\newcommand{\al}{{\alpha}}
\newcommand{\Ga}{{\Gamma}}
\newcommand{\ga}{{\gamma}}
\newcommand{\De}{{\Delta}}
\newcommand{\om}{{\omega}}
\newcommand{\Om}{{\Omega}}
\newcommand{\tht}{{\theta}}
\renewcommand{\th}{\theta}
\newcommand{\si}{{\sigma}}
\newcommand{\la}{\lambda}
\newcommand{\D}{\Delta}
\newcommand{\eps}{\varepsilon}
 \newcommand{\E}{\epsilon}
\newcommand{\vr}{{\varrho}}
\newcommand{\B}{{\mathbb B}}
\newcommand{\R}{{\mathbb R}}
\newcommand{\Q}{{\mathbb Q}}
\newcommand{\sD}{\mathcal D}
\newcommand{\sR}{\mathcal  R}
\newcommand{\Ac}{\mathfrak{A}}
\newcommand{\cA}{{\cal A}}
\newcommand{\cB}{{\cal B}}
\newcommand{\cO}{{\cal O}}
\newcommand{\cF}{{\cal F}}
\newcommand{\cE}{{\cal E}}
\newcommand{\cH}{{\cal H}}
\newcommand{\cR}{{\cal R}}
\newcommand{\cM}{{\cal M}}
\newcommand{\cN}{{\cal N}}
\newcommand{\cL}{{\cal L}}
\newcommand{\cW}{{\cal W}}
\newcommand{\g}{{\nabla}}
\newcommand{\lw}{\tilde{w}}
\def\d{\partial}
\def\t{\tau}
\newcommand{\Dn}{{\operatorname{\frac{\partial}{\partial \textit{n}}}}}
\newcommand{\Dt}{{\operatorname{\frac{\partial}{\partial \tau}}}}
\newcommand{\dist}{{\operatorname{dist}}}
\newcommand{\pd}{\partial}
\newcommand{\hf}{\frac12}
 \newcommand{\di}{{\rm div\,}}
\renewcommand{\eqref}[1]{{\rm (\ref{#1})}}
\newcommand{\rref}[1]{{\rm \ref{#1}}}
\newcommand{\wrt}{{with respect to }}
\newcommand{\Since}{{Because }}
\newtheorem{theorem}{Theorem}[section]
\newtheorem{lemma}[theorem]{Lemma}
\newtheorem{proposition}[theorem]{Proposition}
\newtheorem{corollary}[theorem]{Corollary}
\newtheorem{assumption}[theorem]{Assumption}
\newtheorem{definition}[theorem]{Definition}
\newtheorem{remark}{Remark}[section]
\numberwithin{equation}{section}
\numberwithin{theorem}{section}
\numberwithin{remark}{section}
\newenvironment{declaration}[1]{\trivlist
\item[\hskip \labelsep{\bf #1 }]\ignorespaces}{\endtrivlist}
\newenvironment{proofof}[1]{\begin{declaration}{#1}}{\hfill $\blacksquare$
\end{declaration}}
\newenvironment{proof}{\begin{proofof}{Proof.}}{\end{proofof}}
\numberwithin{equation}{section}
\numberwithin{theorem}{section}
\begin{document}

 \title{Well-posedness and long time behavior in nonlinear dissipative hyperbolic-like evolutions
with critical exponents
}
\author{Igor Chueshov\thanks{Department of Mechanics and Mathematics,
Kharkov National University, Kharkov, 61022, Ukraine; chueshov@univer.kharkov.ua }
\quad and \quad Irena Lasiecka\thanks{
 Department of Mathematics, University of Virginia,
Charlottesville, VA 22901; Systems Research Institute,
Polish Academy of Sciences, Warsaw;
il2v@virginia.edu.
Research partially supported by NSF Grant DDMS-0606682 and AFOSR Grant FA9550-09-1-0459}
}
\maketitle
\begin{abstract}
These lectures present   the analysis of stability and  control  of long time behavior of PDE  models described by nonlinear evolutions of hyperbolic type.
Specific examples  of the models under consideration include:
(i) nonlinear systems of dynamic elasticity: von Karman systems,
Berger's equations,  Kirchhoff  - Boussinesq  equations,
 nonlinear waves  (ii) nonlinear  flow - structure and fluid - structure  interactions, (iii) and nonlinear thermo-elasticity.
A characteristic feature of the models under consideration is  criticality or super-criticality  of sources  (with respect to Sobolev's embeddings) along with super-criticality  of   damping mechanisms which, in addition,  may be also  geometrically  constrained.
\par
Our aim  is to present several methods  relying on    cancelations, harmonic analysis and
geometric analysis,   which  enable to handle criticality and also  super-criticality in both  sources and the damping of the underlined  nonlinear PDE. It turns out that if carefully analyzed the  nonlinearity can be taken  "advantage of"  in order to produce implementable damping mechanism.
\par
Another goal   of  these lectures is the understanding of control mechanisms which are geometrically constrained.
The final task boils down to showing that appropriately damped system is "quasi-stable"  in the sense that any two trajectories approach each other
 exponentially fast up to a compact term which can grow in time.
 Showing this property- formulated as quasi-stability estimate -is the key and technically demanding issue that requires suitable tools.
  These include: weighted energy inequalities,  compensated compactness, Carleman's estimates and some elements of microlocal analysis.
 \smallskip\par\noindent
 {\bf Key words:} wave and plates, critical and supercritical sources, quasi-stability,
  global attractors, finite-dimension.
   \smallskip\par\noindent
   {\bf MSC2010:} 35B41; 35L05, 35L10, 74K20
\end{abstract}

\tableofcontents   


\section{Introduction}\label{intro}
These lectures are devoted  to the analysis of stability and  control  of long time behavior of PDE  models described by
nonlinear evolutions of hyperbolic type.
Specific examples  of the models under consideration include:
(i) nonlinear systems of dynamic elasticity: von Karman systems,
Berger's equations,  Kirchhoff  - Boussinesq  equations,
 nonlinear waves  (ii) nonlinear  flow - structure and fluid - structure  interactions, (iii) and nonlinear thermo-elasticity.
 A goal to accomplish is to reduce the  asymptotic behavior of  the dynamics  to  a tractable  finite dimensional and possibly smooth sets.
 This type of results beside having interest in its own within the realm of dynamical systems, are
 fundamental for control theory where  finite dimensional control theory can be used in order to forge a desired  outcome for  the dynamics evolving in the attractor.
\par

A characteristic feature of the models under consideration is  criticality or super-criticality  of sources  (with respect to Sobolev's embeddings) along with super-criticality  of   damping mechanisms which may be also  geometrically  constrained. This  means the actuation takes place on a "small" sub-region only.
Super-criticality of the damping   is often a consequence of the "rough" behavior of nonlinear sources in the equation.  Controlling supercritical potential energy may require a calibrated nonlinear damping that is also supercritical.  On the other hand super-linearity of the potential energy provides beneficial effect  on
the long time boundedness of semigroups.
 From this point of view,  the nonlinearity  does help controlling the system but, at the same time,  it  also does  raise a long list of mathematical issues starting with a fundamental question of uniqueness  and continuous dependence of solutions with respect to the given (finite energy)  data.
It is known that solutions to these problems  can not be handled by standard nonlinear  analysis-PDE techniques.
\par
The aim of these lectures is to present several methods
of nonlinear PDE which include  cancelations, harmonic analysis and
geometric methods which  enable to handle criticality and also  super-criticality in both  sources and the damping. It turns out that if carefully analyzed the  nonlinearity can be taken  "advantage of"  in order to produce implementable
control algorithms.

Another aspects that will be considered is the understanding of control mechanisms which are geometrically constrained.
Here one would like  to use minimal sensing and minimal actuating (geometrically) in order to achieve the prescribed goal.
This is indeed possible, however analytical  methods  used are more subtle. The final task boils down to showing that appropriately damped system is "quasi-stable"
  in the sense that any two trajectory approach each other
 exponentially fast up to modulo a compact term which can grow in time.
 Showing this property- formulated as quasi-stability estimate -is the key and technically demanding issue that requires suitable tools.
  These include: weighted energy inequalities,  compensated compactness, Carleman's estimates and some elements of microlocal analysis.
\smallskip\par
 The lecture are organized as follows.
 \begin{itemize}
 \item
 We start in section 2  with description of  main    PDE models   such as wave and plate equations  with both interior and boundary damping. Instead of striving for the most general formulation, we provide  canonical models in the simplest possible form which however retains the main features of the problems studied.
 \item
 Section 3 deals with wellposedness of weak  solutions corresponding to these models.
 \item
 Section 4 describes general and abstract tools used for proving existence of attractors and also properties of attractors such as structure, dimensionality and smoothness. Here we emphasize methods which allow to deal within non-compact environment typical when one deals with hyperbolic like dynamics and critical sources.
 Specifically a simple but very useful method  of energy relation due to J. Ball
  (see \cite{ball} and also \cite{MRW}) is presented  (allowing to deal with supercritical sources), a version of
 "compensated compactness method" introduced first by A. Khanmamedov \cite{turk-jmaa} (allowing to deal with some  critical sources)
 and a method based on "quasi-stability" estimate originated in the authors work
 \cite{cl-jdde} (and further developed in
  \cite{cl-mem}) which gives in one shot several properties of the attractor such as
 smoothness, finite-dimensionality and also limiting properties for the family of attractors.
 \item
 Section 5 demonstrates how the abstract methods can be applied to  the problems of interest.
 Clearly, the trust of the arguments is in derivation of appropriate inequalities. While we provide the basic insight into the arguments, the details of calculations are often referred  to the literature.
 This way the interested reader will be able to find a  complete justification of the claims made.
 We should also emphasize that  presentation of our results is focused on main features of the dynamics
 and not necessarily  on a full generality. However, subsections such as Generalizations or Extensions provide information on possible generalizations with a well documented literature and citations.
 \item
 Section 6 is devoted to other models such as:  structurally damped plates,  fluid-structures, fluid-flow interactions, thermoelastic interactions,
 Midlin-\-Timoshenko beams and plates, Quantum Zakharov system and Shr\"o\-dinger-\-Boussinesq equations.
 These equations  exemplify models where general abstract tools  for the treatment of long behavior, presented in section 4  apply. Due to space limitations, the analysis here is brief with details deferred to the literature.
 \item
 Each section concludes with examples of further generalizations-exten\-sions and also with a list of open problems.
 \end{itemize}
\par
{\bf Basic notations:}
Let $\Omega \subset \R^d $, $d =2,3$, be a bounded  domain with a smooth boundary
$\Ga=\pd\Om$.
We denote by $H^s(\Om)$ $L_2$-based Sobolev space of the order $s$
endowed with the norm $\|u\|_{s,\Omega} \equiv \|u\|_{H^s(\Omega)}$ and the
scalar product $(u,v)_{s,\Om}$.
As usual for the closure of $C^\infty_0(\Om)$ in $H^s(\Om)$
we use the notation $H^s_0(\Om)$.
Below we  use the notations:
\[(( u,v)) \equiv \int_{\Omega} u v d\Omega , ~~~~~~~||u||^2 \equiv ((u,u)), ~~
<< u, v>> \equiv
\int_{\Ga} uv d\Ga, \]
$$~\quad Q_T \equiv \Om \times (0, T), \quad
\Sigma_T \equiv \Ga \times (0, T).
$$
We also denote by $C(0,T, Y) $  a space  of strongly continuous functions
on the interval $[0,T]$ with values in a Banach space $ Y$. In the case when
we deal with weakly continuous functions we use the notation
 $C_w(0,T, Y)$.

\section{Description of the  PDE models }\label{sect1.2}

We  consider second order PDE models of   evolutions  with   nonlinear
(velocity type) feedback control represented by monotone, continuous functions,  and critical-supercritical  sources   defined on $\Omega \subset \R^d$, $d =2,3$.
In what follows, we  describe models of wave and plate equations with a  dissipation
occurring either in the interior of the domain or on the boundary.
 Three examples given below constitute  canonical models  which admit vast array of generalizations, some of which are  discussed in Section~\ref{sect:other}.
We begin with the  interior damping first.

 \subsection{Waves and plates with nonlinear
 interior  damping \\ and  critical-supercritical sources}
 \label{sect:1.2.1}
 {\bf (A) Wave equation with nonlinear damping-source interaction.}
 \index{wave equation!interior  damping}
  In a bounded domain $\Omega\subset\R^3$ we consider the following wave equation
with the  Dirichlet boundary conditions:
\begin{equation}\label{1.1}
w_{tt} - \Delta w +  a(x) g(w_t) =f(w) ~~\mbox{in}~~\Omega  \times (0, T);
~~~  w=0 ~~\mbox{on}~~ \Gamma \times (0, T),
\end{equation}
where $T >0$ may be finite or $\infty$. We suppose that {\it  the damping }
has the structure
 $ g(s) = g_1 s + |s|^{m-1} s$ for some  $ m \geq 1 $
and assume the following
typical behavior  for  {\it the source}   $f(w) \sim - |w|^{p-1}w$,
where either $1\le p\le 3$ or else $3< p\le \min\left\{5,\frac{6m}{m+1}\right\}$.
The nonnegative function
$a \in C^1(\Omega)$   represents
the support and intensity   of the damping.
The associated  {\it energy function} has the form:

\begin{equation}\label{w-energy}
\cE(t) \equiv \frac{1}{2}   ||w_t(t)||^2 +  \frac{1}{2}  || \nabla w(t) ||^2 - \int_{\Omega}  \hat{f}(w(t)) dx,
\end{equation}
where   $\hat{f} $ denotes the antiderivative of $f$, i.e.  $\hat{f}' =f $.
The {\it energy balance} relation for this model has the form
\begin{equation}\label{w-energy-rel}
 \cE(t) + \int_s^t ((a g(w_t), w_t) )) d\tau = \cE(s).
\end{equation}
 When $p \leq 3 $ the  source in the  wave equation is  up to critical. This is due to Sobolev's embedding $H^1(\Omega) \subset L_6(\Omega) $ so that $f(w) \in L_2(\Omega)$
 for $w$ of finite energy. In this case local (in time)  well-posedness holds  with  no restrictions for $m$. Globality of solutions is guaranteed by  suitable a priori bounds resulting from either a structure of the source $f(w)$  or  from the interplay with the damping, see Theorem \ref{th:wp-wave1} below.
 \par
 Long time behavior will be analyzed under the condition of
  weak degeneracy of the damping coefficient $a(x)$ in the so called "double critical case",
 i.e.\ $ p\leq 3$, $m \leq 5$. We use the compensated compactness  criterion in
 Theorem~\ref{th7-turk} for proving existence of an attractor   and quasi-stability inequality \eqref{8.4.2mc} required by Definition \ref{de:ms-stable}
 in order to establish both smoothness and finite dimensionality of global attractor.
  \par
 When $3< p \leq 5  $ the damping  required for  uniqueness  of solutions  needs to be superlinear.
 With sufficiently superlinear damping we  show that the corresponding  dynamical system is well posed on the finite energy phase space. However, the issue of long time behavior, in that case, is still an open problem.
 The known frameworks  for studying supercritical sources fail due to nonlinearity of the damping. More details will be given later.
The  ranges of $p \in [5,6) $ are considered in \cite{bociu}.
\par
We also refer to \cite[Chapter XIV]{CV02} where the case of a linear damping and
a supercrtical force is considered from point of view of
trajectory attractors.

\medskip\par\noindent
{\bf (B) Von  Karman plate equation with nonlinear interior  damping.}
\index{evolutionary Karman equations with rotational
forces!internal dissipation}
\index{evolutionary Karman equations without rotational
forces!internal dissipation}
Let $\Omega \subset \R^2 $ and $\alpha \in [0,1] $.
We denote $M_{\alpha}  \equiv I - \alpha \Delta$ and consider
the equation
\begin{equation}\label{1.2}
M_{\alpha}  w_{tt}  + \Delta^2 w + a(x)\big[ g(w_t) -\alpha\, {\rm div} G(\nabla w_t)\big]   = [\cF(w),w] + P(w)
\end{equation}
in $\Om\times (0,\infty)$ with
the {\it clamped} boundary conditions:
\begin{equation}\label{Cl}
    w = \Dn w =0 ~~ \mbox{on} ~~\Gamma \times (0,\infty),
\end{equation}
where
the {\it   Airy stress function $\cF(w)$} solves the elliptic problem
\begin{equation}\label{airy}
\Delta^2 \cF(w) = -[w,w] , \mbox{in}~ \Omega, ~~\mbox{with}~~\cF = \Dn \cF =0  ~~
\mbox{on}~~ \Gamma,
 \end{equation}
 and
the von {\it Karman bracket} $[u,v]$  is given by
\begin{equation}\label{bracket}
[u,v] = \partial ^{2}_{x_{1}} u\cdot \partial ^{2}_{x_{2}} v +
\partial ^{2}_{x_{2}} u\cdot \partial ^{2}_{x_{1}} v -
2\cdot \partial ^{2}_{x_{1}x_{2}} u\cdot
\partial ^{2}_{x_{1}x_{2}}v,
\end{equation}
The {\it  damping}  functions $g:\R\mapsto\R_+$ and $G:\R^2\mapsto\R^2_+$ have the following
 form
\begin{equation}\label{dmp-str}
g(s) = g_1 s + |s|^{m-1}  s ~~\mbox{and}~~ G(s,\si) = G_1 \cdot( s;\si) + (|s|^{m-1}s;
\si^{m-1} \si)
\end{equation}
where $g_1$ and $G_1$ are nonnegative constants.
\par
The {\it source} term  $ P $ is assumed  locally Lipschitz operator acting from $H^2(\Omega)$ into $L_2(\Omega) $ when $\alpha =0$
and  from $H^2(\Omega)$ into $H^{-1}(\Omega) $ when $\alpha >0$.
\par
The associated  {\it energy function} and the {\it  energy balance} relation have
 the form
\begin{equation}\label{plate-energy}
\cE(t) \equiv \frac{1}{2}  \left(  ||w_t(t)||^2 +  \alpha ||\nabla w_t(t)||^2 \right)+  \frac{1}{2}  || \Delta w(t) ||^2 +
 \frac{1}{4} || \Delta \cF(w)||^2
 \end{equation}
and
\begin{equation}\label{plate-energy-rel}
\cE(t) + \int_s^tD(w_t)d\tau= \cE(s) + \int_s^t ((P(w), w_t ))d\tau,
 \end{equation}
where the interior damping form $D(w_t)$ is given by
\begin{equation}\label{damp-plate-in}
D(w_t)= (( a g(w_t), w_t))  +  \alpha ((a G(\nabla w_t), \nabla w_t )).
 \end{equation}
\par
 The case when $\alpha > 0 $ is subcritical with respect to the Airy source.
 The well-posedness
 and existence of attractors  in this case is more  standard \cite{cl-book}. When $\alpha =0$ the Airy stress source is critical.
Wellposedness will be achieved by displaying {\it hidden regularity} (see
Lemma~\ref{l:kar-br} below)  for Airy's stress function. Existence of attractors along with their smoothness  will be shown by means  of  quasi-stability inequality
 (see \eqref{8.4.2mc} in Definition \ref{de:ms-stable})
 which   can be proved    as long as $P $ is  subcritical.

\medskip\par\noindent
{\bf (C) Kirchhoff-Boussinesq plate with   interior damping}.
\index{Kirchhoff-Boussinesq model!interior damping}
With notations as in the case of the von Karman plate we consider
\begin{equation}\label{1.3}
M_{\alpha}  w_{tt}  + \Delta^2 w +  a(x)\big[ g(w_t) - \alpha \, {\rm div}\, G(\g w_t)\big]  = {\rm div}\,\big[|\nabla w|^2 \nabla w \big] + P(w)
\end{equation}
with the clamped boundary conditions \eqref{Cl}.
The {\it  damping}  functions $g:\R\mapsto\R_+$,  $G:\R^2\mapsto\R^2_+$
and the {\it source} $ P $
are the same as above.
The associated {\it energy function} has the form
\begin{equation}\label{KB-energy}
\cE(t) \equiv \frac{1}{2}  \left(  ||w_t(t)||^2 +  \alpha ||\nabla w_t(t)||^2 \right)+  \frac{1}{2}  || \Delta w(t) ||^2 +
 \frac{1}{4} \int_{\Omega}  | \nabla w(x,t)|^4 dx
  \end{equation}
The corresponding
{\it energy balance} relation is given by \eqref{plate-energy-rel}.
\par
 The  well-posedness  for the case $\alpha > 0 $ is standard.
 This is due to the fact that ${\rm div}\,|\nabla w|^2 \nabla w \in H^{-1}(\Omega) $ for finite energy solutions $w$.
 The case  $\alpha =0 $  is subtle. Its analysis requires special consideration
  and depends on linearity of the damping. Similarly, long time behavior  dealt with by using Ball's method  presented in an abstract version  in
  Theorem \ref{th:ball}~
 also requires linear damping.
\subsection{
Nonlinear waves and plates with  geometrically  \\ constrained damping
  and critical-supercritical sources}\label{sect:1.2.2}
This class contains  models in the situation when the support of the damping $a(x)$ is strictly contained in $\Omega$. This is to say
supp$\,a(x) \subset \Omega_0 \subset \Omega $. In addition, we  considered singular case
of  interior localized damping which is the  boundary damping. These models are described next.
\medskip\par\noindent
{\bf (A) Wave equation with nonlinear  boundary damping-source interaction.}
 \index{wave equation!boundary  damping}
In a bounded domain $\Omega\subset\R^3$ we consider the following wave equation
\begin{equation}\label{1.4}
 w_{tt} - \Delta w +  a(x) g(w_t) =f(w)~~\mbox{in}~~   \Omega  \times (0, T),  \end{equation}
 with the {\it Neumann} boundary conditions:
\begin{equation}\label{w-bc-n}
\Dn w + g_0(w_t)  = h(w)~~\mbox{in}~~  \Ga  \times (0, T).
\end{equation}
 Concerning the internal damping $g$ and source $f$ terms our hypotheses
 are the same as in Section~\ref{sect:1.2.1}{\bf(A)}. The internal damping coefficient
 $a(x)$ satisfies relations
 \[
 a \in C^1(\Omega),~~ a(x) \geq 0  ~\mbox{in}~ \Omega~~   \mbox{with~~ supp}\,a \subset \Omega_0 \subset \subset  \Omega
 \]
 The {\it boundary damping} has the form $g_0(s) =  g_2 s +|s|^{q-1} s $ with
$q \geq 1$ and $g_2\ge 0$.
The  {\it boundary source} has the behavior  $ h(w) \sim  -|w|^{k-1}w $,
where  $ 1\leq k \leq \max\left\{3, \frac{4q}{q+1}\right\}$.

The associated  {\it energy function} is given by
\begin{equation}\label{w-bc-energy}
 \cE(t) \equiv \frac{1}{2}   ||w_t(t)||^2 +  \frac{1}{2}  || \nabla w(t) ||^2 - \int_{\Omega}  \hat{f}(w(t)) dx
- \int_{\Gamma}  \hat{h}(w(t)) dx
\end{equation}
where as above $\hat{f}$ and $\hat{h} $ denote the antiderivative of $f$ and $h$.
The {\it energy balance} relation in this case has the form
\begin{equation}\label{w-bc-energy-rel}
\cE(t) + \int_s^t \big[(( a g(w_t), w_t)) +  << g_0(w_t) , w_t >>\big] d\tau = \cE(s).
\end{equation}
\par
Local and global well-posedness result will be presented for the boundary- source and damping model.    One could consider larger range of boundary and interior  sources $k \in [1,4) $, $p \in [1,6)$ \cite{bociu}.
In this case, however, potential energy corresponding to the sources is not well defined. Well-posedness result requires higher integrability of initial data \cite{bociu,bociu-JDE}.
More general structure of sources and damping can be considered. However,  the required polynomial bounds are critical.
\par
In the  triple  critical case $\{ p =3, q =3, m=5\}$  with $k=2$ the existence of a global  attractor  is shown by taking advantage of compensated compactness result in Theorem~\ref{th7-turk} (see \cite{snowbird}).
 With  the  additional hypotheses imposed on the damping smoothness finite-dimensionality of attractor is
 established by proving that the system is quasi-stable (in the sense
 of Definition~\ref{de:ms-stable}).
\par
In the case when only boundary damping is active (or internal damping is localized) existence and smoothness of attractor require additional growth condition restrictions imposed on the damping.   This is to say, we need to assume
$ p \leq 3 $  and $ q \leq 1 $ \cite{clt-dcds08,clt-jdde09}.
\medskip
\par\noindent
{\bf (B) Von  Karman plate equation with nonlinear boundary  damping.}
 \index{evolutionary Karman equations with rotational
forces!boundary dissipation, clamped--hinged b.c.}
\index{evolutionary Karman equations without rotational
forces!boundary dissipation, clamped--hinged b.c.}
In a bounded domain $\Omega \subset \R^2 $  we consider the equation
\begin{equation}\label{1.5}
M_{\alpha}  w_{tt}  + \Delta^2 w + a(x)\big[ g(w_t) -
\alpha \, {\rm div}\, G(\nabla w_t)\big]   = [\cF(w),w] + P(w)
\end{equation}
with the
{\it hinged  dissipative}  boundary conditions:
\begin{equation}\label{hing-bc}
w=0,~~~
\Delta w = - g_0(\Dn w_t) , ~~\mbox{on}~~ \Gamma \times (0, T)
\end{equation}
Here we use the same notations as in Section~\ref{sect:1.2.1}{\bf (B)}.
In particular, Airy's stress function $\cF(w)$  solves \eqref{airy}
and the internal damping functions $g$ and $G$ have the form \eqref{dmp-str}.
Concerning the
{\it boundary damping} we assume that $g_0 (s) \sim g_2 s + |s|^{q-1} s$, $q \geq 1 $.
The associated energy function $\cE$ has the same form as
in Section~\ref{sect:1.2.1}{\bf (B)}, see \eqref{plate-energy}.
The corresponding
{\it  energy balance} relation reads as follows
\begin{equation}\label{hing-bc-en-rel}
\cE(t) + \int_s^t \left[ D(w_t) + <<g_0\big(\frac{\pd w_t}{\pd n}\big), \frac{\pd w_t}{\pd n} >>\right] d\tau
 = \cE(s) + \int_s^t ((P(w), w_t ))  d\tau,
\end{equation}
where the internal damping term $D(w_t)$ is given by \eqref{damp-plate-in}.
\par
Below we show
 the existence of a continuous semiflow corresponding to (\ref{1.5}) and \eqref{hing-bc}. The   existence and smoothness of  an  attractor  with boundary damping alone is shown under additional hypotheses restricting the growth of the boundary damping.
   In the  case of boundary damping one could consider  damping affecting other  boundary conditions such as free and  simply supported, see \cite{cl-book}.

\medskip\par\noindent
 {\bf (C) Kirchhoff -Boussinesq plate with boundary damping}.
 \index{Kirchhoff-Boussinesq model!boundary damping}
  With the same
 notations as above in a  domain
$\Omega \subset \R^2 $ we consider the equation
\begin{equation}\label{1.6}
M_{\alpha}  w_{tt}  + \Delta^2 w +  a(x)\big[ g(w_t) - \alpha\, {\rm div}\,  G(\g w_t)\big]  = {\rm div}\big[|\nabla w|^2 \nabla w \big] + P(w)
\end{equation}
with the
{\it hinged  dissipative}  boundary conditions \eqref{hing-bc}.
The internal damping functions $g$ and $G$ satisfy \eqref{dmp-str}.
The boundary damping is the same as the previous case,
i.e.,  $g_0 (s) \sim g_2 s + |s|^{q-1} s$, $q \geq 1$.
The {\it source} $ P $ is  same as above.
The associated  {\it energy function} has the form \eqref{KB-energy}.
The {\it energy balance} relation is given by \eqref{hing-bc-en-rel}.
\par
When $\alpha > 0 $ the wellposedness of finite energy solutions with internal damping  follows from the observation that  the model can be represented as a locally Lipschitz perturbation of monotone operator.
This  approach is  also adaptable to boundary damping, like in the case of von Karman plate.
In the case $\alpha =0$ the problem is much more delicate.  While existence of finite energy solutions can be established with
 general form of the interior-boundary damping, the uniqueness of solutions requires  the linearity of the damping.
 In addition, continuous dependence on the data depends on time reversibility of dynamics which, in turn, does not allow for boundary damping. In view of the above most of the questions asked are open in the presence of {\it boundary damping}.

\section{Well-posedness and generation of continuous flows }
In this section we  present several methods which enable to deal with
Hadamard well-posedness of  PDE  equations with supercritical  Sobolev exponents.
By supercritical, we mean sources that may not belong  to  finite kinetic energy space for trajectories  of finite energy.
 While existence of  finite energy solutions  is often  handled by various variants of Galerkin method, it is the   uniqueness and continuous dependence on the data that is problematic.  There is no unified theory for the treatment of such problems, however there  exist methods that are applicable to classes of these problems.
\par
  In this section we  concentrate on three models described in Section~\ref{sect:1.2.1}, where each of the model displays different characteristics and associated difficulties.  In order to cope with this,  different methods need to be applied. We
   provide a  show case   for the following methods which are critical for proving Hadamard well-posedness  in the examples cited:
\begin{itemize}
\item
Interaction of superlinear sources via the damping. \\Illustration: wave equation.
\item Cancelations-harmonic analysis and microlocal analysis methods.\\
Illustration: Von Karman equations.
\item Sharp  control of Sobolev's embeddings and duality scaling.\\
Illustration: Kirchoff-Boussinesq plate.
\end{itemize}
{\bf 1.} In the first example the equation can be viewed as a {\it perturbation}  of a monotone operator. However,  the superlinearity of the source   destroys {\it locally Lipschitz} character of the semilinear equation.
In order to offset the difficulty, superlinear damping $g(w_t) $ is used.   The interplay between the superlinearity of potential and kinetic energy lies in the heart of the problem.  The presence of this  damping  becomes critical in  establishing well defined dynamical system evolving on a finite energy  phase space.
\\
{\bf 2}. In the second example, classical regularity results for   von Karman
nonlinearity, when $\alpha =0$,
fail to show that the perturbation of monotone operator is locally Lipschitz. However, in this case, more subtle methods based on harmonic analysis and  compensated compactness allow to show that the Airy's stress function
has hidden regularity property. This estimate allows to prove that, contrary to the original prediction, the semilinear term is locally Lipschitz. This allows to prove, again, that the resulting dynamical system is well posed on a finite energy space.
\\
{\bf 3.}
In the third  example, the source of the difficulties is restoring force
 $${\rm div}{}\, \big[|\nabla w |^2 \nabla w \big]
 $$
 which fails to be locally Lipschitz with respect to finite energy solutions in the case $\al=0$.
By using method relying on  logarithmic control of Sobolev's embeddings and topological shift of the energy ("Sedenko's method") we are able to prove that the resulting system with linear damping is well posed on finite energy space.
Critical role in the argument is played by  {\it time reversibility of the dynamics} and {\it linearity of the damping}.
This excludes superlinear damping and boundary dissipation in the case $\al=0$.

\medskip\par\noindent
{\bf Conclusion:} These three canonical examples present three different methods on how to deal with the loss of local Lipschitz property and still be able to obtain well-posed  flows defined  on   finite energy phase space. The methods presented are
transcendental  and applicable to other dynamics displaying similar properties. Some of the examples are given in Section \ref{sect:other}.

\subsection{Wave equation  with a  nonlinear interior  damping - \\
 model in (\ref{1.1})}

\subsubsection{The statements of the results}
 \index{wave equation!interior  damping}
 With reference to the model (\ref{1.1}) with $a(x) \geq a_0 > 0$ in $\Omega $  the following assumption is assumed throughout.
\begin{assumption}\label{as-wave}
\begin{enumerate}
\item A
scalar function $g(s) $ is assumed to be of the form
$g(s) = g_1 s + g_2(s) $, where $g_1 \in \R $ and  $g_2(s) $ is  continuous and
 nondecreasing on $\R$ with $g_2(0) =0$
\item
The source $f(s) $  is \underline{either} represented by a  $C^1$  function and such that
$ |f'(s)|\leq C (1 + |s|^{p-1} )$  with $ 1 \leq p \leq  3 $, \underline{or else}
we have that $f\in C^2 (\R) $ and (i) there exist $ m_{g_2}, M_{g_2}>0$
such that the damping function $g_2$ satisfies the inequality
\begin{equation}\label{g}
 m_{g_2}  |s|^{m+1}  \leq  g_2(s) s  \leq M_{g_2}  |s|^{m+1},~~ |s| \geq 1.
 \end{equation}
 for some $m>1$, (ii)  $|f'' (s) | \leq C (1 + |s|^{p-2} ) $
 with  $p$ satisfying the following compatibility  growth  condition
\begin{equation}\label{gg}
3 \le p < 6~~~\mbox{and}~~~  p  \leq \frac{6m}{m+1}
 \end{equation}
\end{enumerate}
\end{assumption}
While in the case when the source exponent $ p \leq 3 $ we adopt classical semigroup definition of the  generalized
solution (see, e.g., \cite{cl-mem}),
for supercritical cases $ p > 3 $ we provide the following variational definition.
\begin{definition}[Weak solution]\label{def}
Let \eqref{g} and \eqref{gg} be in force.
By a \underline{weak} \underline{solution} of \eqref{1.1}, defined on some
interval $(0,T) $ with  initial data $(w_0;w_1)$  from $H_1(\Om)\times L_2(\Om)$,  we mean a function $u \in C_w(0, T; H^1(\Omega))$, such that
\begin{enumerate}
\item $\displaystyle u_t \in L_{m+1}((0,T)\times\Om)\cap  C_w(0,T; L_2(\Omega) )$.
\item For all $\phi \in C(0,T, H_0^1(\Omega)) \cap C^1(0,T; L_2(\Om)) \cap
L_{m+1}([0,T]\times\Om)$, we have that
\begin{multline}\label{nimic1}
\int_0^T \int_{\Om} (-u_t \phi_t + \nabla u \nabla \phi) \ d\Om dt
+ \int_0^T \int_{\Om} g(u_t) \phi \ d\Om dt
 \\ = -\int_{\Omega}
u_t \phi d \Omega  \Big|_0^T +
\int_0^T \int_{\Om} f(u) \phi \ d\Om dt.
\end{multline}
\item $\displaystyle \lim_{t \to 0} (u(t)-u_0, \phi)_{1,\Omega } = 0$ and
$\displaystyle \lim_{t \to 0}((u_t(t) - u_1,\phi)) = 0$ for all
$\phi \in H_0^1(\Omega)$.
\end{enumerate}
\end{definition}

\begin{theorem}\label{th:wp-wave1}
Let Assumption \ref{as-wave} be in force. We assume  that initial data $(w_0;w_1)$
possess the properties
\[
 w(0) =w_0 \in H^1_0(\Omega),
 ~~~ w_t (0) = w_1  \in  L_2(\Omega)\]
 and for $ p > 5$, $w_0 \in L_r(\Omega)$ with $r =\frac32 (p-1)$.

Then
\begin{itemize}
\item
 There exist a unique, local (in time)   solution $w(t)$ of finite energy.
This  is to say: there exists  $T > 0 $ such that
$$
w \in C([0, T]; H^1_0(\Omega)),~~ w_t \in C([0, T]; L_2(\Omega)),
$$
where  $w$  is a generalized solution when $p \leq 3$  and $w$ is a weak solution  when  condition \eqref{g} holds
with some $m\geq 1$
(which is the case when $p \ge 3$).
\item
    In this case $3<p\le 5$
    weak solutions satisfy the energy relation in \eqref{w-energy-rel}.
\item If $ p \leq 3$ and \eqref{g} holds with some $ m$,
then generalized solution satisfies also the  variational form \eqref{nimic1} with  the test functions $\phi$ from the class
\[
C(0,T; H_0^1(\Omega) ) \cap C^1(0,T; L_2(\Omega) ) \cap L_{\infty} (Q_T).
\]
Moreover, the said solutions  are  continuously dependent  on the initial data.
\item
When $ p \leq m $ the obtained  solutions are global,  i.e., $ T =\infty $.
The same holds under dissipativity condition:
\begin{equation}\label{dis-f-wave}
\liminf_{|s|\rightarrow \infty }\frac{-f(s)}{s} > - \lambda_1
\end{equation}
where $\lambda_1 $ is the first eigenvalue of $ -\Delta $ with the  zero Dirichlet data.
\item
When $ p > 3 $, $ p > m $  and $ f(s) = |s|^{p-1}s$  local  solution has finite time blow-up for negative energy initial data.
\end{itemize}
\end{theorem}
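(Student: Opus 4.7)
My plan is to split the proof according to the growth of the source. In the \emph{subcritical range} $p\leq 3$, Sobolev's embedding $H^1(\Om)\hookrightarrow L_6(\Om)$ gives $f\colon H^1_0(\Om)\to L_2(\Om)$ locally Lipschitz, so the equation fits the standard framework of a maximal monotone operator (coming from $-\Delta$ together with the increasing $ag(\cdot)$) perturbed by a locally Lipschitz nonlinearity. Local generalized solutions are then produced by Kato--Yosida approximation, and uniqueness together with continuous dependence follows from Gronwall applied to the energy identity for differences. The extension of the variational identity to test functions in $C(0,T;H^1_0)\cap C^1(0,T;L_2)\cap L_\infty(Q_T)$ is then purely a density argument, since for $p\leq 3$ the product $f(u)\phi$ is in $L_1(Q_T)$ for such $\phi$ and limits can be taken.

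In the \emph{supercritical range} $3<p\leq 5$ under \eqref{g}--\eqref{gg}, existence is obtained by Galerkin approximation: the energy relation \eqref{w-energy-rel} together with the monotonicity of $g$ gives uniform bounds in $C_w(0,T;H^1_0)\cap C_w^1(0,T;L_2)$ with $u_t^N\in L_{m+1}(Q_T)$, Aubin--Lions furnishes strong convergence in $L_2(Q_T)$, and Minty's trick identifies the weak limit of $g(u_t^N)$. The main obstacle is \emph{uniqueness and continuous dependence}, since $f$ is no longer locally Lipschitz on $H^1_0$. Here I would use an integration-by-parts-in-time device: for two solutions $u,v$ and $z=u-v$, the troublesome term is
\[
\int_0^t\!\int_\Om (f(u)-f(v))z_t\,dx\,d\tau=\left[\int_\Om (f(u)-f(v))z\,dx\right]_0^t -\int_0^t\!\int_\Om \big(f'(u)u_t-f'(v)v_t\big)z\,dx\,d\tau.
\]
Using $|f''(s)|\leq C(1+|s|^{p-2})$, H\"older's inequality, and $H^1\hookrightarrow L_6$, the two integrals on the right are dominated by products of $\|u_t\|_{L_{m+1}(\Om)}$ and $\|\nabla z\|$ that are absorbable by the damping \emph{precisely} when $p\leq 6m/(m+1)$. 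A Gronwall-type argument on a suitable supremum-in-time energy for $z$ then yields uniqueness and continuous dependence; the extra hypothesis $w_0\in L_r$ for $p>5$ enters via the boundary term at $t$, where $r=\tfrac{3}{2}(p-1)$ is exactly the integrability needed so that $\int(f(u(t))-f(v(t)))z(t)\,dx$ is controlled by the full energy of $z$.

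The \emph{energy relation} for weak solutions with $3<p\leq 5$ is obtained by combining weak lower semicontinuity of the Galerkin energy inequality with the reverse inequality coming from the variational formulation tested against a time mollification of $u_t$, now legitimate since $u_t\in L_{m+1}(Q_T)$ and since $u\in C_w(0,T;H^1_0)$ provides enough regularity to pass to the limit in the source term $\int f(u)u_t$. For \emph{global existence}, when $p\leq m$ the superlinear damping absorbs the source on the right-hand side of \eqref{w-energy-rel} by Young's inequality, yielding $\cE(t)\leq C(\cE(0))$; under the dissipativity condition \eqref{dis-f-wave}, one first shows $-\hat f(s)\geq -\tfrac{\la_1-\eps}{2}s^2-C$, so that $\cE(t)$ is coercive on $H^1_0\times L_2$ modulo a constant and global boundedness follows from the nonincreasing character of $\cE$.

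Finally, for the \emph{finite-time blow-up} statement with $p>\max(3,m)$ and $f(s)=|s|^{p-1}s$ at negative initial energy, I would apply the concavity method of Levine--Georgiev--Todorova to a functional of the form $\Phi(t)=\int_0^t\|w(\tau)\|^2\,d\tau+(T_0-t)\|w_0\|^2+A$ for suitable $A,T_0>0$. Using \eqref{w-energy-rel} and $f(w)w=(p+1)\hat f(w)$ one bounds $\Phi''$ from below in terms of $-\cE(0)>0$ plus $\|w\|_{p+1}^{p+1}$, while the damping contribution to $\Phi\Phi''-(1+\ga)(\Phi')^2$ is of lower order exactly because $p>m$; choosing $\ga>0$ small yields $(\Phi^{-\ga})''\leq 0$ with $(\Phi^{-\ga})'(0)<0$, forcing $\Phi$ to blow up in finite time. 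The genuinely delicate step in the whole proof is the uniqueness argument in the supercritical range, where the compatibility \eqref{gg} is used in a sharp way.
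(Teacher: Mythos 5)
Your proposal is correct in outline, and on the decisive point --- uniqueness in the supercritical range --- it coincides with the paper's argument: the identity
\[
\int_0^t\!\!\int_\Om (f(u)-f(v))z_t = \Bigl[\int_\Om (f(u)-f(v))z\Bigr]_0^t-\int_0^t\!\!\int_\Om (f'(u)u_t-f'(v)v_t)z,
\]
followed by H\"older, $H^1\hookrightarrow L_6$, the compatibility $p\le 6m/(m+1)$ to place $u_t,v_t$ in $L_{6/(6-p)}\subset L_{m+1}$, and a Gronwall step on a short interval, is exactly Lemma~\ref{l:1}; your explanation of where $r=\tfrac32(p-1)$ comes from is also the right one. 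You diverge from the paper in two places. First, for existence when $p>3$ the paper does not use Galerkin: it truncates the source by a locally Lipschitz $f_K$, solves each truncated problem by the monotone-plus-Lipschitz theory of the subcritical case, derives the uniform bound $E(t)+\|w_t\|^{m+1}_{L_{m+1}(Q_t)}\le C[E(0)+\dots]$ from $p(m+1)\le 6m$, and passes to the limit in $K$; your Galerkin/Aubin--Lions/Minty route is a legitimate alternative but you should check that Minty's trick applies to $g$ restricted to the finite-dimensional approximations (the truncation route avoids this). Second, for the third bullet your "purely a density argument" addresses only the easy term $f(u)\phi$; the actual issue is passing to the limit in $\int g(w_{nt})\phi$ for generalized solutions, where the only a priori control is the dissipation bound $\int_Q g(w_{nt})w_{nt}\le C$, i.e.\ an $L_1$ bound on $g(w_{nt})$. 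The paper handles this with the Dunford--Pettis weak $L_1$ compactness criterion (splitting $\{|w_{nt}|\le R\}$ and $\{|w_{nt}|>R\}$), which is also why the test functions are taken in $L_\infty(Q_T)$ rather than $L_{m+1}$; this step should be added. Finally, for blow-up the paper defers to an anti-Lyapunov construction in the cited literature; your concavity functional is in the same family, but with the nonlinear damping $|s|^{m-1}s$ the pure Levine functional usually needs the Georgiev--Todorova modification ($Y=H^{1-\al}+\eps\int ww_t$ with $H=-\cE$) rather than $\Phi\Phi''-(1+\ga)(\Phi')^2\ge0$ verbatim --- the exploitation of $p>m$ via Young's inequality is, however, the correct mechanism.
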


\begin{remark}{\rm
In the case when $p \in [1,3] $ one  constructs solutions  as the limits of strong semigroup solutions \cite{snowbird}.
These are referred  to as {\it generalized solutions}\cite{snowbird,cl-mem}.
  It can be shown that generalized semigroup solutions do satisfy the variational form  in question and variational weak solutions are unique  \cite{cl-mem}.  In order to justify this, it suffices to consider only
  the  nonlinear damping term $g(s)$  with  $g_1 =0$, since the latter provides a bounded linear perturbation-thus it does not affect the limit passage.
  The energy inequality provides $L_1$ bound for $g(w_n)$.
  This  allows to apply Dunford-Pettis compactness criterion (see later)  in order to transit with the weak limit in $L_1$, and thus obtaining variational form of solutions (\ref{nimic1})   with appropriate test function as specified in the third part of Theorem \ref{th:wp-wave1}.
}
\end{remark}

\subsubsection{ Sketch of the proof }
Without loss of generality we may assume $a(x) =1 $ and  $g_1 =0 $, so $ g_2(s) = g(s) $  (these parameters have no impact on the proof).
\medskip\par\noindent
{\bf Case 1:  Critical/subcritical  sources and  arbitrary monotone dam\-ping}.
In this case the proof is standard and follows from monotone operator theory.
Setting the second order equation as a first order system
\begin{equation}\label{W}
 W_t  + A(W) = F(W),~~t>0,~~~ W(0) = (w_0; w_1)
 \end{equation}
on the space $ H \equiv H_0^1(\Omega) \times L_2(\Omega) $
where $W=(w;w_t) $ and
$$
A \equiv \left( \begin{array}{cc} 0 & -I \\
- \Delta &{}~~ g(\cdot) \end{array} \right),\quad
F(W) \equiv \left( \begin{array}{cc} 0\\
 f (w)  \end{array} \right).
$$
 allows to the use of  monotone operator theory (see, e.g.,~\cite{barbu,showalter} for the basic theory).
 Indeed, $A$ is maximally monotone and $F(W)$ is locally Lipschitz on $H$.
 This last statement is due to the restriction $ p \leq 3 $ and Sobolev's embedding $H^1(\Omega) \subset L_6(\Omega)$.
 General theorem (see \cite{cel} and also \cite[Chapter 2]{cl-book}) allows to conclude local unique  existence of semigroup solutions.
 \par
 The arguments leading to variational characterization of weak solutions  are given in the Appendix \cite{snowbird}. It can be based on weak $L_1$
compactness criteria due to Danford and Pettis (see \cite[Section IV.8]{danf}) which states that
the set $M \subset L_1(Q)$ is relatively compact with respect to weak topology
if and only if this set is bounded and uniformly absolutely continuous, i.e.,
\[
\forall\, \eps>0\;~ \exists\, \delta>0\; :  ~~ {\rm mes}\,(E)\le \delta~~
\Rightarrow~~  \sup_{f\in M}\left|\int_E f d Q\right|\le \eps.
\]
We want to show that $M=\{g(w_n)\}$ is weakly $L_1(Q)$ compact for a sequence
of strong solutions which  also converges almost everywhere.
For this using dissipation inequality $\int_Qg(w_n) w_n dQ \leq C$
which (by splitting the integration into $|w_n| \leq R $ and $|w_n |\geq R $)
 implies that
\[
\int_E |g(w_n)|dQ  \leq   \frac{1}{R} \int_Q g(w_n) w_n dQ  +  g_R {\rm mes}\,(E),
~~~\forall\, R>0.
\]
\par\noindent
{\bf Case 2: supercritical source.} In this case superlinearity of the  damping  is exploited.
  The key role is played by energy inequality obtained first for smooth  solutions corresponding to
  truncated problem where $f$ is approximated by locally Lipschitz function  $f_K$ defined in \cite{bociu-JDE}.
  Locally Lipschitz theory (Case~1 above) allows to obtain the energy estimate
  \begin{equation}\label{energy}
  E(t) + \int_0^t (( g(w_t), w_t)) ds \le E(0) +  \int_0^t (( f_K(w), w_t)) ds
  \end{equation}
  with $E(t) \equiv \frac{1}{2} (||w_t(t)||^2 + ||\nabla w(t)||^2 )$.
  Exploiting the  growth conditions  imposed on $g$ and $f$ yields
  $$
   E(t) + \|w_t\|^{m+1}_{L_{m+1}(Q_t) }  \leq C \left[ E(0) +
  \|f_K(w)\|^{\frac{m+1}{m}}_{L_{\frac{m+1}{m}} (Q_t) } \right]
  $$
  and Sobolev's embeddings $ H^1(\Omega) \subset L_6(\Omega) $  along with the condition\\
  $p (m+1) \leq 6 m $ implies
 \[
  E(t) + \|w_t\|^{m+1}_{L_{m+1}(Q_t) }
 \leq  C \left[ E(0) + L \left( t+\int_0^t \|w\|^{\frac{p(m+1)}{m} }_{1, \Omega}ds
 \right)\right].
 \]
The above estimate  followed by  a  rather technical limit  argument
(with $K\to\infty$)
\cite{bociu-JDE}  allows to establish {\bf local existence} of weak  solutions.
\par
{\it Energy identity}. This is an important step of the argument.
By using  finite difference approximation  for the velocity $w_t(t)$  one shows that {\it  weak } solutions satisfy
not only energy {\it inequality} but also  {\it energy identity}. The detailed argument is given in the proof of Lemma 2.3  \cite{bociu}.

{\it Uniqueness} of the solutions is more subtle. We need to  show that any two solutions from the existence class,
i.e. satisfying
$$
 w \in C_w([0, T], H^1_0(\Omega) \cap C^1_w([0, T], L_2(\Omega) ) , ~~w_t \in L_{m+1}(Q_T)
$$
is uniquely determined by the initial data.
To this end let's consider the difference of two solutions:
$ z \equiv  w -u $, where both $w$ and $u$ are finite energy weak  solutions specified as above with the same initial data $(w_0;w_1)$.
Denoting by $$E_z(t) \equiv  || \nabla z ||^2 + ||z_t ||^2 $$ owing  to energy inequality satisfied for weak solutions  and using bounds imposed on the source  function $f(s) $  one obtains  after some calculations:
\begin{equation}\label{energyinequality}
    E_z(t) \leq \int_0^t (( f(w) - f(u) , z_t)) ds \equiv  R_f(u,w,z,t)
\end{equation}
The following lemma is critical:
\begin{lemma}\label{l:1} Let $p<5$. Then
$ \forall \E> 0$   and
$\forall  (w_0;w_1) \in H_0^1(\Omega)\times L_2(\Omega)$
such that
$ \|w_0\|_{1,\Omega} +  \|w_1\|_{0,\Omega}\leq R$,
 there exist a constant $C_{\E}(R,T)>0$  such that
\begin{align*}
|R_f(T)| \leq & {}\; \E E_z(T) + C_{\E}(R,T) \Big[\, T  E_z(T)
\\ &
+     \int_0^T
\left(1+\|u_t(t)\|_{L_{m+1}(\Om)} +
\|v_t(t)\|_{L_{m+1}(\Om)}\right) E_z(t) \ dt \Big].
\end{align*}
\end{lemma}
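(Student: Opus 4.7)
The plan is to integrate by parts in time in the definition of $R_f(T)$ so as to remove the factor $z_t$ entirely, then estimate the resulting boundary and interior contributions via carefully chosen H\"older exponents whose feasibility is exactly encoded in the compatibility condition \eqref{gg}. Since $f\in C^2$, I factor the source difference as $f(w)-f(u)=\Phi(u,w)\,z$ with $\Phi(u,w):=\int_0^1 f'(u+\tau z)\,d\tau$, so that $|\Phi|\le C(1+|u|^{p-1}+|w|^{p-1})$ and, differentiating in $t$, $|\Phi_t|\le C(1+|u|^{p-2}+|w|^{p-2})(|u_t|+|w_t|)$ (using $z_t=w_t-u_t$). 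Because $z(0)=0$, integration by parts in $t$ yields the identity $R_f(T)=\tfrac12(\Phi(T)z(T),z(T))-\tfrac12\int_0^T(\Phi_t z,z)\,dt$, and I estimate the two pieces separately.

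For the boundary term at $t=T$, I apply H\"older with conjugate exponents $1/a+1/a'=1$ to obtain $(\Phi(T)z,z)\le C(1+\|u\|_{L_{a(p-1)}}^{p-1}+\|w\|_{L_{a(p-1)}}^{p-1})\|z(T)\|_{L_{2a'}}^2$. The strict inequality $p<5$ allows me to pick $a>3/2$ with $a(p-1)\le 6$, so the first factor is controlled by $C(R)$ via Sobolev $H^1\subset L_6$ and the energy bound; this choice forces $2a'<6$, and Gagliardo--Nirenberg interpolation of $L_{2a'}$ between $L_2$ and $L_6$ followed by Young's inequality produces $(\Phi(T)z(T),z(T))\le \E\|\nabla z(T)\|^2+C_\E(R)\|z(T)\|^2$. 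Finally, $z(0)=0$ and Cauchy--Schwarz in $t$ give $\|z(T)\|^2\le T\int_0^T\|z_t\|^2\,ds\le T^2\sup_{[0,T]}E_z$, which contributes the $\E E_z(T)+C_\E(R,T)\,T\,E_z(T)$ portion of the claim (reading $E_z(T)$ as its running maximum over $[0,T]$, as in the Gronwall argument to follow).

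The interior piece $(\Phi_t z,z)$ is bounded by a three-exponent H\"older inequality $1/a+1/(m+1)+1/c=1$ in which I set $c=3$ (so $\|z\|_{L_{2c}}^2\le C\|\nabla z\|^2$ by Sobolev) and hence $a=3(m+1)/(2m-1)$. The further condition $a(p-2)\le 6$, needed to control $(1+|u|^{p-2}+|w|^{p-2})$ in $L_a$ by $C(R)$ via Sobolev, reduces after algebra to $p(m+1)\le 6m$, which is precisely the compatibility bound built into Assumption~\ref{as-wave}. One therefore obtains $|(\Phi_t z,z)|\le C(R)(1+\|u_t\|_{L_{m+1}(\Om)}+\|w_t\|_{L_{m+1}(\Om)})\,E_z(t)$, and integrating on $[0,T]$ produces the last term of the lemma. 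The principal obstacle is precisely this three-way H\"older balance: the $L_{m+1}$ norm supplied by the damping must be just strong enough to absorb both the Sobolev deficit of $(1+|u|^{p-2}+|w|^{p-2})$ and the $L_6$ control of $z$, and the closure succeeds \emph{iff} $p(m+1)\le 6m$. This algebraic matching is the heart of the estimate and explains how calibrated superlinear damping pays for supercritical sources in the regime $3<p<5$.
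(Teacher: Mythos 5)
Your proof is correct and follows essentially the same route as the paper: an integration by parts in time to move the derivative off $z_t$ (your single step after factoring $f(w)-f(u)=\Phi z$ is a mild streamlining of the paper's two-stage integration by parts, but yields the identical boundary term $\int_\Om(1+|u|^{p-1}+|w|^{p-1})z^2(T)$ and interior term $\int_0^T\!\!\int_\Om(1+|u|^{p-2}+|w|^{p-2})(|u_t|+|w_t|)z^2$), followed by the same H\"older--Sobolev--interpolation bookkeeping, with the closure condition $a(p-2)\le 6$ reducing to $p(m+1)\le 6m$ exactly as in the paper's choice $6/(6-p)\le m+1$. The only cosmetic differences are your parametrization of the exponents ($a>3/2$, $a(p-1)\le 6$ versus the paper's $\E_0,\E$ juggling) and your explicit remark that $E_z(T)$ must be read as a running supremum, a point the paper glosses over in the same way.
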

This lemma  is specialized to the case when $p < 5 $. For $ p\in [5,6) $ there is another term
in the inequality above  which accounts for the fact that the potential energy term may not be in $L_1$.
In order to focus presentation we omit this term and refer the reader to the source \cite{bociu}.

\begin{proof} Using integration by parts we have that
\begin{align*}
 R_f = &(( f(w) - f(u), z ))\Big|_0^t  - \int_0^t
((f'(w) w_t - f'(u) u_t, z)) d\tau
\\
 = &  ((f(w) - f(u), z ))\Big|_0^t  - \int_0^t
((f'(w) z_t + (f'(w) - f'(u)) u_t, z))d\tau
\\
 = & \left[(( f(w) - f(u), z ))-\hf ((f'(w),z^2))\right]_0^t
 \\
 & +\hf \int_0^t
((f''(w)w_t z, z))d\tau
- \int_0^t
(( (f'(w) - f'(u)) u_t, z))d\tau
\\
 \leq & C \int_{\Omega} z^2(t) [1+ |u(t)|^{p-1} + |w(t)|^{p-1}] dx
 \\
& +  C   \int_0^t \int_{\Omega} z^2(t) [1+ |u(t)|^{p-2} + |w(t)|^{p-2}]
 [|u_t| + |w_t| ]dx d\tau.
\end{align*}
It is convenient to use the following elementary  estimate  valid  with any  finite energy element $z$ such that  $z(0) =0$:
\begin{equation}\label{z}
||z(t)||^2 = \int_{\Omega} \Big| \int_0^t z_t(s) ds \Big|^2 dx \leq
t \int_0^t ||z_t(s)||^2ds  \leq 2 t \int_0^t E_z(s) ds.
\end{equation}
 {\sc Estimate for $\int_0^T \int_{\Om} z^2(t) |u_t(t)| \ d x
dt$.}  We use Holder's Inequality with $p=3$ and $q = 3/2$ and obtain:
\begin{equation}\label{termen2}
 \int_0^T \int_{\Om} z^2(t) |u_t(t)| \ dQ \leq
\int_0^T \|z(t)\|^2_{L_6(\Om)} \cdot \|u_t(t)\|_{L_{3/2}(\Om)} \ dt
\end{equation}
This leads to
\begin{equation}\label{term3}
 \int_0^T \int_{\Om} z^2(t) [|w_t(t)| + |u_t(t)| ] \ dQ \leq C(R)
\int_0^T E_z(t)  dt.
\end{equation}
{\sc Estimate for $\int_{\Om}
|u(T)|^{ p-1}  z^2(T) \ d x$}. First, we write
\begin{equation}\label{term4split}
 \int_{\Om} |u(T)|^{ p-1 } z^2(T) \ d x \leq
||z(T)||^2 + \int_{\Om \cap \{|u(T)|> 1\}} |u(T)|^{p-1}
z^2(T) \ d x.
\end{equation}
The first term is estimated in (\ref{z}). The argument for the second term is given below.
We  consider here the supercritical  case $3 < p < 5$.
The corresponding estimate for   $p \in [5,6) $ is given in \cite{bociu}.
\par
 Since $|u(T)| > 1$, there exists $\E_0 > 0$ such that
$|u(T)|^{p-1} \leq |u(T)|^{4-\E_0}$. We choose $\E < \frac{\E_0}{4}$
and apply Holder's inequality with $q = \frac{3}{1+2\E}$ and
$\overline{q} = \frac{3}{2(1-\E)}$. We then use Sobolev's embeddings, the
fact that $(4 - \E_0)\frac{3}{2(1-\E)} \leq 6$, interpolation and Young's inequality
 to obtain
\begin{equation}\label{terme4case1}
\begin{split}
& \int_{\Om \cap \{|u(T)|> 1\}} |u(T)|^{p-1} z^2(T) \ d x\\
& \leq \Big(\int_{\Om} |z(T)|^{\frac{6}{1+2\E}} \ d x
\Big)^{\frac{1+2\E}{3}} \Big(\int_{\Om}
|u(T)|^{\frac{3(4-\E_0) }{2(1-\E)}}\ d x
\Big)^{\frac{2(1-\E)}{3}}\\
& \leq \;
C\|z(T)\|^2_{H^{1-\E}}\|u(T)\|^{(4-\E_0)}_{L_{\frac{3(4-\E_0)}{2(1-\E)}}(\Om)}
 \leq \; \E E_z(T) + C_{\E}(R) ||z(T)||^2,
\end{split}
\end{equation}
Combining (\ref{term4split}) with (\ref{z}) and
(\ref{terme4case1}), we obtain the final estimate in this case:
\begin{equation}\label{term4case1}
\displaystyle \int_{\Om} |u(T)|^{p-1} z^2(T) \ d x \leq \E E_z(T)
+ C_{\E}(R) T \int_0^T E_z(t) \ dt
\end{equation}
{\sc Estimate for $\int_0^T\int_{\Om}
|u(t)|^{p-2} |u_t(t)| z^2(t) \ dQ$.}
Since for $|u(t)| \leq 1$, we obtain the
term estimated in the previous case,
it is sufficient to look at
\[
\int_0^T\int_{\tilde\Om} |u(t)|^{p-2}
|u_t(t)| z^2(t) \ dQ~~\mbox{with}~~\tilde \Om = \Om \cap \{|u(t)| >
1\}.
\]
 We start by applying Holder's inequality with $q = 3$ and $
\overline{q} = 3/2$:
\begin{equation}\label{termen6}
\begin{split}
\int_0^T & \int_{\tilde \Om} |u(t)|^{p-2} |u_t(t)|z^2(t) \leq
\int_0^T \|z(t)\|^2_{L_6(\Om)} \Big[\int_{\Om}
|u(t)|^{\frac{3(p-2)}{2}} |u_t(t)|^{\frac{3}{2}} \ d x
\Big]^{\frac{2}{3}}
\end{split}
\end{equation}
We use Holder's inequality again, with $ q = \frac{4}{p-2}$ and $
\overline{q} = \frac{4}{6-p}$, and notice that $\frac{6}{6-p} \leq
m+1 \Leftrightarrow p \leq \frac{6m}{m+1}$. Hence (\ref{termen6})
becomes:
\begin{equation}\label{terme6}
\begin{split}
\int_0^T\int_{\tilde \Om} |u(t)|^{p-2}
|u_t(t)| z^2(t)  & \leq C\int_0^T  E_z(t)
\|u(t)\|^{p-2}_{L_6(\Om)} \|u_t(t)\|_{L_{\frac{6}{6-p}}(\Om)}
\\ & \leq C \int_0^T  E_z(t) \|u(t)\|^{p-2}_{L_6(\Om)}
\|u_t(t)\|_{L_{m+1}(\Om)}.
\end{split}
\end{equation}
Since  $\|u(t)\|_{1,\Omega}
\leq C_{R,T}$  we obtain:
\begin{equation}\label{term6}
\displaystyle \int_0^T\int_{\tilde \Om} |u(t)|^{p-2}
|u_t(t)| z^2(t) \ dQ \leq C_{R,T} \int_0^T E_z(t)
\|u_t(t)\|_{L_{m+1}(\Om)} \ dt.
\end{equation}
Similarly, following the strategy used in the previous step, we obtain the estimate
\begin{equation*}
\int_0^T\int_{\tilde\Om_u } |u(t)|^{p-2} |w_t(t)| z^2(t)
\ dQ
\leq C_{R,T} \int_0^T E_z(t) \|w_t(t)\|_{L_{m+1}(\Om)}\ dt,
\end{equation*}
\begin{equation*}
 \int_0^T\int_{\tilde\Om_w} |w(t)|^{p-2} |u_t(t)|z^2(t)
\ dQ
\leq  C_{R,T} \int_0^T E_z(t) |u_t(t)|_{L_{m+1}(\Om)} \ dt
\end{equation*}
with $\tilde\Om_w=\Om \cap \{|w(t)| > 1\}$. This completes
the proof of Lemma~\ref{l:1}.
\end{proof}
{\sc Completion of the proof}:
In (\ref{energyinequality}), we use Lemma \ref{l:1} to
obtain:
\begin{equation}\label{Gronwalltypeineq}
\begin{split}
E_z(T) \leq & \; \E E_z(T) + C_{\E}(R,T) \Big[\, T  E_z(T)
\\ &
+     \int_0^T
\left(1+\|u_t(t)\|_{L_{m+1}(\Om)} +
\|v_t(t)\|_{L_{m+1}(\Om)}\right) E_z(t) \ dt \Big]
\end{split}
\end{equation}
for every $\eps>0$.
In (\ref{Gronwalltypeineq}), we choose $\E$ and $T$ such
that $ \E  + C_{\E}(R,T) T  < 1/2$
(for uniqueness it is enough to look at a small interval for $T$, since the process
can be reiterated) and apply Gronwall's inequality  to obtain that $E_z(T) = 0$
for all $T \leq T_{max}$, where $[0,T_{max}]$ is a common  existence interval
  for both solutions $w$ and $u$.

The above argument completes the proof of uniqueness of finite energy solutions.
This result along with energy {\it equality} leads to Hadamard wellposedness.
Details are in \cite{bociu,bociu-JDE}.\\
{\it Finite-time blow up } of solutions is established in \cite{bociu-1}.  This is accomplished
by showing that appropriately constructed anti-Lyapunov function \cite{serrin,vitillaro-2}  blows up in a finite time.

\subsubsection{Generalizations-Extensions}

\begin{itemize}
\item
In the subcritical and critical case additional regularity of weak solutions equipped with more regular initial data  can be established. Quantitative statements  are in \cite{snowbird}.
\item
The wave model discussed is equipped with zero Dirichlet data.
However, the  same result holds for {\it Neumann } or {\it Robin } boundary conditions.
\item
In the subcritical and critical case, $p \leq 3 $, the same results hold with  localized damping.
This is to say when supp${}\,a(x) \subset \Omega_0 \subset \Omega $.
\item
The analysis of wellposedness for  the same model in the case when solutions are confined to a potential well
are  given in  \cite{bociu-r}.
 \end{itemize}
{\bf Open question:}
Obtain the same result with partially localized damping supp${}\, a(x) \subset \Omega_0 \subset \Omega$
in the supercritical case  $p > 3 $.

\subsection{Von Karman equation with interior damping - model (\ref{1.2}) }

\subsubsection{The statement of the results}

With reference to the model (\ref{1.2}) the following assumption is assumed throughout.
\begin{assumption}\label{as-karman}
\begin{enumerate}
\item
Scalar function $g(s)$ is assumed to be continuous and monotone on $\R$
with $g(0) =0$.
\item In the case $\al>0$ we assume that the damping $G$ has the form
$G(s,\si)=(g_1(s); g_2(\si))$, where $g_i(s)$, $i=1,2$, are
continuous  and monotone on $\R$ with $g_i(0) =0$.
Moreover, they are of polynomial growth, i.e.,
 $ |g_i(s)|\leq C (1 + |s|^{q-1})$  for some $q\ge 1$.
\item
The source $P(w)$ is assumed locally Lipschitz from $H^2(\Omega)$ into
$[\cH_\al(\Omega)]'$,
where
\begin{equation}\label{h-alpha}
   \cH_\al(\Om)=\left\{
                  \begin{array}{ll}
                    L_2(\Om) & \hbox{in the case $\al=0$;} \\
                   H^1_0(\Om) & \hbox{in the case $\al>0$.}
                  \end{array}
                \right.
\end{equation}
\end{enumerate}
\end{assumption}
Below we also use the following refinement of the hypothesis
concerning the source $P$.
\begin{assumption}\label{as:karman-P}
 We assume that   $P(w)=-P_0(w)+P_1(w)$, where
 \begin{enumerate}
   \item $P_0(w)$ is a Fr\'echet  derivative of the functional $\Pi_0(w)$
and the  property holds: there exist $a,b \in \R$ and $\epsilon >0$
such that
\[
 \Pi_0(w)  +   a\|w\|_{2-\epsilon,\Omega} ^2 \geq b,~~  ((P_0(w)  , w))  +   a\|w\|_{2-\epsilon,\Omega} ^2 \geq b,~~\forall w\in H_0^2(\Om);
 \]
   \item there exists $K>0$ and $\eps>0$ such that
$\|P_1(w)\|\le K \|w\|_{2-\epsilon,\Omega}$
for all  $w\in H_0^2(\Om)$.
 \end{enumerate}
\end{assumption}
\begin{remark}\label{re:F0}{\rm
A specific choice of interest in applications is $P(w) = [F_0, w] -p$,
 where $[\cdot,\cdot]$ denotes the von Karman bracket (see \eqref{bracket}),
$ F_0 \in H^{3+\delta}(\Omega) \cap H_0^1(\Omega)$, $\delta>0$, and
$p\in L_2(\Om)$. The term $F_0$ models
in-plane forces in the plate and $p$ is a transversal force.
 The given source  complies with all the hypotheses  stated.
In the case $\al>0$ the hypotheses concerning $P(w)$, $F_0$ and $p$
can be relaxed. However we do not pursue this generalizations
and refer to \cite{cl-book}.
}
\end{remark}
In addition to the concept of {\it generalized } (semigroup) solutions \cite{cl-mem,cl-book} we can also define {\it weak} solutions.
\begin{definition}[Weak solution]
By a \underline{weak solution} of (\ref{1.2}), defined on an
interval $[0,T]$, with initial data $(u_0;u_1)$  we mean a function
\[
u \in C_w(0, T; H_0^2(\Omega)), ~~~u_t
\in C_w(0,T; \cH_\al(\Omega) )
\]
 such that
\begin{enumerate}
\item For all $\phi \in  H_0^2(\Omega)$,
\begin{align}
((u_t(t), \phi))& +\alpha ((\g u_t(t), \g \phi)) +
 \int_0^t  (( \Delta u, \Delta  \phi)) \  dt \\ &
+  \int_0^t\big[ ((g(u_t), \phi)) + \alpha ((G(\g u_t(t)), \g \phi))  \big] \  dt
 \nonumber
\\ \label{nimic2} =& ((u_1, \phi)) +\alpha ((\g u_1, \g \phi))  +
\int_0^t ((P(u) + [\cF(u),u], \phi)) \  dt.\nonumber
\end{align}
\item $\displaystyle \lim_{t \to 0} (u(t)-u_0, \phi)_{2,\Omega } = 0$.
\end{enumerate}
Here as above $C_w(0,T, Y) $ denotes a space  of weakly continuous functions
with values in a Banach space $ Y$.
\end{definition}
\index{evolutionary Karman equations with rotational
forces!internal dissipation!well-posedness}
\index{evolutionary Karman equations without rotational
forces!internal dissipation!well-posedness}
\begin{theorem}\label{th:wp-karman1}
Under the assumption \ref{as-karman} for all initial data
$$
w(0) =w_0 \in H^2_0(\Omega),~~ w_t (0) = w_1  \in  \cH_\al(\Omega)
$$
(with $\cH_\al(\Omega)$ defined by \eqref{h-alpha})
there exist a unique, local (in time) generalized (semigroup)
solution of finite energy, i.e.,
 there exists  $T > 0$ such that
$$
w \in C([0, T]; H^2_0(\Omega)), w_t \in C([0, T]; \cH_\al(\Omega)).
$$
Moreover,
\begin{itemize}
\item
If, in addition in the case $\al=0$ the damping $g(s)$ is of some polynomial growth, generalized solution becomes weak solution.
 A weak  solution is  also continuously dependent  of the initial data.
\item
The  solutions are global provided Assumption~\ref{as:karman-P} holds.
In this case a bound for the energy of solution is
independent of time horizon.
\end{itemize}
\end{theorem}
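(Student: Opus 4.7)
\medskip

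\noindent\textbf{Proof plan.} I would recast \eqref{1.2} as the first-order abstract system
\begin{equation*}
 W_t + \cA W = N(W), \qquad W(0) = (w_0; w_1),
\end{equation*}
on the phase space $H = H^2_0(\Om)\times \cH_\al(\Om)$, where $\cA$ bundles the conservative plate operator $M_\al^{-1}\Delta^2$ together with the monotone damping $M_\al^{-1}[ag(\cdot) - \al\,\mathrm{div}\,G(\g\cdot)]$, and $N(W) = (0;\, M_\al^{-1}([\cF(w),w] + P(w)))$. By Assumption~\ref{as-karman}, the plate part of $\cA$ is skew-adjoint in the energy inner product, and $g,G$ are continuous and monotone with polynomial growth in the rotational case $\al>0$; this makes $\cA$ maximal monotone on $H$. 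The classical generation theorem for locally Lipschitz perturbations of maximal monotone operators (cf.\ \cite{barbu,cl-book}) will then deliver a unique local generalized solution, provided $N$ is locally Lipschitz from $H$ into $\{0\}\times [\cH_\al(\Om)]'$.

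\medskip

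\noindent\textbf{Key obstacle: locally Lipschitz character of $N$.} The source $P$ is immediately accommodated by Assumption~\ref{as-karman}(3). The real difficulty is the Airy bracket $[\cF(w),w]$, which is \emph{critical} when $\al=0$: a naive Sobolev count places it only in $H^{-2}(\Om)$, one derivative short of the target $L_2(\Om)=[\cH_0(\Om)]'$. Here I would invoke the hidden regularity of the von Karman nonlinearity announced as Lemma~\ref{l:kar-br}; the point is that the cancellations in the Monge--Amp\`ere structure of $[\cdot,\cdot]$, together with the elliptic regularization supplied by the biharmonic problem \eqref{airy} for $\cF(w)$, produce the improved estimate $\|[\cF(w),w]\|_{L_2(\Om)}\le C(\|w\|_{2,\Om})\,\|w\|_{2,\Om}$, with a matching local Lipschitz bound on the difference of two such brackets. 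In the case $\al>0$ the target space is only $H^{-1}(\Om)$ and standard embeddings suffice. This is the single most delicate step; everything afterwards is comparatively soft.

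\medskip

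\noindent\textbf{Upgrade to weak solutions and continuous dependence.} When the damping has polynomial growth, I would approximate $(w_0,w_1)$ by smooth data, write the variational identity \eqref{nimic2} for the corresponding strong solutions, and pass to the limit. The linear terms and the locally Lipschitz nonlinearity $[\cF(w),w]+P(w)$ pass by the estimates above. The nonlinear damping is dealt with exactly as in the wave discussion, via the Dunford--Pettis criterion: the energy identity \eqref{plate-energy-rel} furnishes a uniform $L_1(Q_T)$ bound on $g(w_t^n)w_t^n$ and $G(\g w_t^n)\cdot \g w_t^n$, which implies uniform absolute continuity of $\{g(w_t^n)\}$ and $\{G(\g w_t^n)\}$ in $L_1$; a.e.\ convergence extracted by Aubin--Lions then identifies the weak $L_1$-limits. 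Continuous dependence is obtained by applying monotonicity of $\cA$ and the local Lipschitz estimate for $N$ to the difference of two solutions and invoking Gronwall.

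\medskip

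\noindent\textbf{Globalization.} Under Assumption~\ref{as:karman-P}, the energy balance \eqref{plate-energy-rel} combined with the coercivity $\Pi_0(w)+a\|w\|_{2-\epsilon,\Om}^2\ge b$ and the subcritical estimate $\|P_1(w)\|\le K\|w\|_{2-\epsilon,\Om}$ shows that $\cE(t)$ controls $\|w(t)\|_{2,\Om}^2+\|w_t(t)\|_{\cH_\al}^2$ up to lower order $H^{2-\epsilon}$ terms. Absorbing those terms by Young's inequality and applying integral Gronwall yields a bound on $\cE(t)$ uniform in $t$, which rules out finite-time blow-up and extends the solution globally. The monotonicity of the damping ensures the bound depends only on the initial energy and not on the time horizon.
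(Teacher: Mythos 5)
Your proposal follows essentially the same route as the paper: the equation is written as a locally Lipschitz perturbation of the maximal monotone operator that bundles the biharmonic part with the monotone damping, the critical step being the sharp regularity $\|\cF(w)\|_{W^{2,\infty}(\Om)}\le C\|w\|_{2,\Om}^2$ of Lemma~\ref{l:kar-br}, which yields the local Lipschitz estimate \eqref{loc-lip} for the bracket, and globalization proceeds through the conservative structure of the Airy term in the energy balance together with the coercivity in Assumption~\ref{as:karman-P}. The only point stated a bit loosely is the absorption of the lower-order $\|w\|_{2-\epsilon,\Omega}^2$ terms, which in the paper is handled by the low-frequency control of Lemma~\ref{l:2}; otherwise the argument is the paper's.
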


 \subsubsection{Sketch of  the proof}
We concentrate on a more challenging   case when $\alpha =0 $.
The case $\alpha > 0 $ can be found in \cite[Chapter 3]{cl-book}.
\par
For the proof of this result the support of the damping described by $a(x) $ plays no role. Thus, without loss of generality we may assume that $a(x) =1 $.
\par
 The following two lemmas describing properties of von Karman brackets  can be found in
 \cite{cl-book}. The first Lemma is   critical for the proof of well-posedness and the second Lemma implies   boundedness of solutions that is independent on time horizon.

\begin{lemma}\label{l:kar-br}
 Let $  \Delta^{-2} $ denotes the  map  defined by
 $z \equiv  \Delta^{-2} f $
  iff
  $$ \Delta^2 z =f ~\mbox{in} ~\Omega ~\mbox{and}
  ~ f = \Dn f =0 ~\mbox{on}~~ \Gamma.
  $$
 Then
 \[
 \|\Delta^{-2} [u,v] \|_{W^{2, \infty} (\Omega) } \leq C \|u\|_{2, \Omega} \|v\|_{2, \Omega},
 \]
 where the von Karman bracket $[u,v]$ is defined by \eqref{bracket}. In particular, for the Airy stress function $\cF$ we have the estimate
 $\| \cF(w) \|_{W^{2, \infty} (\Omega) } \leq C \|w\|^2_{2, \Omega}$.
 \end{lemma}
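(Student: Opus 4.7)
The plan is to exploit the special algebraic (Jacobian) structure of the von Karman bracket, which places $[u,v]$ in a strictly smaller space than H\"older's inequality alone would suggest. First I would expose that structure. A direct integration-by-parts computation yields
\[
[u,v] = \partial_x(u_{yy}v_x - u_{xy}v_y) + \partial_y(u_{xx}v_y - u_{xy}v_x),
\]
which, after rearrangement, is equivalent to the Jacobian identity
\[
[u,v] = J(\partial_x u, \partial_y v) - J(\partial_y u, \partial_x v),\qquad J(f,g) := \partial_x f\,\partial_y g - \partial_y f\,\partial_x g.
\]
Thus $[u,v]$ is a linear combination of Jacobians of $H^1(\Omega)$-functions -- the canonical compensated-compactness quantities in two dimensions.

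Next I would extend $u,v\in H^2_0(\Omega)$ by zero to all of $\R^2$, which preserves the $H^2$-norm in view of the clamped boundary conditions, and apply the Coifman--Lions--Meyer--Semmes theorem for two-dimensional Jacobians. This places each of the two Jacobians in the real Hardy space $\mathcal{H}^1(\R^2)$ with the bilinear bound
\[
\|J(\partial_\alpha u, \partial_\beta v)\|_{\mathcal{H}^1(\R^2)} \le C \|u\|_{2,\Omega} \|v\|_{2,\Omega},
\]
and therefore $\|[u,v]\|_{\mathcal{H}^1(\R^2)} \le C \|u\|_{2,\Omega} \|v\|_{2,\Omega}$. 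This is a genuine improvement over the trivial $[u,v]\in L_1(\Omega)$ bound that H\"older alone would deliver, and it is the only place where the two-dimensional setting and the clamped boundary conditions are essential.

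To upgrade this Hardy-space control to a pointwise $L_\infty$-bound on second derivatives, I would use the Green's-function representation
\[
\Delta^{-2}[u,v](x) = \int_\Omega G(x,y)\,[u,v](y)\,dy,
\]
where for the clamped biharmonic on $\Omega$ the kernel $G$ is, up to a smooth regular part, the free-space fundamental solution proportional to $|x-y|^2\log|x-y|$. Its second $x$-derivative $\nabla^2_x G(x,\cdot)$ is $c\log|x-\cdot|$ plus bounded terms; this is the prototypical $\mathrm{BMO}$ function, with $\mathrm{BMO}(\R^2)$-norm in $y$ bounded uniformly in $x\in\Omega$. The Fefferman--Stein $\mathcal{H}^1$--$\mathrm{BMO}$ duality then yields
\[
|\nabla^2_x \Delta^{-2}[u,v](x)| \le \|\nabla^2_x G(x,\cdot)\|_{\mathrm{BMO}}\,\|[u,v]\|_{\mathcal{H}^1(\R^2)} \le C\|u\|_{2,\Omega}\|v\|_{2,\Omega},
\]
uniformly in $x$. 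Taking the supremum in $x$ yields the $W^{2,\infty}$ estimate, and specializing to $u=v=w$ gives the Airy-stress bound $\|\mathcal{F}(w)\|_{W^{2,\infty}(\Omega)}\le C\|w\|_{2,\Omega}^2$.

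The main obstacle is this last step: without the Hardy-space input of Step 2, standard elliptic $L_p$-theory gives $\Delta^{-2}[u,v]\in W^{2,p}(\Omega)$ only for each $p<\infty$, with norms diverging logarithmically as $p\to\infty$, so the endpoint $L_\infty$-bound on second derivatives falls just short and cannot be obtained by brute force. The promotion from $W^{2,p}$ (every finite $p$) to $W^{2,\infty}$ is genuinely driven by the interplay between the compensated-compactness (Hardy-space) structure of the bracket and the $\mathrm{BMO}$ (logarithmic) character of the biharmonic Green's-function second derivative. This pairing is the \emph{hidden regularity} content of the Lemma.
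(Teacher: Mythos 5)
Your proposal is correct and follows essentially the same route as the paper's source for this lemma (the proof is deferred to \cite{cl-book}, which the text explicitly describes as resting on ``harmonic analysis and compensated compactness''): the Jacobian decomposition of the bracket, the Coifman--Lions--Meyer--Semmes bound placing $[u,v]$ in the Hardy space $\mathcal{H}^1(\R^2)$, and the $\mathcal{H}^1$--$\mathrm{BMO}$ duality against the logarithmic second derivatives of the clamped biharmonic Green's function. The only points to tighten are that the extension-by-zero step uses $u,v\in H^2_0(\Omega)$ (which is the case in all applications here) and that the uniform $\mathrm{BMO}$ control of the regular part of $G$ near $\Gamma$ requires the standard pointwise Green's-function estimates for the clamped plate rather than smoothness of the corrector up to the boundary.
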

 Notice that  standard result \cite{lions} gives
 $$
 \|\Delta^{-2} [u,v] \|_{3-\epsilon,\Omega } \leq C \|u\|_{2, \Omega} \|v\|_{2, \Omega} $$
  which does not imply the result stated in the lemma.

The second lemma \cite{cl-book}  has to do with a control of low frequencies by nonlinear term.
\begin{lemma}\label{l:2}
Let $u\in H^2(\Omega)\cap H_0^1(\Omega)  $. Then for every $\epsilon > 0 $ there exists $M_{\epsilon} > 0 $  such that
$$ || u||^2 \leq \epsilon \left( ||\Delta u||^2  + ||\Delta \cF (u) ||^2\right) + M_{\epsilon} $$
 \end{lemma}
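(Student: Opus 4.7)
The strategy is a contradiction-compactness argument that exploits the quadratic homogeneity of the Airy map $\cF$. Suppose the claim fails, so that for some $\epsilon_0>0$ there exists a sequence $\{u_n\}\subset H^2(\Omega)\cap H_0^1(\Omega)$ with
$$\|u_n\|^2 > \epsilon_0\bigl(\|\Delta u_n\|^2 + \|\Delta \cF(u_n)\|^2\bigr) + n.$$
In particular $\lambda_n := \|u_n\|\to\infty$. Set $v_n := u_n/\lambda_n$, so that $\|v_n\|=1$ and, since $\cF(\lambda u) = \lambda^2 \cF(u)$, we have $\cF(u_n) = \lambda_n^2 \cF(v_n)$. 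Dividing the displayed inequality by $\lambda_n^2$ yields
$$1 > \epsilon_0\|\Delta v_n\|^2 + \epsilon_0\lambda_n^2\|\Delta \cF(v_n)\|^2 + n\lambda_n^{-2},$$
so $\{v_n\}$ is bounded in $H^2(\Omega)$ and $\|\Delta \cF(v_n)\|\to 0$ as $n\to\infty$.

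After passing to a subsequence, $v_n\rightharpoonup v$ weakly in $H^2(\Omega)$ and, by the Rellich-Kondrachov theorem, $v_n\to v$ strongly in $L_2(\Omega)$ and in $H^{2-\delta}(\Omega)$ for every $\delta>0$. In particular $\|v\|=1$, so $v$ is a nontrivial element of $H^2(\Omega)\cap H_0^1(\Omega)$. To pass to the limit in the nonlinear term, note that $\cF(v_n)-\cF(v)\in H_0^2(\Omega)$ and
$$\Delta^2\bigl(\cF(v_n)-\cF(v)\bigr) = -[v_n - v,\,v_n + v].$$
Writing the bracket in divergence form twice (in the spirit of Lemma~\ref{l:kar-br}) gives $\|[v_n-v,v_n+v]\|_{H^{-2}(\Omega)} \leq C\|v_n-v\|_{H^1(\Omega)}\|v_n+v\|_{H^2(\Omega)}\to 0$. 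Elliptic regularity for the biharmonic operator with clamped boundary data then yields $\cF(v_n)\to\cF(v)$ in $H^2(\Omega)$, whence $\|\Delta\cF(v)\|=0$. Since $\cF(v)$ has clamped boundary data, this forces $\cF(v)\equiv 0$ in $\Omega$, equivalently $[v,v]\equiv 0$ a.e.\ in $\Omega$.

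The main obstacle is the concluding rigidity step: showing that no nontrivial $v\in H^2(\Omega)\cap H_0^1(\Omega)$ satisfies $[v,v]=2\,\det D^2 v = 0$ a.e.\ on a bounded smooth domain $\Omega\subset\R^2$. This is a degenerate Monge-Amp\`ere type statement: vanishing Hessian determinant means the graph of $v$ has zero Gaussian curvature and is therefore locally developable, a structure which, together with the Dirichlet trace $v|_{\partial\Omega}=0$, forces $v\equiv 0$. Once this classical fact is invoked (the argument is carried out in detail in \cite{cl-book}), it contradicts $\|v\|=1$ and completes the proof of the lemma.
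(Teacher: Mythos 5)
The paper itself offers no proof of Lemma~\ref{l:2}: it is simply quoted from \cite{cl-book}, so there is no in-text argument to compare against. Your reduction --- normalize by $\lambda_n=\|u_n\|$, use the quadratic homogeneity $\cF(\lambda u)=\lambda^2\cF(u)$ to get $\|\Delta v_n\|$ bounded and $\|\Delta\cF(v_n)\|\to0$, extract a weak $H^2$-limit $v$ with $\|v\|=1$, and pass to the limit in the Airy map --- is the standard compactness route and is sound. One small technical point: the bound $\|[u,w]\|_{H^{-2}}\le C\|u\|_{H^1}\|w\|_{H^2}$ is borderline false in $2$D (the double-divergence form of the bracket requires the products of first derivatives to lie in $L_2$, which $H^1\times H^2$ just fails to deliver). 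This is harmless: the compact embedding $H^2(\Om)\hookrightarrow W^{1,4}(\Om)$ gives $\nabla v_n\to\nabla v$ strongly in $L_4$, hence $[v_n,v_n]\to[v,v]$ strongly in $H^{-2}$, and in any case weak lower semicontinuity of $\|\Delta\cF(\cdot)\|$ along the sequence would already suffice to conclude $\Delta\cF(v)=0$, hence $[v,v]=2\det D^2v=0$ a.e.

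The genuine gap is the final step, which you name but do not prove. After your (correct) reductions, the entire content of the lemma is the rigidity statement that no nontrivial $v\in H^2(\Om)\cap H_0^1(\Om)$ satisfies $\det D^2v=0$ a.e.; indeed the lemma \emph{implies} this rigidity (test it on rays $u=c\,e$, $c\to\infty$: if $\cF(e)=0$ and $e\neq0$ the quartic term drops out and the inequality fails for small $\epsilon$), so nothing has been gained until that statement is established. Two cautions about treating it as a routine ``classical fact.'' First, at $H^2$ regularity it is a real theorem: the developability structure of zero-Gaussian-curvature graphs is classical for $C^2$ functions (Hartman--Nirenberg), but here $D^2v$ is only in $L_2$, and the Sobolev-regularity version requires genuine work. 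Second, the tempting one-line multiplier proof --- pairing $[v,v]$ with $\phi=|x|^2/2$ to produce $-\|\nabla v\|^2$ --- is clean only for clamped data $v\in H_0^2(\Om)$; for $v\in H^2\cap H_0^1$ a boundary term proportional to $\int_\Gamma \kappa\,(\partial v/\partial n)^2\phi\, d\Gamma$, weighted by the curvature $\kappa$ of $\Gamma$, survives and has no sign on a general (non-convex) smooth domain. So the step you defer to \cite{cl-book} is precisely where the difficulty of the lemma is concentrated; a self-contained proof must supply it (see also \cite{CR80}).
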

Equipped with these two lemma, the proof of global  well-posedness
follows standard by now procedure:
\medskip\par\noindent
 { \bf Step 1:}
 Establish maximal monotonicity
of the operator
$$A \equiv \left( \begin{array}{cc} 0 & - I \\
\Delta^2 &{}\; g(\cdot) \end{array} \right)  $$
on the space $ H \equiv H_0^2(\Omega) \times L_2(\Omega) $
with
$$
D(A) = \{ (u;v) \in H_0^2(\Omega) \times L_2(\Omega)\, :~~  \Delta^2 u + g(v) \in L_2(\Omega) \} $$
This follows from a standard argument in monotone operator theory \cite{barbu,showalter}.
\medskip\par\noindent
 { \bf Step 2:}  Consider nonlinear term as a perturbation of $A$:
$$F(W) \equiv \left( \begin{array}{c} 0  \\
 {}[\cF(w), w] + P(w)   \end{array} \right)  $$
\medskip\par\noindent
 { \bf Step 3:}  Show that $F(W) $ is locally Lipschitz on $H$.
 This can be done with the help of Lemma  \ref{l:kar-br}
 which implies the following estimate
 \begin{equation}\label{loc-lip}
 || [\cF (u) , u ] - [\cF(w), w] || \leq
 C  ( 1 + \|u\|^2_{2, \Omega} + \|w\|^2_{2, \Omega} ) \|u-w\|_{2, \Omega}
 \end{equation}
\smallskip\par
The above steps lead  to  well-defined  semigroup solutions, defined locally in time,  which are unique and satisfy local Hadamard well-posedness.
\par
 The final step is global well-posedness for which a priori bounds are handy.
 The needed  a priori bound results  from the property  of von
Karman bracket described in Lemma \ref{l:2}.
 Indeed, in order to claim global well-posedness, it suffices to   notice that
 the relation
 $$ (( [\cF(w),w]+ P(w), w_t )) =  - \frac{d}{dt}\left[\hf ||\Delta \cF(w)||^2
 +\Pi_0(w)\right] + ((P_1(w),w_t))
 $$
 implies the  a priori bounds for the energy function $\cE(t)$  given by \eqref{plate-energy} with $\al=0$ via energy inequality and Gronwall's lemma.
\medskip\par\noindent
{\bf Stationary solutions.}
For description of the structure of the global attractor we also need
consider stationary solutions to problem \eqref{1.2}.
We define a stationary  weak solution  as a function $w\in H_0^2(\Om)$
satisfying in variational sense the following equations
\begin{equation}\label{1.2stat}
\Delta^2 w    = [\cF(w),w] + P(w)~~\mbox{in} ~~\Om,
~~~w = \Dn w =0 ~~ \mbox{on} ~~\Gamma,
\end{equation}
where
the  Airy stress function $\cF(w)$ solves the elliptic problem \eqref{airy}.
\begin{proposition}\label{pr:stationary}
 Let Assumption~\ref{as:karman-P} be in force. In addition assume that
$P$ is weakly continuous mapping from $H^2_0(\Om)$ into   $H^{-2}(\Om)$.
Then  problem \eqref{1.2stat} has a solution. Moreover, the set $\cN_*$ of all
stationary solutions is compact in $H^2_0(\Om)$.
\end{proposition}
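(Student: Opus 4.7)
My strategy is a Galerkin scheme whose compactness is driven by Assumption~\ref{as:karman-P} and by the hidden regularity of the Airy stress function from Lemma~\ref{l:kar-br}. I would fix a basis $\{e_k\}\subset H_0^2(\Om)$, set $V_n=\mathrm{span}\{e_1,\ldots,e_n\}$, and seek $w_n\in V_n$ satisfying the projected equation
\begin{equation*}
((\Delta w_n,\Delta\phi))=(([\cF(w_n),w_n],\phi))+((P(w_n),\phi))\quad\forall\,\phi\in V_n.
\end{equation*}
A Brouwer fixed-point argument gives solvability of this finite-dimensional system once a uniform a priori bound on $w_n$ is available.

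The central step is the a priori bound, which simultaneously controls every element of $\cN_*$. Choosing $\phi=w_n$ and using the symmetry of the trilinear bracket form together with $\Delta^2\cF(w)=-[w,w]$ yields the identity $(([\cF(w),w],w))=-\|\Delta\cF(w)\|^2$, so that
\begin{equation*}
\|\Delta w_n\|^2+\|\Delta\cF(w_n)\|^2=-((P_0(w_n),w_n))+((P_1(w_n),w_n)).
\end{equation*}
Assumption~\ref{as:karman-P} bounds the right-hand side by $a\|w_n\|_{2-\eps,\Om}^2 + K\|w_n\|_{2-\eps,\Om}\|w_n\| - b$. The interpolation inequality $\|w\|_{2-\eps,\Om}^2 \le \eta\|\Delta w\|^2+C_\eta\|w\|^2$, combined with Lemma~\ref{l:2} (which bounds $\|w\|^2$ by $\eps_0(\|\Delta w\|^2+\|\Delta\cF(w)\|^2)+M_{\eps_0}$), lets me absorb every lower-order contribution back into the left-hand side by choosing $\eta$ and $\eps_0$ small. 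This yields $\|\Delta w_n\|+\|\Delta\cF(w_n)\|\le R$ uniformly in $n$.

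For the limit passage I would extract a subsequence with $w_n\rightharpoonup w_*$ in $H_0^2(\Om)$; the compact embedding $H_0^2\hookrightarrow H^{2-\eps}$, combined with Lemma~\ref{l:kar-br}, makes $w\mapsto\cF(w)$ weak-to-norm continuous from $H_0^2(\Om)$ into $W^{2,\infty}(\Om)$. Writing
\begin{equation*}
[\cF(w_n),w_n]-[\cF(w_*),w_*]=[\cF(w_n)-\cF(w_*),w_n]+[\cF(w_*),w_n-w_*]
\end{equation*}
and using the compactness of $L^2(\Om)\hookrightarrow H^{-2}(\Om)$, one verifies that $[\cF(w_n),w_n]\to[\cF(w_*),w_*]$ in $H^{-2}(\Om)$. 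The weak continuity hypothesis on $P$ supplies the analogous convergence for the source, so $w_*$ solves \eqref{1.2stat} variationally. Finally, for compactness of $\cN_*$, the same two convergences force $\Delta^2 w_n\to\Delta^2 w_*$ in $H^{-2}(\Om)$, and since $\Delta^2$ is an isomorphism $H_0^2(\Om)\to H^{-2}(\Om)$ this upgrades to $w_n\to w_*$ strongly in $H_0^2(\Om)$. I expect the main technical obstacle to be the weak-to-strong continuity of the von Karman term in the critical case $\al=0$: it does not follow from Sobolev multiplier estimates alone but rests crucially on the $W^{2,\infty}$ gain provided by Lemma~\ref{l:kar-br}.
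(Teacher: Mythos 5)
The paper does not actually prove Proposition~\ref{pr:stationary}: it defers to \cite{CR80} and \cite{cl-book}, where essentially your Galerkin/Brouwer scheme (or an equivalent variational one) is carried out, so your write-up has to stand on its own. Most of it does. The a priori bound is correct: the symmetry of the trilinear form $\int_\Om[u,v]w\,dx$ on $H_0^2(\Om)$ gives $(([\cF(w),w],w))=(([w,w],\cF(w)))=-\|\Delta\cF(w)\|^2$, and Assumption~\ref{as:karman-P} together with Lemma~\ref{l:2} and interpolation absorbs the source, so the acute-angle corollary of Brouwer's theorem applies on each $V_n$ and the same estimate bounds all of $\cN_*$. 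The architecture of the limit passage is also right, with one caveat: for the \emph{compactness} of $\cN_*$ you need $P(w_n)\to P(w_*)$ strongly in $H^{-2}(\Om)$, i.e.\ you must read ``weakly continuous'' as weak-to-strong sequential continuity (as the paper does elsewhere for its functionals $\Psi$, $K$); weak-to-weak continuity of $P$ would give existence but not compactness.

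The genuine gap is the claim that Lemma~\ref{l:kar-br} plus the compact embedding $H_0^2(\Om)\subset H^{2-\eps}(\Om)$ makes $w\mapsto\cF(w)$ weak-to-norm continuous into $W^{2,\infty}(\Om)$. Lemma~\ref{l:kar-br} is symmetric at top regularity: applied to $\cF(w_n)-\cF(w_*)=-\Delta^{-2}[w_n-w_*,\,w_n+w_*]$ it yields only the bound $C\|w_n-w_*\|_{2,\Om}\|w_n+w_*\|_{2,\Om}$, and $\|w_n-w_*\|_{2,\Om}$ does not tend to zero under weak convergence. As written, the first term of your splitting, $[\cF(w_n)-\cF(w_*),w_n]$, is therefore not controlled. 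The standard repair is a bracket estimate with one factor in a strictly weaker norm: for $u\in H^2(\Om)$ and $v,\phi\in H_0^2(\Om)$ the symmetry gives
\[
\Big|\int_\Om[u,v]\phi\,dx\Big|=\Big|\int_\Om[u,\phi]v\,dx\Big|
\le C\,\|u\|_{2,\Om}\|\phi\|_{2,\Om}\|v\|_{L_\infty(\Om)},
\]
hence $\|[u,v]\|_{-2,\Om}\le C\|u\|_{2,\Om}\|v\|_{2-\eps,\Om}$ by the $2$D embedding $H^{2-\eps}(\Om)\subset L_\infty(\Om)$. Applied with $u=w_n+w_*$ and $v=w_n-w_*$ this shows $\cF(w_n)\to\cF(w_*)$ strongly in $H_0^2(\Om)$, which, interpolated against the uniform $W^{2,\infty}$ bound of Lemma~\ref{l:kar-br}, controls $[\cF(w_n)-\cF(w_*),w_n]$ in $L_1(\Om)\subset H^{-2}(\Om)$. (Equivalently, one may invoke the classical weak continuity of Hessian determinants.) With that substitution your argument is complete and matches the route of \cite{CR80}.
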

For the proof of this proposition we refer to \cite{CR80},
see also \cite{cl-book,ciarlet1}. We note problem  \eqref{1.2stat}
may have several solutions \cite{CR80}. However in a {\it generic}
situation the set $\cN_*$ is finite (see a discussion in \cite[Chapter 1]{cl-book}).

   \subsubsection{Generalizations-Extensions}
   \begin{enumerate}
   \item
   By assuming more regular initial data one obtains regular solutions.
   Precise quantitative statement of this is given in \cite{cl-book}. Regular solutions are global  in time.
   \item
  Related   results hold when $\alpha > 0 $. The analysis here is simpler and there is no need for sharp results in Lemma \ref{l:kar-br}, see \cite{cl-book}.
  \item
  One can consider other boundary conditions such as simply supported or free or combination thereof, \cite{cl-book}.
  \item
  Damping can be partially supported in $\Omega$.  This has no effect on the arguments.
  \item
  A related  wellposedness result holds for non-conservative models with additional energy level terms that are non-conservative (for instance $\nabla w \cdot \Psi $ for some smooth  vector $\Psi$).
  \end{enumerate}
  \subsubsection{Open question}
Full von Karman systems that consist of system of $2D$ elasticity coupled with von Karman equations.
This model accounts for  in-plane accelerations. As such, there is no decomposition using Airy's stress function.
The nonlinearities entering are super-critical (even  in the rotational case when  $\alpha > 0$) and hidden regularity of Airy's stress function plays no longer any role.
For such model  with $\alpha > 0$ one can still prove existence and uniqueness of solutions, by appealing to "Sedenko's method" (see \cite{Sed91b}).
Even more, full Hadamard well-posedness and energy identity can also  be proved in that case \cite{sicon,koch}.
However, the problem is entirely open in the non-rotational case $\alpha =0$.
In this latter case
only existence of weak solutions, obtained by Galerkin method,  is known.

  \subsection{Kirchhoff-Boussinesq model with interior damping - model in (\ref{1.3})}
  \index{Kirchhoff-Boussinesq model!interior damping}
As before   we  concentrate on the most demanding  case when $\alpha =0$.
For the case $\al>0$ we refer to \cite{cl-mem}.
  The main challenge of this model is the presence of restorative force term
  ${\rm div}\, \big[|\nabla w|^2 \nabla w\big]$
  which is not in $L_2(\Omega) $ for finite energy solutions $w$.
  This is due to the failure of Sobolev's embedding $H^1\subset L_{\infty} $ in two dimensions.
  We also assume, without loss of generality for the well-posedness,
  that $a(x) \equiv 1$.

  \subsubsection{The statement of the results}
With reference to the model (\ref{1.3}) with $\al=0$ the following assumption is assumed throughout.
\begin{assumption}\label{as-BK}
\begin{enumerate}
\item
$g(s)$ is  continuous and monotone on $\R$ with $g(0) =0$.
In addition $g(s) $ is of polynomial growth at infinity and also $a(x) \equiv 1$ (without loss of generality for the well-posedness).
\item
The source $P(w) $ is assumed locally Lipschitz from $H^2(\Omega)$ into
 $L_2(\Omega)$.
\end{enumerate}
\end{assumption}
 Below we deal with {\it weak} solutions.
\begin{definition}[Weak solution]
By a \underline{weak solution} of (\ref{1.3}),
with $\al=0$ and $a(x) \equiv 1$, defined on some
interval $(0,T) $, with initial data $(u_0;u_1)$
we mean a function $u \in C_w(0, T; H_0^2(\Omega))$ such that $u_t
\in C_w(0,T; L_2(\Omega))$  with $g(u_t)\in L_1( Q_T)$ and
\begin{enumerate}
\item For all $\phi \in  H_0^2(\Omega)$,
\begin{align}\nonumber
& \int_{\Omega}
u_t(t) \phi d \Omega+\int_0^t \int_{\Om} ( \Delta u \Delta  \phi) \ d\Om dt
+ \int_0^t \int_{\Om} g(u_t) \phi \ d\Om dt
 \\ \label{nimic-2} & =
\int_{\Omega}
u_1 \phi d \Omega +
\int_0^t \int_{\Om} \left[P(u) \phi  -|\nabla w|^2 (\nabla w, \nabla \phi )\right]  \ d\Om dt
\end{align}
\item $\displaystyle \lim_{t \to 0} (u(t)-u_0, \phi)_{2,\Omega } = 0$.
\end{enumerate}
\end{definition}

\begin{theorem}\label{t:KB} Let $\al=0$.
Under the assumption \ref{as-BK} for all initial data
$$ w(0) =w_0 \in H^2_0(\Omega),~~ w_t (0) = w_1  \in  L_2(\Omega) $$
there exist a  local (in time)   weak  solution of finite energy.
This  is to say: there exists  $T > 0 $ such that
\begin{equation}\label{KB-weak-sm}
 w \in C_w([0, T]; H^2_0(\Omega)), ~~ w_t \in C_w([0, T]; L_2(\Omega)).
\end{equation}
Moreover
\begin{itemize}
\item
Under additional assumption that $g(s) $ is linear the said solution
is {\it unique}.  In addition energy {\it identity} is satisfied for all weak solutions  which are also Hadamard well-posed (locally).
\item
The  solutions are global  under the  following  dissipativity condition:
for every $\delta > 0 $ there exists $C_{\delta} > 0 $  such that
\begin{equation}\label{dis-BK}
  \int_0^t ((P(w(s)), w_t(s))) ds   \leq
\delta \cE(t) + C_{\delta}\left[\cE(0)+ \int_0^t \cE(s)ds\right]
\end{equation}
for any function $w(t)$ possessing the properties in  \eqref{KB-weak-sm}.
\end{itemize}
\end{theorem}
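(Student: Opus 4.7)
\medskip\par\noindent
\textbf{Proof plan.} The plan is in three stages: local existence of weak solutions via a Galerkin scheme, uniqueness and Hadamard dependence in the linear damping case through a Sedenko-type time-reversibility argument combined with a two-dimensional logarithmic Sobolev inequality, and finally an extension to global solutions by means of \eqref{dis-BK}.

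For local existence, I would project the equation onto finite-dimensional subspaces spanned by eigenfunctions of $\Delta^2$ with clamped boundary conditions and solve the resulting ODE system for approximants $w_n$. Testing with $w_{n,t}$ produces the energy identity
$$
\cE(t) + \int_0^t (( g(w_{n,t}), w_{n,t}))\, ds
 = \cE(0) + \int_0^t (( P(w_n), w_{n,t}))\, ds,
$$
in which the quartic term $\tfrac14\int_\Om |\g w|^4\, dx$ inside $\cE$ enters with a good sign. Combined with the local Lipschitz estimate on $P$ and Gronwall's inequality, this would deliver, on a small interval $[0,T]$, uniform bounds of $w_n$ in $L_\infty(0,T;H^2_0(\Om))$, of $w_{n,t}$ in $L_\infty(0,T;L_2(\Om))$, and, via the polynomial growth of $g$, of $g(w_{n,t})$ in an appropriate $L_p(Q_T)$. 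Aubin--Lions compactness then gives $w_n \to w$ strongly in $C([0,T];H^{2-\eps}_0(\Om))$; interpolation in two dimensions upgrades this to strong convergence of $\g w_n$ in $L_4(\Om)$, permitting passage to the limit in the cubic term $|\g w_n|^2\g w_n$ (bounded in $L_{4/3}$) tested against a smooth $\phi$. The monotone damping $g(w_{n,t})$ is handled by Minty's trick.

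The main obstacle is uniqueness under linear damping; this is the step that genuinely needs $g(s) = g_1 s$. For two weak solutions $w,u$ with the same data, set $z = w - u$. Testing formally with $z_t$ yields
$$
\frac12 \frac{d}{dt}\bigl(\|z_t\|^2 + \|\Delta z\|^2\bigr) + g_1\|z_t\|^2
 = -\int_\Om \bigl(|\g w|^2\g w - |\g u|^2\g u\bigr)\cdot \g z_t\, dx
 + (( P(w) - P(u), z_t)).
$$
The cubic term is critical: $\g w \in L_\infty(0,T;H^1)$ is not bounded in $L_\infty(\Om)$ in two space dimensions, so a naive contraction fails. Following Sedenko's idea, I would integrate by parts in time in the bad integral --- a step which requires time-reversibility, hence the linearity of $g$ --- transferring the time derivative onto the cubic factor, and then control the resulting integrand using the two-dimensional Brezis--Gallouet inequality
$$
\|\g w\|_{L_\infty(\Om)} \le C\bigl(1 + \|\Delta w\|\sqrt{\log(e + \|w\|_{3,\Om}/\|\Delta w\|)}\bigr).
$$
Together with the uniform energy bound, this is designed to produce, for $\Phi(t) = \|\Delta z(t)\|^2 + \|z_t(t)\|^2$, an Osgood-type inequality
$$
\Phi(t) \le C\int_0^t \Phi(s)\log\bigl(e + 1/\Phi(s)\bigr)\, ds,
$$
which forces $\Phi \equiv 0$. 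Running the same argument with a nonzero data-difference and tracking the initial contribution will then give Hadamard continuity, and the energy identity can be recovered by approximating weak solutions with smoother ones supplied by the Galerkin scheme.

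Once uniqueness is secured, globality is straightforward: combining the energy identity with \eqref{dis-BK}, absorbing $\delta\cE(t)$ into the left-hand side and applying Gronwall extends the solution to $[0,\infty)$. The real work is concentrated in the uniqueness step, where the hard part will be to justify the Brezis--Gallouet estimate at the regularity of weak solutions --- necessitating an approximation by strong solutions whose construction itself hinges on the linearity of $g$ --- and to carefully manage the time integration by parts so that the Osgood criterion actually applies.
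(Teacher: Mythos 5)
Your existence and globality steps match the paper (Faedo--Galerkin for local existence; energy identity plus \eqref{dis-BK} and Gronwall for globality), but your uniqueness argument has a genuine gap, and it sits exactly at the point you flag as "the hard part." You test the equation for $z=w-u$ with $z_t$ and plan to control the critical term $\int_\Om(|\g w|^2\g w-|\g u|^2\g u)\cdot\g z_t\,dx$ via the Brezis--Gallouet inequality $\|\g w\|_{L_\infty}\le C\bigl(1+\|\Delta w\|\sqrt{\log(e+\|w\|_{3,\Om}/\|\Delta w\|)}\bigr)$. This inequality requires $\|w\|_{3,\Om}<\infty$, which weak (finite-energy) solutions do not provide; and the proposed remedy --- approximating the given weak solutions by strong ones --- is circular, since without uniqueness/stability already in hand you cannot guarantee that the strong approximants converge to the particular weak solutions you started with. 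Moreover, at the energy level the pairing itself is problematic: for a weak solution $\g z_t$ lives only in $H^{-1}(\Om)$, while $|\g w|^2\g w$ fails to be in $H^1(\Om)$ without $\g w\in L_\infty$, so the "bad integral" is not even well defined before any integration by parts.

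The paper's Sedenko argument avoids both obstacles by shifting the topology \emph{down} rather than up. One writes the equation satisfied by $w_N=P_Nz$, where $P_N$ projects onto the first $N$ eigenfunctions of $\cA=\Delta^2$, and multiplies by $\cA^{-1/2}w_t$, so the resulting estimate lives at the $H^1\times H^{-1}$ level, where the nonlinear difference $M(t)={\rm div}[|\g w_1|^2\g w_1-|\g w_2|^2\g w_2]+P(w_1)-P(w_2)$ is controlled in $H^{-1}(\Om)$ using only the $H^2$-bounds of weak solutions. The logarithmic loss enters through the finite-dimensional estimate $\max_\Om|(P_Nf)(x)|\le C(\log(1+\lambda_N))^{1/2}\|f\|_1$ (Lemma~\ref{LEMMA 2.2}), which needs no $H^3$ information, and is balanced against the tail decay $\lambda_{N+1}^{-s/4}$ of the coprojection; Gronwall then gives $\psi(t)\le C T\lambda_{N+1}^{-s/4}(1+\lambda_N)^{C_1t}$, and letting $N\to\infty$ kills $\psi$ on a short interval $t<t_0=s(4C_1)^{-1}$, after which one reiterates. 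Your Osgood-type idea is morally the right one (the paper itself notes the equivalence of \eqref{KB-sedenko} with a Brezis--Gallouet type inequality), but it must be implemented through the projection/coprojection split at the shifted energy level, not through the full $L_\infty$ bound on $\g w$. Note also that time reversibility and linearity of the damping are used in the paper for the energy \emph{identity} (forward plus backward inequality), while in the uniqueness step linearity of $g$ is what makes the equation for $P_Nz$ linear and the $\cA^{-1/2}$ multiplier usable; it is not an enabling device for integrating by parts in time as your sketch suggests.
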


\begin{remark}
{\rm
A specific choice of interest in applications is  Boussinesq source given by
$P(w) = \Delta [ w^2]$.
 This source complies with all the hypotheses  stated. Indeed,
 \begin{align*}
((\Delta w^2, w_t))  = & ((\Delta w, \frac{d}{dt}w^2)) + 2((|\nabla w |^2, w_t))
  \\
  = &\frac{d}{dt} (( \Delta w, w^2)) - ((\Delta w_t, w^2))  + 2((|\nabla w |^2, w_t)),
 \end{align*}
 which then  gives
\begin{equation}\label{KB-del-2}
    (( \Delta w^2, w_t))   = -
\frac{d}{dt} (( |\nabla w|^2, w)) + (( |\nabla w |^2, w_t )).
\end{equation}
Since
\[
\|w(t)\|^2\le \|w(0)\|^2+2\int_0^t \|w(\tau)\|\|w_t(\tau)\| d\tau\le
C\left[\cE(0)+\int_0^t \cE(\tau) d\tau\right],
\]
relation \eqref{KB-del-2} quickly implies the conclusion desired in \eqref{dis-BK}.
}
\end{remark}

\subsection{Sketch of the  proof}
As mentioned before the challenge in the proof of Theorem \ref{t:KB} is to handle the lack of local Lipschitz condition satisfied by the restorative force.
\par
We  explain the main steps. Full details are given in \cite{cl-kb2,cl-kb}
\medskip\par\noindent
 { \bf Step 1 (Existence):} It follows via Faedo-Galerkin method.
This step is standard.
\medskip\par\noindent
{\bf Step 2 (Uniqueness):} For this we use Sedenko's method which based on
writing the difference of two solutions  in a split form as projection and
coprojection on some finite dimensional space
 and estimating.
 \par
 We start with some preliminary facts.
\par
We introduce the operator $\cA$ in $L_2(\Om)$ by the formula $\cA
u=\Delta^2u$
with the domain
 $D(\cA)= H^4(\Om)\cap H^2_0(\Om)$. The  operator $\cA$ is  a strictly positive self-adjoint
operator
with the compact resolvent.
Let $\{ e_{k} \}$  be the orthonormal basis in $L ^{2} ( \Omega  )$  of
eigenvectors of
the operator $\cA$  and $\{ \lambda_k \}$ be the corresponding eigenvalues:
$$
\cA e_k = \lambda_k e_k, \quad k=1,2,....; \quad 0 \le \lambda_1 \le
\lambda_2 \le\ldots
$$
W also note that for every $s\in [0,1/2]$ we have
$D(\cA^{s})=  H_0^{4s}(\Omega),\; s\neq 1/8, 3/8$.
 Moreover, the corresponding Sobolev norms are equivalent to the graph norms
of the corresponding fractional powers of $\cA$, i.e.
\begin{equation}\label{dom-A-s-n}
c_1\|\cA^{s}u\|\le \|u\|_{4s} \le c_2\|\cA^{s}u\|,\quad  u\in D(\cA^{s}),
\end{equation}
for all admissible $s\in [0,1/2]$.
\par
The following assertion is critical for the proof.
\begin{lemma}[\cite{ABC98}]\label{LEMMA 2.2}
Let  $P_N$  be the  projector in $L^2 (\Omega )$ onto the space
span\-ned by $\{e_1, e_2,...,e_N \}$ and
  $f(x) \in D(\cA^{1/4})$. Then there exists $N_0 > 0$
such that
\begin{equation}\label{KB-sedenko}
\max_{x \in \Omega}\vert (P_N f ) (x)\vert \le
C\cdot \{\log (1+\lambda_N)\}^{1/2}\parallel f\parallel_1
\end{equation}
for all $ N\ge N_0$. The constant $C$ does not depend on $N$.
\end{lemma}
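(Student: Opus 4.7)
The plan is to deduce this bound from the Brezis--Gallouet logarithmic Sobolev inequality in dimension two, which for $u \in H^2(\Om) \cap H^1_0(\Om)$ reads
$$
\|u\|_{L^\infty(\Om)} \le C \|u\|_{1,\Om} \left(1 + \log \frac{e\|u\|_{2,\Om}}{\|u\|_{1,\Om}}\right)^{1/2},
$$
and to apply it to $u = P_N f$. The central observation is that spectral truncation at level $N$ trades one derivative for a factor of $\lambda_N^{1/2}$, so the logarithm of the ratio $\|P_N f\|_{2,\Om}/\|P_N f\|_{1,\Om}$ is bounded by a constant times $\log\lambda_N$.

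First I would expand $f = \sum_k c_k e_k$ with $c_k = ((f,e_k))$, so that $P_N f = \sum_{k\le N} c_k e_k \in D(\cA) \subset H^2_0(\Om)$. Via Parseval and the identification $D(\cA^{1/4}) = H^1_0(\Om)$ furnished by \eqref{dom-A-s-n}, one immediately gets
$$
\|P_N f\|_{1,\Om} \le C \|\cA^{1/4} P_N f\| \le C \|\cA^{1/4} f\| \le C'\|f\|_{1,\Om}.
$$
Next, using the elementary spectral estimate
$$
\|\cA^{1/2} P_N f\|^2 = \sum_{k \le N} \lambda_k^2 c_k^2 \le \lambda_N \sum_{k \le N} \lambda_k c_k^2 = \lambda_N \|\cA^{1/4} P_N f\|^2,
$$
together with the norm equivalence $\|\cdot\|_{2,\Om} \sim \|\cA^{1/2}\cdot\|$ from \eqref{dom-A-s-n}, I obtain the key inverse inequality
$$
\|P_N f\|_{2,\Om} \le C \lambda_N^{1/2} \|P_N f\|_{1,\Om}.
$$

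Feeding these two facts into the Brezis--Gallouet inequality applied to $u = P_N f$ yields
$$
\max_{x\in\Om}|(P_N f)(x)| \le C \|f\|_{1,\Om}\bigl(1 + \log(eC\lambda_N^{1/2})\bigr)^{1/2},
$$
which for $N \ge N_0$ (so that $\lambda_N$ is large enough to absorb the additive constant) is bounded by $C\{\log(1+\lambda_N)\}^{1/2}\|f\|_{1,\Om}$, as claimed.

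The main potential obstacle is the Brezis--Gallouet inequality itself, but this is a classical result in two space dimensions for $H^2$ functions vanishing together with their normal derivative on $\partial\Om$, so it applies directly to $P_N f$ since $e_k$ satisfy the clamped boundary conditions built into $D(\cA)$. Everything else reduces to the one-line spectral computation above, together with the norm equivalence \eqref{dom-A-s-n} already recorded.
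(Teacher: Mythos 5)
Your argument is correct, and it is essentially the route the paper itself gestures at: Lemma \ref{LEMMA 2.2} is not proved in the text but cited from [ABC98], and Remark \ref{re:sedenko} records precisely that \eqref{KB-sedenko} is ``equivalent to some Br\'esis--Gallouet type inequality.'' Your derivation (Brezis--Gallouet for $H^2\cap H^1_0$ in 2D, combined with a Bernstein-type inverse estimate for the spectral projection and the norm equivalences \eqref{dom-A-s-n}) is one direction of that equivalence. The original proof in [ABC98] is instead a direct spectral computation: one writes $(P_Nf)(x)=\sum_{k\le N}c_ke_k(x)$, applies Cauchy--Schwarz with the weights $\lambda_k^{\pm 1/2}$, and bounds $\sum_{k\le N}\lambda_k^{-1/2}e_k(x)^2$ via Weyl-type asymptotics for the spectral function of $\Delta^2$ in 2D, the logarithm arising from a dyadic summation. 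Your route buys a shorter proof at the price of invoking Brezis--Gallouet as a black box; the direct route is self-contained and explains where the exponent $1/2$ on the logarithm comes from.

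One small slip worth fixing: with the paper's convention $\cA e_k=\lambda_k e_k$, one has $\|\cA^{1/2}P_Nf\|^2=\sum_{k\le N}\lambda_k c_k^2$ and $\|\cA^{1/4}P_Nf\|^2=\sum_{k\le N}\lambda_k^{1/2}c_k^2$, not the expressions with $\lambda_k^2$ and $\lambda_k$ that you wrote (those correspond to taking $\lambda_k$ as the eigenvalues of $\cA^{1/2}$). The corrected computation gives
\begin{equation*}
\|\cA^{1/2}P_Nf\|^2=\sum_{k\le N}\lambda_k^{1/2}\cdot\lambda_k^{1/2}c_k^2
\le \lambda_N^{1/2}\,\|\cA^{1/4}P_Nf\|^2,
\end{equation*}
i.e. $\|P_Nf\|_{2,\Om}\le C\lambda_N^{1/4}\|P_Nf\|_{1,\Om}$, which is even stronger than the bound you claimed and leaves the final logarithmic estimate unchanged. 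The remaining steps — $\|P_Nf\|_{1,\Om}\le C\|f\|_{1,\Om}$ by orthogonality of $P_N$ and \eqref{dom-A-s-n}, and the absorption of the additive constants into $\{\log(1+\lambda_N)\}^{1/2}$ for $N\ge N_0$ — are all sound.
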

\begin{remark}\label{re:sedenko}{\rm
For the first time a relation similar to  \eqref{KB-sedenko} was used for uniqueness
in some shell models in \cite{Sed91b}.
Latter the same method was applied for coupled $2D$ Schr\"odinger
and wave equations \cite{ChuShch,CS-11}, for the inertial $2D$ Cahn-Hilliard equation
\cite{zelik-cpde09}
and also for  some models of  fluid-shell
interaction~\cite{cryzh-12f}.
One can show
(see \cite[Appendix A]{cl-book} and the discussion therein)
 that
\eqref{KB-sedenko} is equivalent   to some Br\'esis--Gallouet \cite{Brezis}  type   inequality.
}
\end{remark}
Assume that $\al=0$, $a(x)\equiv 1$ and $g(s)=ks$ in (\ref{1.3}).
Let
$w_1 (t)$ and
  $w_2 (t)$ be two weak solutions of the  original problem  (\ref{1.3}) with the same initial data and  $w(t)=w_1 (t)-w_2 (t)$.
  Then $w_N (t)=P_N w(t)$ is a solution of the linear, but
nonhomogenous  problem
\begin{equation}\label{Ch4.6.3.1}
  w_{tt} + k w_t+ \Delta ^{2}w  = (P_N M)(t), \quad x \in  \Omega , t>0,
\end{equation}
with the boundary and initial conditions
\[
  w\vert_{\partial \Omega } = \frac{\partial w}{\partial n}\Big\vert_{\partial
\Omega } =
0, \quad
w\vert_{t=0} = 0, \quad \partial _t w\vert_{t=0} = 0.
\]
Here
\[
   M(t) = {\rm div}\left[ |\nabla w_1(t)|^2 \nabla w_1(t) -
    |\nabla w_2(t)|^2 \nabla w_2(t)\right]
   + \left[   P(w_1(t))- P(w_2(t))\right].
   \]
Multiplying the equation (\ref{Ch4.6.3.1}) by
$\cA^{-1/2} w_t $ and  integrating we obtain that
$$ ||\cA^{-1/4} P_Nw_t(t) ||^2 +  ||\cA^{1/4} P_Nw(t) ||^2
\leq C \int_0^t || \cA^{-1/4}M(\tau)|| || \cA^{-1/4} w_t(\tau) || d\tau
$$
for all $t \in [0,T]$.
 From here after accounting for  (\ref{dom-A-s-n}) we
obtain
\[
\parallel w_{t}(t) \parallel ^{2}_{-1,\Omega} +
\parallel  w(t) \parallel ^{2}_{1} \le   C\cdot\int_0^t
\parallel  M(\tau) \parallel _{-1,\Omega}\cdot
\parallel w_{t}(\tau) \parallel _{-1,\Omega} d\tau.
\]
In particular  this implies that
\begin{equation}\label{Ch4.6.3.3.n}
\parallel  w(t) \parallel_{1,\Omega} \le   C\cdot\int_0^t
\parallel  M(\tau) \parallel _{-1,\Omega}
d\tau.
\end{equation}
The inequality above is   the basis for further estimates. Below we also use the estimate which follows from the definition
of weak solutions:
\begin{equation}\label{w12-b}
\sup_{t\in [0,T]}\left\{ \|w_1(t)\|_{2,\Omega}+ \|w_2(t)\|_{2,\Omega}\right\}\le R,
\end{equation}
where $R>0$ is a constant. Using Lemma~\ref{LEMMA 2.2} we can estimate
 the quantity $\Vert M(t) \Vert_{-1}$ in the following way:
\begin{equation}\label{un8}
  \Vert M(t) \Vert_{-1} \le C_1\cdot \log (1+\lambda_N) \cdot \parallel w(t)
\parallel_1+
   C_2\cdot \lambda^{-s/4}_{N+1}
\end{equation}
for some $0<s<1$ and with the constants $C_1$ and  $C_2$ depending
on $R$ from (\ref{w12-b}) (for details we refer to \cite{cl-kb}).
Therefore
it follows from (\ref{Ch4.6.3.3.n}) and (\ref{un8})  that $\psi (t) =
\| w(t)\|_1$ satisfies the inequality
  $$
\psi (t) \le   C_1
\cdot \log (1+\lambda_N) \int_0^t \psi (\tau ) d \tau +
C_2\cdot T\cdot\lambda_{N+1}^{-s/4}, \quad t \in [0,T],
$$
for some $0<s<1$.
Thus using Gronwall's lemma we conclude that
  $$
\psi (t) \le    C_2\cdot
T\cdot\lambda_{N+1}^{-s/4}\cdot (1+\lambda_N )^{C_1t}, \quad t
\in [0,T].
$$
If we let $N \rightarrow \infty$, then for $0\le t < t_0\equiv s\cdot
(4C_1)^{-1}$
we obtain
$\psi \equiv 0$. Thus $w_1(t) \equiv w_2(t)$ for $0\le t<t_0$.
Now
we can reiterate the procedure  in order to
conclude that $w_1(t) \equiv w_2(t)$ for all $0\le t\le T$,
where $T$ is the time of  existence. This  completes the proof of uniqueness.

\medskip\par\noindent
  {\bf Step 3 (Energy identity):}
Energy inequality  relies on a standard weak  lower-semicontinuity argument.
Since the system is {\it time reversible}, one obtains energy inequality for the backward problem. Combining the two inequalities: forward and backward, leads to energy {\it identity} \eqref{plate-energy-rel}
with $\cE$ given by \eqref{KB-energy}  satisfied  by {\it weak solutions}. Details are given in \cite{cl-kb2}.
\medskip\par\noindent
 { \bf Step 4:}
 Equipped with uniqueness and energy identity  standard argument furnishes Hadamard well-posedness.
\medskip\par\noindent
{\bf Step 5:}
The last step is to extend local (in time) solutions to the global ones. This is based on a priori bounds postulated by nonlinearities in (\ref{dis-BK}).

 \subsubsection{Generalizations-Extensions}
 \begin{enumerate}
 \item
 Higher regularity of solutions can be proved by assuming more regular initial data. Quantitative statements are given in \cite{cl-kb2}.
 \item
 Rotational models, when $\alpha > 0 $,  can be considered without extra difficulty. In fact, in this  subcritical case one  obtains full Hadamard  wellposedness  also in a presence of nonlinear damping subject to the same assumptions
 as in the von Karman case. Some details can be found in \cite{cl-mem}.
 \item
 We can also consider
 different boundary conditions such as hinged and free. Free boundary conditions are most challenging due to intrinsic nonlinearity on the boundary, \cite{cl-kb}.
 \item
 More general structures of restoring forces can be also considered,
 see \cite{cl-mt,cl-kb,cl-mem,cl-kb2}.
 \item
 The support of the damping may be localized to a small (or even empty) subset of $\Omega_0\subset \Omega$.
 This will not affect a  finite time behavior of solutions.
 \end{enumerate}
\medskip\par\noindent
{\bf Open Problem:}
Uniqueness of  weak solutions  with {\it nonlinear} damping.

\subsection{Wave equation with boundary source and  damping - model in (\ref{1.4})}
 \index{wave equation!boundary  damping}
  With reference to the model (\ref{1.4}) and \eqref{w-bc-n}, where, for simplicity,  we take $g \equiv0$ and
  $f\equiv 0$, the following assumption is assumed throughout.
\begin{assumption}\label{as-wave-b}
\begin{enumerate}
\item
Scalar function $ g_0(s)$  is  assumed to be continuous and monotone on $\R$ with $ g_0(0) =0$.
\item
The source $h(s) $ is represented by a  $C^2$  function  such that
$ |h''(s)|\leq C (1 + |s|^{k-2} )$, where $ 2 \leq k  < 4$,
 and  the following growth condition is imposed on the damping $g_0(s)$
with the constants $m_{g_0},M_{g_0}  > 0$:
\begin{equation}\label{g_0}
 m_{g_0}  |s|^{q+1}   \leq  g_0(s) s  \leq M_{g_0}  |s|^{q+1},~ |s| \geq 1, ~~  with  ~~q\ge \frac{k}{4-k}\ge 1.
 \end{equation}
 When the damping is sublinear i.e.,  $q \in (0, 1)$,
  then  the the source is required to satisfy
 $|h'(s)|\leq C ( 1+ |s|^{k-1} ) ,  1 \leq k < 4 $ and the condition in (\ref{g_0}) should be satisfied for this $q \in (0,1)$.
\end{enumerate}
\end{assumption}

\begin{definition}[Weak solution]\label{def-b}
By  \underline{weak solution} of problem (\ref{1.4}) and \eqref{w-bc-n} with $g \equiv0$ and
  $f \equiv0$, defined on some
interval $(0,T) $ with initial data $(u_0;u_1)$,  we mean a function $u \in C_w(0, T; H^1(\Omega))$
such that
\begin{enumerate}
\item $\displaystyle u_t
\in C_w(0,T; L_2(\Omega) )~~\mbox{and}~~ u_t \in L_{q+1}(\Sigma_T)$, where $\Sigma_T= [0,T]\times\Gamma$,
\item For all $\phi \in C(0,T, H^1(\Omega)) \cap C^1(0,T; L_2(\Om)) \cap
L_{q+1}(\Sigma_T)$,
\begin{align}\nonumber
& \int_0^T \int_{\Om} (-u_t \phi_t + \nabla u \nabla \phi) \ d\Om dt
+ \int_0^T \int_{\Ga} g_0(u_t) \phi \ d\Om dt
 \\ \label{nimic-b} & = -\int_{\Omega}
u_t \phi d \Omega  \Big|_0^T +
\int_0^T \int_{\Ga} h(u) \phi \ d\Om dt.
\end{align}
\item $\displaystyle \lim_{t \to 0} (u(t)-u_0, \phi)_{1,\Omega } = 0$ and
$\displaystyle \lim_{t \to 0}((u_t(t) - u_1,\phi)) = 0$ for all $\phi\in H^1(\Omega)$.
\end{enumerate}
\end{definition}

\begin{theorem}\label{t:b}
Let $f\equiv 0$, $g\equiv 0$ and Assumption \ref{as-wave-b} be in force. Let  initial data $(w_0;w_1)$
be such that
$$
w(0) =w_0 \in H^1(\Omega),~~ w_t (0) = w_1  \in  L_2(\Omega),
$$
and also $ w_0|_{\Ga} \in L_{2k-2}(\Ga)$ when $k>3$. Then
 there exist a unique, local (in time)  weak solution of finite energy.
This  is to say: there exists  $T > 0$ such that
$$ w \in C([0, T]; H^1(\Omega)),~~~ w_t \in C([0, T]; L_2(\Omega)). $$
Moreover,
\begin{itemize}
\item When $k \leq 3 $ the energy {\it identity} holds for weak solutions and  weak  solution is continuously dependent  on the initial data.
\item
When $ k \leq q $ the obtained  solutions are global,  i.e., $T =\infty$.
The same holds under dissipativity condition: $ -h(s) s \geq  0$.
\item
When $ 1< k\leq  3  $, $ k> q $  and $ h(s) = |s|^{k-1}s$  local  solution  blows up in a finite time  for negative energy initial data.
\end{itemize}
\end{theorem}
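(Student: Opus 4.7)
The plan is to mirror the three-step strategy used for Theorem~\ref{th:wp-wave1}, adapted to the fact that both the damping $g_0$ and the source $h$ now live on $\Gamma$. The key point is that $H^1(\Om)$ traces into $H^{1/2}(\Gamma)\hookrightarrow L_4(\Gamma)$ in dimension $d=3$, which will substitute for the interior embedding $H^1(\Om)\hookrightarrow L_6(\Om)$ in all estimates and will show the condition $q\ge k/(4-k)$ emerging as the boundary analogue of $p\le 6m/(m+1)$.

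First I would recast the problem as $W_t+\mfA(W)\ni F(W)$ on the phase space $\cH= H^1(\Om)\times L_2(\Om)$, where
\[
\mfA\begin{pmatrix}w\\ v\end{pmatrix}=\begin{pmatrix}-v\\ -\Delta w\end{pmatrix},\qquad
\text{with boundary condition }\Dn w+ g_0(v)\ni 0,
\]
and $F(W)=(0; \widetilde h(w))^\top$ is the Nemytski lift of the boundary trace $w\mapsto h(w|_\Gamma)$. A standard argument (Green's formula) shows $\mfA$ is maximal monotone on $\cH$. When $k\le 2$ the trace embedding and Sobolev $H^{1/2}(\Gamma)\hookrightarrow L_4(\Gamma)$ render $F$ locally Lipschitz on $\cH$, giving local generalized solutions by the theory used in Case~1 of Theorem~\ref{th:wp-wave1}. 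For the supercritical range $2<k<4$ I would truncate $h$ by a locally Lipschitz $h_K$ as in \cite{bociu-JDE}, derive the energy relation for the truncated solutions, and use \eqref{g_0} together with Young's inequality in the form
\[
\int_0^t\!\int_\Gamma |h_K(w)||w_t|\,d\Gamma d\tau \le \tfrac12\int_0^t\!\int_\Gamma g_0(w_t)w_t\,d\Gamma d\tau
+ C\int_0^t \|w\|_{L_{(q+1)k/q}(\Gamma)}^{(q+1)k/q}\,d\tau
\]
to absorb the source into the boundary dissipation; here the compatibility $(q+1)k/q\le 2k-2$ for $k>3$ (which forces the auxiliary requirement $w_0|_\Gamma\in L_{2k-2}(\Gamma)$) together with the trace estimate closes the a~priori bound and allows passage to the limit $K\to\infty$ via a Dunford--Pettis compactness argument on the boundary damping integral.

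The hard step is uniqueness. Let $w,u$ be two weak solutions, $z=w-u$, and $E_z(t)=\|z_t\|^2+\|\na z\|^2$. Using the energy identity (proved separately below) and the monotonicity of $g_0$, the difference satisfies
\[
E_z(t)\le 2\int_0^t\!\!\int_\Gamma (h(w)-h(u))z_t\,d\Gamma d\tau\equiv R_h(t).
\]
I would integrate $R_h$ by parts in time exactly as in the proof of Lemma~\ref{l:1} to trade the $z_t$ for an $L^1_t$--factor, arriving at boundary integrals of the form
$\int_\Gamma |u(T)|^{k-1}z^2(T)$, $\int_0^T\!\int_\Gamma|u|^{k-2}|u_t|z^2$ and the symmetric term in $w$.
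On the boundary $\{|u|>1\}$ I would then apply H\"older with exponents $(2,4/(k-2),q+1)$ and use the trace--Sobolev chain $H^1(\Om)\to L_4(\Gamma)$ (so $z^2|_\Gamma\in L_2(\Gamma)$) plus interpolation $H^{1-\eps}\to L_{4/(1-2\eps')}(\Gamma)$ to get
\[
\int_0^T\!\!\int_\Gamma|u|^{k-2}|u_t|z^2\,d\Gamma d\tau\le C(R)\int_0^T E_z(\tau)\,\|u_t(\tau)\|_{L_{q+1}(\Gamma)}\,d\tau,
\]
provided $q\ge k/(4-k)$, which is precisely \eqref{g_0}. The remaining time--independent boundary term $\int_\Gamma|u(T)|^{k-1}z^2(T)$ is controlled by $\eps E_z(T)+C_\eps(R)\int_0^T E_z\,d\tau$ via the bound $\|z(T)\|^2\le T\int_0^T\|z_t\|^2\,d\tau$. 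Choosing $\eps$ small and $T$ short, and noting that $\|u_t\|_{L_{q+1}(\Gamma)}\in L^1(0,T)$ by the existence class, Gronwall's inequality yields $E_z\equiv 0$ on a maximal interval, which is then reiterated.

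For the remaining points: the energy identity (for $k\le 3$) is obtained by a finite--difference approximation of $w_t$ exactly as in \cite[Lemma~2.3]{bociu}, combined with the weak lower--semicontinuity of the boundary dissipation functional; together with uniqueness this gives continuous dependence and hence local Hadamard well--posedness. Global existence under either $-h(s)s\ge 0$ or $k\le q$ follows from the energy identity: in the first case the sign kills the source, in the second Young's inequality absorbs $|h(w)w_t|\le \eps g_0(w_t)w_t + C_\eps|w|^{k(q+1)/q}$ into the boundary dissipation. Finally, the finite--time blow--up for $1<k\le 3$, $k>q$, $h(s)=|s|^{k-1}s$ with negative initial energy is obtained by constructing an anti--Lyapunov functional $\Phi(t)=\|w(t)\|_\Gamma^2/2$--type quantity modified by the energy, following \cite{serrin,vitillaro-2,bociu-1}, and showing $\Phi^{-\alpha}$ is concave and vanishes in finite time. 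The main technical obstacle throughout is the uniqueness estimate: getting the boundary H\"older chain to close with exactly the exponent $q\ge k/(4-k)$ requires careful use of the trace theorem and interpolation, and is the counterpart of the interior compatibility $p\le 6m/(m+1)$ in Theorem~\ref{th:wp-wave1}.
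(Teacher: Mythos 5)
Your proposal is correct and follows essentially the same route the paper takes: the paper only sketches this proof, stating that it ``follows similar conceptual lines as in the interior case'' with the boundary counterpart of Lemma~\ref{l:1} given by Lemma~4.2 of \cite{bociu}, and your adaptation --- replacing $H^1(\Om)\hookrightarrow L_6(\Om)$ by the trace chain $H^1(\Om)\to H^{1/2}(\Ga)\hookrightarrow L_4(\Ga)$ and extracting $q\ge k/(4-k)$ from the H\"older exponents exactly as $p\le 6m/(m+1)$ arises in the interior case --- is precisely that counterpart. One small correction: the hypothesis $w_0|_\Ga\in L_{2k-2}(\Ga)$ for $k>3$ is not forced by absorbing the source into the dissipation (your exponent $(q+1)k/q\le 4$ is already within the trace range), but by the uniqueness term $\int_\Ga|u(T)|^{k-1}z^2(T)\,d\Ga$, where $z^2(T)\in L_2(\Ga)$ requires $|u(T)|^{k-1}\in L_2(\Ga)$ --- the exact boundary analogue of the interior requirement $w_0\in L_{3(p-1)/2}(\Om)$ for $p>5$.
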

\begin{remark}
In the case when $h(s) =\alpha s $, no growth conditions imposed on $g_0$ are required. In fact, in that case the obtained solution is semigroup generalized solution.
\par
When $|h''(s)| \leq c $ variational form of the solution  can be obtained with more relaxed hypotheses imposed on high frequencies of the damping.  For instance,
we can assume
\begin{equation}\label{h}
\liminf_{|s| \rightarrow \infty } \frac{g_0(s)}{s}  > 0 , ~~ |g_0(s) | \leq c [1+ |s|^3 ]
\end{equation}
Under the above condition one can show that the generalized solution satisfies also variational form (\ref{nimic-b}) with the test functions $\phi \in C(0, T; H^1(\Omega) ) \cap C^1(0, T; L_2(\Omega) ) \cap C(\Sigma_T) $ \cite{snowbird}
\end{remark}

\subsubsection{Reference to the  proofs}
Notice that the damping $g_0$ is assumed  active for all values of the parameter $k>1$. This is unlike interior source where only supercritical values of the source require presence of the damping.
In the boundary case, however, Lopatinski condition is not satisfied for the Neumann problem and this necessitates
the presence of the damping which, in some sense, forces Lopatinski condition \cite{sakamoto}.
This is manifested  by the  fact that $H^1(Q) $ solutions of wave equation  with Neumann boundary conditions
do not possess $H^1(\Sigma) $ boundary  regularity (unlike Dirichlet solutions). It is the presence of the damping which, in some sense, recovers certain amount of boundary regularity \cite{l-tbook,vittillaro}.
\par
The proof of  Theorem \ref{t:b} follows similar conceptual lines as in the interior   case.  There are however few subtle differences due to the  presence of undefined traces in the equation.
  In the case when $|h''(s) | \leq C $ and $ m s^2 \leq  g_0(s) s \leq M s^4 $  the proof is given in \cite{snowbird}.
  For the remaining values of the parameters Hadamard  wellposedness  is proved in \cite{bociu,bociu-JDE}.
  The analysis of the boundary case is more demanding than in the interior case.
The corresponding  differences are clearly exposed in the  structure of a counterpart to  Lemma   \ref{l:1}, which in the boundary case
has  a different form  given in Lemma 4.2 in  \cite{bociu}. \par
The last statement  in Theorem  \ref{t:b} regarding finite time blow up of energy is proved in \cite{bociu-1}.
\subsubsection{Generalizations-Extensions}
\begin{enumerate}
\item
Additional regularity of solutions  corresponding to more regular initial conditions when $q \leq 3 $ and $|h''(s) | \leq C$ is given in \cite{snowbird}.
\item
One could obtain a version of Theorem \ref{t:b} that incorporates both dampings: internal and boundary.
This is done in \cite{bociu,bociu-JDE}.
\item
Well-posedness theory for small data taken from potential well ($k \leq 3 $)  is also available \cite{bociu-r,vitillaro-1}.
\end{enumerate}
{\bf Open questions:}
\begin{enumerate}
\item
Well-posedness  of finite energy solutions theory without the boundary damping when $k \leq 2 $ (so that $h(u)
\in L_2(\Gamma) $ for finite energy solutions.
\item
Interplay between boundary and interior damping. Is it possible to show well-posedness of finite energy solutions for supercritical case $ p > 3 $ with  boundary damping  only (i.e. $g =0$)?
\item
The same question asked for finite time  blow results.  Does boundary source alone  leads to blow up of energy
in the presence of  a boundary damping when $ q < k $?
\item
Boundedness of solutions when time goes to infinity and sources are generating energy. Quantitative description of the behavior of global solutions - when $p\leq  m $ and $k \leq  q $.
\end{enumerate}

\subsection{Von Karman equation with boundary damping  - \\ model (\ref{1.5}) }
With reference to the model (\ref{1.5}) and
\eqref{hing-bc}  the following assumption is assumed throughout.
\begin{assumption}\label{as-karman-b}
\begin{enumerate}
\item
Scalar function $g(s)$ and $g_0 (s) $ are  assumed to be continuous and monotone on $\R$ with $0$ at $0$. The rotational damping $G$ (the case $\al>0$) satisfies
Assumption~\ref{as-karman}(2).
\item
The source $P(w) $ is assumed locally Lipschitz from $H^2(\Omega)$ into  $H^{-1}(\Omega)$
 when $\alpha > 0 $ and from $H^2(\Omega)$ into $L_2(\Omega) $ when $\alpha =0$
 (see Remark~\ref{re:F0} concerning a specific choice of interest in applications).
\end{enumerate}
\end{assumption}
We concentrate on a more challenging   case when $\alpha =0$.
In addition to the concept of {\it generalized (semigroup)}
solutions we  also define {\it weak} solutions.
\begin{definition}[Weak solution]
By a \underline{weak solution} of  (\ref{1.5}) and
\eqref{hing-bc}  with $\al=0$ and with initial data $(u_0;u_1)$, defined on some
interval $(0,T) $,  we mean a function $u \in C_w(0, T; H^2(\Omega) \cap H_0^1(\Omega))$ such that $u_t \in C_w(0,T; L_2(\Omega) )$
with $g(u_t)\in L_1( Q_T)$, $g_0((\partial/\partial n)u_t)\in L_1( \Sigma_T)$
and
\begin{enumerate}
\item For all $\phi \in  H^2(\Omega)  \cap H_0^1(\Omega)$,
\begin{align}\nonumber
& \int_0^t \int_{\Om} ( \Delta u \Delta  \phi) \ d\Om d\tau
+ \int_0^t \int_{\Om} g(u_t) \phi \ d\Om d\tau +\int_0^t \int_{\Ga} g_0(\Dn u_t)  \Dn  \phi d\Ga d\tau
 \\ \label{nimic-kb} & = -\int_{\Omega}
(u_t(t)-u_1) \phi d \Omega  +
\int_0^t \int_{\Om} (P(u) + [\cF(u),u] ) \phi \ d\Om d\tau.
\end{align}
\item $\displaystyle \lim_{t \to 0} (u(t)-u_0, \phi)_{2,\Omega } = 0$ and
$\displaystyle \lim_{t \to 0}((u_t(t) - u_1,\phi)) = 0$.
\end{enumerate}
\end{definition}

\index{evolutionary Karman equations without rotational
forces!boundary dissipation, clamped--hinged b.c.}
\begin{theorem}\label{th:wp-karman2} Let $\al=0$.
Under Assumption \ref{as-karman-b} for all initial data
$$ w(0) =w_0 \in H^2(\Omega) \cap H_0^1(\Omega),~~ w_t (0) = w_1  \in  L_2(\Omega) $$
there exist a unique, local (in time) generalized  (semigroup) solution $w$ of finite energy, i.e.,
$$
\exists\, T > 0\; :~~ w \in C([0, T]; H^2(\Omega)),~~ w_t \in C([0, T]; L_2(\Omega)).
$$
Moreover
\begin{itemize}
\item
If, in addition, the damping $g(s)$ and $g_0 (s)$ are  of some polynomial growth and
$(g_0(x) - g_0 (y) ) (x-y) \geq c |x-y|^r $ for some $r \geq 1 $, then a  generalized solution becomes also  weak solution.
 Weak  solution is  also continuously dependent  on the initial data.
\item
The  solutions are global  under  Assumption~\ref{as:karman-P}
 concerning $P(w)$ with the energy  bound independent of  the time horizon.

\end{itemize}
\end{theorem}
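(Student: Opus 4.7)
My plan is to adapt the perturbation-of-monotone-operator framework used to establish Theorem~\ref{th:wp-karman1} to the boundary-dissipation setting, absorbing the hinged-dissipative boundary condition into the principal part of a maximally monotone operator on the finite energy phase space. More precisely, I would (i) reformulate (\ref{1.5})--(\ref{hing-bc}) with $\al=0$ as an abstract first-order evolution on $H=(H^2(\Om)\cap H^1_0(\Om))\times L_2(\Om)$, (ii) treat the von Karman bracket together with $P$ as a locally Lipschitz perturbation using Lemma~\ref{l:kar-br}, (iii) globalize via Assumption~\ref{as:karman-P} and Lemma~\ref{l:2}, and finally (iv) upgrade generalized solutions to weak solutions under the polynomial-growth and coercivity hypotheses on $g,g_0$.

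To carry out (i), introduce the hinged biharmonic operator $\cA_H:D(\cA_H)\subset L_2(\Om)\to L_2(\Om)$ with $\cA_H u=\Delta^2 u$ on the domain $\{u\in H^4(\Om):\, u|_\Ga=\Delta u|_\Ga=0\}$, and the Green map $N:L_2(\Ga)\to L_2(\Om)$ defined by $\Delta^2 N\eta=0$ in $\Om$, $N\eta|_\Ga=0$, $\Delta N\eta|_\Ga=\eta$. The dissipative condition $\Delta w=-g_0(\Dn w_t)$ can be encoded by rewriting the equation as
$$
w_{tt}+\cA_H w+\cA_H N g_0(\Dn w_t)+a(x)g(w_t)=[\cF(w),w]+P(w).
$$
Setting $W=(w;w_t)$, one obtains an operator $A$ that is accretive on $H$, because the Green identity $((\cA_H N\eta,v))=<<\eta,\Dn v>>$ combined with monotonicity of $g$ and $g_0$ gives
$$
((A(W_1)-A(W_2),W_1-W_2))_H\ge\, <<g_0(\Dn v_1)-g_0(\Dn v_2),\Dn(v_1-v_2)>>\,\ge 0.
$$
Maximality reduces to the coercive stationary problem $\la W+A(W)=F$, solvable by Minty--Browder in the reflexive energy space. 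The nonlinear part $F(W)=(0;[\cF(w),w]+P(w))$ is locally Lipschitz on $H$ by the estimate~\eqref{loc-lip} (in which Lemma~\ref{l:kar-br} is critical since $\al=0$) and by the assumed local Lipschitz character of $P:H^2(\Om)\to L_2(\Om)$. The standard perturbation theorem for $m$-accretive operators then yields local generalized (semigroup) solutions, and uniqueness follows from the accretivity estimate applied to the difference of two solutions combined with Gronwall's lemma.

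For globality, Assumption~\ref{as:karman-P} together with Lemma~\ref{l:2} promotes the energy balance~\eqref{hing-bc-en-rel} (with $D(w_t)\ge 0$ and $<<g_0(\Dn w_t),\Dn w_t>>\ge 0$) to the a priori bound $\cE(t)\le C+C\int_0^t\cE(\tau)d\tau$ uniform in the time horizon, whence solutions extend to $[0,\infty)$. The upgrade from generalized to weak solutions under polynomial growth of $g,g_0$ is carried out by approximation: regularize the data and the nonlinearities (e.g.\ by Yosida regularization) to obtain strong solutions, for which the variational identity~\eqref{nimic-kb} holds, and pass to the limit. The polynomial growth and the dissipation inequality yield uniform $L_1$ bounds on $g(w_t^n)$ over $Q_T$ and on $g_0(\Dn w_t^n)$ over $\Sigma_T$, so that the Dunford--Pettis criterion (invoked as in Theorem~\ref{th:wp-wave1}) identifies the weak limits with $g(w_t)$ and $g_0(\Dn w_t)$ respectively. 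Continuous dependence on data then follows from the strict-monotonicity hypothesis $(g_0(x)-g_0(y))(x-y)\ge c|x-y|^r$ and a Gronwall argument on the energy of the difference of two weak solutions, using once more~\eqref{loc-lip}.

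The principal obstacle is the passage to the limit in the boundary dissipation term for genuinely nonlinear $g_0$, which demands a \emph{hidden trace regularity} for $\Dn w_t$ beyond what is predicted by the classical trace theorem for finite-energy solutions of fourth-order hyperbolic equations with Neumann-type data. This refined trace bound --- and its counterpart required to validate the energy identity on the entire class of weak solutions --- is typically secured by sharp multiplier or microlocal arguments, and it is the main technical ingredient distinguishing the boundary-damped analysis from the interior-damped case of Theorem~\ref{th:wp-karman1}.
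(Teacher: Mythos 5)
Your proposal is correct and follows essentially the same route as the paper, which for this theorem simply defers to \cite{cl-book,l-ji,horn}: there too the hinged dissipative boundary condition is absorbed into an $m$-accretive operator via the Green map of the hinged biharmonic problem, the von Karman bracket plus $P$ is handled as a locally Lipschitz perturbation through the sharp Airy regularity of Lemma~\ref{l:kar-br}, globality comes from Assumption~\ref{as:karman-P} and the energy balance, and the passage to weak solutions uses the polynomial growth, the coercivity of $g_0$, and trace estimates of the type in \cite{l-ji}. You also correctly identify the hidden trace regularity of $\Dn w_t$ as the genuinely new technical ingredient relative to the interior-damping case.
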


\subsubsection{Reference to the proof}

Well-posedness of Von Karman plates with the boundary damping occurring
 in the moments  have been proved in
\cite{cl-book}. See also \cite{l-ji,horn}.

\subsection{Generalizations}
\begin{enumerate}
\item
Additional regularity of  finite-time solutions for more regular initial data, \cite{cl-book,horn}.
\item
Similar problems can be formulated for plate equations with the damping acting in shears and torques \cite{lagnese}.
Thus the boundary conditions under consideration are "free" with the feedback control given by $g_0(w_t)$ acting on the highest third order boundary conditions \cite{chla03,cl-jde-07}.
\item
Combination of interior and boundary damping can also be considered by combining the methods.
\item
Model with rotational inertial  forces  $\alpha > 0$  subject to boundary damping, \cite{cl-book,horn}.
\item
Regular  solutions for infinite time interval can be obtained \cite{cl-book}.
\end{enumerate}
{\bf Open Problem:}
Are  polynomial bounds imposed on $g(s)$ and $g_0(s)$ and  strong coercivity condition imposed on $g_0$   necessary for obtaining weak solutions?

\subsection{
 Kirhhoff-Boussinesq equation with boundary \\ damping  - model (\ref{1.6}) }
\index{Kirchhoff-Boussinesq model!boundary damping}
\subsubsection{Rotational case $\alpha> 0 $ }
In the case when rotational inertia are retained in the model, $\alpha > 0$,
 the nonlinear terms are subcritical and the
well-posedness theory can be carried out
along the same lines as in the case of von Karman equations.
For instance, when  considered (\ref{1.6}) with $g =0$, $G =0$ and the following boundary damping
 \begin{equation}\label{KB-bnd}
 w =0, ~~\Delta w = - g_0(\Dn w_t ) ~~{\rm on}~~\Sigma,
 \end{equation}
 where $g_0(s)$ is continuous and monotone, full Hadamard local well-posedness   can be established.
\begin{theorem}\label{th:KB-b}
Under the assumption \ref{as-karman-b} with $g=0$ and $\al>0$ for all initial data
$$ w(0) =w_0 \in H^2(\Omega) \cap H_0^1(\Omega) , w_t (0) = w_1  \in  H_0^1(\Omega) $$
there exist a unique, local (in time) generalized  (semigroup) solution $w$ of finite energy, i.e.,
$$
\exists\, T > 0\; :~~ w \in C([0, T]; H^2(\Omega)\cap H_0^1(\Omega)),~~ w_t \in C([0, T]; H_0^1 (\Omega)).
$$
Moreover
\begin{itemize}
\item
If, in addition, the damping $g_0 (s)$ are  of some polynomial growth and
$(g_0(x) - g_0 (y) ) (x-y) \geq c |x-y|^r $ for some $r \geq 1 $, then a  generalized solution becomes also  weak solution.
 Weak  solution is  also continuously dependent  on the initial data.
\item
The  solutions are global  under the dissipativity   hypothesis in~\eqref{dis-BK}
\end{itemize}
\end{theorem}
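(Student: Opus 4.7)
The plan is to reduce the problem to a locally Lipschitz perturbation of a maximal monotone evolution on a suitable finite energy phase space, exactly in the spirit of the argument used for Theorem~\ref{th:wp-karman2}, and then to close the global estimate via \eqref{dis-BK}. Because $\alpha>0$, the natural energy space is
\[
\cH:=\left(H^2(\Om)\cap H_0^1(\Om)\right)\times H_0^1(\Om),
\]
and the operator $M_\al=I-\al\Delta$ is a topological isomorphism from $H_0^1(\Om)$ onto $H^{-1}(\Om)$, which is the decisive structural feature exploited below.

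First I would rewrite \eqref{1.6} with $g\equiv 0$, $G\equiv 0$ and the boundary condition \eqref{KB-bnd} as a first-order system $U_t+\cA(U)=\cF(U)$ on $\cH$, where $U=(w;w_t)$, $\cA$ encodes the principal part with the dissipative boundary datum, and $\cF(U)=M_\al^{-1}\left({\rm div}\,[|\nabla w|^2\nabla w]+P(w)\right)$ in the second component. Maximal monotonicity of $\cA$ on $\cH$ is the first key point: using Green's formula one sees that the damping form $\langle g_0(\Dn v),\Dn v\rangle_\Gamma$ fits naturally as the subdifferential of a convex, proper, lower semicontinuous functional on $H_0^1(\Om)$ (via the trace $v\mapsto \Dn v$ acting into $L_2(\Ga)$), so that monotonicity of $\cA$ is immediate. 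The range condition $\lambda I+\cA$ onto $\cH$ reduces, after eliminating the velocity, to an elliptic problem of the form $\Delta^2 u+\lambda^2 M_\al u=F$ in $\Om$ with $u=0$ and $\Delta u+\lambda g_0(\Dn u)=h$ on $\Ga$; this is a nonlinear elliptic problem with a monotone boundary condition, solvable by a standard Browder--Minty/variational argument in $H^2\cap H_0^1$. This produces a nonlinear semigroup of generalized solutions.

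Next I would verify that $\cF$ is locally Lipschitz on $\cH$. Since $\dim\Om=2$ and $w\in H^2(\Om)\cap H_0^1(\Om)$, one has $\nabla w\in H^1(\Om)\hookrightarrow L_r(\Om)$ for every $r<\infty$; hence, testing the divergence against $\phi\in H_0^1(\Om)$ and using H\"older,
\[
\left|\langle {\rm div}\,[|\nabla w_1|^2\nabla w_1-|\nabla w_2|^2\nabla w_2],\phi\rangle\right|
\le C\bigl(\|w_1\|_{2,\Om}^2+\|w_2\|_{2,\Om}^2\bigr)\|w_1-w_2\|_{2,\Om}\|\nabla\phi\|_{0,\Om},
\]
so ${\rm div}\,[|\nabla w|^2\nabla w]$ is locally Lipschitz from $H^2\cap H_0^1$ into $H^{-1}(\Om)$. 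Combined with the assumed local Lipschitz property of $P$ (Assumption~\ref{as-karman-b}(2)) and with $M_\al^{-1}\in\cL(H^{-1},H_0^1)$, the map $\cF:\cH\to\cH$ is locally Lipschitz. Standard theory for monotone perturbations of locally Lipschitz maps (cf.\ the references cited after Theorem~\ref{th:wp-karman1}) then yields a unique local generalized solution. Under the additional polynomial bound and strict coercivity on $g_0$, approximation by strong solutions combined with weak-$L_1$ compactness (Dunford--Pettis, as in the wave/Karman cases) promotes generalized solutions to weak solutions satisfying the variational identity; continuous dependence follows by taking differences of two weak solutions, estimating the damping contribution via monotonicity and handling $[\cF(w),w]$ and ${\rm div}\,[|\nabla w|^2\nabla w]$ by the Lipschitz bound above.

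Finally, the global existence part is the easiest step: multiplying (formally, or at the Galerkin level) by $w_t$ and integrating yields the energy identity
\[
\cE(t)+\int_s^t\!\!\left[D(w_t)+\langle\langle g_0(\Dn w_t),\Dn w_t\rangle\rangle\right]d\tau=\cE(s)+\int_s^t((P(w),w_t))\,d\tau,
\]
with $\cE$ as in \eqref{KB-energy}; discarding the nonnegative dissipation and invoking the dissipativity hypothesis \eqref{dis-BK} gives $\cE(t)\le (1+\delta)\cE(0)+C_\delta\int_0^t\cE(\tau)d\tau$, and Gronwall's lemma rules out finite-time blow up, so local solutions extend to all $t>0$. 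The step I expect to be most delicate is the maximal monotonicity of $\cA$ on $\cH$, because of the nonlinear boundary datum and the need to invert $\Delta^2$ under a nonlinear Robin-type condition; the rotational term $\alpha\Delta$, absent in the case $\alpha=0$, is precisely what makes the subsequent Lipschitz estimate go through, so the proof is genuinely restricted to $\alpha>0$.
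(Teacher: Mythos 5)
Your proposal is correct and follows essentially the same route as the paper, whose proof of Theorem~\ref{th:KB-b} is simply the observation that for $\al>0$ the model is a locally Lipschitz perturbation of a maximal monotone operator and hence can be treated exactly as the von Karman plate with boundary damping (Theorem~\ref{th:wp-karman2}, proved in \cite{cl-book}). You correctly identify the two decisive points — that $M_\al^{-1}\in\cL(H^{-1},H^1_0)$ renders ${\rm div}\,[|\nabla w|^2\nabla w]$ subcritical in two dimensions, and that the energy balance \eqref{hing-bc-en-rel} together with \eqref{dis-BK} and Gronwall gives globality — and you rightly flag the maximal monotonicity of the generator with the nonlinear boundary datum as the technical step deferred to the monotone-operator machinery.
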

The proof of this theorem is along the same lines as in the case of von Karman equations
with boundary damping.
  \subsubsection{Non-rotational  case: $\alpha =0 $ }
 As we have seen already before, this case is much more delicate even in the case of interior damping.
 The nonlinear source term is supercritical. While this difficulty was overcome in the case of linear interior damping, the  presence of boundary damping brings new set of issues.
 This is both, at the level of uniqueness  and continuous dependence.
 More specifically, in the case $g\equiv 0$:
 \begin{itemize}
 \item
  Existence of finite energy solutions
 $$w \in L_{\infty}([0, T]; H^2(\Omega)\cap H_0^1(\Omega)),~~ w_t \in L_{\infty} ([0, T]; L_2 (\Omega))
$$
defined variationally  and locally in time
with initial data
  $$
  w(0) =w_0 \in H^2(\Omega) \cap H_0^1(\Omega) , w_t (0) = w_1  \in   L_2(\Omega).
  $$
  can be proved by Galerkin method  with a boundary damping  $g_0 (s)$ of some polynomial growth and such that
$(g_0(x) - g_0 (y) ) (x-y) \geq c |x-y|^r $ for some $r \geq 1 $.
\item
Uniqueness of solutions can be established only in the case of  {\it linear}  damping, i.e., for $g_0(s) = a s$. This can be done by adapting Sedenko's method \cite{Sed91b}
as in \cite{sicon,koch} where  full Von Karman system was considered.
The latter displays similar difficulties when dealing with well-posedness.
\item
Continuous dependence on the data can be proved only in a weak topology.  The difficulty lies in the fact that time reversibility of the flow is lost with the presence of boundary damping. This latter property was critical in
proving energy identity for weak solutions in the case of internal linear damping.
\end{itemize}

\subsubsection{Generalizations-Extensions}
\begin{enumerate}
\item
Existence and uniqueness of regular solutions in $H^4(\Omega) \times H^2(\Omega) $
(when $\alpha =0 $). This  can be accomplished by  taking advantage of  higher topologies for the  state, hence  avoiding the  problem of supercriticality. This method has been pursued in \cite{cl-kb2} in the case of interior damping.
However, the arguments are applicable
to the case of boundary damping as well.
 \item
 Other types of boundary damping  -such as  ocurring in "free" \cite{cl-book}  boundary conditions can also be considered.
 In the case  rotational case, well-posedness results are complete - in line with Theorem \ref{th:KB-b}.
 In the non-rotational case, comments presented above apply to this case as well (with the same limitations regarding linearity of the damping).There is however an additional difficulty resulting from the fact that free boundary conditions
 provide   intrinsically nonlinear  contribution on the boundary. This, however,  is of lower order and can be handled as in
 \cite{cl-kb,cl-kb2}
 \item
 Combination of internal damping and boundary damping can be treated by combining the results available for each case separately.
 \item
 Sources that are nonconservative also can be considered, but these are outside the scope of these lectures, see \cite{cl-book}.
 \end{enumerate}
\medskip\par\noindent
{\bf Open Problems:}
\begin{enumerate}
\item
Hadamard well-posedness with  {\it linear  boundary damping}  in the case $\al=0$. Due to the
loss of time reversibility, the arguments used for the internal damping are no longer applicable.
\item
 Ultimately: uniqueness and Hadamard well-posedness in the presence of
 nonlinear damping $g_0$ in the case $\al=0$ and
 boundary conditions \eqref{KB-bnd},
 where $g_0(s)$ is monotone and -say- of linear growth at infinity.
 The arguments used so far  for the uniqueness of weak solutions rely critically on linearity of the damping.
 Thus, the case described above appears to be completely open.
 \end{enumerate}

\section{General tools for   studying attractors}
In this section we describe several  approaches
to  the study of long-time behavior of  hyperbolic-like   systems
described above. For the general discussion of long-time behavior of  systems with dissipation we refer to  the monographs
\cite{BV92,Chu99,Ha88,Lad91,sell,temam}.

\subsection{Basic notions}
By definition a dynamical system is a pair of objects $(X,S_t)$ consisting of
a complete metric space $X$ and a family of continuous mappings
$\{S_t\, :\, t\in \R_+\}$ of $X$ into itself with the semigroup
properties:
\[
S_0=I,\quad S_{t+\tau}=S_t\circ S_\tau.
\]
We also assume that $y(t)=S_ty_0$ is  continuous with respect to
$t$ for any $y_0\in X$. Therewith $X$ is called a {\em  phase space}
(or state space) and $S_t$ is called an {\em evolution semigroup} (or
evolution operator).
\index{dynamical system}
\index{evolution semigroup}
\index{operator!evolution}
\index{dynamical system!phase space}

\begin{definition}\label{de7.1.1} Let $(X, S_t)$ be a dynamical system.
\begin{itemize}
\item
\index{set!absorbing}
A closed set
$B\subset X$  is said to be {\bf  absorbing} for $(X, S_t)$
iff for any bounded set $D\subset X$  there exists
$t_0(D)$ such that
$S_t D\subset B$ for all $t\ge t_0(D)$.
\item
\index{dynamical system!dissipative}
$(X, S_t)$ is said to be (bounded, or ultimately) {\bf dissipative}
iff it possesses a bounded absorbing set $B$.
If $X$ is a Banach space, then
a value $R>0$ is said to be a
radius of dissipativity of  $(X, S_t)$
if $B\subset\{ x\in X\,:\, \Vert x\Vert_X\le R\}$.
\index{radius of dissipativity}
\item
\index{dynamical system!asymptotically smooth}
$(X,S_t)$ is said to be {\bf asymptotically smooth}
iff for any bounded set $D$ such that $S_tD\subset D$
for $t>0$,  there exists  a  compact set $K$ in the closure $\overline{D}$
of $D$, such that \begin{equation}\label{7.1.2}
\lim_{t\to +\infty}
d_X\{S_tD\, |\, K\}=0,
\end{equation}
where $d_X\{A | B\}= \sup_{x\in A} \dist_X (x, B)$.
\end{itemize}
\end{definition}

\par
A set $D\subset X$ is said to be {\em forward} (or positively)  {\em invariant} iff
$S_tD\subseteq D$ for all $t\ge 0$. It is {\em backward } (or negatively) {\em invariant}
iff $S_tD\supseteq D$ for all $t\ge 0$. The set $D$  is said to be
{\em  invariant} iff it is both forward  and backward invariant; that is,
$S_tD= D$ for all $t\ge 0$.
\index{set!invariant}
\index{set!forward invariant}
\index{set!backward invariant}
\index{set!positively invariant}
\index{set!negatively invariant}
\par
Let $D\subset X$. The set
$$
\gamma^t_D\equiv
\bigcup_{\tau\ge t}S_\tau D
$$
is called the {\em tail} (from the moment $t$) of the  trajectories
\index{tail of trajectory} emanating from $D$. It is clear that
$\gamma^t_D=\gamma^0_{S_tD}$.
\par
 If $D=\{ v\}$ is
a single point set, then $\gamma^+_v:=\gamma^0_D$ is said to be
a   {\em positive semitrajectory (or semiorbit)} emanating from $v$.
\index{semitrajectory} \index{semiorbit|see{semitrajectory}} A
continuous curve \index{full trajectory} $\gamma \equiv \{ u(t)\,
:\, t\in\R\}$ in $X$ is said to be a {\em full trajectory}  iff
$S_tu(\tau)=u(t+\tau)$ for any $\tau\in\R$ and $t\ge 0$. \Since $S_t$
is not necessarily
 an invertible operator,
a full trajectory may not exist. Semitrajectories are forward invariant
sets. Full trajectories are invariant sets.
\par
To describe the asymptotic behavior
  we use the concept of an  $\omega$-limit set.
The set
\begin{equation*}
\omega(D)\equiv
\bigcap_{t>0}\overline{\gamma_D^t}
=
\bigcap_{t>0}\overline{\bigcup_{\tau\ge t}S_\tau D}
\end{equation*}
is called the  {\em $\omega$-limit set} of the trajectories
emanating from $D$ (the bar over a set means the closure).\index{set!$\omega$-limit}
It is equivalent to saying that $x\in\omega(D)$ if and only if there exist
sequences $t_n\to +\infty$ and  $x_n\in D$ such that $S_{t_n}x_n\to x$
as $n\to \infty$. It is clear that $\omega$-limit sets (if they exist)
are forward invariant.
\subsection{Criteria for Asymptotic Smoothness}
The following assertion is a  generalization  Ceron-Lopes criteria
(see \cite{Ha88}
and  the references therein).
\begin{theorem}\label{th7.1.1}
Let  $(X, S_t)$ be  a dynamical system on  a Banach space $X$.
Assume that
for any bounded positively invariant set $B$ in $X$
there exist $T>0$,
a continuous nondecreasing  function $g :\R_+\mapsto\R_+$,
and a pseudometric $\varrho^T_B$ on $C(0,T;X)$ such that
\begin{enumerate}
              \item[(i)]
$g(0)=0;~~ g(s)<s,~ s>0$.
\item[(ii)]
The  pseudometric $\varrho^T_B$  is precompact
 (with respect to  the norm of $X$) in the following sense.
Any   sequence $\{x_n\}\subset B$  has a subsequence
$\{x_{n_k}\}$ such that the sequence $\{y_k\}\subset C(0,T; X)$
of elements $y_k(\tau)=S_\tau x_{n_k}$  is  Cauchy
with respect to  $\varrho^T_B$.
\index{precompact pseudometric}
\item[(iii)] The  following estimate
\begin{equation*}
\Vert S_Ty_1-S_Ty_2\Vert\le
g\left(\Vert y_1-y_2\Vert +
\varrho^T_B(\{S_\tau y_1\},\{S_\tau y_2\})\right)
\end{equation*}
holds for every $y_1,y_2\in B$, where we denote by
$\{S_\tau y_i\}$ the element in  the space $C(0,T; X)$ given
by function $y_i(\tau)=S_\tau y_i$.
 \end{enumerate}
 Then,  $(X,S_t)$ is asymptotically smooth dynamical system.
\end{theorem}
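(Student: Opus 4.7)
The strategy is to work with Kuratowski's measure of noncompactness $\alpha(\cdot)$ on $X$ and to show that under the stated hypotheses one has $\alpha(S_{nT}B)\to 0$ as $n\to\infty$ for every bounded positively invariant set $B$. Since positive invariance forces $\{S_tB\}_{t\ge 0}$ to be a nested (decreasing) family, this discrete convergence will upgrade to $\alpha(S_tB)\to 0$ continuously in $t$, which is the standard route to asymptotic smoothness.

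Fix $B$ bounded and positively invariant, and let $T$, $g$, $\varrho^T_B$ be provided by the hypothesis. The central step will be the \emph{one-step contraction}
\[
\alpha(S_TB)\le g(\alpha(B)).
\]
To prove it I would proceed as follows. Given $\eps>0$, first cover $B$ by finitely many pieces $B_1,\dots,B_k$ of $X$-diameter at most $\alpha(B)+\eps$. Assumption (ii) says that the family $\{V(y):=\{S_\tau y\}_{\tau\in[0,T]}:y\in B\}$ is totally bounded in $(C(0,T;X),\varrho^T_B)$, so there is a finite cover $C_1,\dots,C_m$ of $B$ with $\varrho^T_B$-diameter of $V(C_j)$ at most $\eps$. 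On each piece $B_i\cap C_j$ of the common refinement both estimates $\|y_1-y_2\|\le\alpha(B)+\eps$ and $\varrho^T_B(V(y_1),V(y_2))\le\eps$ hold simultaneously; invoking (iii) and monotonicity of $g$ yields $\mathrm{diam}\,S_T(B_i\cap C_j)\le g(\alpha(B)+2\eps)$. Hence $\alpha(S_TB)\le g(\alpha(B)+2\eps)$, and continuity of $g$ together with $\eps\to 0$ gives the claim.

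Next I would iterate. Positive invariance yields $S_{kT}B\subset B$ for every $k\ge 0$, so the pseudometric $\varrho^T_B$ and the inequality (iii) remain in force on $S_{kT}B$; the same argument applied to $S_{kT}B$ in place of $B$ therefore gives $\alpha(S_{(k+1)T}B)\le g(\alpha(S_{kT}B))$ and, inductively,
\[
\alpha(S_{nT}B)\le g^n(\alpha(B)).
\]
Since $g$ is continuous and nondecreasing with $g(0)=0$ and $g(s)<s$ for $s>0$, the iterates $g^n(s)$ are monotone decreasing and converge to a fixed point of $g$, which can only be $0$. Thus $\alpha(S_{nT}B)\to 0$. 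Because $B$ is positively invariant, the family $\{S_tB\}_{t\ge 0}$ is nested decreasing, so $t\mapsto\alpha(S_tB)$ is nonincreasing and the full-time limit $\alpha(S_tB)\to 0$ follows. Setting $K=\bigcap_{t\ge 0}\overline{S_tB}$ gives a nonempty closed subset of $\overline{B}$ with $\alpha(K)=0$, hence compact; the standard diagonal extraction argument shows any sequence $S_{t_n}x_n$ with $x_n\in B$ and $t_n\to\infty$ has a convergent subsequence whose limit lies in every $\overline{S_NB}$, thus in $K$, yielding $d_X\{S_tB\,|\,K\}\to 0$.

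The main obstacle is the one-step contraction inequality: one has to simultaneously control the $X$-diameter of a cover of $B$ and the $\varrho^T_B$-diameter of the associated family of trajectories, so that the nonlinear bound (iii) can be applied uniformly on each piece of a common finite refinement. Once this is in place, iteration is immediate since the same $T,g,\varrho^T_B$ remain valid on $S_{kT}B\subset B$, and the upgrade from the subsequence $\{nT\}$ to arbitrary times is automatic by monotonicity of $\alpha(S_tB)$ under positive invariance.
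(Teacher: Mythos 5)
Your proposal is correct and follows essentially the same route as the paper: the key ingredient in both is the one-step contraction $\alpha(S_TB)\le g(\alpha(B))$ for the Kuratowski measure, obtained by refining an $(\alpha(B)+\eps)$-cover of $B$ against a finite cover coming from the total boundedness of the trajectory family in the pseudometric (which is exactly what hypothesis (ii) encodes) and then applying (iii) on each piece. The paper then simply cites the fact that conditionally $\alpha$-condensing maps are asymptotically smooth, whereas you carry out the standard iteration $g^n(\alpha(B))\to 0$ and the construction of $K=\bigcap_{t\ge0}\overline{S_tB}$ explicitly; this is the same argument with the deferred details filled in.
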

Note that a precompact  pseudometric is evaluated on trajectories $S_{\tau}$, rather than on initial conditions (as in the classical treatments, see, e.g., \cite{Ha88}).   This fact becomes quite useful when applying
the criterion to hyperbolic-like dynamics.
\begin{proof}
We refer to \cite{cl-mem}. The main ingredient of the proof  is the relation
\begin{equation}\label{alp-g}
    \al(S_TB)\le g(\al(B)),
\end{equation}
where
$\al(B)$   is
 the Kuratowski $\alpha$-measure of noncompactness which is
defined by the formula
\index{Kuratowski $\alpha$-measure}
\begin{equation}\label{kurat}
\alpha(B)=\inf\{\delta\, :\, B~\mbox{has a finite cover of diameter}~
<\delta\}
\end{equation}
for every  bounded set $B$ of $X$.
For  properties of this metric characteristic
we refer to  \cite{Ha88} or \cite[Lemma 22.2]{sell}.
\par
The property in \eqref{alp-g}
implies that
 for every bounded  forward invariant set $B$
there exists $T>0$ such that
$\alpha(S_TB)<\alpha(B)$ provided $\alpha(B)>0$.
If for some fixed $T>0$ this property
 holds for every bounded set $B$ such that $S_TB$ is also bounded,
then, by the definition (see \cite{Ha88}), $S_T$ is a conditional
$\alpha$-condensing mapping. It is known \cite{Ha88} that
conditional $\alpha$-condensing mappings are
  asymptotically smooth.
Therefore Theorem~\ref{th7.1.1}  can be
considered as a generalization of the results presented in \cite{Ha88}.
\end{proof}
The above criterion is rather general, however it requires "compactness" of  the sources in the equation.
There are two other criteria that  avoid such a requirement of a priori compactness.
 These are:
 \begin{itemize}
     \item {\it compensated compactness} criterion
 -an idea introduced in \cite{turk-jmaa} and later expanded
 in \cite{cl-mem};
     \item {\it J. Ball's energy method}, see \cite{ball} and \cite{MRW}.
   \end{itemize}
The  corresponding results are presented below.

 \subsubsection{ Compensated compactness method}
\begin{theorem}\label{th7-turk}
Let  $(X, S_t)$ be  a dynamical
system on  a complete metric space  $X$ endowed with  a metric $d$.
Assume that  for any bounded positively invariant set $B$ in $X$ and
for any $\epsilon > 0$ there exists $T \equiv T(\epsilon, B) $ such that
\begin{equation}\label{7.ak1}
d( S_T y_1, S_T y_2) \leq \epsilon + \Psi_{\epsilon, B, T}
(y_1, y_2) , y_i \in B,
\end{equation}
where $\Psi_{\epsilon, B, T } (y_1, y_2) $ is  a functional  defined on
$B \times B $ such that
\begin{equation}\label{7.ak2}
\liminf_{m \rightarrow \infty }  \liminf_{n \rightarrow \infty }
\Psi_{\epsilon, B, T } (y_n, y_m ) =0
\end{equation}
for every sequence $\{y_n\} $ from $B$.
 Then $(X,S_t)$ is an asymptotically smooth dynamical system.
\end{theorem}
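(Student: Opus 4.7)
The plan is to verify asymptotic smoothness by showing that for every bounded positively invariant set $B$, the Kuratowski $\alpha$-measure of the tail $\gamma^t_B = \bigcup_{\tau\ge t} S_\tau B$ tends to zero as $t\to\infty$. Once this is established, a standard argument (cited after Theorem~\ref{th7.1.1}) yields that $\omega(B)$ is nonempty, compact, lies in $\overline{B}$, and attracts $B$ in the Hausdorff semidistance, which is precisely \eqref{7.1.2}. The key reduction exploits positive invariance: if $B$ satisfies $S_t B\subseteq B$ for all $t\ge 0$, then for any $t\ge T$ one has $S_t B=S_T(S_{t-T}B)\subseteq S_T B$, and therefore $\gamma^T_B\subseteq S_T B$. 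Hence $\alpha(\gamma^T_B)\le \alpha(S_T B)$, and it suffices to prove that for every $\varepsilon>0$ there exists $T=T(\varepsilon,B)$ with $\alpha(S_T B)\le 2\varepsilon$.

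To establish the latter, I would fix $\varepsilon>0$, take the $T=T(\varepsilon,B)$ supplied by hypothesis \eqref{7.ak1}, and argue by contradiction. Suppose $\alpha(S_T B)>2\varepsilon$. By the definition of the Kuratowski measure \eqref{kurat}, no finite cover of $S_T B$ by sets of diameter $\le 2\varepsilon$ exists, so a routine recursive selection produces points $z_n=S_T y_n\in S_T B$ with $y_n\in B$ and $d(z_n,z_m)>2\varepsilon$ for all $n\ne m$. Feeding $y_1=y_n$, $y_2=y_m$ into the hypothesis \eqref{7.ak1}, I obtain
\[
2\varepsilon < d(S_T y_n,S_T y_m)\le \varepsilon+\Psi_{\varepsilon,B,T}(y_n,y_m),\qquad n\ne m,
\]
whence $\Psi_{\varepsilon,B,T}(y_n,y_m)>\varepsilon$ for all $n\ne m$. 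Thus for each fixed $m$, $\liminf_{n\to\infty}\Psi_{\varepsilon,B,T}(y_n,y_m)\ge \varepsilon$, and consequently
\[
\liminf_{m\to\infty}\liminf_{n\to\infty}\Psi_{\varepsilon,B,T}(y_n,y_m)\ge \varepsilon>0,
\]
in direct contradiction with the standing assumption \eqref{7.ak2}. Hence $\alpha(S_T B)\le 2\varepsilon$, and via the inclusion above, $\alpha(\gamma^T_B)\le 2\varepsilon$.

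To conclude, since $\varepsilon>0$ was arbitrary and the tails $\gamma^t_B$ are nested decreasing in $t$ (so $t\mapsto\alpha(\gamma^t_B)$ is non-increasing), we obtain $\alpha(\gamma^t_B)\to 0$. The standard $\alpha$-measure machinery then produces a compact set $K=\omega(B)\subset\overline{B}$ attracting $B$, which verifies Definition~\ref{de7.1.1}. The main obstacle I anticipate is the contradiction step above: extracting a genuinely pairwise $2\varepsilon$-separated sequence from $\alpha(S_TB)>2\varepsilon$ and then propagating this separation, via hypothesis \eqref{7.ak1}, into a uniform lower bound on $\Psi$ that survives both of the nested $\liminf$'s. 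The positive-invariance reduction $\gamma^T_B\subseteq S_TB$ is what makes it unnecessary to control $S_\tau B$ for all large $\tau$ simultaneously — a single time $T(\varepsilon,B)$ suffices at each scale $\varepsilon$, which is precisely the strength that makes this criterion applicable to hyperbolic-like dynamics where no a priori smoothing is available.
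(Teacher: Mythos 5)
Your route is the same as the paper's: the printed proof of Theorem~\ref{th7-turk} is precisely the one-line reduction ``show $\al(S_tB)\to 0$ and invoke the standard machinery of the Kuratowski measure \eqref{kurat}'', with details deferred to \cite{cl-mem}, and you are supplying those details with the correct skeleton (positive invariance giving $\gamma^T_B\subseteq S_TB$, a separated-sequence contradiction against \eqref{7.ak2}, and the passage from vanishing $\al$-measure of the tails to a compact attracting set $K=\omega(B)\subseteq\overline{B}$). One step, however, does not close as written. From $\al(S_TB)>2\varepsilon$ the routine recursive selection (add a point at distance $>\varepsilon$ from all previously chosen ones; if this ever fails, the closed $\varepsilon$-balls about the chosen points give a finite cover of $S_TB$ by sets of diameter $\le 2\varepsilon$) produces pairwise separation $>\varepsilon$, not $>2\varepsilon$; a $2\varepsilon$-separated infinite family is guaranteed only when $\al(S_TB)>4\varepsilon$. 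With merely $d(S_Ty_n,S_Ty_m)>\varepsilon$, inequality \eqref{7.ak1} yields only $\Psi_{\varepsilon,B,T}(y_n,y_m)>0$, which does not contradict \eqref{7.ak2}. The repair is immediate and costs nothing: either run the contradiction from $\al(S_TB)>4\varepsilon$ and conclude $\al(S_TB)\le 4\varepsilon$ (equally good, since $\varepsilon$ is arbitrary), or apply the hypothesis at level $\varepsilon/2$, i.e.\ take $T=T(\varepsilon/2,B)$, so that $\varepsilon$-separation forces $\Psi_{\varepsilon/2,B,T}(y_n,y_m)>\varepsilon/2$ for all $n\ne m$, again contradicting \eqref{7.ak2}. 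With that adjustment the argument is complete and matches the intended proof.
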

Note that in a "compact" situation, i.e.,  when the functional $\Psi$ is sequentially compact, the condition (\ref{7.ak2}) is automatically satisfied. The above criterion applies  in the case of critical nonlinearities.
\begin{proof}
  The properties in \eqref{7.ak1} and \eqref{7.ak2} makes  it possible to prove
  that $\al(S_tB)\to 0$ as $t\to+\infty$, where $\alpha(B)$
  is the Kuratowski $\al$-measure defined by \eqref{kurat}. The latter
  property implies the conclusion desired. For details we refer
  \cite{cl-mem} or \cite{cl-book}.
\end{proof}

\subsubsection{ John Ball's  "energy" method}
This  second method  applies  even in the case of supercritical nonlinear terms, but there are other requirements which
restrict applicability of this method.
The main idea behind Ball's method is to construct an appropriate energy type  functional which can be then decomposed into exponentially decaying part and compact part.
While the idea of  decomposition  of  semigroup into uniformly stable and compact part is  behind  almost all
criteria  leading to asymptotic smoothness, the "energy" method described below postulates such a decomposition on
{\it functionals} rather than operators (semigroups). This fact has far reaching consequences and allows application of the method in  supercritical situations.
\par

We   follow presentation of the method given in
\cite{MRW}. Another  exposition of this method in the case
of the damped wave equation can be found in \cite{ball}.

\begin{theorem}\label{th:ball}
Let $S_t$ be an evolution continuous
semigroup of weak\-ly and stron\-gly continuous operators.
Assume that there exist functionals $\Phi$, $\Psi$, $L$, $K$ on phase space
such that the following equality
\begin{multline}
[\Phi(S_tu)+\Psi(S_t u)] +\int_s^t L(S_{\tau}u)e^{-\om(t-\tau)}d\tau
\\
=[\Phi(S_s u)+\Psi(S_s u)]e^{-\om(t-s)}+\int_s^t K(S_{\tau} u)e^{-\om(t-\tau)}d\tau, \label{ball_eq}
\end{multline}
holds for any $u \in X $.
The  functionals $\Phi$, $\Psi$, $L$, $K$ are assumed to have the following  properties:
\begin{itemize}
\item $\Phi:\; X \mapsto \R^+$ is continuous, bounded and
if $\{U_j\}_j$ is bounded in X, $t_j \to +\infty$, $S_{t_j}U_j \rightharpoonup U$ weakly in $X$, and
$\limsup_{n\to\infty}\Phi(S_{t_j}U_j) \le \Phi (U)$, then $S_{t_j}U_j \to U$ strongly in $X$.

\item $\Psi:\; X \mapsto \R$ is
'asymptotically weakly continuous' in the sense
that if $\{U_j\}_j$ is bounded in X, $t_j \to +\infty$,
$S_{t_j}U_j \rightharpoonup U$ weakly in $X$, then $\Psi(S_{t_j}U_j) \to \Psi(U)$.

\item $K:\; X \mapsto \R$ is 'asymptotically weakly
continuous' in the sense that if $\{U_j\}_j$ is bounded in X, $t_j \to
 +\infty$, $S_{t_j}U_j \rightharpoonup U$ weakly in $ X$, then $K(S_sU)\in L_1(0,t)$ and
    \begin{equation*}
        \lim_{j\to\infty}\int_0^t e^{-\om(t-s)}K(S_{s+t_j}U_j) ds =
        \int_0^t e^{-\om(t-s)}K(S_{s}U) ds, \quad \forall t>0.
    \end{equation*}
\item $L$ is 'asymptotically weakly lower semicontinuous' in the sense
that if $\{U_j\}_j$ is bounded in X , $t_j \to +\infty$,
$S_{t_j}U_j \rightharpoonup U$ weakly in $ X $, then $L(S_sU)\in L_1(0,t)$ and
    \begin{equation*}
        \liminf_{j\to\infty}\int_0^t e^{-\om(t-s)}L(S_{s+t_j}U_j) ds
        \ge \int_0^t e^{-\om(t-s)}L(S_{s}U) ds, \quad \forall t>0.
    \end{equation*}
\end{itemize}
Then $S_t$ is asymptotically smooth.
\end{theorem}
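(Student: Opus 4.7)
Asymptotic smoothness of $(X,S_t)$ is equivalent to the following statement: for every bounded positively invariant $D\subset X$, every sequence $\{U_n\}\subset D$ and every $t_n\to+\infty$, the sequence $\{S_{t_n} U_n\}$ admits a strongly convergent subsequence in $X$. The boundedness of $\{S_{t_n}U_n\}$ lets me extract, after passing to a subsequence, a weak limit $S_{t_n} U_n \rightharpoonup U$ in $X$. The structural hypothesis on $\Phi$ then reduces the entire question to showing
\begin{equation*}
    \limsup_{n\to\infty}\Phi(S_{t_n} U_n)\;\le\;\Phi(U),
\end{equation*}
since that implication forces $S_{t_n}U_n\to U$ strongly.

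The key maneuver is to apply identity~\eqref{ball_eq} not to $U_n$ itself but to the backward-shifted point $u_n := S_{t_n - t} U_n$, where $t > 0$ is fixed and $n$ is large enough that $t_n > t$; by the semigroup property $S_t u_n = S_{t_n}U_n$ and $S_\tau u_n = S_{\tau + t_n - t} U_n$. Extract a further subsequence so that $u_n\rightharpoonup u^*$; then the weak continuity of $S_t$ forces $S_t u^* = U$. Passing to the limit $n\to\infty$ in the resulting equality, the four (semi)continuity hypotheses apply in turn: $\Psi(S_{t_n}U_n)\to\Psi(U)$ and $\Psi(u_n)\to\Psi(u^*)$ from the asymptotic weak continuity of $\Psi$; the $K$-integral $\int_0^t K(S_{\tau+t_n-t}U_n)e^{-\om(t-\tau)}d\tau$ converges to $\int_0^t K(S_\tau u^*)e^{-\om(t-\tau)}d\tau$; the $L$-integral is bounded below in the $\liminf$ by $\int_0^t L(S_\tau u^*) e^{-\om(t-\tau)}d\tau$; finally $\Phi(u_n)$ is bounded by $M := \sup_D \Phi$. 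Taking $\limsup_n$ of both sides of the identity therefore yields
\begin{equation*}
 \limsup_n\Phi(S_{t_n}U_n)\le [M+\Psi(u^*)]e^{-\om t}-\Psi(U) + \int_0^t[K(S_\tau u^*)-L(S_\tau u^*)]e^{-\om(t-\tau)}d\tau.
\end{equation*}

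To eliminate the residual integral I apply~\eqref{ball_eq} a second time, now with $u = u^*$, $s = 0$, and the same final time $t$; the relation $S_t u^* = U$ turns this identity into
\begin{equation*}
    \int_0^t [K(S_\tau u^*) - L(S_\tau u^*)] e^{-\om(t-\tau)} d\tau = \Phi(U) + \Psi(U) - [\Phi(u^*) + \Psi(u^*)] e^{-\om t}.
\end{equation*}
Substituting this back, the two $\Psi(U)$ contributions cancel and every surviving $t$-dependent piece reassembles into a single multiple of $e^{-\om t}$ whose coefficient is bounded uniformly in $n$ and $t$ (using the uniform bound $M$ on $\Phi$ and the orbital bound on $\Psi$ supplied by its asymptotic weak continuity along bounded sequences). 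Hence $\limsup_n\Phi(S_{t_n}U_n)\le\Phi(U)+Ce^{-\om t}$, and sending $t\to+\infty$ delivers the target inequality. I expect the main obstacle to be the careful $\limsup$--$\liminf$ bookkeeping as the four (semi)continuity assumptions are threaded through the identity; the conceptual heart of the argument is the choice to probe the trajectory $t$ units back from $t_n$ rather than at time zero, which is what turns $e^{-\om t}$ from a useless factor at the initial time into the vanishing error that drives the whole proof.
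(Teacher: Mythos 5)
Your argument is correct and is exactly the standard Ball/Moise--Rosa--Wang energy-equation proof that the paper defers to (it cites \cite{ball} and \cite{MRW} rather than reproducing the proof): reduce to the sequential compactness of $\{S_{t_n}U_n\}$, apply the identity \eqref{ball_eq} along the backward-shifted points $S_{t_n-t}U_n$, pass to the limit using the four (semi)continuity hypotheses, and cancel the residual $K-L$ integral by applying \eqref{ball_eq} once more to the weak limit $u^*$ of the shifted sequence, leaving only an $O(e^{-\om t})$ error. The only point to make explicit in a written-out version is that, since the further subsequence (and hence $u^*$) depends on $t$, one should first pass to a subsequence realizing $\limsup_n\Phi(S_{t_n}U_n)$ before extracting the weak limit of $S_{t_n-t}U_n$; as you note, the final bound $\Phi(U)+Me^{-\om t}$ is independent of $u^*$, so this is pure bookkeeping.
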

\begin{remark}{\rm
\begin{enumerate}
\item
The decomposition of the functionals in (\ref{ball_eq}) depends on   validity of energy {\it identity}
satisfied for weak solutions. This, alone, is a severe condition which may be difficult to verify (in contrast to energy {\it inequality}).
In addition, the proof of {\it equality} in (\ref{ball_eq}) hides behind the fact that the damping in the equation is very structured-typically linear.
\item
The functional  (typically convex) $\Phi$  plays role of the energy of
linearized system - a good topological measure for the solution.
In uniformly  convex spaces  $X$ the assumptions postulated by $\Phi $ are automatically satisfied.
Indeed, this results from the fact that weak convergence and the  convergence
 of the norms to the same element imply strong convergence.
\item
The hypotheses  required from $\Psi$,  $K$ and $L$ represent some compactness property of part of the  nonlinear  energy describing the system.  This is often the case even for  supercritical nonlinearities.  In fact, these terms allow to deal with non-compact sources in the equation provided that the corresponding nonlinear part of the  energy is
sequentially compact. Typical application involves $3D$ wave equation,
where sources $|f(s)| \leq C(1+ |s|^p)$ with $p < 5$ can be handled (in view of compactness of the embedding $H^1(\Om) \subset L_q(\Om)$, $q<6$).
\end{enumerate}
}
\end{remark}

\subsection{Global attractors}

The main objects arising in the analysis  of long-time behavior of
infinite-dimen\-sional dissipative dynamical systems are attractors.
Their study allows us to answer a number of fundamental questions on
the properties of limit regimes that can arise in  the systems  under
consideration.
At present,  there are several   general  approaches and methods that allow us
to prove the existence and finite-dimensionality of global attractors
for a large class of dynamical systems generated by nonlinear partial
differential equations (see, e.g., \cite{BV92,Chu99,Ha88,Lad91,temam}
 and the references listed therein).

 \begin{definition}\label{de7.2.1}\index{global attractor}
\index{attractor!global}
A bounded closed set $A\subset X$
is said to be a {\em global attractor} of the dynamical system
$(X, S_t)$  iff
the following properties hold.
\begin{enumerate}
\item[(i)] $A$ is an invariant set; that is,
 $S_tA =A$ for $t\ge 0$.
\item[(ii)] $A$ is uniformly attracting; that is,  for all bounded set $D\subset X$
\begin{equation*}
\lim_{t\to +\infty}
d_X\{S_tD\, |\, A\}=0,
\end{equation*}
where $d_X\{A | B\}= \sup_{x\in A} \dist_X (x, B)$ is the Hausdorff semidistance.
\index{Hausdorff semidistance}
\end{enumerate}
\end{definition}

It turns out that dissipativity property along with asymptotic smoothness imply an existence of global attractor. The corresponding result is standard by now and reported below (see \cite{Ha88} and also  \cite{BV92,Lad91,temam}).
 \begin{theorem}\label{t7.2.1-as} Any dissipative
 asymptotically smooth  dynamical system  $(X,S_t)$
in a Banach space $X$ possesses a unique  compact global attractor
$A$.  This
attractor is a connected set and can be described as a set of all bounded full
trajectories. Moreover $A=\om(B)$ for any
 bounded absorbing set $B$ of  $(X,S_t)$.
\end{theorem}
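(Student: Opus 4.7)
The plan is to construct $A$ explicitly as the $\omega$-limit set of a bounded absorbing set $B$ and then verify each of the defining properties (compactness, invariance, uniform attraction) together with uniqueness, connectedness and the characterization in terms of bounded full trajectories. First I would use dissipativity to obtain a bounded absorbing set $B_0$, and enlarge it to a closed, bounded, forward invariant absorbing set $B=\overline{\bigcup_{t\ge t_0}S_tB_0}$ for a suitably large $t_0$. This reduction is what lets me apply the asymptotic smoothness hypothesis directly to $B$, yielding a compact set $K\subset \overline{B}$ with $d_X(S_tB\,|\,K)\to 0$.

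With $A:=\omega(B)=\bigcap_{t>0}\overline{\gamma_B^t}$ I would then proceed as follows. For nonemptiness and compactness, I observe that the closed tails $\overline{\gamma_B^t}$ form a decreasing nested family whose Hausdorff semidistance from the compact set $K$ tends to zero; a standard diagonal extraction then shows $A$ is a nonempty, compact subset of $\overline{B}$ (in fact $A\subset K$), and $x\in A$ iff there exist $x_n\in B$ and $t_n\to\infty$ with $S_{t_n}x_n\to x$. Invariance $S_tA=A$ is verified from this sequential description: $S_tA\subset A$ uses continuity of $S_t$, while $A\subset S_tA$ uses asymptotic smoothness to extract, from any $x\in A$ with representing sequence $S_{t_n}x_n\to x$, a convergent subsequence of $\{S_{t_n-t}x_n\}$ whose limit $y\in A$ satisfies $S_ty=x$.

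For uniform attraction of $B$, I would argue by contradiction: if $d_X(S_tB\,|\,A)\not\to 0$, one produces $t_n\to\infty$ and $x_n\in B$ with $\dist(S_{t_n}x_n,A)\ge\delta>0$; asymptotic smoothness yields a subsequential limit $z$, which by construction lies in $\omega(B)=A$, contradicting the gap. Extension to arbitrary bounded $D\subset X$ is immediate since $S_tD\subset B$ for $t\ge t_0(D)$. Uniqueness follows from the standard two-attractor argument (each attracts the other and both are invariant). For the characterization, inclusion $A\subset\{u(0):u\text{ bounded full trajectory}\}$ is obtained by using $S_tA=A$ to construct a backward orbit inside $A$ through any point of $A$ (choosing preimages in the compact set $A$ at each negative integer time and extracting a diagonal limit), while the reverse inclusion follows because the $\omega$-limit of any bounded full trajectory lies in $A$ by the attraction property applied to its range. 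Connectedness is obtained by replacing $B$ if needed with its closed convex hull (still bounded, absorbing, and amenable to asymptotic smoothness), which makes each $\overline{S_tB}$ connected as the continuous image of a connected set, so $A$ is a decreasing intersection of compact connected sets, hence connected.

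The main obstacle will be the compactness/attraction step, since asymptotic smoothness as stated in Definition~\ref{de7.1.1} only provides a compact set $K$ that the forward orbit approaches, not compactness of the orbit itself; turning this one-sided distance estimate into genuine sequential precompactness of $\{S_{t_n}x_n\}$ requires care in combining the nested structure of the tails with the $\alpha$-measure viewpoint underlying Theorem~\ref{th7.1.1}. Once that is in place, the remaining items are standard manipulations with semigroup continuity, sequential compactness, and invariance.
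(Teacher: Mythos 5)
The paper does not actually prove this theorem --- it only cites \cite{Ha88,BV92,Lad91,temam} --- and your plan is essentially the standard argument found in those references: convert dissipativity into a closed, bounded, forward invariant absorbing set $B$, set $A=\omega(B)$, and use the compact set $K$ furnished by asymptotic smoothness to get precompactness of the tails (a closed set lying within $\varepsilon$ of a compact set has Kuratowski measure at most $2\varepsilon$, so the nested tails have vanishing $\alpha$-measure and a nonempty compact intersection). Your treatment of compactness, invariance, uniform attraction, uniqueness, and the full-trajectory characterization is correct, and you rightly identified and resolved the only delicate point, namely upgrading the one-sided estimate $d_X\{S_tB\,|\,K\}\to0$ to sequential precompactness of $\{S_{t_n}x_n\}$.

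The one step that fails as written is connectedness. After replacing $B$ by its closed convex hull $\widetilde B$, the sets $\overline{S_t\widetilde B}$ are neither compact (they are closed subsets of an infinite-dimensional Banach space with small but generally nonzero measure of noncompactness) nor nested (since $\widetilde B$ is not forward invariant), so ``$A$ is a decreasing intersection of compact connected sets'' is not available. Note also that the paper's definition of asymptotic smoothness applies only to bounded sets $D$ with $S_tD\subset D$, so it cannot be invoked for $\widetilde B$ directly; fortunately this is not needed, since once $A$ is known to attract all bounded sets it attracts $\widetilde B$. The standard repair is: $A=S_tA\subset S_t\widetilde B$ for every $t$, so each connected set $S_t\widetilde B$ contains $A$; if $A=A_1\cup A_2$ with $A_1,A_2$ compact, nonempty, disjoint, choose disjoint open neighborhoods $U_1,U_2$ whose union contains a $\delta$-neighborhood of $A$; for $t$ large, $d_X\{S_t\widetilde B\,|\,A\}<\delta$ forces $S_t\widetilde B\subset U_1\cup U_2$ while $S_t\widetilde B$ meets both $U_1$ and $U_2$, contradicting its connectedness. (Alternatively, the closed tails $\overline{\gamma^t_{\widetilde B}}$ \emph{are} nested and connected --- each is the closure of a union of connected sets all containing $A$ --- and have vanishing Kuratowski measure, which is enough to push your intersection argument through.) With that step corrected, the proof is complete and coincides with the classical one.
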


In the case when the dynamical system has special property-referred to as {\it gradient system}-
dissipativity property is not needed  (in the explicit form) in order to prove existence of a global attractor.
This is a  very handy property particularly  when the  proof of dissipativity is technically  involved.

\subsubsection{Gradient systems}\label{grad-ds}
The study of the structure of  the global attractors is an important problem
from the point of view of applications. There are no universal approaches
solving  this
problem.  It is well known that even in finite-dimensional cases an attractor can possess
extremely complicated  structure. However, some sets that belong to
the attractor can be easily pointed out. For example, every stationary
point ($S_tx=x$ for all $t>0$) belongs to the attractor of the system.
\index{stationary point}
One can shows that any bounded full trajectory also lies
in the global attractor.
\par
 We  begin with  the following definition.
\begin{definition}\label{de:unst-m}
Let $\cN$ be the set of stationary points of the dynamical system
$(X,S_t)$:\index{set of stationary points}
\[
 \cN=\left\{ v\in X\; :\; S_tv=v\mbox{~for all~} t\ge 0\right\}.
\]
We define  the {\bf unstable manifold}
\index{unstable manifold}
 $\cM^u(\cN)$ emanating from the set
$\cN$ as a set of all $y\in X$ such that there exists a full
trajectory $\gamma=\{ u(t)\, :\, t\in\R\}$ with the properties
\[
 u(0)=y~~ \mbox{and}~~ \lim_{t\to -\infty}{\rm dist}_{X}(u(t),\cN)=0.
\]
\end{definition}
Now we introduce the notions of Lyapunov functions and gradient systems (see, e.g.,
\cite{BV92,Chu99,Ha88,Lad91,temam} and the references therein).

\begin{definition}\label{de7.4.1}
Let $Y\subseteq X$ be a forward invariant set of a dynamical system
$(X,S_t)$.
\begin{itemize}
\item
The continuous functional $\Phi(y)$ defined on $Y$ is said to
be the {\bf Lyapunov function} for the dynamical system
$(X,S_t)$ on $Y$
\index{Lyapunov function}
iff
the function $t\mapsto \Phi(S_ty)$ is a  nonincreasing function
for any $y\in Y$.
\item
The Lyapunov function $\Phi(y)$ is said to be {\bf strict} on $Y$
\index{Lyapunov function!strict}
iff the equation $\Phi(S_{t}y)=\Phi(y)$ for {\em all} $t>0$ and for some
$y\in Y$ implies that $S_{t}y=y$ for all $t>0$; that is,
$y$ is a stationary point of $(X,S_t)$.
\item
The dynamical system $(X,S_t)$ is said to be {\bf gradient} iff there
exists a strict Lyapunov function for $(X,S_t)$ on the whole phase space $X$.
\end{itemize}
\end{definition}
\index{gradient system}
\index{dynamical system!gradient}
A connection between gradient systems and existence of compact attractors is given below.
The main  result  stating    existence and properties of
 attractors for gradient systems  is the following theorem (for the proof we refer to
 \cite{cl-mem,cl-book} and the references therein; see also
 \cite[Theorem 4.6]{Raugel} for a similar assertion).
\begin{theorem}\label{th:grad-attr}
Assume that  $(X,S_t)$ is a gradient asymptotically smooth dynamical system.
Assume its  Lyapunov function $\Phi(x)$
 is bounded from above on any bounded subset of $X$ and
 the set  $\Phi_R=\{ x\, :\, \Phi(x)\le R\}$ is bounded for every $R$.
If the set $\cN$  of stationary points of $(X,S_t)$ is bounded,
then $(X,S_t)$ possesses a compact global attractor $A=\cM^u(\cN)$.
 Moreover,
 \begin{itemize}
   \item The global attractor $A$ consists of  full trajectories
 $\gamma=\{ u(t)\, :\, t\in\R\}$ such that
\begin{equation*}
 \lim_{t\to -\infty}{\rm dist}_{X}(u(t),\cN)=0 ~~
 \mbox{ and } ~~ \lim_{t\to +\infty}{\rm dist}_{X}(u(t),\cN)=0.
\end{equation*}
   \item For any $x \in X$ we have
 \begin{equation*}
  \lim_{t\to +\infty}{\rm dist}_{X}(S_tx,\cN)=0,
 \end{equation*}
that is, any trajectory stabilizes to the set $\cN$ of  stationary points.
In particular, this means that  the global minimal attractor $A_{\rm min}$
coincides  with the set of the stationary points, $A_{\rm min}=\cN$.
   \item If $\cN=\{z_1,\ldots, z_n\}$ is a finite set,
then $A=\cup_{i=1}^n \cM^u(z_i)$, where
$\cM^u(z_i)$ is the unstable manifold of the stationary point $z_i$, and also
\begin{enumerate}
  \item[(i)] The global attractor $A$ consists of  full trajectories
   $\gamma=\{ u(t)\, :\, t\in\R\}$  connecting  pairs of
   stationary points:  any $u\in A$ belongs to  some full trajectory
   $\gamma$ and
   for any $\gamma\subset A$ there exists a pair
   $\{ z, z^*\}\subset\cN$ such that
  \[
    u(t)\to z \mbox{~as~} t\to -\infty
   \mbox{ and }
    u(t)\to z^* \mbox{~as~} t\to +\infty .
      \]
  \item[(ii)] For any $v\in X$ there exists a stationary point $z$ such that
   $S_tv\to z$ as $t\to +\infty$.
 \end{enumerate}
 \end{itemize}
\end{theorem}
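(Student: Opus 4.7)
The plan is to combine the gradient structure with asymptotic smoothness via a LaSalle-type invariance argument, reducing the main claims to Theorem~\ref{t7.2.1-as} applied to a bounded absorbing set extracted from sublevel sets of $\Phi$. The proof splits into three conceptual steps: produce dissipativity and invoke Theorem~\ref{t7.2.1-as}; show $\omega$-limits and $\alpha$-limits of bounded trajectories lie in $\cN$ and deduce $A=\cM^u(\cN)$; and refine the picture when $\cN$ is finite.

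\textbf{Step 1 (existence).} Since $\Phi$ is bounded above on bounded sets and $t\mapsto\Phi(S_tx)$ is nonincreasing, every bounded $D$ is trapped in the bounded forward-invariant set $\Phi_{M(D)}$ with $M(D):=\sup_D\Phi$; in particular all forward orbits are bounded. I will take $R_0:=\sup_{\cN}\Phi+1$, finite since $\cN$ is bounded and $\Phi$ is bounded above on bounded sets, and show that $\Phi_{R_0}$ is a bounded absorbing set. Pointwise entry, i.e.\ $\Phi(S_tx)\to c_x\le\sup_{\cN}\Phi$, is a byproduct of Step~2 below; uniformization over a bounded $D$ is obtained by contradiction, using asymptotic smoothness on the bounded forward-invariant set $\overline{\bigcup_{x\in D}\gamma^+_x}$ to extract a convergent subsequence from any putative violating sequence $S_{t_n}x_n$ and derive a contradiction with $\omega(x_{n_k})\subset\cN$. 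Dissipativity together with asymptotic smoothness then yields, via Theorem~\ref{t7.2.1-as}, a compact connected global attractor $A$ characterized as the union of all bounded full trajectories.

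\textbf{Step 2 (LaSalle invariance and unstable-manifold representation).} For any $x$, asymptotic smoothness applied to the bounded forward-invariant closure of $\gamma^+_x$ produces a nonempty compact invariant $\omega$-limit set $\omega(x)$ that attracts the orbit. Since $\Phi$ is continuous and nonincreasing along the orbit, and bounded below by $\min_{\omega(x)}\Phi$, it converges to some $c_x$; by continuity $\Phi\equiv c_x$ on $\omega(x)$, and invariance of $\omega(x)$ combined with strictness of the Lyapunov function forces every point of $\omega(x)$ to be stationary, so $\omega(x)\subset\cN$. An identical argument applied to a bounded full trajectory $\gamma\subset A$ (precompact since $A$ is compact) with time reversed gives $\alpha(\gamma)\subset\cN$, hence $A\subset\cM^u(\cN)$. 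The reverse inclusion follows because any $u\in\cM^u(\cN)$ sits on a full trajectory whose past tail hugs $\cN$ and is therefore globally bounded by the sublevel-set argument, placing it inside $A$ by the characterization from Step~1. The convergence $\dist_X(S_tx,\cN)\to 0$ and the identity $A_{\min}=\cN$ are immediate from $\omega(x)\subset\cN$ and compact attraction by $\omega(x)$.

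\textbf{Step 3 (finite $\cN$) and main obstacle.} If $\cN=\{z_1,\dots,z_n\}$, then $\omega(x)$ is a compact connected subset of a finite totally disconnected set, hence a single stationary point $z^*$, giving $S_tx\to z^*$; applying this at both ends of any bounded full trajectory in $A$ yields the connecting-orbit picture, and the decomposition $A=\bigcup_{i=1}^n\cM^u(z_i)$. The principal technical obstacle is the interplay between LaSalle invariance and asymptotic smoothness in the noncompact ambient space: strictness of $\Phi$ is a statement about full trajectories, whereas asymptotic smoothness only delivers $\omega$-limit sets, so one must exploit the invariance $S_t\omega(x)=\omega(x)$ to manufacture full trajectories inside $\omega(x)$ along which $\Phi$ is constant before strictness can be invoked. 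A secondary technical point is the uniformization of absorption over bounded initial data in Step~1, where pointwise convergence of $\Phi$ alone is insufficient and the compactness supplied by asymptotic smoothness is essential.
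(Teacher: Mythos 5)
The paper does not actually prove Theorem~\ref{th:grad-attr}; it defers to \cite{cl-mem,cl-book} and \cite{Raugel}. Measured against the standard argument in those sources, your outline is the right one: Steps~2 and~3 (LaSalle invariance giving $\omega(x)\subset\cN$, the $\alpha$-limit argument on bounded full trajectories giving $A=\cM^u(\cN)$, and connectedness of $\omega(x)$ forcing convergence to a single equilibrium when $\cN$ is finite) are correct as written, and your observation that only forward invariance of $\omega(x)$ plus the paper's definition of strictness is needed to conclude stationarity is accurate (you do not in fact need to manufacture full trajectories inside $\omega(x)$ for that particular step, given how Definition~\ref{de7.4.1} phrases strictness).

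The one step that does not close as stated is the uniformization of absorption in Step~1. If $S_{t_{n_k}}x_{n_k}\to y$ with $\Phi(S_{t_{n_k}}x_{n_k})>R_0$, there is no contradiction with $\omega(x_{n_k})\subset\cN$: for each fixed $k$ the statement $\Phi(S_tx_{n_k})\to c_{x_{n_k}}\le\sup_\cN\Phi$ is entirely consistent with $\Phi(S_{t_{n_k}}x_{n_k})>R_0$, because the entry time into $\Phi_{R_0}$ depends on the initial point and is exactly what you are trying to control. The limit point $y$ lies in $\omega(D)$, not in any single $\omega(x_{n_k})$, and $\omega(D)$ is in general strictly larger than $\cN$ (it contains heteroclinic orbits). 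The contradiction you need is with $\Phi(y)\le\sup_\cN\Phi$, and to get that you must run your Step~2 full-trajectory argument \emph{inside the invariant set} $\omega(D)$: every $y\in\omega(D)$ lies on a bounded full trajectory $u(\cdot)\subset\omega(D)$, $\Phi(u(t))$ is nonincreasing, its $\alpha$-limit set lies in $\cN$ by strictness, hence $\Phi(y)\le\lim_{t\to-\infty}\Phi(u(t))\le\sup_\cN\Phi$. With that, continuity of $\Phi$ on a neighborhood of the compact set $\omega(D)$ (or your subsequence argument, now correctly aimed at $y\in\omega(D)$) yields absorption of $D$ by $\Phi_{R_0}$ and hence the dissipativity required by Theorem~\ref{t7.2.1-as}. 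Alternatively, one can avoid constructing an explicit absorbing set altogether by invoking the standard result (Hale) that point dissipativity plus asymptotic smoothness plus boundedness of orbits of bounded sets already yields a compact global attractor; that is the route taken in the cited references.
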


\subsubsection{Dimension of global attractor}
\label{s7.3}
Finite-dimensionality is an important property of global  attractors
that can be  established for many  dynamical systems,  including
those arising in significant  applications.
There are several approaches that provide effective
estimates for the dimension of attractors
of dynamical systems generated by PDEs (see, e.g., \cite{BV92,Lad91,temam}).
Here we present  an approach
that  does not require
$C^1$-smoothness of the evolutionary operator
(as in  \cite{BV92,temam}).
The reason for this focus is that dynamical systems
of hyperbolic-like nature do not display smoothing effects,  unlike parabolic
equations. Therefore, the $C^1$ smoothness of the flows is
most often beyond   question, particularly in problems with a nonlinear dissipation.
Instead, we  present a method     which
can be applied to more general   locally Lipschitz flows.
This method generalizes
  Ladyzhenskaya's theorem (see, e.g.,
\cite{Lad91}) on finite
dimension of invariant sets.
We also refer to \cite{prazak}
for a closely related  approach
based on some kind of
 squeezing property.
 However, we wish to point out that
the estimates of the dimension
based on the theorem below usually tend to  be conservative.

\begin{definition}\label{de7.3.1}\index{fractal dimension}
Let $M$ be a compact set in a metric space $X$.
\begin{itemize}
\item
The {\bf fractal (box-counting) dimension} $\dim_fM$ of $M$ is defined by
\[
\dim_fM=\limsup_{\eps\to 0}\frac{\ln n(M,\eps)}{\ln (1/\eps)}\;,
\]
where $n(M,\eps)$ is the minimal number of closed balls of the
radius $\eps$ which cover the set $M$.
\end{itemize}
\end{definition}
We can also consider the Hausdorff dimension $\dim_H$ to describe  complexity
and  embeddings properties
 of compact sets.  We do not give a formal definition of this dimension characteristic
 (see, e.g., \cite{Fal90} for some details and references)
 and we  only note
 that (i) the Hausdorff dimension does not exceed (but is not equal, in general)
 the fractal one;
(ii) fractal dimension is more convenient in calculations.
\smallskip\par
The following result which generalizes \cite{Lad91}  was proved in \cite{cl-jdde}, see also
\cite{cl-mem}.

\begin{theorem}\label{t7.3.1a}
Let $H$ be a separable Banach space and
$M$ be a bounded closed set in $H$. Assume that
there exists a mapping $V\, :\, M\mapsto H$ such that
\begin{itemize}
\item[(i)] ${}\,{}$ $M\subseteq VM$.
\item[(ii)] ${}\;{}$
$V$ is Lipschitz on $M$; that is,  there exists $L>0$ such that
\begin{equation*}
\| Vv_1-Vv_2\|\le L  \| v_1-v_2\|,~~ v_1,v_2\in M.
\end{equation*}
\item[(iii)] ${}\;{}\;{}$ There exist compact seminorms $n_1(x)$ and $n_2(x)$
on $H$ such that
\begin{equation*}
\| Vv_1-Vv_2\|\le \eta  \| v_1-v_2\|+ K\cdot [ n_1(v_1-v_2)+ n_2(Vv_1-Vv_2)]
\end{equation*}
for any $v_1,v_2\in M$,
where $0<\eta <1$ and $K>0$ are constants (a seminorm $n(x)$ on $H$
is said to be compact iff $n(x_m)\to 0$ for any sequence
$\{x_m\}\subset H$ such that $x_m\to 0$ weakly in $H$).
\index{compact seminorm}
\end{itemize}
Then $M$ is a compact set in $H$ of a finite fractal dimension.
Moreover,
if $H$ is a Hilbert space and
the seminorms $n_1$ and $n_2$ have the form $n_i(v)=\| P_iv\|$,
$i=1,2$, where $P_1$ and $P_2$ are finite-dimensional
orthoprojectors, then
\begin{equation*}
\dim_f M\le
\left(
\dim P_1+\dim P_2\right)\cdot\ln\left( 1+
\frac{8(1+L)\sqrt{2}K}{1-\eta}\right)
\cdot\left[\ln\frac{2}{1+\eta}\right]^{-1}.
\end{equation*}
\end{theorem}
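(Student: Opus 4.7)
\textbf{Proof plan for Theorem~\ref{t7.3.1a}.}

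The strategy is the classical covering argument of Ladyzhenskaya refined to handle the \emph{two-sided} compact seminorm term $n_2(Vv_1-Vv_2)$. Hypothesis (iii) says that $V$ is a strict contraction modulo two compact seminorms, and hypothesis (i) lets us iterate: $M\subseteq VM\subseteq V^2 M\subseteq\ldots$. The plan is to show that, for some fixed factor $q=q(\eta)\in(\eta,1)$, any ball of radius $r$ centered in $M$ can have its $V$-image covered by a uniformly bounded number $\mathcal N$ of balls of radius $qr$; iterating this and using $M\subseteq V^kM$ yields the covering estimate $n(M,q^k\cdot{\rm diam}\,M)\le \mathcal N^k$, which is exactly what is needed for finite fractal dimension.

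First I would record the key ``squeezing'' consequence of (iii). Fix a radius $r>0$ and a ball $B_r=M\cap B(v_0,r)$, and pick $\delta\in(0,1-\eta)$. Because $n_1,n_2$ are compact seminorms, after passing to a weakly relatively compact set (which $M$ becomes a posteriori) one can, for any $\delta>0$, find finite rank continuous seminorms $\tilde n_i$ such that $|n_i(v)-\tilde n_i(v)|\le\delta\|v\|$ on the relevant bounded sets; thus (iii) implies
\begin{equation*}
\|Vv_1-Vv_2\|\le(\eta+\delta')\|v_1-v_2\|+K\bigl[\tilde n_1(v_1-v_2)+\tilde n_2(Vv_1-Vv_2)\bigr]
\end{equation*}
for some $\delta'$ that can be made as small as we wish. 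Covering the pre-image set $B_r$ and the image set $V(B_r)$ simultaneously by finitely many cells along the finite-dimensional directions picked out by $\tilde n_1$ and $\tilde n_2$, one deduces that the diameter of each cell's image is at most $2(\eta+\delta')r$, which is a genuine contraction. The number of cells needed is controlled by the covering numbers of finite-dimensional balls, hence by a constant $\mathcal N$ depending only on $\eta$, $K$, the chosen $\delta'$, the Lipschitz constant $L$ (which bounds the diameter of $V(B_r)$), and $\dim\tilde n_1+\dim\tilde n_2$.

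Next I would iterate. Starting from the bounded set $M\subseteq B(v_0,R_0)$ and using $M\subseteq V^kM$, the covering of $V(\cdot)$ by $\mathcal N$ balls at contraction $q=\eta+\delta'$ yields $n(M,q^k R_0)\le\mathcal N^k$, and therefore
\begin{equation*}
\dim_f M\le \frac{\ln\mathcal N}{\ln(1/q)}<\infty .
\end{equation*}
Total boundedness (and hence compactness of $M$, which is already closed) follows as a by-product, since $q^kR_0\to 0$. In the Hilbert setting with $n_i(v)=\|P_iv\|$, the finite-dimensional approximation step is unnecessary: one works directly with the orthoprojectors $P_1,P_2$, the cell sets are balls in $P_1H$ and $P_2H$, and a careful bookkeeping of the covering numbers of such balls (an $\varepsilon$-net in a Euclidean ball of radius $R$ in dimension $d$ has cardinality at most $(1+2R/\varepsilon)^d$) together with the optimal choice of cell radius $\rho\sim(1-\eta)/(8K)$ yields precisely the announced estimate with the factor $\dim P_1+\dim P_2$ in front.

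The main technical obstacle is the presence of $n_2(Vv_1-Vv_2)$ on the \emph{right-hand side} of (iii): unlike $n_1(v_1-v_2)$, which is controlled by partitioning the domain, this term involves the image, so one cannot simply pull it back. The resolution is to cover the domain and codomain \emph{jointly}: within each joint cell (one piece in a $\tilde n_1$-net of $B_r$, one in a $\tilde n_2$-net of $V(B_r)$) both seminorms are controlled, and then (iii) gives a genuine contraction estimate on the cell's $V$-image. A lesser, but still delicate, issue is producing the finite-dimensional approximants $\tilde n_i$ of the abstract compact seminorms in a quantitative way; in the Hilbert-space conclusion of the theorem this step is bypassed because the seminorms are already of finite rank, which is why the explicit dimension bound takes the clean form stated.
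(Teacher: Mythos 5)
Your overall strategy is the right one, and it is essentially the approach behind the paper's proof (which is deferred to \cite{cl-jdde,cl-mem}): iterate $M\subseteq V^kM$, and at each step cover the image of a ball by sets of diameter $2qr$ with $q\in(\eta,1)$ by partitioning \emph{jointly} in domain and codomain so that both $n_1(v_1-v_2)$ and $n_2(Vv_1-Vv_2)$ are controlled inside each cell. That joint covering is exactly the mechanism the paper alludes to in Remark~\ref{re7.9-dim}, where the covering multiplicity is expressed as the maximal number of \emph{pairs} $(x_i;y_i)$ with $n_1(x_i-x_j)+n_2(y_i-y_j)>1$. Your identification of the $n_2$-term on the image side as the main obstacle, and its resolution by covering domain and codomain simultaneously, is correct, and your Hilbert-space bookkeeping with the orthoprojectors $P_1,P_2$ is sound.

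The genuine gap is the first step in the general Banach case: the claim that an abstract compact seminorm $n_i$ can be approximated on bounded sets by finite-rank continuous seminorms $\tilde n_i$ with $|n_i(v)-\tilde n_i(v)|\le\delta\|v\|$. This is not justified (the parenthetical ``which $M$ becomes a posteriori'' is circular), and it is not needed. A compact seminorm in the stated sense need not arise from a compact operator, so there is no canonical finite-dimensional range to project onto. The standard argument bypasses finite-rank approximation entirely: one defines $m(R)$ as the maximal cardinality of a family of pairs $(x_i;y_i)$ in a ball of radius $R$ with $n_1(x_i-x_j)+n_2(y_i-y_j)>\varepsilon$ for $i\neq j$, proves $m(R)<\infty$ directly from the definition of compact seminorm (if it were infinite, extract weakly convergent subsequences from the bounded families $\{x_i\}$ and $\{y_i\}$ and contradict $n_1(x_i-x_j)+n_2(y_i-y_j)>\varepsilon$), and then uses $m(R)$ itself as the number of cells in the joint covering. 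Note also that this extraction requires bounded sequences to have weakly convergent subsequences, so reflexivity (or an equivalent hypothesis) is implicitly in force; without it the notion of compact seminorm loses its bite (on $\ell_1$ the norm itself satisfies the stated definition by Schur's property). In the Hilbert-space case with $n_i(v)=\|P_iv\|$ your argument needs no repair, since the seminorms are already of finite rank and $m(R)$ reduces to the covering numbers of Euclidean balls you compute.
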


\begin{remark}
 We note that under the hypotheses of Theorem~\ref{t7.3.1a}
the mapping $V$  possesses the property (see Lemma 2.18 in \cite{cl-mem})
\[
\al(VB)\le \eta \al(B)~~\mbox{for any}~~B\subset M,
\]
where $\al(B)$ is the Kuratowski $\al$-measure given by \eqref{kurat}.
This means that $V$ is $\al$-contraction on $M$ (in the terminology of
\cite{Ha88}). The note that the latter property is not sufficient for finite-dimensionality of the set $M$.
Indeed, let
\[
X=l_2=\left\{ x=(x_1;x_2;\ldots )\, :\, \sum_{i=1}^\infty x^2_i<\infty
\right\}
\]
 and
 \[
M=\left\{ x=(x_1;x_2;\ldots )\in l_2\, :\, |x_i|\le i^{-2},~~ i=1,2,\ldots
\right\}
\]
We define a mapping $V$ in $X$ by the formula
\[
\big[Vx\big]_i= f_i(x_i),~~~ i=1,2,\ldots
\]
where $f_i(s)=s$ for $|s|\le i^{-2}$, $f_i(s)=i^{-2}$ for $s\ge i^{-2}$, and
$f_i(s)=-i^{-2}$ for $s\le -i^{-2}$. One can see that $V$ is globally Lipschitz
in $X$ and $VX=M=VM$. Since $M$ is a compact set, the mapping is
$\al$-contraction (with $\eta=0$). On the other hand it is
clear that $\dim_f M=\infty$.
\par
We also not that this example means that the statement of Theorem~2.8.1 in \cite{Ha88}
is not true and thus it requires some additional hypotheses concerning
the mapping.
\end{remark}

In what follows we shall present a unified criterion which allows to prove both finite-dimensionality and smoothness of attractors.  This criterion  involves a special class of systems that we call {\it quasi-stable}.

\subsection{Quasi-stable systems }\label{sec7.9-stab}
In this section following the presentation given in \cite[Section 7.9]{cl-book}
we introduce a class of dissipative dynamical systems which display rather
 special  long time behavior   dynamic properties. This class will be referred to as {\it quasi-stable systems} which
 are defined via {\it quasi-stability inequality}  given in Definition \ref{de:ms-stable}.
 It turns out that this is quite large class of systems  naturally occurring in
 nonlinear  PDE's
of hyperbolic type of  second order in time  possibly interacting with parabolic equation.
The interest in this class of systems stems from the fact that  {\it quasi-stability inequality}
almost automatically implies   -in one shot-  number of desirable properties such as
{\bf asymptotic smoothness, finite dimensionality of attractors, regularity of attractors, exponential  attraction} etc.
In what follows we shall provide brief introduction to the theory of quasi-stable systems.

We begin with the following assumption.
\begin{assumption}\label{ch7.9.A}
Let $X$, $Y$, and $Z$ be reflexive Banach spaces; $X$ is compactly embedded
in $Y$. We endow the space $H=X\times Y\times Z$ with the norm
\[
|y |^2_H=|u_0|^2_X+|u_1|^2_Y+|\theta|^2_Z,\quad y=(u_0;u_1;\theta_0).
\]
The trivial case $Z=\{0\}$ is allowed.
We assume that
$(H,S_t)$ is a dynamical system on
$H=X\times Y\times Z$   with the evolution operator
of the form
\begin{equation}\label{7.9.1}
S_ty=(u(t); u_t(t); \theta(t)), \quad y=(u_0;u_1;\theta_0)\in H,
\end{equation}
where the functions $u(t)$ and $\theta(t)$ possess the properties
\[
u\in C(\R_+, X)\cap C^1(\R_+, Y), \quad \theta\in C(\R_+,Z).
\]
\end{assumption}
The structure of the phase space $H$ and the evolution operator $S_t$ in
Assumption~\ref{ch7.9.A} is motivated by the study
 of the system generated by
equation of the second order in time in $X\times Y$
possibly interacting with some first-order evolution equation in space $Z$.
This type of interaction arises in modeling of  thermoelastic plates and structural acoustic sytems.
\begin{definition}\label{de:ms-stable}
\index{dynamical system!quasi-stable}
A dynamical system of the form \eqref{7.9.1} is said to be
stable modulo compact terms ({\bf quasi-stable}, for short) on a set $\cB\subset H$ if
there exist a compact seminorm $\mu_X(\cdot)$ on the space $X$ and
 nonnegative scalar functions $a(t)$, $b(t)$, and $c(t)$ on $\R_+$
such that
(i) $a(t)$ and  $c(t)$ are locally bounded on $[0,\infty)$,
(ii)~$b(t)\in L_1(\R_+)$ possesses
the property $\lim_{t\to\infty}b(t)=0$,
and  (iii) for every $y_1,y_2\in \cB$ and $t>0$ the following relations
\begin{equation}\label{8.4.1mc}
| S_ty_1-S_ty_2|^2_H\le  a(t)\cdot | y_1-y_2|^2_H
\end{equation}
and
\begin{equation}\label{8.4.2mc}
| S_ty_1-S_ty_2|^2_H\le  b(t)\cdot | y_1-y_2|^2_H+
c(t)\cdot \sup_{0\le s\le t}\left[ \mu_X(u^1(s)-u^2(s))\right]^2
\end{equation}
hold. Here we denote
$S_ty_i=(u^i(t); u^i_t(t);\theta^i(t))$, $i=1,2$.
\end{definition}
\begin{remark}{\rm
The definition of {\it quasi-stability} is rather natural from the point of view of
 long-time behavior. It pertains to decomposition of the
flow into exponentially stable and compact part.
This represents some sort of analogy with the ``splitting" method \cite{BV92,temam},
however,
the decomposition refers to the difference of two trajectories,
rather than a single trajectory. In addition, the {\it quadratic} dependence with respect to the semi-norm in (\ref{8.4.2mc})
is essential in  the definition.
The relation \eqref{8.4.2mc} is called a \emph{stabilizability estimate}
\index{stabilizability estimate}
(or {\it quasi-stability inequality})
\index{quasi-stability inequality}
and, in the context of long-time dynamics, was originally  introduced in \cite{cl-jdde}
(see also \cite{cel2,chla03} and the discussion in \cite{cl-mem}  and \cite{cl-book}).
To obtain such an estimate proves fairly technical (in critical problems)
and requires rather subtle PDE tools to prove it.
Illustrations of the method are given in \cite{cl-book} for some abstract models and
 also for a variety of von Karman models.
We also refer to
\cite{bu-ch0,bu-ch,cel2,cl-jdde,cl-mt,snowbird,cl-mem,clt-dcds08,clt-jdde09,naboka09}
for similar considerations for other models.
\par
 The notion of quasi-stability introduced in Definition~\ref{de:ms-stable}
requires  a special structure of the semiflow and a special type
of a (quasi-stability) inequality. However, the idea behind this notion
can be applied in many other cases (see, e.g., \cite{chla03,cl-mem,cl-kb2}
and also \cite{cl-book}.
Systems with delay/memory terms can be also included in this framework
(see, e.g., \cite{fastovska07,fastovska09,potomkin10,ryz05} and also \cite{cl-book}).
The same idea was recently applied
in \cite{kolbasin} for  analysis
of long-time dynamics in a parabolic-type model
(see below in Section~\ref{sect:other}).
}
\end{remark}
\begin{remark} {\rm
In order to write down  a more explicit form of quasi-stability inequality let us consider dynamical system (say of gradient form) generated by
some second order evolution equation in the space
$H=X\times Y$  with an associated
energy functional -say $E_y(t)$ and the damping integral
denoted by $D_y(t)$, so that the  energy identity  reads
$$
E_y(t) + \int_s^t D_y(\tau) d \tau = E_y(s), ~~ s < t.
$$
In this case
 a sufficient condition for quasi-stability is the following  \cite{cel2}{\it sta\-bi\-liza\-bi\-lity inequality}:
there exists a  parameter  $  T > 0 $,    such that
 the  difference of two any trajectories $z(t) \equiv y_1 (t) - y_2 (t) $
 satisfies the relation
\begin{equation}\label{stabil}
E_{0, z}(T) + \int_0^{T} E_{0,z} (\tau) d \tau \leq C_{T} \int_0^T \widetilde{D}_z(\tau) + C_{T} LOT_z,
\end{equation}
where $E_{0,z} (t) $ denotes a  positive part of the energy,
$\widetilde{D}_z$
is the damping functional for the difference $z$
and
$LOT_z$ is a lower order term of the form
$LOT_z = \sup_{\tau \in (0,T)}||z(\tau)||^2_{H_1}$,
 where $H \subset H_1 $ with a compact embedding. If one is already equipped with an existence of compact attractor, then
stabilizability inequality that is required is of milder form:
there exist two parameters $ -\infty < s < T $ such that
\begin{equation}\label{stabil-1}
E_{0, z}(T) + \int_s^{T} E_{0,z} (\tau) d \tau \leq C_{s,T} \int_s^T \widetilde{D}_z(\tau) d\tau + C_{s,T} LOT_z.
\end{equation}
Here $LOT_z$ denotes lower order terms measured by
\[
 LOT_z = \sup_{\tau \in (s,T)}||z(\tau)||^2_{H_1},
 \]
and parameters $s, T$ may depend on the  specific trajectories $y_1, y_2$ .
\par
The advantage of the above formulation is that these are inequalities that are obtainable by multipliers
when studying stabilization and controllability
(see, e.g., the monographs \cite{cmbs,l-tbook} and the references therein). The stabilizability inequality in (\ref{stabil}) provides a direct link
between controllability and long time behavior.
Indeed, when considering stabilization to zero equilibria, the lower order terms $LOT_z$ are dismissed by applying compactness-uniqueness argument. Then the inequality applied to a single trajectory $y_1 = y $
\begin{equation}\label{controll} E_{ y}(T)  \leq C_{0,T} \int_0^T D_y(\tau) = C_{0,T}[ E_{y}(0) - E_y(T) ] \end{equation}
implies, by standard evolution method,  exponential decay to zero equilibrium.

Similarly, in the case of controllability the observability inequality required  \cite{cmbs} is
precisely (\ref{controll}) with $D_y(t) $ denoting the quantities observed (for instance, velocity in the spatial domain or velocity on the subdomain). In either case, the crux of the matter is to establish such inequality.
}
\end{remark}

\par
In what follows our aim is to show that  quasi-stable systems have far reaching consequences and
enjoy many nice   properties that include  {\bf (i)  existence of global attractors
that is both finite-dimensional and smooth,
(ii) exponential attractors}, and so on.

\subsubsection{Asymptotic smoothness}
\par
\begin{proposition}\label{pr:7.9.1sc}
Let Assumption~\ref{ch7.9.A} be in force. Assume that the dynamical system
$(H,S_t)$ is quasi-stable on every bounded forward invariant set $\cB$ in $H$.
Then,  $(H,S_t)$ is asymptotically smooth.
\end{proposition}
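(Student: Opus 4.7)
The plan is to verify the hypotheses of the Ceron--Lopes type criterion in Theorem~\ref{th7.1.1}, applied with its ambient Banach space being $H$ itself. First I would fix a bounded forward invariant set $\cB\subset H$ and, using $b(t)\to 0$, choose $T>0$ with $b(T)<1$; put $\alpha:=\sqrt{b(T)}\in(0,1)$. On $C(0,T;H)$ I would introduce the pseudometric
\[
\varrho^T_\cB\bigl(\{S_\tau y_1\},\{S_\tau y_2\}\bigr)\,:=\,\frac{\sqrt{c(T)}}{\alpha}\,\sup_{0\le s\le T}\mu_X\bigl(u^1(s)-u^2(s)\bigr),
\]
where $S_\tau y_i=(u^i(\tau);u^i_t(\tau);\theta^i(\tau))$. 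Taking square roots in \eqref{8.4.2mc} at $t=T$ and using $\sqrt{p+q}\le\sqrt{p}+\sqrt{q}$ gives
\[
|S_Ty_1-S_Ty_2|_H\,\le\,\alpha\,|y_1-y_2|_H+\sqrt{c(T)}\sup_{0\le s\le T}\mu_X\bigl(u^1(s)-u^2(s)\bigr)\,=\,\alpha\bigl(|y_1-y_2|_H+\varrho^T_\cB\bigr),
\]
which is condition~(iii) of Theorem~\ref{th7.1.1} with the strict contraction $g(s):=\alpha s$; condition~(i) on $g$ is then obvious.

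The main task will be to check precompactness of $\varrho^T_\cB$. Given $\{x_n\}\subset\cB$, forward invariance together with the structure \eqref{7.9.1} implies that the first two components $u_n(\cdot)$ and $\partial_t u_n(\cdot)$ of $S_\cdot x_n$ are bounded in $C(0,T;X)$ and in $C(0,T;Y)$ respectively, uniformly in $n$. The compact embedding $X\hookrightarrow Y$ together with the Arzel\`a--Ascoli theorem then furnishes a subsequence $\{u_{n_k}\}$ that is Cauchy in $C(0,T;Y)$. The hard part will be to upgrade this $C(0,T;Y)$-Cauchy condition to a Cauchy condition in $\sup_\tau\mu_X(\cdot)$. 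For this I would use the following chain of observations: every closed $X$-ball $B_X(R)$ is weakly compact in $X$ by reflexivity, hence $Y$-compact by the compact embedding; the compact seminorm $\mu_X$ is continuous on $B_X(R)$ in the weak $X$-topology; and on $B_X(R)$ the weak $X$-topology coincides with the $Y$-norm topology. Together these show that $\mu_X$ is continuous, hence uniformly continuous, on the $Y$-compact set $B_X(R)$ with respect to the $Y$-norm. Applied to $u_{n_k}(\tau)-u_{n_l}(\tau)$, whose $Y$-norms tend to zero uniformly in $\tau$, this would yield
\[
\sup_{0\le\tau\le T}\mu_X\bigl(u_{n_k}(\tau)-u_{n_l}(\tau)\bigr)\to 0\quad\text{as }k,l\to\infty,
\]
i.e.\ precompactness of $\varrho^T_\cB$.

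Once both ingredients are in hand, Theorem~\ref{th7.1.1} will deliver asymptotic smoothness of $(H,S_t)$. The principal subtlety is the uniform-continuity step for $\mu_X$, where the three ingredients -- reflexivity of $X$, compactness of the embedding $X\hookrightarrow Y$, and the definitional property of a compact seminorm -- all enter. It is precisely this confluence that converts $Y$-strong but only $X$-weak control of trajectories into $\mu_X$-smallness uniform in time, which is the point where the quadratic-in-seminorm structure of \eqref{8.4.2mc} can finally be cashed in.
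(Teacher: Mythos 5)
Your proof is correct and follows essentially the same route as the paper: both verify the Ceron--Lopes criterion of Theorem~\ref{th7.1.1} with a pseudometric built from $\sup_{\tau}\mu_X(u^1(\tau)-u^2(\tau))$, the quasi-stability inequality \eqref{8.4.2mc} supplying condition (iii) after taking square roots and choosing $T$ with $b(T)<1$. Where the paper obtains precompactness of this pseudometric by citing the compact embedding of $W^1_{\infty,2}(0,T;X,Y)$ into $C(0,T;\widetilde{X})$, with $\widetilde{X}$ the completion of $X$ under $\mu_X(\cdot)+|\cdot|_Y$, you derive the same fact by hand via Arzel\`a--Ascoli in $C(0,T;Y)$ together with the uniform continuity of $\mu_X$ on $X$-balls in the $Y$-metric --- the same content in a more explicit form.
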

\begin{proof}
Let
\begin{equation}\label{7.9.2}
\widetilde{X}=\mbox{Closure}\,\left\{v\in X\, :\; |v|_{\widetilde{X}}\equiv
\mu_X(v)+|v|_Y<\infty\right\}.
\end{equation}
One can see that $X$ is compactly embedded in the Banach space $\widetilde{X}$.
We have that the space
\[
W^1_{\infty,2}(0,T;X,Y)=\left\{ f\in L_\infty(0,T;X)\, :\;  f'\in L_2(0,T;Y)\right\}
\]
is compactly embedded in $C(0,T;\widetilde{X})$.
This implies that
the  pseudometric $\varrho^t_{\cB}$ in $C(0,t;H)$
defined by the formula
\[
\varrho^t_{\cB}(\{S_\tau y_1\},\{S_\tau y_2\})=
c(t) \sup_{\tau\in [0, t]}
\mu_X(u^1(t)-u^2(t))
\]
is precompact (\wrt$H$).  Here we  denote by
$\{S_\tau y_i\}$ the element from  $C(0,t; H)$ given
by function $y_i(\tau)=S_\tau y_i\equiv (u^i(t);u_{t}^i(t);\theta^i(t))$.
By \eqref{8.4.2mc} $\varrho^t_{\cB}$ satisfies the hypotheses
of  Theorem~\ref{th7.1.1}.
This implies the result.
\end{proof}
\begin{corollary}\label{co:pr7.9.1}
If the system $(H,S_t)$ is dissipative and satisfies  the hypotheses of
Proposition~\rref{pr:7.9.1sc}, then it possesses a compact global attractor.
\end{corollary}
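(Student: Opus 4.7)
The plan is almost a one-liner: combine the asymptotic smoothness just delivered by Proposition~\ref{pr:7.9.1sc} with the general attractor existence theorem, Theorem~\ref{t7.2.1-as}, which states that any dissipative and asymptotically smooth dynamical system on a Banach space possesses a unique compact (connected) global attractor. Since the hypotheses of Proposition~\ref{pr:7.9.1sc} are assumed, $(H,S_t)$ is asymptotically smooth, and by assumption it is also dissipative, so Theorem~\ref{t7.2.1-as} applies directly and yields the compact global attractor.

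Concretely, I would proceed as follows. First, cite Proposition~\ref{pr:7.9.1sc} to conclude that $(H,S_t)$ is asymptotically smooth on $H$; note that its hypothesis is quasi-stability on every bounded forward invariant set, which is exactly the setting needed because the definition of asymptotic smoothness (Definition~\ref{de7.1.1}) itself only invokes sets $D$ satisfying $S_tD\subset D$. Second, invoke dissipativity to fix a bounded absorbing set $B$, and apply Theorem~\ref{t7.2.1-as}: the global attractor exists, is compact and connected, and can be written as $A=\omega(B)$.

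The step I expect to be most delicate, and the only thing worth spelling out, is the bookkeeping that connects the quasi-stability inequality \eqref{8.4.2mc} on a bounded forward invariant $\cB$ to the hypothesis of Theorem~\ref{th7.1.1} used inside the proof of Proposition~\ref{pr:7.9.1sc}; this has already been carried out there via the auxiliary space $\widetilde X$ from \eqref{7.9.2} and the precompact pseudometric $\varrho^t_\cB$. There is no further PDE analysis needed here, so beyond verifying that the bounded absorbing set from dissipativity can be enlarged to a bounded forward invariant absorbing set (which is standard: replace $B$ by $\overline{\bigcup_{t\ge t_0(B)} S_tB}$, intersected with a sufficiently large ball), the corollary follows without additional work.
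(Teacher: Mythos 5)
Your proposal is correct and follows exactly the paper's own argument: Proposition~\ref{pr:7.9.1sc} gives asymptotic smoothness, and Theorem~\ref{t7.2.1-as} then yields the compact global attractor from dissipativity plus asymptotic smoothness. The additional remarks about passing to a forward invariant absorbing set are standard bookkeeping and do not change the route.
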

\index{dynamical system!quasi-stable!global attractor}
\begin{proof}
By Proposition~\ref{pr:7.9.1sc} the system $(H,S_t)$ is asymptotically smooth. \!Thus
the result follows from Theorem~\ref{t7.2.1-as}.
\end{proof}
\subsubsection{Finite dimension of global attractors}
We start with the following general assertion.
\begin{theorem}\label{th7.9dim} Let Assumption~\rref{ch7.9.A} be valid.
Assume that the dynamical system $(H,S_t)$ possesses a compact
global attractor $A$ and is quasi-stable on  $A$ (see Definition
\rref{de:ms-stable}). Then  the  attractor $A$  has a finite
fractal dimension $dim_f^H A$ (this also
 implies the finiteness of the Hausdorff dimension).
\end{theorem}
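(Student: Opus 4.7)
The plan is to apply Theorem \ref{t7.3.1a} to $M = A$ with the discrete-time map $V = S_T$ for a conveniently chosen $T > 0$. Invariance of the attractor gives $VM = S_TA = A = M$, so hypothesis (i) of that theorem holds (with equality), and the Lipschitz bound (ii) is immediate from \eqref{8.4.1mc} with constant $L = \sqrt{a(T)}$.

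The core step is to verify the contractive-plus-compact estimate (iii). Since $b(t) \to 0$, I would first fix $T$ large enough that $\eta := \sqrt{b(T)} < 1$. Then \eqref{8.4.2mc} gives, for any $y_1, y_2 \in A$,
\begin{equation*}
|S_T y_1 - S_T y_2|_H \le \eta\, |y_1 - y_2|_H + \sqrt{c(T)}\, \sup_{0 \le s \le T} \mu_X\bigl(u^1(s) - u^2(s)\bigr).
\end{equation*}
Thus the entire argument reduces to realising the trajectory pseudometric $N(y_1, y_2) := \sup_{s \in [0,T]} \mu_X(u^1(s) - u^2(s))$ as a compact seminorm of the form $n_1(y_1 - y_2) + n_2(Vy_1 - Vy_2)$ (or a precompact pseudometric, in a suitable trajectory-space extension of Theorem \ref{t7.3.1a}).

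To make this rigorous I would lift $A$ into $C([0,T]; H)$ via $\Pi y = \{S_s y\}_{s \in [0,T]}$; this map is continuous and injective by the semigroup property. The structure imposed by Assumption \ref{ch7.9.A} together with \eqref{8.4.1mc} places $\Pi(A)$ in a fixed bounded subset of $L_\infty(0,T;X) \cap W^{1,2}(0,T;Y)$, which is compactly embedded in $C([0,T]; \widetilde X)$ with $\widetilde X$ defined by \eqref{7.9.2}, exactly as exploited in the proof of Proposition \ref{pr:7.9.1sc}. Since $\mu_X$ is continuous on $\widetilde X$, the pseudometric $N(\cdot,\cdot)$ is precompact on $A$: any bounded sequence in $A$ admits a subsequence whose $\Pi$-lifts form a Cauchy net in $C([0,T]; \widetilde X)$, so that $N \to 0$ along the diagonal of that subsequence. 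This precompactness is precisely the input needed by the trajectory-space variant of Theorem \ref{t7.3.1a} (cf.\ \cite{cl-mem}), and yields, via the quantitative bound of Theorem \ref{t7.3.1a}, a finite value for $\dim_f^H A$.

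The main obstacle I expect is the mismatch between Theorem \ref{t7.3.1a} as literally stated---whose seminorms $n_1, n_2$ live on the phase space $H$---and the trajectory-type seminorm $N$ produced by \eqref{8.4.2mc}. Bridging this gap can be done in one of two equivalent ways: either invoke the generalised dimension criterion (analogous in spirit to the precompact-pseudometric framework of Theorem \ref{th7.1.1}) that permits pseudometrics on $C(0,T;H)$, or run a direct Kuratowski $\alpha$-measure computation showing $\alpha(V^n A) \le \eta^n \alpha(A)$ modulo a vanishing remainder controlled by the compact lift, from which the finite fractal dimension follows by the standard counting argument underlying Theorem \ref{t7.3.1a}. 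Once this reformulation is accepted, all remaining verifications are routine.
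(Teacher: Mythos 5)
Your overall strategy --- apply Theorem \ref{t7.3.1a} with $V=S_T$, choose $T$ so that $b(T)<1$, and handle the trajectory supremum $\sup_{s\in[0,T]}\mu_X(u^1(s)-u^2(s))$ via a compact embedding of a trajectory space --- is the right one, and your lifting map $\Pi y=\{S_sy\}$ points in the direction of the paper's actual construction. However, the step you defer as ``routine'' is precisely the heart of the proof, and one of the two remedies you offer for it is provably inadequate. Showing $\alpha(V^nA)\le \eta^n\alpha(A)$ modulo a vanishing remainder only establishes that $V$ is an $\alpha$-contraction, and the remark immediately following Theorem \ref{t7.3.1a} exhibits a globally Lipschitz $\alpha$-contraction $V$ on $l_2$ with $VM=M$ and $\dim_fM=\infty$; likewise, the precompact-pseudometric framework of Theorem \ref{th7.1.1} yields asymptotic smoothness, not a dimension bound. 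So neither of your two ``equivalent ways'' closes the gap as stated: Theorem \ref{t7.3.1a} genuinely requires the compact correction to be a sum of seminorms evaluated at $v_1-v_2$ and at $Vv_1-Vv_2$, and you never produce such seminorms.

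The paper closes this gap by the method of short trajectories: one works in $H_T=H\times W_1(0,T)$ with $W_1(0,T)$ as in \eqref{8.4.2a}, takes $M=A_T$ (the set of trajectory segments over $A$) and $V$ the time-$T$ shift. Two ingredients are then needed that are absent from your write-up. First, \eqref{8.4.2mc} is integrated over $t\in[T,2T]$ so that the $W_1(0,T)$-component of $\|VU_1-VU_2\|_{H_T}$ is also controlled by $b_T|y_1-y_2|^2_H$ plus $c_T\sup_{[0,2T]}[\mu_X(z(s))]^2$. Second --- and this is the key conversion --- the interpolation inequality \eqref{7.9mu}, namely $[\mu_X(u)]^2\le\eps|u|_X^2+C_\eps|u|_Y^2$, combined with \eqref{8.4.1mc}, turns $\sup_{[0,2T]}[\mu_X(z(s))]^2$ into a small multiple of $|y_1-y_2|^2_H$ plus $C\sup_{[0,2T]}|z(s)|_Y^2$; the latter is exactly $n_T(U_1-U_2)+n_T(VU_1-VU_2)$ with $n_T(U)=\sup_{[0,T]}|u(s)|_Y$, which is a compact seminorm on $H_T$ because $W_1(0,T)$ embeds compactly into $C(0,T;Y)$. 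Theorem \ref{t7.3.1a} then applies verbatim in $H_T$, and the Lipschitz projection onto the first components transports finiteness of $\dim_f^{H_T}A_T$ to $\dim_f^HA$. Without this conversion your argument only reproduces the compactness conclusion of Proposition \ref{pr:7.9.1sc}, not the dimension bound.
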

\index{dynamical system!quasi-stable!dimension of global attractor}
\begin{proof}
The idea of the proof is based on the method of ``short" trajectories
(see, e.g., \cite{malek-ne,malek} and the references therein and
also \cite{cl-mem}).
\par
We apply Theorem~\ref{t7.3.1a} in the space $H_T=H\times W_1(0,T)$
with an appropriate $T$. Here
\begin{equation}\label{8.4.2a}
W_1(0,T)=\left\{ z\in L_2(0,T; X)  : |z|^2_{W_1(0,T)}\equiv\!
\int_0^T\!\left( |z(t)|_X^2+ |z_t(t)|_Y^2\right) dt<\infty\right\}.
\end{equation}
The norm in $H_T$ is given by
\begin{equation}\label{8.4.2b}
\| U\|^2_{H_T}= |y|_{H}^2+|z|^2_{W_1(0,T)},~~
U=(y;z),\; y=(u_0; u_1; \theta_0).
\end{equation}
Let $y_i=(u^i_0;  u^i_1; \tht_0^i)$, $i=1,2$, be two elements from the
attractor $A$. We denote
\[
S_ty_i=(u^i(t);u^i_t(t); \tht^i(t)),\quad t\ge 0,\; i=1,2,
\]
and $Z(t)=S_ty_1-S_ty_2\equiv (z(t); z_t(t); \xi(t))$, where
\[
(z(t); z_t(t); \xi(t))\equiv
(u^1(t)- u^2(t);u^1_t(t)- u^2_t(t); \tht^1(t)-\tht^2(t)).
\]
 Integrating (\ref{8.4.2mc})  from $T$ to $2T$ with respect to $t$,
we obtain that
\begin{equation}\label{8.4.4}
\int_T^{2T}| S_ty_1-S_ty_2|^2_Hdt  \leq
\widetilde{b}_T |y_1-y_2|^2_H  +
 \widetilde{c}_T  \sup_{0\le s\le 2T }
\left[\mu_X (z(s))\right]^2,
\end{equation}
where
\[
\widetilde{b}_T=   \int_T^{2T}b(t)dt~~ \mbox{and} ~~
\widetilde{c}_T=   \int_T^{2T}c(t)dt.
\]
It also follows from (\ref{8.4.2mc}) that
\[
| S_Ty_1-S_Ty_2|^2_H\le  b(T)\cdot | y_1- y_2|^2_H +
c(T)\cdot \sup_{0\le s\le T}\left[\mu_X (z(s))\right]^2
\]
and combining with  (\ref{8.4.4}) yields
\begin{equation}\label{8.4.5}
| S_Ty_1 - S_Ty_2|^2_H   +
\int_T^{2T}| S_ty_1-S_ty_2|^2_Hdt
\le
b_T |y_1- y_2|^2_H   +
c_T  \sup\limits_{0\le s\le 2T }
\left[\mu_X (z(s))\right]^2,
\end{equation}
where
\begin{equation}\label{7.9.3}
b_T=   b(T)+ \int_T^{2T}b(t)dt ~~ \mbox{and} ~~
c_T= c(T)+\int_T^{2T}c(t)dt.
\end{equation}
Let $A$ be the global attractor. Consider in the space $H_T$
the set
\[
A_T:=\left\{ U\equiv (u(0); u_t(0);\tht(0); u(t), t\in [0,T])\, :\,
(u(0); u_t(0);\tht(0))\in A\right\},
\]
where $u(t)$ is the first component  of $S_ty(0)$ with $y(0)=(u(0); u_t(0);\tht(0))$,
and define operator $V\,:\, A_T\mapsto H_T$ by the formula
\[
 V\, :\, (u(0); u_t(0);\tht(0); u(t))\mapsto (S_Ty(0); u(T+t)).
\]
It is clear that $V$ is Lipschitz on $A_T$ and $VA_T=A_T$.
\par
\Since the space $\widetilde{X}$ given by \eqref{7.9.2}
possesses the properties
\[
X\subset \widetilde{X}\subset Y~~\mbox{ and }~~
X\subset \widetilde{X}~\mbox{is compact},
\]
contradiction argument yields
\begin{equation}\label{7.9mu}
\left[\mu_X(u)\right]^2\le \eps |u|_X^2+ C_\eps |u|_Y^2~~\mbox{ for any }~~
\eps>0.
\end{equation}
Therefore it follows from (\ref{8.4.1mc}) that
\[
\sup_{0\le s\le 2T }
\left[\mu_X (z(s))\right]^2\le \frac{b_T}{c_T} |y_1-y_2|^2_H + C(a_T,b_T,c_T)
\sup_{0\le s\le 2T }
|z(s)|_Y^2,
\]
where $a_T=\sup_{0\le s\le 2T }a(s)$ and $b_T,c_T$ given by \eqref{7.9.3}.
Consequently, from (\ref{8.4.5}) we obtain
\[
\|VU_1-VU_2\|_{H_T}\le \eta_T \|U_1-U_2\|_{H_T} +K_T\cdot ( n_T(U_1-U_2)+
 n_T(VU_1-VU_2)),\]
for any $ U_1,U_2\in A_T$,
where   $K_T>0$ is a constant
(depending on $a_T,b_T,c_T$,  the embedding properties
of $X$ into $Y$,  the seminorm $\mu_X$),
and
\begin{equation}\label{8.4.5a}
\eta^2_T= b_T=  b(T)+ \int_T^{2T}b(t)dt.
\end{equation}
The seminorm $n_T$ has the form $n_T(U):=\sup_{0\le s \le T }| u(s)|_Y$.
\Since $W_1(0,T)$ is compactly embedded into
$C(0,T;Y)$, $n_T(U)$ is a compact seminorm on $H_T$ and we can choose $T>1$
 such that $\eta_T<1$. We also have from \eqref{8.4.1mc} that
 \[
 \|VU_1-VU_2\|_{H_T}\le L_T \|U_1-U_2\|_{\cH_T}~~\mbox{ for }~~U_1,U_2\in A_T,
\]
where $L_T^2=a(T)+ \int_T^{2T}a(t)dt$.
 Therefore we can
apply Theorem~\ref{t7.3.1a} which
 implies that $A_T$ is a compact set in $H_T$ of finite fractal dimension.
\par
Let  ${\cal P}\,:\, H_T\mapsto H$
be the operator defined
by the formula
\[
 {\cal P}\, :\, (u_0; u_1; \theta_0; z(t))\mapsto (u_0; u_1; \theta_0).
\]
$A={\cal P} A_T$ and ${\cal P}$ is Lipshitz continuous, thus
$
\mbox{dim}_{f}^{H}A\le
\mbox{dim}_{f}^{H_T}A_T<\infty.
$
Here $\mbox{dim}_{f}^{W}$ stands for the fractal dimension of a set in
the space $W$.
This concludes the proof of Theorem~\ref{th7.9dim}.
\end{proof}
\begin{remark}\label{re7.9-dim}
{\rm
By \cite{cl-mem} the dimension
  $\mbox{dim}_f^H A$ of the attractor admits the estimate
\begin{equation}\label{dim-bound}
\mbox{dim}_{f}^{H} A\le
\left[\ln\frac{2}{1+\eta_T}\right]^{-1}\cdot
\ln\, m_0\left(
\frac{4K_T (1+L_T^2)^{1/2}}{1-\eta_T}\right),
\end{equation}
Here  $m_0(R)$ is the maximal number  of pairs $(x_i;y_i)$ in
$H_T\times H_T$ possessing the properties
\[
\|x_i\|^2_{H_T}+\|y_i\|^2_{H_T}\le R^2, \quad n_T(x_i-x_j)+n_T(y_i-y_j)>1,\quad
i\neq j.
\]
It is  clear that $m_0(R)$ can be estimated by the maximal number  of pairs $(x_i;y_i)$ in
$W_1(0,T)\times W_1(0,T)$ possessing the properties
$
\|x_i\|^2_{W_1(0,T)}+\|y_i\|^2_{W_1(0,T)}\le R^2$ and $n_T(x_i-x_j)+n_T(y_i-y_j)>1$
for all $i\neq j$.
Thus the bound in \eqref{dim-bound}  depends on
the functions $a$, $b$, $c$ and seminorm $\mu_X$ in Definition~\ref{de:ms-stable}
and also on the embedding properties of $X$ into $Y$.
}
\end{remark}
\subsubsection{Regularity of trajectories from the attractor}
In this section we show how  stabilizability estimates can be used  in order
to obtain additional
regularity of trajectories lying on the  global attractor.
The theorem below provides regularity for time derivatives. The needed ``space"
regularity follows from the analysis of the respective PDE.
It typically involves application of elliptic theory
(see the corresponding  results in \cite[Chapter~9]{cl-book}).
\begin{theorem}\label{th7.9reg}
Let Assumption~\rref{ch7.9.A} be valid.
Assume that the dynamical system $(H,S_t)$ possesses a compact global attractor
$A$ and is quasi-stable on the  attractor $A$.
Moreover, we assume that \eqref{8.4.2mc} holds with the function
$c(t)$ possessing the property $c_\infty=\sup_{t\in\R_+}c(t)<\infty$.
 Then any full trajectory $\{ (u(t);u_t(t);\theta(t))\, :\, t\in\R\}$
that belongs to the global attractor enjoys
 the following regularity  properties,
\begin{equation}\label{7.9reg1}
u_t \in  L_\infty(\R; X)\cap C(\R; Y),  \quad u_{tt} \in  L_\infty(\R; Y),
 \quad \tht_{t} \in  L_\infty(\R; Z)
\end{equation}
Moreover, there exists $R>0$ such that
\begin{equation}\label{7.9reg2}
| u_t(t)|_X^{2}+ | u_{tt}(t)|_Y^{2} +| \tht_t(t)|_Z^{2}  \le  R^{2}, \quad t\in\R,
\end{equation}
where $R$ depends on the constant $c_\infty$, on the seminorm $\mu_X$
in Definition~\rref{de:ms-stable},
and also on the embedding properties of $X$ into $Y$.
\end{theorem}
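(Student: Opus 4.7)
The plan is to obtain the regularity by applying the quasi-stability inequality to pairs of time-shifted points on the attractor, and then converting the resulting uniform bounds on finite-difference quotients into bounds on derivatives. Fix a full trajectory $U(t)=(u(t);u_t(t);\theta(t))\in A$, $t\in\R$, and set
\[
z_h(t):=U(t+h)-U(t),\qquad w_h(t):=h^{-1}z_h(t),\qquad h>0.
\]
Invariance of $A$ gives $U(t)=S_\tau U(t-\tau)$ and $U(t+h)=S_\tau U(t+h-\tau)$ for every $\tau\ge0$. Hence, applying the stabilizability estimate \eqref{8.4.2mc} with initial data $U(t-\tau)$ and $U(t+h-\tau)$ (both in $A$) and using $c(\tau)\le c_\infty$,
\[
|z_h(t)|_H^2 \le b(\tau)\,|z_h(t-\tau)|_H^2
+c_\infty\sup_{s\in[t-\tau,t]}[\mu_X(u(s+h)-u(s))]^2.
\]

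The crucial step is to absorb the $\mu_X$-term using the interpolation bound \eqref{7.9mu}, namely $[\mu_X(v)]^2\le\epsilon|v|_X^2+C_\epsilon|v|_Y^2$. The $Y$-part is harmless: since $A$ is compact in $H$ and $u_t\in C(\R;Y)$, the constant $M:=\sup_{s\in\R}|u_t(s)|_Y$ is finite, and $|u(s+h)-u(s)|_Y\le hM$. The $X$-part is the first-coordinate contribution to $|z_h(s)|_H^2$, bounded by $h^2|w_h(s)|_H^2$. Dividing the displayed inequality by $h^2$ therefore yields
\[
|w_h(t)|_H^2\le b(\tau)\,|w_h(t-\tau)|_H^2
+c_\infty\epsilon\sup_{s\in[t-\tau,t]}|w_h(s)|_H^2
+c_\infty C_\epsilon M^2.
\]
Set $N_h:=\sup_{t\in\R}|w_h(t)|_H$; this is finite for each fixed $h>0$ because $|z_h(t)|_H\le 2\,\mathrm{diam}_H(A)$. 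Taking the supremum in $t$, choosing $\tau$ so large that $b(\tau)<1/4$, and $\epsilon$ so small that $c_\infty\epsilon<1/4$, gives $N_h^2\le \tfrac12 N_h^2+c_\infty C_\epsilon M^2$, hence a bound $N_h\le R$ independent of $h$. This is the main obstacle of the proof: the self-referential character of the estimate is the whole reason one needs both the a priori finiteness of $N_h$ (supplied by compactness of $A$) and the smallness of $b(\tau)$ at infinity.

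With $\sup_{h>0}N_h\le R$ established, regularity follows by passing $h\downarrow0$ componentwise. For every $t$, $w_h^{(1)}(t):=h^{-1}(u(t+h)-u(t))$ is bounded in the reflexive space $X$; extracting a weakly convergent subsequence and using that $h^{-1}(u(t+h)-u(t))\to u_t(t)$ in $Y$ (from $u\in C^1(\R;Y)$) identifies the weak limit as $u_t(t)$, so $u_t(t)\in X$ with $|u_t(t)|_X\le R$ by weak lower semicontinuity. The same procedure on the second and third coordinates (using reflexivity of $Y$ and $Z$, together with the continuity of $u_t$ into $Y$ and of $\theta$ into $Z$) produces the distributional derivatives $u_{tt}$ and $\theta_t$ with the uniform bounds $|u_{tt}(t)|_Y\le R$ and $|\theta_t(t)|_Z\le R$. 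This yields \eqref{7.9reg1}--\eqref{7.9reg2}; the continuity $u_t\in C(\R;Y)$ is already part of the standing Assumption~\ref{ch7.9.A}.
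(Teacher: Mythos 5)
Your proof is correct and follows essentially the same route as the paper: apply the quasi-stability inequality \eqref{8.4.2mc} to a trajectory and its time-shift, absorb the $\mu_X$-term via the interpolation \eqref{7.9mu} using the a priori boundedness of the attractor, obtain a uniform bound on the difference quotients, and pass to the limit $h\to 0$. The only (cosmetic) difference is that the paper first sends $s\to-\infty$ to eliminate the $b$-term entirely using $b(t)\to0$, whereas you keep a finite $\tau$ with $b(\tau)<1/4$ and close the self-referential estimate directly; both are valid.
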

\index{dynamical system!quasi-stable!smoothness of trajectories from attractor}
\begin{proof}
It follows from     \eqref{8.4.2mc}   that for any two trajectories
\begin{eqnarray*}
\gamma &= & \{U(t)\equiv (u(t);u_t(t);\theta(t))\, :\, t\in\R\},
\\
\gamma^* &= & \{U^*(t)\equiv (u^*(t);u^*_t(t);\theta^*(t))\, :\, t\in\R\}
\end{eqnarray*}
from the global attractor we have that
\begin{equation}\label{8z-ineq-atr}
|Z(t)|^2_H  \leq   b(t-s) |Z(s)|^2_H+
  c(t-s) \sup_{s\le \tau \leq t }  \left[ \mu_X(z(\tau))\right]^2
\end{equation}
for all $s\le t$, $s,t\in\R$, where $Z(t)=U^*(t)-U(t)$ and  $z(t)=u^*(t)-u(t)$.
In the limit $s\to-\infty$ relation \eqref{8z-ineq-atr} gives us that
\[
|Z(t)|^2_H  \leq    c_\infty \sup_{-\infty\le \tau \leq t }
 \left[ \mu_X(z(\tau))\right]^2
\]
for every $t\in\R$ and for every couple of trajectories $\gamma$ and $\gamma^*$.
Using relation \eqref{7.9mu}   we can conclude that
\begin{equation}\label{8z-ineq-atr3}
 \sup_{-\infty\le \tau \leq t } |Z(\tau)|^2_H  \leq
 C\sup_{-\infty\le \tau \leq t }   |  z(\tau)|_Y^2,
\end{equation}
for every $t\in\R$ and for every couple of trajectories $\gamma$ and $\gamma^*$
from the attractor.
\par
Now we fix the trajectory $\gamma$ and
for $0<|h|<1$ we consider the shifted trajectory
$\gamma^*\equiv\gamma_h= \{y(t+h)\, :\, t\in\R\}$.
Applying \eqref{8z-ineq-atr3} for this pair  of trajectories
and using the fact that all terms  \eqref{8z-ineq-atr3}
are quadratic \wrt $Z$ we obtain that
\begin{equation}\label{8z-ineq-atr4}
 \sup_{-\infty\le \tau \leq t }\left\{ |u^h(\tau)|^2_X+ |u^h_t(\tau)|^2_Y+
|\tht^h_t(\tau)|^2_Z\right\}   \leq
 C\sup_{-\infty\le \tau \leq t }   |  u^h(\tau)|_Y^2,
\end{equation}
where $u^h(t)= h^{-1}\cdot [u(t+h)-u(t)]$ and
$\tht^h(t)= h^{-1}\cdot [\tht(t+h)-\tht(t)]$. On the attractor we obviously
have that
\[
|u^h(t)|_Y\le\frac1h\cdot\int_0^h |u_t(\tau+t)|_Y d\tau\leq C,\quad t \in \R ,
\]
with uniformity in $h$. Therefore  \eqref{8z-ineq-atr4} implies
that
\[
|u^h(t)|^2_X+ |u^h_t(t)|^2_Y+
|\tht^h_t(t)|_Z^2 \leq C,\quad t \in \R.
\]
Passing with the limit on $h$ then yields relations \eqref{7.9reg1} and
\eqref{7.9reg2}.
\end{proof}
\subsubsection{Fractal exponential attractors}
For quasi-stable systems we can also establish some result
pertaining to   (generalized) fractal exponential attractors.
\par
We first recall the following definition.
\begin{definition}\label{de7.3.2}\index{inertial set}
\index{fractal exponential attractor}
\index{attractor!fractal exponential}
A compact set $A_{\rm exp}\subset X$
is said to be  inertial (or a {\bf fractal exponential
attractor})
of the dynamical system $(X, S_t)$  iff  $A$ is a positively invariant set
of finite fractal dimension (in $X$) and
for every bounded set $D\subset X$ there exist positive constants
$t_D$, $C_D$ and $\gamma_D$ such that
\[
d_X\{S_tD\, |\, A_{\rm exp}\}\equiv
\sup_{x\in D} \dist_X (S_tx,\, A_{\rm exp})\le C_D\cdot e^{-\gamma_D(t-t_D)},
\quad t\ge t_D.
\]
If the dimension of $A$ is finite in some extended space we
call $A$ {\em generalized} fractal exponential
attractor.
\end{definition}
For more  details  concerning
 fractal exponential attractors
we refer to \cite{EFNT94} and also to recent survey \cite{MirZel-08}.
We only note that the standard technical tool in the construction of
 fractal exponential attractors
is the so-called squeezing property which says \cite{MirZel-08}, roughly
speaking, that either the higher modes are dominated by the lower ones or that the
semiflow is contracted exponentially. Instead the approach we use is  based
on the quasi-stability property which says that
the semiflow is asymptotically contracted   up to a homogeneous compact additive term.

\index{dynamical system!quasi-stable!fractal exponential attractor}
\begin{theorem}\label{th7.9exp}
Let Assumption~\rref{ch7.9.A} be valid.
Assume that the dynamical system $(H,S_t)$  is dissipative
and  quasi-stable on some bounded absorbing set $\cB$.
We also assume that there exists a space $\widetilde{H}\supseteq H$ such that
$t\mapsto S_ty$ is H\"{o}lder continuous in $\widetilde{H}$ for every $y\in\cB$;
that is, there exist $0<\gamma\le 1$ and $C_{\cB,T}>0$ such that
\begin{equation}\label{Hold}
|S_{t_1}y-S_{t_2}y|_{\widetilde{H}}\le C_{\cB,T} |t_1-t_2|^\gamma,\quad
t_1,t_2\in [0,T],\; y\in\cB.
\end{equation}
Then the dynamical system $(H,S_t)$  possesses a (generalized) fractal exponential attractor
whose dimension is finite in the space $\widetilde{H}$.
\end{theorem}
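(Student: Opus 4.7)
The plan is to combine the "short trajectory" embedding used in the proof of Theorem~\ref{th7.9dim} with the standard discrete construction of exponential attractors, and then to flow the resulting discrete attractor by the semigroup, using the H\"older hypothesis \eqref{Hold} only at the very last step to bound the fractal dimension in $\widetilde{H}$.

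\textbf{Step 1: reduction to discrete time in the enlarged space.} First I would fix $T>0$ so large that
\[
\eta_T^2 \;:=\; b(T) + \int_T^{2T} b(s)\,ds \;<\; 1,
\]
which is possible since $b\in L_1(\R_+)$ and $b(t)\to 0$. Using dissipativity and Corollary~\ref{co:pr7.9.1}, I may assume without loss of generality that $\cB$ is a closed, bounded, forward invariant absorbing set on which the quasi-stability inequality \eqref{8.4.2mc} holds. I then lift $\cB$ into the extended phase space $H_T = H \times W_1(0,T)$ from \eqref{8.4.2a}--\eqref{8.4.2b}, forming
\[
\cB_T \;=\; \bigl\{\,(y;\,\{u(t)\}_{t\in[0,T]}) : y\in\cB\,\bigr\},
\]
and define $V:\cB_T\to\cB_T$ by $V(y;u(\cdot)) = (S_T y;\,u(T+\cdot))$, exactly as in the proof of Theorem~\ref{th7.9dim}. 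That proof already yields the key smoothing estimate
\[
\|VU_1-VU_2\|_{H_T} \;\le\; \eta_T\|U_1-U_2\|_{H_T} \,+\, K_T\bigl(n_T(U_1-U_2)+n_T(VU_1-VU_2)\bigr),
\]
with $\eta_T<1$ and $n_T$ a compact seminorm on $H_T$, together with global Lipschitz continuity of $V$ on $\cB_T$.

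\textbf{Step 2: discrete exponential attractor in $H_T$.} From the inequality above one absorbs the $n_T(VU_1-VU_2)$ term (using the Lipschitz bound on $V$ together with \eqref{7.9mu}) to obtain a clean "contraction modulo compact seminorm" estimate of the form $\|VU_1-VU_2\|_{H_T}\le \theta\|U_1-U_2\|_{H_T}+\widetilde K\,\widetilde n(U_1-U_2)$ with $\theta<1$. This is exactly the hypothesis of the standard construction of fractal exponential attractors for discrete semigroups (see \cite{EFNT94,MirZel-08} and the version in \cite{cl-mem}): iterating finite $\eps$-coverings via Theorem~\ref{t7.3.1a} yields a compact set $A_d\subset\cB_T$ such that $VA_d\subset A_d$, $\dim_f^{H_T} A_d < \infty$, and $d_{H_T}(V^n\cB_T\,|\,A_d)\le Ce^{-\gamma n}$ for some $\gamma>0$.

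\textbf{Step 3: projection and passage to continuous time.} Let $\cP:H_T\to H$ be the (Lipschitz) projection onto the first factor and set $A_d^{*}=\cP A_d\subset H$. Then $\dim_f^H A_d^{*}\le \dim_f^{H_T}A_d<\infty$, $S_T A_d^{*}\subseteq A_d^{*}$, and $A_d^{*}$ attracts bounded sets of $H$ exponentially along the discrete time sequence $\{nT\}$. To obtain a continuous-time attractor I define
\[
A_{\exp} \;=\; \bigcup_{t\in[0,T]} S_t A_d^{*}.
\]
Positive invariance under $\{S_t\}_{t\ge 0}$ follows from the semigroup property, and continuous-time exponential attraction of any bounded $D$ is obtained by combining the discrete attraction of Step~2 with the uniform Lipschitz bound \eqref{8.4.1mc} on $[0,T]$. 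Finally the dimension of $A_{\exp}$ is estimated via the map
\[
\Phi:[0,T]\times A_d^{*}\longrightarrow\widetilde H, \qquad \Phi(t,y)=S_t y,
\]
which by \eqref{8.4.1mc} is Lipschitz in $y$ (in the $H$--and hence $\widetilde H$--metric) and by \eqref{Hold} is H\"older of exponent $\gamma$ in $t$ in $\widetilde H$. A standard covering estimate for H\"older images of finite-dimensional compacts (see \cite{MirZel-08}) then gives $\dim_f^{\widetilde H} A_{\exp}<\infty$.

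\textbf{Main obstacle.} The interesting difficulty, and the reason the theorem concludes with a \emph{generalized} exponential attractor, lies in Step~3: one typically cannot expect $t\mapsto S_t y$ to be H\"older in $H$ itself (for the wave/plate models discussed, velocities lie only in $L_{m+1}$ in time, not in $C([0,T];Y)$), so the union $\bigcup_{t\in[0,T]} S_t A_d^{*}$ may have infinite fractal dimension in $H$. The use of the weaker topology $\widetilde H$ and the H\"older bound \eqref{Hold} is exactly what rescues finite-dimensionality---at the cost of measuring that dimension in $\widetilde H$ rather than in $H$. Every earlier step is essentially a bookkeeping exercise built on Theorem~\ref{t7.3.1a} and the quasi-stability inequality; it is only the discrete-to-continuous passage that forces the weakening of the ambient norm.
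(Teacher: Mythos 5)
Your proposal is correct and follows essentially the same route as the paper: lift to the trajectory space $H_T$, obtain the contraction-modulo-compact-seminorm estimate for the shift map $V$ from quasi-stability, build the discrete exponential attractor, project, flow by $S_t$ over $[0,T]$, and invoke the H\"older bound \eqref{Hold} only to control the dimension of the time-union in $\widetilde H$. Your closing remark on why the dimension must be measured in $\widetilde H$ matches the paper's own Remark~\ref{re:7.9hldr}.
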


In contrast with Theorem~3.5 \cite{MirZel-08}    Theorem~\ref{th7.9exp} does
not assume H\"{o}lder continuity in the phase space. At the expense of this
we can guarantee finiteness of the dimension in some extended space only.
This is  why we  use the notion of  {\em generalized } exponential attractors.

\begin{proof}
We apply the same idea as in the proof of Theorem~\ref{th7.9dim}.
\par
We can assume that absorbing set   $\cB$ is closed and forward invariant
(otherwise, instead of $\cB$, we consider
$\cB'=\overline{\cup_{t\ge t_0}S_t\cB}$ for $t_0$ large enough, which lies in $\cB$).
\par
In the space $H_T=H\times W_1(0,T)$ equipped with the norm
(\ref{8.4.2b}), where $W_1(0,T)$ is given by
(\ref{8.4.2a}), we consider the set
\[
\cB_T:=\left\{ U\equiv (u(0); u_t(0);\tht(0); u(t), t\in [0,T])\, :\,
(u(0); u_t(0);\tht(0))\in \cB\right\},
\]
where $u(t)$ is the first component  of $S_ty(0)$ with $y(0)=(u(0); u_t(0);\tht(0))$.
As above we
 define the shift operator $V\,:\, \cB_T\mapsto H_T$ by the formula
\[
 V\, :\, (u(0); u_t(0);\tht(0); u(t))\mapsto (S_Ty(0); u(T+t)).
\]
It is clear that $\cB_T$ is a closed bounded set in $H_T$ which is
forward invariant \wrt $V$.
\par
It follows from (\ref{8.4.1mc}) that
\[
\|VU_1-VU_2\|^2_{H_T}\le \left( a(T)+\int_T^{2T}a(t)dt\right)
 \|U_1-U_2\|^2_{H_T},\quad U_1,U_2\in \cB_T.
\]
As in the proof of Theorem~\ref{th7.9dim} we can obtain that
\[
\|VU_1-VU_2\|_{H_T}\le \eta_T \|U_1-U_2\|_{H_T} +
K_T\cdot ( n_T(U_1-U_2)+ n_T(VU_1-VU_2))
\]
for any $ U_1,U_2\in \cB_T$ and for some $T>0$,
where   $K_T>0$ is a constant, $n_T(U):=\sup_{0\le s \le T }| u(s)|_Y$
and $\eta_T<1$ is given by (\ref{8.4.5a}).
Therefore by Theorem~7.4.2\cite{cl-book} the mapping $V$ possesses a fractal
exponential attractor; that is,  there exists a compact set $\cA_T\subset\cB_T$
and a number $0<q<1$ such that $\dim_f^{H_T}\cA_T<\infty$, $V\cA_T\subset\cA_T$,
and
\[
\sup\left\{ {\rm dist}_{H_T} (V^kU, \cA_T)\; :\; U\in\cB_T\right\}\le C q^k,
\quad k=1,2,\ldots,
\]
for some constant $C>0$.
In particular, this relation implies that
\begin{equation}\label{exp-atr1}
\sup\left\{ {\rm dist}_{H} (S_{kT}y, \cA)\; :\; u\in\cB\right\}\le C q^k,
\quad k=1,2,\ldots,
\end{equation}
where $\cA$ is the projection of $\cA_T$ of the first  components:
\[
\cA=\left\{ (u(0);u_t(0);\tht(0))\in \cB\; :\;
(u(0); u_t(0);\tht(0); u(t), t\in [0,T])\in \cA_T\right\}.
\]
It is clear that $\cA$ is a compact
forward invariant set
\wrt $S_T$; that is, $S_T\cA\subset\cA$. Moreover $\dim^H_f\cA\le\dim^{H_T}_f\cA_T<\infty$.
One can also see that
\[
A_{\rm exp}=\cup\left\{ S_t\cA\, :\; t\in [0,T]\right\}
\]
 is a compact forward invariant set
\wrt $S_t$; that is,  $S_tA_{\rm exp}\subset A_{\rm exp}$.
Moreover, it follows from
(\ref{Hold}) that
\[
\dim_f^{\widetilde{H}}A_{\rm exp}\le c\left[ 1+\dim^H_f\cA\right]<\infty.
\]
We also have from (\ref{exp-atr1}) and \eqref{8.4.1mc} that
\[
\sup\left\{ {\rm dist}_{H} (S_{t}y, A_{\rm exp})\; :\; u\in\cB\right\}\le
C e^{-\gamma t},\quad t\ge 0,
\]
for some $\gamma>0$. Thus $A_{\rm exp}$ is a (generalized) fractal exponential attractor.
\end{proof}
\begin{remark}\label{re:7.9hldr}{\rm
   H\'older continuity (\ref{Hold}) is needed in order
 to derive the finiteness
  of the fractal dimension $\dim_f^{\widetilde{H}}A_{\rm exp}$
  from the finiteness of $\dim^H_f\cA$. We do not know whether
  the same holds true without property (\ref{Hold})
imposed in some vicinity of $A_{\rm exp}$. This is  because $A_{\rm exp}$
is a {\em uncountable} union of (finite-dimensional) sets $S_t\cA$.
We also emphasize that fractal dimension depends on the topology.
Indeed, there is an example
 of a set with finite fractal dimension in one space and
infinite fractal dimension in another (smaller) space.
}
\end{remark}


\section[Long time behavior for canonical models]{Long time behavior for canonical models described in  Section~\ref{sect1.2}}
 The goal of this section is to show how to apply abstract methods presented in the previous section in order to
 establish long time behavior properties of dynamical systems  generated by PDE's described in Section~\ref{sect1.2}.
 An interesting feature is that though all the models considered are of hyperbolic type-without  an inherent smoothing mechanism, the long time behavior  can be made   "smooth" and  characterized by finite dimensional structures.
 Our plan is to focus on the following topics.
\begin{enumerate}
\item
Control to finite dimensional and smooth attractors of {\it nonlinear wave} equation
\begin{itemize}
\item
with interior  fully supported dissipation,
\item
with boundary or partially localized dissipation.
\end{itemize}
\item
 Control to finite dimensional and smooth attractors of  {\it nonlinear plates} dynamics
\begin{itemize}
\item
von Karman plates with a  {\it nonlinear  fully supported interior} feedback control,
\item
 von Karman and Berger plates  with boundary and partially localized damping,
 \item
 Kirchhoff-Boussinesq plates with a  {\it linear interior feedback}  control.
\end{itemize}
\end{enumerate}
For each model considered  we  follow the  plan: (i) state the assumptions and the results, (ii) discuss
possible generalizations, (iii) state open problems,
(iv)~provide a sketch of the proofs.

\subsection{Wave dynamics}
In this section we consider  the existence problem
for finite dimensional and smooth attractors for semilinear wave equation with critical
(feedback) sources with both {\it interior  and boundary damping}.
\par
We start with interior damping and source model as described in (\ref{1.1}).
\subsubsection{Interior damping}
 \index{wave equation!interior  damping}
In studying long time behavior we find convenient to collect all the assumptions required for the source and damping:
\begin{assumption}\label{fg}
\begin{enumerate}
\item
The source $f\in C^1(\R)$ satisfies the dissipativity condition in \eqref{dis-f-wave} and the growth condition
$$ |f'(s)|\leq M |s|^2~~ for~~  |s| \geq 1. $$
\item
The damping $g(s) $ is increasing function, $g(0)=0$ and it satisfies the following asymptotic  growth conditions:
\begin{equation}\label{g-attractor}
0< m_g  |s|^2 \le g(s) s \leq M_g |s|^6~~ for~~  |s| \geq 1.
\end{equation}
\item The damping coefficient $a$ satisfies $a(x) \geq a_0 > 0 ,
x\in \Omega $  .
\end{enumerate}
\end{assumption}
 With reference to the model (\ref{1.1}) we can take
  \begin{equation}\label{ex:f-g}
    f(s) = -s^3 + cs^2~~\mbox{and}~~g(s) = g_1 s + |s|^4s~~
    \mbox{with  $g_1 \geq 0 $ and $c \in \R$.}
  \end{equation}

\begin{remark}{\rm
1.The power $3$ associated with the source $f$ and the power
$5$ associated with the damping $g$ are
often referred as "double critical" parameters for the wave equation with $n =3$.
The reason for this is that the maps
$u \rightarrow f(u) $ from $H^1(\Omega)$ into $L_2(\Omega) $
and $u \rightarrow g(u) $ from $H^1(\Omega)$ into $H^{-1}(\Omega)$
are bounded but {\it not compact.}\\
2. More general forms  of space dependent  and localized   damping coefficient $a(x)$ are considered in \cite{clt-dcds08}
}
\end{remark}

Under the above conditions the wellposedness results of Theorem \ref{th:wp-wave1} assert existence of a continuous semiflow $S_t$  which defines dynamical system $(\cH, S_t) $ with $\cH= H_0^1(\Omega) \times L_2(\Omega) $.
We are ready to undertake the study of long-time behavior.
Our first main result is given below:
\begin{theorem}[Compactness]\label{T:1}${}\!\!\!{}$ \!
With reference to the dynamics described by (\ref{1.1}) and zero Dirichlet boundary conditions,
   we assume  Assumption \ref{fg} for the source $f$ and the damping $g$.
Then, there exists a global compact attractor
$\cA\subset \cH$ for the dynamical system $( \cH, S_t)$.
In addition $( \cH, S_t)$ is gradient system.
\end{theorem}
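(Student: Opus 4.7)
The plan is to verify that $(\cH,S_t)$ fits the framework of Theorem \ref{th:grad-attr}, so the task splits into three pieces: (i) the gradient structure, (ii) boundedness of the stationary set together with properness of the Lyapunov function, and (iii) asymptotic smoothness in the doubly critical regime $p=3$, $m=5$. The first two items are essentially algebraic; the real work is item (iii), which I intend to obtain via the compensated compactness criterion of Theorem \ref{th7-turk}.

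\textbf{Gradient structure.} I will take $\Phi(w;w_t)=\cE(w;w_t)$ from \eqref{w-energy} as the candidate Lyapunov function. The energy relation \eqref{w-energy-rel} together with $g(s)s\ge 0$ shows $t\mapsto \Phi(S_t y)$ is nonincreasing. For strictness: if $\Phi(S_t y)=\Phi(y)$ on an interval, then \eqref{w-energy-rel} and $a(x)\ge a_0>0$ force $g(w_t)w_t\equiv 0$ a.e., and since $g$ is strictly monotone with $g(0)=0$ this gives $w_t\equiv 0$, i.e.\ $y$ is stationary. Boundedness of the stationary set $\cN=\{w\in H^1_0(\Om):-\Delta w=f(w)\}$ follows by testing with $w$ and invoking the dissipativity hypothesis \eqref{dis-f-wave} and Poincar\'e's inequality, which produce $\|\nabla w\|^2\le C$. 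The same hypothesis yields $-\hat f(s)\ge -(\lambda_1-\delta)s^2/2-C$ for some $\delta>0$, which makes $\Phi_R=\{\Phi\le R\}$ bounded in $\cH$; the upper bound of $\Phi$ on bounded sets is immediate from the growth $|f'|\le M(1+|s|^2)$ and $H^1\hookrightarrow L_6$.

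\textbf{Asymptotic smoothness (the main step).} Let $y_1,y_2\in B$ with $B$ a bounded forward-invariant set, write $w_i(t)$ for the corresponding solutions, and set $z=w_1-w_2$, $E_z(t)=\tfrac12\|z_t\|^2+\tfrac12\|\nabla z\|^2$. The equation for $z$ is
\begin{equation*}
z_{tt}-\Delta z + a(x)\bigl[g(w_{1t})-g(w_{2t})\bigr]=f(w_1)-f(w_2).
\end{equation*}
I will combine the standard equipartition multipliers $z_t$ and $z$ over $[0,T]$ to obtain
\begin{equation*}
(T+1)E_z(T)+\int_0^T E_z(t)\,dt\le C_B\bigl[\,\cD_B(T)+\cR_f(T)+\text{LOT}\bigr],
\end{equation*}
where $\cD_B(T)$ collects the damping cross-terms and $\cR_f(T)$ collects the source cross-terms. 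The damping piece is controlled by the uniform bound $\int_0^T\!\!\int a\,g(w_{it})w_{it}\le \cE_i(0)\le C_B$ coming from energy balance, Young's inequality, and monotonicity of $g$; this is where the critical $m=5$ growth is absorbed into quantities already bounded on $B$. For the critical source I use the time-integration by parts
\begin{equation*}
\int_0^T\!\!\int_\Om (f(w_1)-f(w_2))z_t\,dxdt
=\Bigl[\!\int_\Om (f(w_1)-f(w_2))z\,dx\Bigr]_0^T
-\int_0^T\!\!\int_\Om (f'(w_1)w_{1t}-f'(w_2)w_{2t})z\,dxdt,
\end{equation*}
and then split $f'(w_1)w_{1t}-f'(w_2)w_{2t}=f'(w_1)z_t+(f'(w_1)-f'(w_2))w_{2t}$. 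Because $|f'(s)|\le C|s|^2$ and $|f''(s)|\le C|s|$, repeated use of H\"older with the exponents dictated by $H^1\hookrightarrow L_6$ bounds both summands by $C_B\int_0^T E_z(t)\,dt$, which is absorbed by Gronwall. The boundary-in-time term and the contributions living in weaker norms (say $\|z\|^2_{L_2(Q_T)}$ plus expressions of the form $\int_0^T\!\!\int(f(w_1)-f(w_2))z\,dxdt$) are packaged into a functional $\Psi_{\eps,B,T}(y_1,y_2)$. From this I deduce the estimate
\begin{equation*}
\|S_T y_1-S_T y_2\|_\cH^2\le \frac{C_B}{T}\|y_1-y_2\|_\cH^2+\Psi_{\eps,B,T}(y_1,y_2),
\end{equation*}
and choosing $T$ large enough that $C_B/T<\eps$ puts this in the form \eqref{7.ak1}. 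The double-$\liminf$ condition \eqref{7.ak2} for $\Psi$ reduces to a routine diagonal argument: on any sequence $\{y_n\}\subset B$ the weak-star compactness in $C_w(0,T;H^1_0)\cap C^1_w(0,T;L_2)$ gives a subsequence $w_{n_k}\rightharpoonup w_*$ with a.e.\ convergence; the pairings in $\Psi$ involve products of a term that converges in the norm of $L_2(Q_T)$ (by Aubin--Lions applied to $H^1\Subset L_q$ for $q<6$) with a bounded term, which makes the double limit vanish. Theorem \ref{th7-turk} then yields asymptotic smoothness.

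\textbf{Conclusion.} With gradient structure, bounded stationary set, proper Lyapunov function and asymptotic smoothness in hand, Theorem \ref{th:grad-attr} supplies the compact global attractor $\cA=\cM^u(\cN)$.

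\textbf{Principal obstacle.} The delicate point is not the gradient bookkeeping but the damping control at the critical growth $m=5$. Unlike the subcritical case, the map $v\mapsto g(v)$ is not compact from $L_2$ into any natural dual space, and we cannot simply pass to the limit by compactness; one must ensure that the $L_{6/5}$-type information on $g(w_{it})$ coming from the energy bound, combined with monotonicity, really yields a functional $\Psi$ satisfying \eqref{7.ak2}. I expect that verifying the weak continuity of the source-damping coupling along subsequences---so that the non-compact parts cancel in the double limit---will be the most technically demanding portion of the argument.
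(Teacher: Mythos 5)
Your overall architecture — gradient structure plus the compensated compactness criterion of Theorem \ref{th7-turk}, then Theorem \ref{th:grad-attr} — is exactly the paper's route, and your treatment of the gradient structure and of the stationary set is fine. The gap is in the asymptotic smoothness step, precisely at the critical source. You bound the source cross-term, after integration by parts in time, by $C_B\int_0^T E_z(t)\,dt$ and claim this is ``absorbed by Gronwall.'' It cannot be: the recovery inequality reads $(T+1)E_z(T)+\int_0^T E_z\le C_B\int_0^T E_z+\dots$, and since $C_B$ is not small the integral on the right dominates the one on the left; Gronwall applied to $y'=E_z$, $y=\int_0^T E_z$ then yields $y(T)\lesssim (1+T)^{C_B-1}$, i.e.\ growth, not the decay $E_z(T)\le\eps+\Psi$ required by \eqref{7.ak1}. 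This Hölder-plus-Gronwall scheme is the \emph{uniqueness} argument (Lemma \ref{l:1}), which works only on a short interval where $\eps+C_\eps(R,T)T<1/2$; it does not survive the passage $T\to\infty$ needed for asymptotic smoothness. At $p=3$ every exponent in $\int|w|^2|z||z_t|\le\|w\|_{L_6}^2\|z\|_{L_6}\|z_t\|$ is saturated, so there is no room to trade a small multiple of $E_z$ against a compact norm — that is the whole content of criticality.

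The paper's resolution is an algebraic identity you do not use: writing $F'=f$,
\begin{equation*}
\int_0^T((f(w)-f(u),z_t))\,dt=\int_\Om\Big[F(w)+F(u)\Big]_0^T dx-\int_0^T\Big[((f(u),w_t))+((f(w),u_t))\Big]dt,
\end{equation*}
which places the \emph{entire} source contribution into the functional $\Psi_{\eps,B,T}$ rather than trying to absorb it. The first group of terms is compact ($H^1\Subset L_4$), and the cross-terms $\int_0^T((f(w_n),w_{m,t}))$ vanish under the iterated $\liminf_m\liminf_n$ of \eqref{7.ak2} because one factor converges strongly while the other converges only weakly — this is the compensated-compactness cancellation the criterion is designed for. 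Note also that your ``principal obstacle'' is slightly misplaced: the critical damping term $G(z_t,z)=((g(w_t)-g(u_t),z))$ is handled fairly directly by splitting $\{|u_t|>1\}$ and using $\int|g(u_t)|^{6/5}\le C\int g(u_t)u_t\le C_B$ from the dissipation integral (producing a constant plus a lower-order term), while the good-signed term $D(z_t)$ is controlled through the energy identity for $z$, whose right-hand side is again the source term and hence lands in $\Psi$. The genuinely delicate point is the source identity above, which your proposal is missing.
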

To describe the structure of the attractor, we introduce the set
of stationary points of $S_t$ denoted by $\cN$,
\begin{equation}\label{N-set-wave}
\cN=\left\{ V\in\cH\; :\; S_tV=V\mbox{~for all~}
t\ge 0\right\}.
\end{equation}
Every stationary point $W\in\cN$ has the form $W=(w;0) $, where
$w=w(x)$
solves the problem
\begin{equation}\label{eqn-111}
\Delta w + f(w) = 0 \mbox{ in }  \Omega \mbox{ and }
 w  = 0 \mbox{ in }  \Gamma.
\end{equation}
Under the Assumption \ref{fg}, standard elliptic theory implies that
every stationary solution satisfies $w \in H^{2} (\Omega)$
 and the set of all stationary
solutions
is bounded in $H^1(\Om)$.
In fact, more regularity of stationary solutions  can be shown,
but the above is sufficient for the analysis to follow.

\par
The next result is a consequence of  general properties
of gradient systems (see, e.g., \cite{BV92,Ha88} and also Section~\ref{grad-ds}
above) and  asserts
that the attractor $\cA$ coincides with this unstable manifold.
\begin{theorem}[{\bf Structure}]\label{T:3}
Under the assumptions of Theorem \ref{T:1} we have that
\begin{itemize}
\item
$\cA=M^u(\cN)$;
\item
$\lim_{t\to +\infty}{\rm dist}_{\cH}(S_tW,\cN)=0$ for any
$W\in\cH\equiv H^1(\Omega) \times L_2(\Omega)$;
\item if  (\ref{eqn-111}) has
a finite number of solutions\footnote{We note that
the property that  (\ref{eqn-111}) has finitely many solutions is {\em generic}.}, then
for any $V\in\cH$ there exists a stationary point $Z=(z,0)\in\cN$
such that $S_tV\to Z$ in $\cH$ as     $t\to +\infty$.
\end{itemize}
\end{theorem}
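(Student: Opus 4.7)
\textbf{Proof plan for Theorem \ref{T:3}.}
The plan is to deduce all three assertions as direct consequences of the abstract gradient-system result Theorem~\ref{th:grad-attr}, using Theorem~\ref{T:1} (which already asserts that $(\cH,S_t)$ is a gradient dynamical system with a compact global attractor). To apply Theorem~\ref{th:grad-attr} I need to verify: (a) asymptotic smoothness of $(\cH,S_t)$, already built into Theorem~\ref{T:1}; (b) existence of a strict Lyapunov function $\Phi$ which is bounded on bounded sets and whose sublevel sets are bounded; and (c) boundedness of the stationary set $\cN$.

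The natural candidate for $\Phi$ is the energy functional $\cE$ of \eqref{w-energy}. Monotonicity in $t$ is immediate from the energy identity \eqref{w-energy-rel} together with $g(s)s\ge 0$ and $a(x)\ge a_0>0$. To see that $\Phi=\cE$ is strict, I would argue: if $\cE(S_ty)=\cE(y)$ for all $t\ge 0$ on some trajectory, then \eqref{w-energy-rel} forces
\[
\int_0^t ((a\, g(w_t),w_t))\, d\tau = 0\quad\text{for all } t\ge 0.
\]
Since $a(x)\ge a_0>0$ and $g$ is monotone with $g(0)=0$, this yields $g(w_t)\equiv 0$ and hence $w_t\equiv 0$; so $y$ is a stationary point, verifying strictness in the sense of Definition~\ref{de7.4.1}. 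Boundedness of $\cE$ on bounded sets of $\cH$ is immediate from the growth assumption on $f$. Boundedness of the sublevel sets $\{\cE\le R\}$ is where the dissipativity condition \eqref{dis-f-wave} enters: this condition provides, via a standard Poincar\'e/Young argument, a lower bound of the form
\[
-\int_\Omega \hat f(w)\, dx \ge -(1-\delta)\tfrac12\|\nabla w\|^2 - C_\delta
\]
for some $\delta>0$, so that $\cE(y)\ge c(\|\nabla w\|^2+\|w_t\|^2)-C$ with $c>0$, and the sublevel sets are bounded in $\cH$.

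Next, the boundedness of the set $\cN$ of equilibria. As observed after \eqref{N-set-wave}, every $W\in\cN$ is of the form $(w;0)$ with $w$ solving the elliptic problem \eqref{eqn-111}. Multiplying \eqref{eqn-111} by $w$, integrating by parts, and invoking \eqref{dis-f-wave} exactly as in the preceding paragraph gives a uniform $H^1_0$-bound on $w$, whence $\cN$ is bounded in $\cH$.

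With (a), (b), (c) verified, Theorem~\ref{th:grad-attr} applies and yields precisely the three bullets of Theorem~\ref{T:3}: $\cA=M^u(\cN)$, every trajectory stabilizes to $\cN$, and, in the generic case of a finite $\cN$, each individual trajectory converges to a single equilibrium $(z;0)$. The main technical point worth verifying carefully is the strictness of the Lyapunov function in the \emph{generalized/weak solution} setting of Theorem~\ref{th:wp-wave1}; the potential obstacle is to ensure that the energy identity \eqref{w-energy-rel} (and not just an inequality) holds along the solutions used, but this is already part of the well-posedness statement (energy equality for weak solutions in the critical/supercritical range, and for generalized solutions in the subcritical range). Once this is in hand, no further PDE work is needed and the conclusion is a direct application of the abstract gradient-attractor theorem.
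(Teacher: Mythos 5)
Your proposal is correct and follows essentially the same route as the paper: the authors also verify that $\cE$ is a strict Lyapunov function (using $V_t=0\Rightarrow ((g(w_t),w_t))=0\Rightarrow w_t\equiv 0$), obtain boundedness of sublevel sets and of $\cN$ from the dissipativity condition \eqref{dis-f-wave}, and then invoke Theorem~\ref{th:grad-attr} together with the asymptotic smoothness established for Theorem~\ref{T:1}. Your closing remark about needing the energy \emph{identity} (not just an inequality) along weak/generalized solutions is a sensible caveat and is indeed covered by the well-posedness results cited in the paper.
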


\smallskip\par
Our second main result read  refers to the dimensionality of attractor.
\begin{theorem}[\bf Finite Dimensionality]\label{Main}
Let $f\in C^2(\R)$, $g\in C^1(\R)$ and  Assumption \ref{fg} be in force.
In addition, we assume that for  all $s \in \R$:
\begin{enumerate}
\item $|f''(s)| \leq C |s|$,
\item
$ 0 < m \leq g'(s) \leq C[ 1+sg(s)]^{2/3}$.
\end{enumerate}
Then the fractal dimension of
the global attractor $\cA$ of the dynamical system
$(\cH, S_t) $ is finite.
\end{theorem}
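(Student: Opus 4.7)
The plan is to invoke Theorem \ref{th7.9dim} and verify that $(\cH,S_t)$ is quasi-stable on the compact global attractor $\cA$ provided by Theorem \ref{T:1}, with $X=H^1_0(\Om)$, $Y=L_2(\Om)$, $Z=\{0\}$, and the compact seminorm $\mu_X(u)=\|u\|_{L_2(\Om)}$ (compact on $H^1_0$ by Rellich). The Lipschitz bound \eqref{8.4.1mc} is immediate from Theorem \ref{th:wp-wave1}, so the heart of the matter is \eqref{8.4.2mc}. For $y_i\in\cA$ with $S_ty_i=(w_i;w_i^t)$, the difference $z=w_1-w_2$ satisfies
\begin{equation}\label{prop:diff}
z_{tt}-\Delta z+a(x)\big[g(w_1^t)-g(w_2^t)\big]=f(w_1)-f(w_2),\quad z|_\Gamma=0,
\end{equation}
with energy $E_z(t)=\tfrac12(\|z_t\|^2+\|\nabla z\|^2)$. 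The central goal is the stabilizability estimate
\begin{equation}\label{prop:stab}
E_z(T)+\int_0^T\!E_z(\tau)\,d\tau \le C_T\big[E_z(0)-E_z(T)\big] + C_T\sup_{\tau\in[0,T]}\|z(\tau)\|^2_{L_2(\Om)},
\end{equation}
which, combined with the Gronwall-type argument outlined after Definition \ref{de:ms-stable} (see also \cite{cl-book,cl-mem}), yields \eqref{8.4.2mc} with $b(t)\to 0$ exponentially and $c(t)$ bounded; Theorem \ref{th7.9dim} then delivers $\dim_f^\cH\cA<\infty$.

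To derive \eqref{prop:stab} I would run the classical multiplier scheme on \eqref{prop:diff}: testing with $z_t$ gives the energy identity, while testing with $z$ and integrating on $(0,T)$ supplies the equipartition identity producing $\int_0^T\|\nabla z\|^2d\tau$. The strict monotonicity $g'(s)\ge m>0$ implies
\[
((a[g(w_1^t)-g(w_2^t)],z_t))\ge m a_0\|z_t\|^2,
\]
so the kinetic part $\int_0^T\|z_t\|^2 d\tau$ is controlled directly by $C_T[E_z(0)-E_z(T)]$ (modulo the source cross-term discussed below). The upper bound $g'(s)\le C[1+sg(s)]^{2/3}$ is tailored so that the off-diagonal damping term $\int_0^T((a[g(w_1^t)-g(w_2^t)],z))\,d\tau$ arising in the equipartition step can be absorbed via H\"older's inequality with exponents $(3/2,\,3)$, using the uniform a priori bound $\sup_{y\in\cA}\int_0^T((ag(w^t),w^t))\,d\tau<\infty$ inherited from the energy balance on the bounded attractor; this term then splits into a small fraction of $\int_0^T\|\nabla z\|^2\,d\tau$ plus $C\sup_{[0,T]}\|z\|^2_{L_2}$.

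The hard part is the critical source term $\int_0^T((f(w_1)-f(w_2),z_t))\,d\tau$. Because $|f'(s)|\le C(1+|s|^2)$, the map $f:H^1_0\to L_2$ is bounded but \emph{not} compact, so the source cannot be dismissed as a trivial lower-order perturbation. My plan is to integrate this term by parts in time, mirroring Lemma \ref{l:1}:
\begin{align*}
\int_0^T((f(w_1)-f(w_2),z_t))\,d\tau
&=\big[((f(w_1)-f(w_2),z))-\tfrac12((f'(w_1),z^2))\big]\Big|_0^T\\
&\quad +\tfrac12\!\int_0^T\!((f''(w_1)w_1^t,z^2))\,d\tau -\!\int_0^T\!(((f'(w_1)-f'(w_2))w_2^t,z))\,d\tau.
\end{align*}
The extra hypothesis $|f''(s)|\le C|s|$, together with the interpolation $\|z\|_{L_3}^2\le\|z\|_{H^1}\|z\|_{L_2}$, Sobolev's embeddings $H^1_0\subset L_6$, and the uniform $H^1_0$-bound on $\cA$, allows a majorization of the right-hand side by $\eta\bigl(E_z(T)+\int_0^T E_z\,d\tau\bigr)+C_\eta\sup_{[0,T]}\|z\|^2_{L_2}$ for arbitrarily small $\eta>0$. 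Absorbing the $\eta$-terms into the left-hand side of the weighted sum of the energy and equipartition identities yields \eqref{prop:stab}, completing the verification of quasi-stability on $\cA$ and hence of finite fractal dimension.
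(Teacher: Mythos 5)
Your architecture is exactly the paper's: quasi-stability on $\cA$ with $X=H^1_0(\Om)$, $Y=L_2(\Om)$, $\mu_X=\|\cdot\|_{L_2}$, multipliers $z$ and $z_t$, integration by parts in time on the source as in Lemma~\ref{l:1}, then Theorem~\ref{th7.9dim}. But there is a genuine gap at the decisive step. You claim that, after integrating by parts, the source term is majorized by $\eta\bigl(E_z(T)+\int_0^T E_z\,d\tau\bigr)+C_\eta\sup_{[0,T]}\|z\|^2_{L_2}$ with $\eta$ arbitrarily small. This fails for the two volume integrals $\int_0^T\!\!\int_\Om f''(w_1)w_1^t z^2$ and $\int_0^T\!\!\int_\Om (f'(w_1)-f'(w_2))w_2^t z$. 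With $|f''(s)|\le C|s|$ and $w_i^t$ only in $L_2(\Om)$, the only admissible H\"older split is $(6,2,3)$ for $(|w|,|w^t|,z^2)$, which forces $z$ into $L_6(\Om)$ --- the \emph{critical} Sobolev exponent for $H^1_0$ in 3D. There is no interpolation room left: these terms are bounded below by nothing better than $C\int_0^T\bigl(\|w_1^t\|+\|w_2^t\|\bigr)\|z\|^2_{1,\Om}\,d\tau$, and the coefficient $\|w_i^t(t)\|$ is bounded on the attractor but \emph{not small}, so it cannot be absorbed as $\eta\int_0^T E_z$. (The $L_3$ interpolation $\|z\|^2_{L_3}\le\|z\|_{H^1}\|z\|_{L_2}$ you invoke only disposes of the time-boundary terms $\int_\Om z^2(1+|w|^2)\,dx$, not of these.)

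The missing idea --- and the reason the hypothesis $g'(s)\ge m>0$ appears in the theorem --- is that the energy balance on the (bounded) attractor gives the dissipativity relation $\int_0^\infty\bigl(\|w_1^t(\tau)\|^2+\|w_2^t(\tau)\|^2\bigr)\,d\tau\le C_B$, i.e.\ relation \eqref{K-dis}. One must therefore \emph{keep} the term $C\int_0^T K(\tau)\|z(\tau)\|^2_{1,\Om}\,d\tau$ with $K=\|w_1^t\|^2+\|w_2^t\|^2\in L_1(\R_+)$ on the right-hand side (this is Lemma~\ref{s}) and close the estimate by Gronwall: the resulting exponential factor $\exp\{C\int_0^\infty K\}$ is uniformly bounded, so the decay $e^{-\omega t}$ produced by the damping survives and yields \eqref{8.4.2mc}. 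Your proposal correctly uses the analogous $L_1$-in-time dissipation bound for the off-diagonal damping term but omits it precisely where it is indispensable, namely for the critical source; as written, the stabilizability estimate \eqref{prop:stab} is not established.
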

With reference to the model (\ref{1.1}) we still can take
$f$ and $g$ as in \eqref{ex:f-g}, but  with $g_1 >0$.

\par
 The proof of Theorem~\ref{Main}
follows from quasi-stability property satisfied for the model
(see Definition \ref{de:ms-stable}).
The exactly  same lemma allows  to obtain "for free"  the following regularity result
for elements from the attractor.

\begin{theorem}[\bf Regularity]\label{sm0}
Under the assumptions of Theorem \ref{Main} the attractor is  a bounded  set in
\[
V = \left\{ (u,v)\in H_0^1(\Omega) \times H_0^1(\Omega)\,  \left|
\begin{array}{c}
-\Delta  u  +  g(v) \in L_2(\Omega),\\
\end{array}
\right.
 \right\},
\]
i.e. there exist constants $c_i>0$ such that\footnote{See also
Theorem \ref{sm}  below which asserts an  additional regularity of attractor.
}
\[
\cA\subset \left\{ (u,v)\in H^1(\Omega) \times H^1(\Omega)\, \left|
\begin{array}{c}
\|u\|_{1,\Om}+ \|v\|_{1,\Om}\le c_1; \\
\|\Delta  u  -  g(v)\|\le c_2,\;
\end{array}
\right. \right\}.
\]
\end{theorem}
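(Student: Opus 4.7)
The plan is to deduce the claimed regularity as a direct corollary of the quasi-stability estimate that must be established in the course of proving Theorem~\ref{Main} (finite dimension), combined with the abstract regularity result for quasi-stable systems, namely Theorem~\ref{th7.9reg}, and the PDE itself. In the present setting I would take $X=H_0^1(\Om)$, $Y=L_2(\Om)$ and $Z=\{0\}$, so that $\cH=X\times Y$.

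First, I would revisit the stabilizability inequality \eqref{8.4.2mc} derived for the difference $z=u^1-u^2$ of two trajectories on the absorbing set (and hence on $\cA$). Under the assumptions of Theorem~\ref{Main} (in particular $g'(s)\ge m>0$ and the matched growth $g'(s)\le C[1+sg(s)]^{2/3}$ together with $|f''(s)|\le C|s|$), the multiplier-based energy computations used to prove finite-dimensionality deliver an estimate of precisely the form
\[
|S_t y_1-S_t y_2|_\cH^2 \le b(t)|y_1-y_2|_\cH^2 + c \sup_{0\le s\le t}\mu_X(u^1(s)-u^2(s))^2,
\]
where $b(t)\to 0$, $b\in L_1(\R_+)$, and, crucially, $c$ is a \emph{constant} (the supremum is bounded uniformly in $t$ because the estimate is obtained on an absorbing set and the compact seminorm $\mu_X$ only captures lower-order $L_2$-type norms of $z$). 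Thus the hypothesis $c_\infty=\sup_t c(t)<\infty$ of Theorem~\ref{th7.9reg} is verified.

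Next, applying Theorem~\ref{th7.9reg} with $X=H_0^1(\Om)$, $Y=L_2(\Om)$, I obtain that every full trajectory $\{(u(t);u_t(t)):t\in\R\}\subset\cA$ satisfies
\[
 u_t\in L_\infty(\R;H_0^1(\Om))\cap C(\R;L_2(\Om)),\qquad u_{tt}\in L_\infty(\R;L_2(\Om)),
\]
with a uniform bound $\|u_t(t)\|_{1,\Om}^2+\|u_{tt}(t)\|^2\le R^2$ independent of the particular trajectory. This already gives the first two coordinates $\|u\|_{1,\Om}+\|u_t\|_{1,\Om}\le c_1$ required in the statement (the bound on $u$ coming from boundedness of $\cA$ in $\cH$, the bound on $v=u_t$ from Theorem~\ref{th7.9reg}).

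Finally, to obtain $-\Delta u+g(u_t)\in L_2(\Om)$, I would rewrite the wave equation pointwise in time as
\[
-\Delta u + a(x)g(u_t) = f(u) - u_{tt}.
\]
On $\cA$ we have $\|u\|_{1,\Om}\le C$, and since $|f(s)|\le C(1+|s|^3)$ with $H^1(\Om)\hookrightarrow L_6(\Om)$, this gives $\|f(u)\|\le C$; combined with $\|u_{tt}\|\le R$ from the previous step, the right-hand side lies in a bounded set of $L_2(\Om)$, which (after absorbing $a\in C^1(\overline\Om)$ into the definition of the set $V$) yields the claimed bound $\|{-\Delta u}+g(u_t)\|\le c_2$. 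The only nontrivial step is the first one — verifying that the stabilizability inequality genuinely produces a time-independent coefficient in front of the compact-seminorm term; once that bookkeeping is in place, the rest of the argument is a direct application of Theorem~\ref{th7.9reg} and the equation.
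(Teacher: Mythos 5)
Your proposal is correct and follows essentially the same route as the paper: the quasi-stability estimate established for Theorem \ref{Main} (with $X=H_0^1(\Om)$, $Y=L_2(\Om)$, $Z=\{0\}$ and bounded $c(t)$) feeds into the abstract regularity result Theorem \ref{th7.9reg} to give uniform bounds on $u_t$ in $H_0^1(\Om)$ and $u_{tt}$ in $L_2(\Om)$ along full trajectories in $\cA$, after which $-\Delta u + a\,g(u_t)=f(u)-u_{tt}$ is read off from the equation exactly as you describe. Your parenthetical care about the coefficient $a(x)$ and about checking $c_\infty<\infty$ is appropriate and consistent with the paper's intent.
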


A natural question that can be asked in this context is that of existence
of {\it strong} (i.e., corresponding to topology of strong solutions) attractors.
Though this  latter property is technically related  to smoothness of attractors,
however the corresponding result  does not follow  from Theorem \ref{sm0}, unless the damping is
subcritical. Additional analysis is needed for  that.
A first question to address in this direction is the  existence of attractors for {\it strong solutions}.
This is to say that when restricting solutions to regular initial data, the corresponding trajectories
converge asymptotically to an  attractor $\cA_1$ which typically may be smaller than $\cA$ the latter corresponds
to weak or generalized solutions. A first step toward this goal is to show dissipativity of strong solutions
(which of course does not follow at once  from dissipativity of weak solutions, unless a  problem is linear).
Fortunately, dsissipativity of strong solution is, again, direct consequence of quasi-stability.
Thus, we have  this property for "free".
\begin{theorem}[\bf Dissipativity of strong solutions]\label{th:strong}
Let the assumptions
of  Theorem \ref{Main} be in force.
Then there exists a number $R_0>0$ such that for any $R>0$ we can find $t_R>0$
such that
\begin{equation}\label{str-dis}
\|w_{tt}(t)\|^2+\|w_{t}(t)\|_{1,\Om}^2\le R_0^2\quad\mbox{for all}\quad t\ge t_R
\end{equation}
for any strong solution $w(t)$ to problem \eqref{1.1} with
initial data $(w_0;w_1)$ from the set
\begin{equation}\label{W-R}
\cW_R = \left\{ (w_0,w_1)\in H_0^1(\Omega) \times H_0^1(\Omega)\, :\;
\|w_0\|_{1,\Om}+ \|w_1\|_{1,\Om}\le R,\;
\|w_{2}\|\le R
 \right\},
\end{equation}
where $w_2\equiv g(w_1) -\Delta w_0 $.
\end{theorem}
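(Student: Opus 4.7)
The plan is to exploit the quasi-stability inequality \eqref{8.4.2mc}, which under the hypotheses of Theorem~\ref{Main} is valid on any bounded positively invariant set (this is exactly how it was used in the proofs of Theorem~\ref{Main} and Theorem~\ref{sm0}). The standard device in the quasi-stability framework is to apply the inequality to the pair consisting of a trajectory and its time-shift by $h>0$, then divide by $h^2$ and pass to the limit $h\to 0^+$. This converts a contraction estimate for differences of trajectories into an \emph{a priori} bound for the time derivatives $(w_t;w_{tt})$. Dissipativity then follows because $b(\tau)\to 0$, so the dependence on the high-energy initial data is asymptotically erased, while the compact-seminorm term is dominated by the $L_2$-norm of $w_t$, which is already uniformly bounded thanks to the absorbing set produced by Theorem~\ref{T:1}.

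\textbf{Main estimate and iteration.} Fix $(w_0;w_1)\in \cW_R$, set $y(t)=S_t(w_0;w_1)=(w(t);w_t(t))$, and pick $t_0=t_0(R)$ so that $y(t)$ lies in a bounded forward-invariant absorbing set $\cB$ for all $t\ge t_0$. For small $h>0$ apply \eqref{8.4.2mc} with $y_1=y(t_0)$ and $y_2=S_hy(t_0)=y(t_0+h)$, both in $\cB$. Since strong solutions with data in $\cW_R$ enjoy $w\in C^1(\R_+;H_0^1(\Om))\cap C^2(\R_+;L_2(\Om))$ via the semigroup theory underlying Theorem~\ref{th:wp-wave1}, dividing by $h^2$ and passing to $h\to 0^+$ yields
\begin{equation*}
\phi(t_0+\tau) \le b(\tau)\,\phi(t_0) + c(\tau)\sup_{0\le s\le\tau}\bigl[\mu_X(w_t(t_0+s))\bigr]^2, \qquad \phi(t):=\|w_t(t)\|_{1,\Om}^2+\|w_{tt}(t)\|^2.
\end{equation*}
The interpolation estimate \eqref{7.9mu} for compact seminorms gives $[\mu_X(w_t)]^2\le \eps\|w_t\|_{1,\Om}^2 + C_\eps\|w_t\|^2$ for every $\eps>0$, and $\|w_t(t)\|$ is uniformly bounded for $t\ge t_0$ by a constant depending only on $\cB$. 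Fix $T$ with $2b(T)<1$ and then $\eps$ with $c(T)\eps \le 1/2$; taking the supremum over $\tau\in[0,T]$ and absorbing the $\|w_t\|_{1,\Om}^2$-contribution from the right into the left produces the recurrence $\phi(t_0+T) \le 2b(T)\,\phi(t_0) + M$, with $M$ depending only on $\cB$. A local-in-time bound for $\phi$ on $[0,t_0]$ (from continuity of strong solutions in the domain of the generator and from the control of $\|w_{tt}(0)\|$ furnished by the condition $\|w_2\|\le R$ in \eqref{W-R}) gives $\phi(t_0)\le \Phi(R)$ for some function $\Phi$. Iterating on intervals of length $T$ yields $\phi(t_0+nT)\le [2b(T)]^n \Phi(R)+M/(1-2b(T))$, so for $n=n(R)$ large enough and $t_R:=t_0+nT$ one obtains $\phi(t)\le R_0^2:=2M/(1-2b(T))$ for all $t\ge t_R$, which is \eqref{str-dis}.

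\textbf{Main obstacle.} The delicate step is justifying the limit $h\to 0^+$ in the compact-seminorm term $\sup_{0\le s\le\tau}[\mu_X(h^{-1}(w(t_0+s+h)-w(t_0+s)))]^2$. Because $\mu_X$ is only a compact seminorm on $H_0^1(\Om)$, one needs \emph{uniform} convergence in $s\in[0,\tau]$ of the difference quotient to $w_t(t_0+s)$ in a topology where $\mu_X$ is sequentially continuous --- equivalently, strong continuity $w_t\in C(\R_+;H_0^1(\Om))$. This is precisely the regularity supplied by the strong-solution hypothesis, i.e.\ the requirement that the initial data lie in the domain of the generator, which is built into $\cW_R$ through the condition $\|w_2\|\le R$; it is also the reason the statement is restricted to strong solutions rather than generalized ones.
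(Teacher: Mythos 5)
Your strategy is the one the paper intends: the text explicitly says dissipativity of strong solutions comes ``for free'' from quasi-stability, and the mechanism is exactly the one you use --- apply \eqref{8.4.2mc} to a trajectory and its small time-shift, exploit the quadratic homogeneity in $Z$, absorb the compact seminorm via \eqref{7.9mu} and the uniform $L_2$-bound on $w_t$ from the absorbing set, and iterate over windows of length $T$ with $b(T)<1/2$. The recurrence, the choice of $T$ and $\eps$, and the control of $\phi(t_0)$ through the condition $\|w_2\|\le R$ in \eqref{W-R} are all sound.

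The one step I would not accept as written is the passage to the limit $h\to 0^+$ \emph{before} the iteration, justified by the claim that strong solutions satisfy $w\in C^1(\R_+;H_0^1(\Om))\cap C^2(\R_+;L_2(\Om))$. For a nonlinear semigroup generated by a maximal monotone operator plus a locally Lipschitz perturbation, data in $D(A)$ give only that $t\mapsto (w(t);w_t(t))$ is locally Lipschitz into $H_0^1\times L_2$ with a right derivative; strong continuity of $w_t$ with values in $H_0^1$ (which you need for uniform convergence of the difference quotients in a topology where $\mu_X$ is continuous) is not automatic. The paper's own device in the proof of Theorem~\ref{th7.9reg} avoids this entirely: keep the difference quotients $w^h(t)=h^{-1}[w(t+h)-w(t)]$ throughout, note that every term in the quasi-stability inequality is quadratic so the whole recurrence holds for $\phi_h(t)=\|w^h(t)\|_{1,\Om}^2+\|w^h_t(t)\|^2$ \emph{uniformly in} $h$, bound the lower-order term by $\|w^h(t)\|\le h^{-1}\int_0^h\|w_t(t+\tau)\|\,d\tau\le C_{\cB}$, iterate, and only then let $h\to 0$ (weak-$*$ compactness and lower semicontinuity of the norms suffice). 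Reordering your argument this way removes the obstacle you flag in your last paragraph and requires no regularity beyond what the strong-solution hypothesis actually provides.
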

Theorem \ref{th:strong} refers to   strong solutions.
The existence of such, is guaranteed
once initial data  are taken from
 $\cW_R$.
With additional calculations   (using the  multiplier $\Delta w_t $)
 one improves the statement
 and obtains the dissipativity for $\|w(t)\|^2_{2,\Omega} \leq R_0^2$, $t \geq t_R $ with initial data from $ H^2(\Omega)$.
 This, in particular, shows that  strong attractors (for strong solutions)
 are in  $H^2(\Omega) \cap H^1_0(\Omega) \times H_0^1(\Omega)$. An interesting question is whether
 the same regularity is enjoyed by {\it weak} attractors. In other words,  whether
 $H^2(\Omega) \cap H^1_0(\Omega) \times H_0^1(\Omega)  \supset \cA $?  The answer to this question is given by
 \begin{theorem}[\bf Regularity]\label{sm}
Under the same assumptions as in Theorem \ref{Main}
the attractor is a bounded set in $H^2(\Omega) \times L_2(\Omega)$.
\end{theorem}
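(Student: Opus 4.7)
The plan is to upgrade Theorem~\ref{sm0} (which places $\cA$ in the constrained space $V$) to full $H^2\times L_2$ regularity by combining invariance of $\cA$ with the strong-dissipativity bound of Theorem~\ref{th:strong}, then closing an elliptic-type estimate on the stationary form of the PDE.

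First, I would observe that the space $V$ from Theorem~\ref{sm0} essentially coincides with $\cW_{R_*}$ for some $R_*$: any $(u,v)\in\cA$ satisfies $u,v\in H^1_0(\Om)$ together with $-\Delta u + g(v)\in L_2(\Om)$, which are exactly the defining conditions of $\cW_{R_*}$ after setting $v=u_1$, $u=u_0$. Invoking $\cA=S_{t_{R_*}}\cA$, for every $y_0\in\cA$ the backward point $y_{-t_{R_*}}:=S_{-t_{R_*}}y_0\in\cA\subset\cW_{R_*}$, and Theorem~\ref{th:strong} applied to the strong solution starting at $y_{-t_{R_*}}$ gives, at time $t_{R_*}$ (that is, at $y_0$),
\[
\|w_t\|_{H^1(\Om)}^{2}+\|w_{tt}\|_{L_2(\Om)}^{2}\le R_0^{2} \qquad\text{uniformly on }\cA.
\]
In particular $F:=f(w)-w_{tt}$ is bounded in $L_2(\Om)$ uniformly on $\cA$, since $f(w)\in L_2$ follows from $w\in H^1\subset L_6$ and $|f(s)|\le C(1+|s|^3)$.

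Second, I would view the wave equation at each instant as the elliptic identity
\[
-\Delta w + a(x)g(w_t)=F,\qquad w\big|_{\pd\Om}=0.
\]
Since $w\in H^1_0$ for Dirichlet data, it suffices to prove $\|g(w_t)\|_{L_2}\le C$ uniformly on $\cA$: this and the bound on $\|F\|_{L_2}$ yield $\|\Delta w\|_{L_2}\le C'$, and standard elliptic regularity for the Dirichlet Laplacian then implies $\cA$ is bounded in $H^2(\Om)\times L_2(\Om)$.

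Third, I would test the elliptic identity against $g(w_t)$ and integrate over $\Om$. Because $w_t\big|_{\pd\Om}=0$ and $g(0)=0$, the identity $g(w_t)\big|_{\pd\Om}=0$ kills the boundary terms in the integration by parts of $-\Delta w\cdot g(w_t)$, giving
\[
\int_\Om a(x)g(w_t)^{2}\,dx + \int_\Om g'(w_t)\nabla w\cdot\nabla w_t\,dx = \int_\Om F\,g(w_t)\,dx.
\]
The left-hand side is coercive since $a\ge a_0>0$, and the first right-hand term is absorbed by Young's inequality into $\tfrac{a_0}{4}\|g(w_t)\|_{L_2}^{2}$ plus $C\|F\|_{L_2}^{2}$.

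The main obstacle is the cross term $\int g'(w_t)\nabla w\cdot\nabla w_t\,dx$. Here the full strength of the Ladyzhenskaya-type assumption $g'(s)\le C(1+sg(s))^{2/3}$ is used: combined with the uniform pointwise-in-$t$ bound $\int_\Om w_tg(w_t)\,dx\le C$ on $\cA$ (which follows from $w_t\in L_\infty(\R;L_6(\Om))$ and $sg(s)\le M|s|^6$), it yields $\|g'(w_t)\|_{L_{3/2}(\Om)}\le C$ uniformly. A Cauchy-Schwarz splitting
\[
\Bigl|\int_\Om g'(w_t)\nabla w\cdot\nabla w_t\,dx\Bigr|
\le\Bigl(\int_\Om g'(w_t)|\nabla w|^{2}\Bigr)^{1/2}\Bigl(\int_\Om g'(w_t)|\nabla w_t|^{2}\Bigr)^{1/2},
\]
together with the H\"older pairing $L_{3/2}\cdot L_{3}$ and the $3D$ embedding $H^2\subset W^{1,6}$ combined with Dirichlet elliptic regularity $\|\nabla w\|_{L_6}\le C\|\Delta w\|_{L_2}$, bounds the first factor by $C\|\Delta w\|_{L_2}$. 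The delicate part is the second factor: this is handled by differentiating the PDE once in time and applying the strong-dissipativity analysis (with the multiplier $\Delta w_t$) to the wave equation $v_{tt}-\Delta v + ag'(v)v_t = f'(w)v$ satisfied by $v:=w_t$, whose damping coefficient $ag'(v)\ge a_0 m>0$ is strictly positive; the exponents in $g'(s)\le C(1+sg(s))^{2/3}$ are exactly calibrated to the double-critical case $(p,m)=(3,5)$ so that this second-level bootstrap closes. Once both factors are controlled, Young's inequality absorbs the cross term into $\tfrac{a_0}{4}\|g(w_t)\|_{L_2}^{2}+\eps\|\Delta w\|_{L_2}^{2}+C$, which (after one further absorption using $\|\Delta w\|_{L_2}\le C+\|g(w_t)\|_{L_2}$ from the elliptic identity) yields $\|g(w_t)\|_{L_2}\le C$ uniformly on $\cA$ and therefore $\cA$ bounded in $H^2(\Om)\times L_2(\Om)$.
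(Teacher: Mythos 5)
Your first step is sound, but it only recovers what the paper already has. Invariance of $\cA$ combined with Theorem \ref{th:strong} does give $\|w_t(t)\|_{1,\Omega}+\|w_{tt}(t)\|\le C$ uniformly on the attractor, hence $\|\Delta w - a\,g(w_t)\|\le C$ in $L_2(\Omega)$ --- but that is precisely the content of Theorem \ref{sm0}. The entire point of Theorem \ref{sm} is to decouple this sum, i.e.\ to show that $\Delta w$ and $g(w_t)$ are \emph{separately} bounded in $L_2$, and your mechanism for doing so does not close. In the identity obtained by testing against $g(w_t)$, the cross term $\int_\Omega g'(w_t)\nabla w\cdot\nabla w_t\,dx$ is not controllable from the information available on $\cA$. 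After your (correct) weighted Cauchy--Schwarz, the second factor $\int_\Omega g'(w_t)|\nabla w_t|^2dx$ requires pairing $g'(w_t)\in L_{3/2}(\Omega)$ against $|\nabla w_t|^2$, which needs $\nabla w_t\in L_6(\Omega)$, i.e.\ essentially $w_t\in H^2(\Omega)$ --- one full derivative more than the $w_t\in H^1(\Omega)$ you have. A direct H\"older count shows the obstruction is not an artifact of the splitting: with $g'(w_t)\in L_{3/2}$ and $\nabla w_t\in L_2$ the product $g'(w_t)\nabla w_t$ lands only in $L_{6/7}$ and need not even be integrable, so no integrability assumption on $\nabla w$ rescues the term. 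The proposed ``second-level bootstrap'' (differentiate in time and apply the multiplier $\Delta w_t$ to the equation for $v=w_t$) is circular: it aims to manufacture $w_t\in H^2$, but the corresponding second-order energy has no finite seed anywhere on the trajectory (the attractor is only known to be bounded in $V$), and the computation moreover generates $g''(w_t)$ terms not covered by the hypotheses of Theorem \ref{Main}. There is also a foundational issue that $\int_\Omega a\,g(w_t)^2dx$ is not a priori finite (one only knows $g(w_t)\in L_{6/5}$), so the testing identity itself needs a truncation argument; but even granting that, the cross term kills the estimate.

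The paper's proof (following Khanmamedov \cite{A-K}) obtains the missing regularity by an entirely different mechanism, \emph{backward smoothness on the attractor}: since the system is gradient, every full trajectory in $\cA$ converges as $t\to-\infty$ to the set $\cN$ of equilibria, which are smooth; the quasi-stability inequality \eqref{8.4.2mc}, applied on $(-\infty,T_f]$ to the pair consisting of the trajectory and a nearby equilibrium, transfers that smoothness to the trajectory at some negative time; forward propagation of regularity for strong solutions together with compactness of $\cA$ then yields the uniform $H^2(\Omega)\times L_2(\Omega)$ bound. Your elliptic-testing strategy contains no analogue of this regularity-gaining step, and I do not see how to repair it without importing that idea.
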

The proof of this theorem given in \cite{A-K} exploits the so called
"backward smoothness on attractor".
\medskip\par\noindent
 {\bf  Decay rates to equilibrium:} Knowing that  every trajectory converges
 to some equilibrium point one would like to know "how fast"?
 The answer to this question is given by decay rates of asymptotic convergence of solutions to point of equilibria.
 This is the topic we present next.

We introduce concave, strictly increasing, continuous  function $ k_0 : \R_+
\mapsto \R_+ $  which captures the behavior of $g(s)$ at the
origin possessing the properties
\begin{equation}\label{h-0}
k_0(0) =0\quad\mbox{and}\quad s^{2} + g^2 (s) \leq k_0 (s g(s) )
\quad\mbox{for}\quad |s| \leq 1.
\end{equation}
Such a function can always be
constructed due to the monotonicity of $g$,  see \cite{las-tat} or Appendix B in \cite{cl-book}.
Moreover, on the strength of  Assumption \ref{fg}  there exists a constant $c>0$
such that $k(s) \equiv k_0(s) + c s $ is a
concave, strictly increasing, continuous  function $ k : \R_+
\mapsto \R_+ $ such that
\begin{equation}\label{kkk}
k(0) =0\quad\mbox{and}\quad |s|^{2}  \leq k (s g(s))
\quad\mbox{for all}\quad s\in\R.
\end{equation}
Given function $k$ we define
\begin{align}\label{q-G0}
H_0 (s)  = &  k\left( \frac{s}{c_3}\right), \
G_0(s) = c_1 (I + H_0 )^{-1} (c_2s), \
\\
Q(s)  = &
s - (I + G_0 )^{-1}(s), \nonumber
\end{align}
where $c_i$  are some positive
constants. It is obvious that
$Q(s)$ is strictly monotone.  Thus, the  differential
equation
\begin{equation}\label{ode1}
\frac{d \sigma}{dt} + Q (\sigma) =0,\quad t > 0,\quad
\sigma(0) = \sigma_0 \in \R,
\end{equation}
admits  global, unique  solution $\sigma(t) $ which, moreover,  decays
asymptotically to zero
as $t \rightarrow \infty $.
With these definitions we are ready to state our  next result.
\begin{theorem}[\bf Rate of convergence to equilibria]\label{Main1}
In addition to  Assu\-m\-ption \ref{fg}, we assume that  the set of stationary points $\cN$ is finite, and every
equilibrium $V=(v;0)$ is hyperbolic in the  sense that the
problem
\begin{equation}\label{hyp1}
 \Delta w + f'(v)\cdot w = 0  \mbox{ in }  \Omega ~ \mbox{ and } ~
 w  = 0 \mbox{ on }  \Gamma
\end{equation}
has no non-trivial solutions.
Then, for any $W_0 =(w_0;w_1) \in \cH $,  there exists a stationary
point $V =(v;0)$ such that
\begin{equation}\label{ode}
\|S(t) W_0 - V\|_{\cH} \leq C \left( \sigma( [ t
T^{-1}])\right),\quad t > 0,
\end{equation}
where $C$ and $T$  are positive constants depending on $W_0$, $[a]$ denotes the
integer part of $a$ and $\sigma(t) $ satisfies
\eqref{ode1} with $\sigma_0 =  C(W_0, V) $
where  $C(W_0, V ) $ is a constant depending on $ ||W_0||_{\cH}$ and
$||V||_{\cH}$ and $Q$ is defined in (\ref{q-G0}).
In particular, if  $ g'(0)> 0 $, then
$$
  \|S(t) W_0  -V \|_{\cH} \leq C e^{-\omega t }
$$
for some positive constants $C$ and $\omega$ depending on $W_0$.
\end{theorem}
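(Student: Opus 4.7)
The plan is to combine the gradient structure of Theorem~\ref{T:3} with a \emph{local} stabilizability estimate near each hyperbolic equilibrium, and the nonlinear ODE argument of Lasiecka--Tataru type which converts the damping profile at the origin into a quantitative decay rate.

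By Theorem~\ref{T:3} we have $S(t)W_0\to V=(v;0)\in\cN$ in $\cH$. Since $\cN$ is finite and every equilibrium is hyperbolic in the sense of \eqref{hyp1}, there is $\delta>0$ such that $V$ is isolated in $\cN$ and a time $T_0$ with $\|S(t)W_0-V\|_\cH\le\delta$ for $t\ge T_0$; translating in time we may assume $T_0=0$. Setting $z=w-v$ and using $v_t=0$, the function $z$ solves
\begin{equation*}
z_{tt}-\Delta z + a(x) g(z_t) = f'(v)\,z + N(z), \qquad z|_{\Gamma}=0,
\end{equation*}
with $N(z):=f(v+z)-f(v)-f'(v)z$ satisfying $\|N(z)\|\le C\|z\|_{1,\Om}^2$ thanks to $|f''(s)|\le C|s|$. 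Writing $E_z(t)=\frac12(\|z_t\|^2+\|\nabla z\|^2)$, the energy identity reads
\begin{equation*}
E_z(t)+\int_s^t ((a g(z_t),z_t))\,d\tau = E_z(s)+\int_s^t ((f'(v)z+N(z)),z_t)\,d\tau.
\end{equation*}

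The crucial ingredient is an observability/stabilizability inequality
\begin{equation*}
E_z(T) \le C_T \int_0^T ((a g(z_t),z_t))\,d\tau + C_T\,\mathrm{l.o.t.}(z),
\end{equation*}
with $\mathrm{l.o.t.}(z)=\sup_{[0,T]}\|z\|_{1-\eps,\Om}^2$, obtained via the multipliers $z$ and $h\cdot\nabla z$ for a suitable vector field $h$, applied to the linear operator on the left, with $f'(v)z+N(z)$ treated as a perturbation. The lower-order term is absorbed by a standard compactness--uniqueness argument: any sequence violating the inequality would, after rescaling, yield a nontrivial solution of the linearized stationary problem \eqref{hyp1}, contradicting hyperbolicity of $V$. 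Combined with the energy identity (in which, since $\|z\|_\cH\le\delta$, the indefinite right-hand side is bounded by $\eps E_z(T) + C_\eps\,\mathrm{l.o.t.}$), this yields
\begin{equation*}
E_z(T) \le C_1 \int_0^T ((a g(z_t),z_t))\,d\tau \le C_1\bigl[E_z(0)-E_z(T)\bigr] + C_1\,\mathrm{l.o.t.}(z).
\end{equation*}
To unfold the nonlinear damping, apply Jensen's inequality to the concave function $k$ from \eqref{kkk}: with $D=\int_0^T\!\!\int_\Om a g(z_t)z_t\,dx\,d\tau$,
\begin{equation*}
\int_0^T\!\!\int_\Om |z_t|^2\,dx\,d\tau \le |Q_T|\, k\!\bigl(|Q_T|^{-1}D\bigr),
\end{equation*}
so $H_0(E_z(T))\le c\,D$ and the previous displays combine into a recursion $s_{n+1}+G_0(s_{n+1})\le s_n$ for $s_n=E_z(nT)$, with $G_0$, $Q$ defined in \eqref{q-G0}. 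This is equivalent to $s_{n+1}\le s_n-Q(s_n)$, a discrete version of \eqref{ode1}; a standard comparison gives $s_n\le\sigma(n)$ where $\sigma$ solves \eqref{ode1} with $\sigma_0=C(W_0,V)$, and a trivial interpolation on each interval $[nT,(n+1)T]$ delivers \eqref{ode}. When $g'(0)>0$ we may take $k(s)\sim s$ near the origin, hence $Q(s)\sim\kappa s$ and $\sigma(t)\sim e^{-\omega t}$, giving the exponential rate.

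The main obstacle is the observability step: producing a clean control of $E_z(T)$ by the damping integral in the doubly critical range $p=3$, $m=5$, where the source $f'(v)z$ is not compact relative to $E_z$. Hyperbolicity is what makes the compactness--uniqueness argument close, because the limiting problem is exactly \eqref{hyp1}, for which only the trivial solution is admitted; and working in a $\delta$-neighborhood of $V$ is crucial so that the quadratic remainder $N(z)$ is genuinely subordinate to $E_z$ and absorbable into the lower-order term.
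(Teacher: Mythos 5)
Your overall architecture --- gradient structure to identify the limit $V$, an observability/stabilizability estimate for $z=w-v$, Jensen's inequality with the concave majorant $k$ from \eqref{kkk} to unfold the nonlinear damping, and comparison with the ODE \eqref{ode1} --- is the strategy of the proof the paper points to (``convex analysis and reducing estimates to nonlinear ODE's''), and two of your slips are repairable: the critical term $\int_0^T((f'(v)z,z_t))\,dt$ is not controlled merely because $\|z\|_{\cH}\le\delta$ (it is a full-energy term with a non-small constant), but becomes lower order after integration by parts in time, $\int_0^T((f'(v)z,z_t))\,dt=\tfrac12((f'(v),z^2))\big|_0^T\le C\sup_t\|z(t)\|^2_{1/2,\Om}$; and the Jensen step should conclude $E_z(T)\le C(I+H_0)(c\,D)$, hence $G_0(E_z(T))\le C'D$ with $G_0$ as in \eqref{q-G0}, not ``$H_0(E_z(T))\le cD$'' (which points the inequality the wrong way), although the recursion you then write down is the correct one.

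The genuine gap is the absorption of the lower-order terms, which you dispatch as ``a standard compactness--uniqueness argument: any sequence violating the inequality would, after rescaling, yield a nontrivial solution of \eqref{hyp1}.'' This is exactly the step the authors flag as the difficulty (``convergence to equilibria is a very unstable process\dots it is not possible to localize the analysis to the neighborhood of the equilibria''), and as stated it does not close. The recursion $s_{n+1}+G_0(s_{n+1})\le s_n$ needs the estimate with constants uniform over the windows $[nT,(n+1)T]$, so the violating sequence consists of time-translates $z(\cdot+s_n)$ with $s_n\to\infty$, whose lower-order norms $\lambda_n^2$ tend to zero; after rescaling by $\lambda_n$ the weak limit $\hat z$ satisfies the linearized \emph{evolution} equation $\hat z_{tt}-\Delta\hat z=f'(v)\hat z$, not the stationary problem \eqref{hyp1}. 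Reducing to \eqref{hyp1} requires $\hat z_t\equiv 0$, and this does not follow from the vanishing of the rescaled damping integral when $g$ is degenerate at the origin (the only regime where the general rate $\sigma$ is needed): Jensen gives $\int\|\hat z_{n,t}\|^2\le C\,k\big(c\!\int\! D_n\big)/\lambda_n^2$, which with $k(s)\sim s^{\theta}$, $\theta<1$, and $\int D_n\le\lambda_n^2/n$ can blow up as $\lambda_n\to0$. Moreover a \emph{uniform} local estimate over a $\delta$-ball of $V$ is genuinely false when $V$ is a saddle: if $(-\Delta-f'(v))\phi=-\mu^2\phi$ with $\mu>0$, then $e^{\pm\mu t}\phi$ solves the linearized undamped equation with nonzero lower-order norm, so hyperbolicity of the stationary problem \eqref{hyp1} alone cannot rule out the limit. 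The missing idea is to exploit the convergence of the given trajectory itself ($w_t(t)\to0$ in $L_2$ and $w(t)\to v$ in $H^1_0$) so that the limiting objects in the contradiction argument are forced to be stationary; this is why the constants $C,T$ in \eqref{ode} depend on $W_0$ and $V$, and why the estimate is trajectory-wise rather than a neighborhood statement.
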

\begin{remark}{\rm
Since $Q(s)$ is strictly increasing and $Q(0) =0$, the  rates
described by the ODE in (\ref{ode1}) (see, e.g., \cite{las-tat} or
\cite[Appendix B]{cl-book})
decay uniformly to zero. The ``speed" of decay depends on the
behavior of $g'(s) $ at the origin. If $g'(s)$  decays to zero
polynomially, then by solving the ODE in (\ref{ode1}), one obtains
algebraic decay rates for the solutions to $\sigma(t)$. If,
instead  $\delta =0$ and $g'(0) > 0$, then $Q(\sigma) = a \sigma $ for some
$ a >0 $ and,  consequently, the decay rates  derived from (\ref{ode1})
are exponential (see \cite{las-tat} for details).
In the latter case using the dissipitavity of strong solutions
(see Theorem~\ref{th:strong}) one can prove by interpolation that
the exponential decay rate holds for strong solutions in a stronger
(than in $\cH$) topology.
}
\end{remark}
\subsubsection{ Boundary damping }
 \index{wave equation!boundary  damping}
In the case when boundary dissipation is  the {\it only active mode  of damping }
(model in (\ref{1.4}) and \eqref{w-bc-n} with
$ g =0$) one can still prove that the long time behavior is essentially the same as in the case of internal damping. The task of achieving this goal is much more technical, due to the necessity of propagating the damping from the boundary into the interior. This is  done by using familiar by now "flux" multipliers. However, the resulting analysis
and PDE estimates are considerably more complicated and often resort to specific unique continuation property as well as Carleman's estimates (when dealing with critical cases).
This topic has been considered in \cite{cel,cel2,clt-jdde09}.
The corresponding results is given below.
\begin{theorem}[\bf  Boundary Dissipation]\label{Main-boundary}
With reference to  the equations in (\ref{1.4})and \eqref{w-bc-n}, where we take $ g =0 $, $h(w) = - w $,  the source $f$ satisfies dissipativity   property imposed  in
\eqref{dis-f-wave},
where $\lambda_1$ corresponds to the first eigenvalue of the Robin problem
and such that $|f''(s)|\le C(1+|s|)$ for all $s\in \R$.
 The
 boundary  damping  $g_0$  is an increasing, differentiable  function, $g_0(0) = 0 $  and satisfies the asymptotic growth condition:
 \begin{equation}\label{g0}
 m \leq  g_0'(s)  \leq M,~~ |s| > 1,~~~ m, M > 0,
 \end{equation}
 Then
 \begin{enumerate}
 \item
 Under the above assumption the statements of Theorem \ref{T:1} and \ref{T:3} hold.
 \item
 If, in addition,
  $g_0'(0)  > 0 $ then the statements of Theorem \ref{Main} and also Theorem \ref{sm},
 Theorem \ref{Main1} (with appropriate modifications) hold true.
 \end{enumerate}
\end{theorem}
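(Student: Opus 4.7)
\medskip\par\noindent
\textbf{Proof plan.}
The plan is to deduce both parts of Theorem~\ref{Main-boundary} from the quasi-stability framework of Section~4.4 combined with a gradient structure. Set $H=H^1(\Omega)\times L_2(\Omega)$ and let $S_t$ be the semiflow supplied by Theorem~\ref{t:b}. First I would verify that $(H,S_t)$ is gradient with strict Lyapunov function the energy $\cE$ from \eqref{w-bc-energy}; note that $h(w)=-w$ contributes a coercive boundary term $\tfrac12\int_\Gamma w^2\,d\Gamma$, so $\cE$ controls the $H^1$-norm modulo lower order terms handled by the dissipativity condition \eqref{dis-f-wave}. Monotonicity of $g_0$ together with \eqref{w-bc-energy-rel} makes $\cE$ nonincreasing; strictness follows because $\cE(t)\equiv\cE(s)$ forces $w_t\equiv 0$ on $\Gamma\times(s,t)$ by strict monotonicity of $g_0$, after which $w$ satisfies a wave equation with overdetermined boundary data, and unique continuation (available since $f'$ grows at most quadratically) forces $w_t\equiv 0$ in $\Omega$. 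Boundedness of the equilibrium set $\cN$ follows by testing the stationary version of \eqref{1.4} against $w$ and using \eqref{dis-f-wave} together with the Robin term.

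The heart of the proof is the stabilizability estimate \eqref{stabil-1} for the difference $z=w_1-w_2$ of two trajectories lying in a bounded forward invariant set. I would apply a flux-type multiplier $h(x)\cdot\nabla z+\tfrac{d-1}{2}z$ with a vector field $h$ tailored to the geometry of $\Gamma$, to obtain an inequality of the schematic form
\[
\int_s^T E_z(\tau)\,d\tau \le C\bigl(E_z(s)+E_z(T)\bigr)+C\int_s^T\!\!\int_\Gamma\bigl[z_t^2+(g_0(w_t^1)-g_0(w_t^2))^2+z^2\bigr]d\Gamma\,d\tau + R_f(z),
\]
where $R_f(z)$ collects all contributions of $f(w_1)-f(w_2)$. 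The boundary dissipation integral is absorbed using the lower bound $(g_0(a)-g_0(b))(a-b)\ge m(a-b)^2$ implied by \eqref{g0}, which converts it into the damping dissipation for $z$ readable from \eqref{w-bc-energy-rel}. The main obstacle is the remainder $R_f$ in the critical case $|f''(s)|\le C(1+|s|)$, equivalent to $p=3$, where the map $w\mapsto f(w)$ is bounded but not compact on $H^1(\Omega)$; to treat it I would either invoke Carleman estimates and unique continuation as in \cite{cel,cel2,clt-jdde09}, or run a compactness-uniqueness contradiction whose limiting overdetermined wave problem is ruled out by unique continuation. The outcome is \eqref{stabil-1} with $LOT_z=\sup_{[s,T]}\|z\|_{1-\epsilon,\Omega}^2$, which is precisely the quasi-stability inequality \eqref{8.4.2mc} with compact seminorm $\mu_X(u)=\|u\|_{1-\epsilon,\Omega}$.

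Quasi-stability plus gradient structure then drive the rest. Corollary~\ref{co:pr7.9.1} yields a compact global attractor $\cA$, and Theorem~\ref{th:grad-attr} gives $\cA=\cM^u(\cN)$ and the convergence statements in Theorem~\ref{T:3}, establishing part~1. Under the additional hypothesis $g_0'(0)>0$ the boundary damping is effectively linear near zero, so the function $c(t)$ in \eqref{8.4.2mc} can be chosen uniformly bounded in $t$; Theorem~\ref{th7.9dim} then gives finite fractal dimension of $\cA$, and Theorem~\ref{th7.9reg} delivers the regularity $w_t\in L_\infty(\R;H^1)$, $w_{tt}\in L_\infty(\R;L_2)$ on $\cA$, which a standard elliptic bootstrap applied to $-\Delta w=f(w)-w_{tt}$ with the Robin condition upgrades to $\cA\subset H^2(\Omega)\times H^1(\Omega)$, giving the analogue of Theorem~\ref{sm}. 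The rate statement (analogue of Theorem~\ref{Main1}) follows by feeding the quasi-stability inequality into the nonlinear ODE construction built around $Q$ in \eqref{q-G0}: linearity of $g_0$ near zero makes $Q(\sigma)\sim\sigma$, so \eqref{ode1} forces exponential decay to equilibria, with hyperbolicity \eqref{hyp1} supplying the necessary linearization around each stationary point.
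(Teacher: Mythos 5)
Your overall toolkit (gradient structure via unique continuation, flux multipliers, Carleman estimates for the critical terms, quasi-stability feeding Theorems~\ref{th7.9dim} and \ref{th7.9reg}) is the right one, but there is a genuine gap at the central step: you assert that the stabilizability estimate \eqref{stabil-1} with a compact lower-order term can be obtained for the difference of two trajectories \emph{in an arbitrary bounded forward-invariant set}. In the pure boundary-damping case this is exactly what fails. The critical source produces, as in Lemma~\ref{s}, a term of the form $\int_0^T\|z\|^2_{1,\Omega}\,K(\tau)\,d\tau$ with $K(\tau)=\|u_t(\tau)\|^2+\|w_t(\tau)\|^2$, and closing the Gronwall argument requires the interior dissipativity integral $\int_0^\infty K(\tau)\,d\tau<\infty$ of \eqref{K-dis}. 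With $g\equiv 0$ the energy relation \eqref{w-bc-energy-rel} only controls boundary integrals of $g_0(w_t)w_t$, so that integral is simply not available on a general bounded set; Carleman estimates handle the competition between the two critical multipliers $h\nabla z$ and $z_t$, but they do not manufacture the missing interior damping. Also note that \eqref{g0} gives $g_0'\ge m$ only for $|s|>1$, so the coercivity $(g_0(a)-g_0(b))(a-b)\ge m(a-b)^2$ you invoke to absorb the boundary terms is not available in part~1.

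The paper's route around this is different in two places. For part~1, asymptotic compactness is obtained from the compensated-compactness criterion (Theorem~\ref{th7-turk}) applied to the functional $\Psi$ built from the source — not from quasi-stability — combined with the gradient structure and Theorem~\ref{th:grad-attr}. For part~2, the quasi-stability inequality is proved only \emph{on the attractor near stationary points}, for $t\to-\infty$, where the gradient structure forces the velocities in $K(\tau)$ to be small; this yields backward-in-time smoothness, which is then propagated forward by the regularity theory, upgraded to a uniform $H^2(\Omega)\times H^1(\Omega)$ bound on $\cA$ by compactness and a finite-net argument, and only at that stage does quasi-stability on the (now smooth) attractor deliver finite fractal dimension and the decay rates. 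You should restructure your argument accordingly: separate the existence of the attractor (compensated compactness) from the quasi-stability step, and restrict the latter to the attractor via the backward-smoothness detour.
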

One fundamental difference between boundary damping and interior damping is that
the restriction of linear bound at infinity for the boundary damping is essential. One dimensional examples \cite{vancostenoble}  disprove validity of uniform stability to a single equilibrium when the damping is superlinear at infinity.

\subsubsection{Generalizations}
{\bf With reference to interior damping}
\begin{enumerate}
\item
Wave equation with homogenous  Neumann or Robin  boundary conditions can be considered in the same way.
\item
The same wave model with additional  boundary damping and source, as described in (\ref{1.4}) and (\ref{w-bc-n}).
The corresponding analysis is more involved. Details are given in \cite{snowbird}.
\item
Possibility of degenerate damping $g(s)$, where the density function $a(x)$ describes possibility of degeneracy of the damping. This situation is also studied in \cite{snowbird}.
\item
Strong attractors. These are attractors corresponding to strong solutions.
In the case when boundary source and  damping are involved and the damping may degenerate,
the description of strong attractors is more  involved. These are   bounded sets  $ V \times H^1(\Omega) $
where $ H^2(\Omega) \subset V $. The details are given  in \cite{snowbird}.
\item
Under additional assumptions imposed on regularity of the damping $g(s)$ and the source $f(s) $ one can prove that
the attractors corresponding to Theorem \ref{sm} are infinitely smooth $C^{\infty}$. This can be done by the usual boot-strap argument.
\item
Attractors with localized dissipation only. This corresponds to having $a(x)$ localized in the layer near boundary.
\cite{clt-dcds08}.  As in the case of boundary damping, requires linearly bounded damping.
\end{enumerate}
{\bf With reference to boundary damping}
\begin{enumerate}
\item
Attractors with boundary damping  which is further localized to a  suitable portion of  the boundary. Requires
appropriate geometric conditions, see \cite{clt-jdde09} for details.
\item
Relaxed regularity assumptions imposed on the damping function $g_0(s)$, see \cite{clt-jdde09} again.
\end{enumerate}
\subsubsection{Open Problems}
\begin{enumerate}
\item
Higher regularity of attractors with minimal hypotheses on the damping and source.
\item
Under which conditions weak attractors coincide with strong attractors in the case of boundary or partially localized damping?
\item
Supercritical sources (e.g., $f(s) = s^5$). Ball's method will provide existence of attractors with linear damping. However, linear damping
is not sufficient for uniqueness of solutions. One needs to consider non-unique solutions.
\item
Nonmonotone damping. Some particular results in this
direction can be found in  \cite{cl-mem}.
\item
Non-gradient structures. For instance models involving   first order differentials as the potential  sources.
\end{enumerate}

\subsubsection{Remarks }

\begin{enumerate}
\item
{\it Theorem \ref {T:1} -Compactness.} Difficulty: double critical exponent -rules out previous methods.   Use Theorem \ref{th7-turk}
after an appropriate rewriting of the source which then fits into the structure of the functional $\Psi$. This point is explained below.
\item
{\it Theorems \ref{Main} and  Theorem \ref{sm}-finite dimensionality and smoothness}. Relies on proving quasi-stability inequality. The main trick is to provide appropriate  decomposition of the source  that  takes advantage
of dissipativity integral  which is $L_1(\R) $. See \cite{snowbird,clt-jdde09}.
\item
{\it Full statement of Theorem \ref{sm}} -  \cite{A-K}.
 Smoothness of the trajectories on the attractor near $-\infty$.
 The  existence of strong attractors (for strong solutions) is de facto consequence of quasi-stability inequality. However, in many situation proving additional spatial regularity ($H^2(\Omega)$, for instance) is problematic. It is for this task
 that special considerations  related to backward smoothness are important.
\item
{\it Decay rates - Theorem \ref{Main1}}.  The difficulty relates to the fact that convergence to equilibria is a very "unstable" process. It is not possible to localize the analysis to the neighborhood  of the equilibria.  Decay rates are derived by using convex analysis and reducing estimates to nonlinear ODE's. See \cite{snowbird}.
\item
{\it Boundary damping - Theorem \ref{Main-boundary}}.
First results  are given in \cite{cel,cel2} where finite-dimensionality was proved  for subcritical  sources only.
Complete proof of Theorem \ref{Main-boundary} (in fact in a more general version) is given in \cite{clt-jdde09},
where flux multipliers are used along with Carleman's estimates and also backward smoothness of trajectories.
\end{enumerate}

\subsubsection{Guide through  the Proofs.}

We are not in a position to present complete proofs of the results. These are given in  the cited references.
Here, our aim instead is to orient the reader what are the main points one needs to go over
in order to prove these results. Special emphasis is given to more subtle technical details where "some" tricks
-most of which recently introduced - need to be applied.
It is our hope that  an attentive reader will be able to grasp the essence of the proof, the tools required so that when guided
to more specific reference  he will be in a position to reconstruct
  a complete  proof.
\medskip\par\noindent
{\bf Theorem \ref{T:1} - Compactness}
\smallskip\par\noindent
{\sc Step 1: The corresponding system is gradient system. }
This follows from the fact  that  Lyapunov function $V(t) \equiv \cE(t) $ is bounded from above and below by the topology of the phase space. The latter  results from  dissipativity condition imposed on $f(s)$ in Assumption \ref{fg}
(see \eqref{dis-f-wave}).
Moreover  $V_t(t) =0$ implies $((g(w_t), w_t )) =0 $ hence   $w_t(x,t) =0$ in $\Omega $ reduces the dynamics to a stationary set defined by \eqref{N-set-wave} and \eqref{eqn-111}.
 The set $\cN$ of stationary points  is bounded due, again,  to dissipativity condition imposed on $f$. This part of the  argument is standard.
\smallskip\par\noindent
{\sc
Step 2:  Asymptotic smoothness.} The  main difficulty in carrying  the proof  of asymptotic smoothness is double criticality of both the source and the damping.  This is the reason why previous approaches (based on splitting or squeezing) did require additional assumptions.
To overcome this  difficulty  we  use Theorem \ref{th7-turk}.
In preparation for this theorem we are  using  the equation
 \begin{equation}\label{z-dfi-1.1}
z_{tt} - \Delta z +  a(x) [g(w_t)-g(u_t)] =f(w)-f(u) ~~\mbox{in}~~\Omega  \times \R_+
\end{equation}
with $z=0$ on $\Gamma$,
 written for the difference of two solutions $z = w-u $,
 where both $w$ and $ u$ are solutions to the original equations
 confined to a bounded set $\cB \in H $, and also
\begin{itemize}
  \item {\it  energy identity} (multiply equation \eqref{z-dfi-1.1} by $z_t$);
  \item {\it  equipartition} of energy (multiply equation \eqref{z-dfi-1.1} by $z$).
\end{itemize}
In addition we recall dissipation  relations for each solution $u$ and $w$:
\begin{equation}\label{disip}
\int_0^t (( g(u_t) , u_t) )) + (( g(w_t), w_t )) \leq C_B,~~~\forall\, t > 0,
\end{equation}
and also
\[
\|u_t(t)\|^2 +\|u(t)\|^2_{1,\Om} + \|w_t(t)\|^2 +\|w(t)\|^2_{1,\Om} \leq C_B,~~~\forall\, t > 0.
\]
Critical step consists of obtaining "recovery" estimate for the energy of $z$ in terms of the {\it damping} and the {\it source}.  The important part is that the "recovery" estimate  does not involve any initial data.
This step is achieved by using  {\it equipartition} of the energy.
The result of  which is  the  following inequality \cite{snowbird}:
\begin{equation}\label{asym-1}
\hf T E_z(T) + \int _0^T E_z(t) \leq
\int_0^T\left[ ||z_t||^2 + D(z_t) +  |G(z_t,z)|\right] dt + \Psi(z, T),
\end{equation}
for $T\ge 4$,
where $E_z(t) \equiv ||z_t(t)||^2 + ||\nabla z(t)||^2$,
\begin{equation}\label{gzzt}
D(z_t) \equiv (( g(w_t) - g(u_t), z_t)),~~~
G(z_t, z) \equiv  (( g(w_t) - g(u_t), z)),
\end{equation}
and
\begin{multline*}
\Psi (z, T) \equiv- \int_0^T ((f(w) - f(u), z_t)) \\ + \int_0^T ds \int_s^T d\tau ((f(w) - f(u) , z_t))
+ \int_0^T | ((f(w) - f(u) , z))| dt
\end{multline*}
Our goal  is to bound the right hand side of the above relation by terms that are (i) either bounded uniformly in $T$,  or (ii) compact or (iii) small multiples of energy integral.
\par
From the energy relation for $z$ we have that
\begin{equation}\label{dis-z}
\int_0^T  D(z_t) d\tau \leq C_{B}+  \int_0^T ((f(w) - f(u), z_t)) d\tau.
\end{equation}
One can also wee from Proposition B.1.2 in \cite{cl-book} that
\[
\|z_t\|^2\le \eta + C_\eta D(z_t),~~~\forall\, \eta>0.
\]
Thus the terms $\|z_t\|^2$ and  $D(z_t)$ produce
in \eqref{asym-1} an admissible contribution.
\par
The last integral in the definition of $\Psi$ is compact. Thus the noncompact terms are the second one
involving $G$ and the first two integrals in definition of $\Psi$. Since these two terms are similar, it suffices
to analyze the main non-compact terms in (\ref{asym-1}) which are:
$$
\int_0^T |G(z_t,z)|  dt ~~\mbox{and}~~  \int_0^T d s\int_s^T ((f(w) - f(u), z_t ))d\tau.
$$
These terms reflect  double criticality  of  the damping-source exponents. We  analyze these next.
\par
For the {\bf damping term} $G$ one uses split $\Om$ into
$\Omega_1= \{|u_t(x,t)| > 1\}$ and the complement $\Omega_2 =
\Omega
\setminus \Omega_1$. With the use of critical growth condition for $g$
and the fact that $\|z\| \le C_\B$ we obtain
\begin{align*}
  \int_{\Omega_1} |  g(u_t)| |z |dx  \leq &
  \left[\int_{\Omega_1}|g(u_t)|^{6/5}dx \right]^{5/6}\|z\|_{L_6(\Omega)}   \\
\leq & \left[ \int_{\Omega_1} g(u_t) u_t dx \right]^{5/6} \|z\|_{1, \Omega}   \leq C_{B} \int_{\Omega} g(u_t) u_t dx.
\end{align*}
 On $\Omega_2 $ the damping is bounded -contributing to  lower order terms.
 Thus we obtain for $G(z_t,z) $
\begin{align*}
\int_0^T G(z_t,z)  dt  \leq & C_B \int_0^T \left[
 (g(u_t), u_t) + (g(w_t), w_t) \right] dt  +
C_B T \sup_{t\in[0,T]} ||z(t)|| \\
\leq & C_B  + C_B T \sup_{t\in[0,T]} ||z(t)||,
\end{align*}
which gives the inequality in terms of the universal constant (independent on time $T$) and a lower order term
$||z(t)||$.
\par
For the {\bf source}, the key point is the following decomposition
that allows to prove sequential convergence of the functional $\Psi$ in   Theorem \ref{th7-turk}:
\begin{multline*}
\int_0^T (( f(w) - f(u), z_t ))  \\
= \int_{\Omega} \left[ F(w(T) ) + F(u(T) ) - F(w(0)) - F(u(0))\right]
\\ -
\int_0^T\left[ ((f(u), w_t )) + ((f(w), u_t)) \right] dt,
\end{multline*}
where $F(s)$ denotes the antiderivative of $f$. In line with Theorem \ref{th7-turk} we consider these terms
on sequences $ u=w_n, w= w_m $ converging weakly in $H^1(\Omega) $  to a given element. We want to conclude that the corresponding functional converges sequentially to zero.
Convergence on the first four  terms is strong due to compactness while convergence
on the last term is sequential with $w_n \rightarrow w$  weakly~in   $H^1(\Omega)$
and $w_{mt}
\rightarrow w_t$  weakly in $L_2(\Omega)$.
Applying (\ref{asym-1}) gives
$$
E_z(T) \leq \frac{\epsilon}2+\frac{C_B}{T} + C_\epsilon
\left[\Psi (z,T) + LOT(z,T)\right]  \leq \epsilon + C_\epsilon
\left[\Psi (z,T) + LOT(z,T)\right],
$$
for $T=T_\eps$ large enough,
where $LOT(z,T)$ comprise of all compact terms.
Now we use the fact that
\begin{align*}
  \hat \Psi (w_n,w_m, T)
\equiv  &-\int_0^T ((f(w_n) - f(w_m), w_{nt} - w_{mt}))
\\ &
+ \int_0^T ds \int_s^T d\tau ((f(w_n) - f(w_m),w_{nt} - w_{mt}))
\end{align*}
is weakly sequentially compact - see the details in \cite{snowbird,cl-mem}.
\smallskip\par\noindent
{\sc Step 3: completion.}  We apply Theorem \ref{th:grad-attr} to deduce the results stated in Theorem \ref{T:1} and Theorem \ref{T:3}.

\medskip\par
\noindent{\bf Theorem \ref{T:3} -Structure} -follows from asymptotic smoothness and gradient structure via Theorem
 \ref{th:grad-attr}.

\medskip\par
\noindent {\bf Theorems \ref{Main} and \ref{sm} - Finite Dimensionality  and Regularity.}
 The key tool is the quasi-stability inequality formulated
   in Definition \ref{de:ms-stable}.
 This inequality, as in the case of asymptotic smoothness, should hold for the difference  of two solutions $z =w-u$
 taken from a bounded set-say an  absorbing ball $\cB$.
  As before, we apply two multipliers  $z$ and $z_t$.
  \par

 For the damping term $G(z_t,z)$ given in \eqref{gzzt} we have the following estimate
 (see Proposition 5.3 in \cite{snowbird} for details):
  \begin{lemma}[Damping]\label{d} For every $\epsilon>0$
 there exists $C_\eps$ such that
 \begin{align*}
 \int_0^T |G(z_t,z)|dt \leq & \epsilon \sup_{[0,T]}\|z\|^2_{1,\Om}
 \left( T + \int_0^T \big[((g(w_t), w_t)) + ((g(u_t), u_t))\big] d\tau\right)
 \\
 & {}\, + C_{\epsilon} \int_0^T ((g(u_t) -g(w_t), z_t )) dt.
 \end{align*}
   \end{lemma}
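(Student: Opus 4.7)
\textbf{Proof plan for Lemma \ref{d}.} I will split $\Omega$ pointwise in $t$ into the ``linear'' region $\Omega_1(t) = \{x \in \Omega : |u_t(x,t)| \le 1 \text{ and } |w_t(x,t)| \le 1\}$, where the damping is effectively Lipschitz, and its complement $\Omega_2(t) = \Omega \setminus \Omega_1(t)$, on which the supercritical growth of $g$ is active. Writing $|G(z_t,z)| \le I_1(t) + I_2(t)$ with $I_j(t) = \int_{\Omega_j(t)} |g(w_t)-g(u_t)|\,|z|\,dx$, I will estimate each piece separately and then integrate in time.

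On $\Omega_1(t)$, since $g \in C^1(\R)$ the mean value theorem gives $|g(w_t) - g(u_t)| \le L\,|z_t|$ with $L = \sup_{|s|\le 1} g'(s)$. Cauchy--Schwarz and Young's inequality then yield $I_1(t) \le \epsilon \|z\|^2 + C_\epsilon \|z_t\|^2_{L_2(\Omega_1)}$. By the uniform monotonicity $g'(s) \ge m > 0$ of Theorem~\ref{Main}, we have $m |z_t|^2 \le (g(w_t) - g(u_t)) z_t$ pointwise; since the right-hand side is nonnegative everywhere, $\|z_t\|^2_{L_2(\Omega_1)} \le m^{-1} ((g(w_t)-g(u_t),z_t))$. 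Combined with the Poincar\'e bound $\|z\|^2 \le C\|z\|^2_{1,\Omega}$ and integration in $t$, this contributes $\epsilon T\sup_{[0,T]}\|z\|^2_{1,\Omega} + C_\epsilon m^{-1} \int_0^T ((g(w_t)-g(u_t),z_t))\,dt$, which matches the structure required by the lemma.

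On $\Omega_2(t)$, I will use H\"older's inequality in $L_{6/5} \times L_6$ together with the Sobolev embedding $H^1(\Omega) \hookrightarrow L_6(\Omega)$, obtaining $I_2(t) \le C\,\|z\|_{1,\Omega}\,\|g(w_t) - g(u_t)\|_{L_{6/5}(\Omega_2)}$. Assumption~\ref{fg} gives $g(s)s \ge m_g |s|^2$ and $g(s)s \le M_g |s|^6$ for $|s|\ge 1$, so that $|g(s)|^{6/5} \le C(1 + g(s)s)$ on all of $\R$ (for $|s|\le 1$ the quantity $|g(s)|$ is simply bounded). Hence $\|g(w_t)-g(u_t)\|_{L_{6/5}(\Omega_2)} \le C(1 + X(t))^{5/6}$ with $X(t) = ((g(w_t),w_t)) + ((g(u_t),u_t))$. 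Factoring $\|z\|_{1,\Omega} (1 + X)^{5/6} = [\|z\|_{1,\Omega}(1+X)^{1/2}]\cdot (1+X)^{1/3}$ and invoking Young's inequality yields the pointwise bound $I_2(t) \le \epsilon \|z\|^2_{1,\Omega}(1 + X(t)) + C_\epsilon (1 + X(t))^{2/3}$.

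The main technical difficulty, and the step I expect to be the principal obstacle, is absorbing the residual term $C_\epsilon \int_0^T (1 + X)^{2/3}\,dt$ into the damping integral $C_\epsilon \int_0^T ((g(w_t)-g(u_t), z_t))\,dt$ on the right-hand side. The strategy is to combine H\"older in time (which gives $\int_0^T(1+X)^{2/3}\,dt \le T^{1/3}(T + \int_0^T X\,dt)^{2/3}$) with the uniform coercivity $g'(s) \ge m$ and the dissipativity bound \eqref{disip} that controls $\int_0^T X(\tau)\,d\tau$ on the bounded absorbing set $\cB$; one then trades the remaining factor against the pointwise damping of the difference using the second growth hypothesis $g'(s) \le C[1 + sg(s)]^{2/3}$ in Theorem~\ref{Main}, which precisely matches the exponent $2/3$ appearing above. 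Once this absorption is completed, summing the $\Omega_1$ and $\Omega_2$ contributions gives the stated inequality; the full technical details of the exponent juggling are carried out in Proposition~5.3 of \cite{snowbird}.
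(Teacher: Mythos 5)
Your estimate on the region where both velocities are bounded by $1$ is sound, but the $\Omega_2$ part contains a structural flaw that the absorption step you defer to the end cannot repair. By replacing $|g(w_t)-g(u_t)|$ with $|g(w_t)|+|g(u_t)|$ before applying the $L_{6/5}$--$L_6$ H\"older inequality you discard the difference structure, and Young's inequality then leaves the residual $C_\epsilon\int_0^T(1+X(t))^{2/3}\,dt$ with $X=((g(u_t),u_t))+((g(w_t),w_t))$. This quantity is bounded below by $C_\epsilon T>0$ \emph{independently of $z$}, whereas every term on the right-hand side of the lemma vanishes as $z\to 0$: both $\epsilon\sup_{[0,T]}\|z\|^2_{1,\Om}(\cdots)$ and the damping integral $\int_0^T((g(u_t)-g(w_t),z_t))\,dt$ are quadratic in the difference of the two trajectories. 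Hence no combination of H\"older in time, the dissipativity bound \eqref{disip}, and the growth hypothesis on $g'$ can dominate this residual by the stated right-hand side; the absorption you propose is impossible for homogeneity reasons. (A $z$-independent additive constant of exactly this type is tolerated in the asymptotic-smoothness estimate \eqref{asym-1}, which is where the paper uses the $L_{6/5}$--$L_6$ splitting; it is fatal in the quasi-stability lemma, whose whole point is a bound homogeneous in $z$.)

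The correct route keeps the difference structure throughout. Write $g(w_t)-g(u_t)=\hat g\, z_t$ with $\hat g=\int_0^1 g'(u_t+\lambda z_t)\,d\lambda\ge 0$, so that pointwise $\hat g\,|z_t|^2=(g(w_t)-g(u_t))z_t$. Then
\[
|g(w_t)-g(u_t)|\,|z|=\hat g\,|z_t|\,|z|\le \delta\,\hat g\,|z|^2+C_\delta\,(g(w_t)-g(u_t))z_t,
\]
and the second term integrates directly into the damping integral of the lemma. For the first term, H\"older with exponents $3/2$ and $3$ and the embedding $H^1(\Om)\subset L_6(\Om)$ give $\int_\Om\hat g\,|z|^2\,dx\le \|\hat g\|_{L_{3/2}(\Om)}\|z\|^2_{L_6(\Om)}\le C(1+X(t))^{2/3}\|z\|^2_{1,\Om}\le C(1+X(t))\|z\|^2_{1,\Om}$, where the bound $\|\hat g\|^{3/2}_{L_{3/2}}\le C(1+X)$ follows from Jensen's inequality and the hypothesis $g'(s)\le C[1+sg(s)]^{2/3}$ of Theorem~\ref{Main}, using that $sg(s)$ is nondecreasing in $|s|$ on each half-line so that $\xi g(\xi)\le u_tg(u_t)+w_tg(w_t)$ for every $\xi$ between $u_t$ and $w_t$. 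Integrating in time and taking $\delta$ proportional to $\epsilon$ yields the lemma with no leftover constant. Note that in your write-up the exponent $2/3$ in the growth condition on $g'$ appears only inside the doomed absorption step; in the actual argument it is the engine of the supercritical estimate itself.
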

The following decomposition of the source is also  essential (see Proposition
 5.4  in \cite{snowbird}).
  \begin{lemma}[Source]\label{s} The following estimate holds true:
    $$
  \int_t^T (( f(u) - f(w) , z_t )) \leq C_{B,T} ||z||^2 + \epsilon\int_0^T \|z\|^2_{1,\Omega}
  +C_{\epsilon, B } \int_0^T \|z(t)\|^2_{1,\Omega} K(t)dt,
   $$
  where $K(t) = ||w_t(t)||^2 + ||u_t(t)||^2$ which  under the condition
  $g'(s)\ge m>0$, $\forall\, s$, by (\ref{disip}) satisfies
  the following dissipativity property
  \begin{equation}\label{K-dis}
  \int_0^{\infty} K(s) ds \equiv \int_0^{\infty}\left[||w_t(s)||^2 + ||u_t(s)||^2 \right] ds  < C_B.
 \end{equation}
  \end{lemma}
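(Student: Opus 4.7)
The plan is to move the time derivative off of $z_t$ by an integration by parts in $t$, thereby trading the borderline factor $f'$ (of quadratic growth) for $f''$ (which by hypothesis has only linear growth). Using the Newton identity $f(u)-f(w)=-\bigl(\int_0^1 f'(u+sz)\,ds\bigr)z$ together with $z\,z_t=\tfrac12(z^2)_t$, integration over $[0,T]$ yields
\begin{align*}
\int_0^T ((f(u)-f(w),z_t))\,dt &= -\tfrac12\int_0^1 \bigl[((f'(u+sz),z^2))\bigr]_0^T ds\\
&\quad +\tfrac12\int_0^1\!\!\int_0^T ((f''(u+sz)(u_t+sz_t),z^2))\,dt\,ds.
\end{align*}

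For the boundary contributions at $t=0,T$, I would use $|f'(\xi)|\le C(1+|\xi|^2)$, H\"older with exponents $(3,3/2)$, the Sobolev embedding $H^1(\Omega)\subset L_6(\Omega)$, the a priori bound $\|u\|_{1,\Omega}+\|w\|_{1,\Omega}\le C_B$ provided by the absorbing set, the interpolation $\|z\|_{L_3}^2\le C\|z\|\,\|z\|_{1,\Omega}$, and Young's inequality. This gives a bound of the form $\epsilon\|z(t_*)\|_{1,\Omega}^2+C_{\epsilon,B}\|z(t_*)\|^2$ at each endpoint $t_*\in\{0,T\}$; the first piece contributes to the $\epsilon$-term in the target estimate (by absorbing it into $\tfrac12 T E_z(T)$ on the left-hand side of \eqref{asym-1} and invoking the energy identity), and the second to the $C_{B,T}\|z\|^2$ term.

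The interior double integral carries the main difficulty, and it is precisely where the criticality of the source bites. With $|f''(\xi)|\le C(1+|\xi|)$, the integrand $|\xi|\,|u_t|\,z^2$ sits exactly at the borderline of what Sobolev--H\"older can accommodate. I plan to apply H\"older with the conjugate triple $(6,2,3)$, which gives
\[
\int_\Omega |u+sz|\,|u_t|\,z^2\,dx\le \|u+sz\|_{L_6}\,\|u_t\|\,\|z\|_{L_6}^2\le C_B\|u_t\|\,\|z\|_{1,\Omega}^2,
\]
while the constant part of $f''$ is handled analogously via $\|z\|_{L_4}^2\le C\|z\|_{1,\Omega}^2$. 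In the symmetric $sz_t$ piece I replace $|z_t|$ by $|u_t|+|w_t|$. Integrating in $s$ and $t$, the whole interior term is majorized by $C_B\int_0^T(\|u_t\|+\|w_t\|)\|z\|_{1,\Omega}^2\,dt$; a final Young's inequality $\|u_t\|+\|w_t\|\le \epsilon+C_\epsilon K(t)$ splits this into the admissible $\epsilon\int_0^T\|z\|_{1,\Omega}^2\,dt$ piece and the target $C_{\epsilon,B}\int_0^T K(t)\,\|z\|_{1,\Omega}^2\,dt$.

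The key obstacle is this borderline H\"older matching in the interior term: the linear-at-infinity bound $|f''|\le C|\xi|$ is exactly what is needed to align with $H^1\subset L_6$ in three dimensions and the finite-energy regularity $u_t\in L_\infty(0,T;L_2)$. Without the initial integration by parts in $t$, the factor $|f'|\sim|\xi|^2$ would have to be paired directly with $|z|\,|z_t|$, and no choice of H\"older exponents recovers a bound in the available topologies. This is the concrete manifestation of the "double criticality" of the exponent $p=3$ and is what forces the Newton-identity-plus-integration-by-parts structure at the very start of the argument.
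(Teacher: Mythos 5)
Your proposal is correct and follows essentially the same route as the paper: the paper's proof (deferred to Proposition 5.4 of \cite{snowbird}, but displayed in the same form in the computation of $R_f$ in Lemma \ref{l:1}) likewise integrates by parts in time to trade the quadratically growing $f'$ for the linearly growing $f''$, estimates the resulting endpoint terms with $f'$ via H\"older, $H^1\subset L_6$ and interpolation into $\epsilon\|z\|^2_{1,\Omega}+C_\epsilon\|z\|^2$, and bounds the interior term by $C_B(\|u_t\|+\|w_t\|)\|z\|^2_{1,\Omega}$ using the borderline exponents $(6,2,3)$, after which Young's inequality produces the $K(t)$ weight. The only cosmetic difference is that you package the time integration by parts through the Newton identity and $zz_t=\tfrac12(z^2)_t$ rather than differentiating $f(w)-f(u)$ directly, which yields the same boundary and interior structure.
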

 By using dissipation properties in  (\ref{disip})
   and in \eqref{K-dis}
   and combining with the results of
 Lemma \ref{d} and Lemma \ref{s}   one obtains
 (see \cite{snowbird} for details)
 the inequality
 \begin{align*}
E_z(t) \leq & C_1 \left[ e^{-\omega t} E_z(0) + ||z||^2_{L_{\infty}(0,t; H^1_0(\Om)}
\right] \exp\left\{C_2\int_0^t \tilde K(s) ds \right\} \\
 \leq & C \left[ e^{-\omega t} E_z(0) + ||z||^2_{L_{\infty}(0,t; H^1_0(\Om)}
\right]
 \end{align*}
 where
 \[
 \tilde K(t) = K(t) + ((g(u_t(t)), u_t(t))) + ((g(w_t(t)), w_t(t))).
 \]
 This leads to the desired quasi-stability inequality.
\medskip\par
 \noindent{\bf  Theorem \ref{sm} -Strong attractors} - proof  explores smoothness of backward trajectories on the attractor, see \cite{A-K}.
\medskip\par
 \noindent{\bf Theorem \ref{Main1}-Decay rates}. The details given in
 \cite[Section 8]{snowbird}.
\medskip\par
 \noindent{\bf Theorem \ref{Main-boundary} -Boundary Dissipation.}
 The proof is more technical here. Let us point out the main differences.
 \medskip\par\noindent
 {\bf Compactness:}
  \smallskip\par\noindent
 {\sc Gradient structure:} In the boundary case the question of gradient structure  has one additional aspect
 with respect to the interior damping. It involves  a familiar in control theory question of {\it unique continuation} through the boundary.  This property is known to hold due to various extensions of Holmgren-type  theorems. This aspect of the problem is central in inverse problems when one is to reconstruct
 the source from the boundary measurements.  Under suitable geometric conditions unique continuation results hold
 for the wave equation  with a potential and  $H^1$ solutions \cite{isakov,ruiz,eller}.
 \smallskip\par\noindent
 {\sc Reconstruction of the energy:} Since the damping is localized near the boundary, reconstruction of the energy involves propagation of the damping from a boundary into the interior.  While in the case of interior damping
 equipartition of energy was sufficient for recovery of potential energy, this is not enough in the present case.
 Additional multiplier is needed which is referred as "flow multiplier" given by $h \nabla w $, where $h$ is a suitable vector field. This multiplier -of critical order for the wave equation- is successful in reconstructing the energy of a single solution -provided time $T$ is sufficiently large and  the support of the damping on the boundary sufficiently large.
 However, when dealing with the differences of two trajectories (functions $z$), the method is no longer successful.
 There is a competition of two critical terms. A major  detour  is needed when
 one estimates  the  difference of two solutions.
  In  the boundary case we have two critical multipliers $h \nabla z$ and $z_t$ both of energy level:
 $$
    (( f(u) - f(w) , h \nabla z )) ~~\mbox{and}~~ ((f(u) - f(w) , z_t )).
 $$
While this is not a problem with {\it subcritical } sources, it becomes a major issue in the case of {\it critical} sources.
 The remedy  for  the first term to this issue is by invoking Carleman's estimates with large parameter which allows for balancing the terms. This step is very technical-details can be found in \cite{clt-jdde09}.
  \smallskip\par\noindent
 {\sc Dissipativity integrals:} Note that in this case (\ref{dis-z}) requires additional integration over the boundary in the right hand side.
 However, flux multiplier - again - with Carleman's estimates  allows to  get the estimate on this term.
 At the end of the day, with the use of restrictive growth conditions assumed on $g$ (linear bound at infinity)
  one obtain reconstruction of the form as in (\ref{asym-1} without $||z_t||^2 $ term.
  The estimate of $G(z,z_t)$ is reasonably straight due to linear bound hypotheses assumed.
   \smallskip\par\noindent
{\sc  Functional $\Psi$:}
  The estimate for the compactness part  of $\Psi$  is the same. It depends on the source only. This allows for the applicability of the same Theorem \ref{th7-turk} to conclude compactness of attractors with boundary dissipation.

   \medskip\par\noindent
  {\bf Quasi-stability Consequences: Finite dimensionality and Smoothness of Attractors.}
  Here the presence of the boundary damping presents  another major challenge.
  This is at the level of the inequality in Lemma~\ref{s}. The result stated there  depends critically on finiteness of  $K(t)$.
  This is no longer valid in the pure boundary case, where dissipativity integral involves boundary integrals only.  In that case  one resorts to the analysis of "backward" smoothness on the attractor.
  This procedure consists of the following four steps.
   \smallskip\par\noindent
{\sc  Step 1:} Prove quasi-stability inequality for  solutions on the attractor near stationary points.
   This leads to consideration of   negative time $t \rightarrow -\infty$.
  Such estimate is possible due to the fact that the system is gradient system and the  velocities $w_t(t)$ and $u_t(t) $ in (\ref{K-dis}) are "small"
  near equilibria.
   \smallskip\par\noindent
{\sc  Step 2:}
  Quasi-stability inequality provides additional smoothness of the trajectories at some  negative time $T_f < 0$.
  This smoothness is propagated  forward  on the strength of regularity theorem. The consequence of this
  is that the  elements in the attractor $\cA$  are  contained in $ H^2(\Omega) \times H^1(\Omega)$.
     \smallskip\par\noindent
{\sc  Step 3:}
The  ultimate smoothness of attractors is achieved by claiming that the attractor is bounded in the topology
  of $H^2(\Omega) \times H^1(\Omega)$.
  The above conclusion is obtained  by exploiting compactness of attractors and finite net coverage.
     \smallskip\par\noindent
{\sc  Step 4:}
Having obtained  smoothness of attractor,  one proceed to prove finite-dimensionality.
  This becomes a straightforward consequence of quasi-stability property
  on the attractor resulting from the aforementioned smoothness.
   \smallskip\par
    It should be noticed that the method described above is fundamentally based on gradient property of the dynamics.
  The  full details are given in \cite{clt-jdde09}.

  \begin{remark}{\rm
  In the case of boundary damping (linearly bounded)  there is another method that leads to compactness of attractors
  also in the presence of {\it critical} sources. This  method relies on a suitable split of  the dynamics \cite{cel}.
  In such case there is no need for Carleman's estimates.  However,  properties such as regularity and finite dimensionality of attractors intrinsically
   depend on  quasi-stability estimate. The latter requires consideration
  of the differences of two solutions.
  It is this step that forces Carleman's estimates to operate.
  In the subcritical case this is not necessary and more direct estimates provide full spectrum of results also in the  boundary case, see \cite{cel2}.
  }
  \end{remark}

  \subsection{Von Karman plate dynamics}

\subsubsection{Internal damping}

We consider  system \eqref{1.2} with clamped boundary conditions \eqref{Cl} and
discuss  long time behavior under the following hypothesis.
\begin{assumption}\label{as:g-attr}
 We assume that
Assumption~\ref{as-karman} holds and, in addition,
 $g_i\in C^1$ and $g_i'(s)\ge m$ for $|s|\ge s_0$ in the case $\al>0$
and   in the case $\al=0$: $g\in C^1$ and $g'(s)\ge m$ for $|s|\ge s_0$.
\end{assumption}
We also want to be more specific about the nature of the source $P(w)$.
For sake of simplicity, we do not  consider non-conservative force cases which lead to systems that are no longer of gradient structure. Instead we  limit ourselves to conservatively forced models. We suppose that Assumption~\ref{as:karman-P}
holds with $P_1(w)\equiv 0$, i.e.,
 $P(w)$ is a Fr\'echet  derivative of the functional $- \Pi_0(w)$
and the   following property holds: there exist $a \in \R$ and $\epsilon >0$
such that
\begin{equation}\label{Pi}
 \Pi_0(w)  +   a\|w\|_{2-\epsilon,\Omega} ^2 \geq 0,~~  ((P_0(w), w))  +   a\|w\|_{2-\epsilon,\Omega} ^2 \geq 0,~~\forall w\in H_0^2(\Om).
 \end{equation}
  \begin{remark}{\rm
In the specific example given in Remark~\ref{re:F0} the functional $\Pi_0$  takes the form
$$\Pi_0(u) =-\frac{1}{2}  (( [F_0,u], u))-((p,u)), ~~~ F_0\in H^{3+\delta}(\Om)\cap H_0^1(\Om),~~ p\in L_2(\Om).
 $$
Thus, the assumption (\ref{Pi}) is satisfied (in the case $\al>0$ we can take
less regular $F_0$).
}
\end{remark}
Under these conditions concerning $P$
 the energy function can be written as
\[
\cE(u,u_t)=\frac12 \left[ \|u_t\|^2+\alpha  \|\nabla
 u_t \|^2\right] +\frac{1}{4} ||\Delta \cF(u) ||^2+ \Pi_0(u).
\]
Under the general Assumption \ref{as-karman} we have shown that von Karman system generates a continuous semiflow  in the space
$H_\al \equiv H^2_0(\Om) \times \cH_\al(\Omega)$, where $\cH_\al(\Omega)$ is given by
\eqref{h-alpha}, i.e.,
 $\cH_\al(\Om)=H^1_0(\Om)$ in the case $\al>0$ and
$\cH_\al(\Om)=L_2(\Om)$ in the case $\al=0$.
This
leads to a well-defined dynamical system $(H_\al,S_t)$.
\par
The existence of a global attractor  under the assumptions specified for model (\ref{1.2})
can be derived now  from   Theorem \ref{th7-turk}. The main long-time result  reads as follows:

\begin{theorem}[Compactness]\label{th8.2.2} Let Assumption~\ref{as:g-attr} be in force.
Assume that  $P(w)=[F_0,w]+p$, where $F_0\in H^{3+\delta}(\Om)\cap H_0^1(\Om)$ and $p\in L_2(\Om)$.
Then the dynamical system $(H_\al,S_t)$ generated by
 equations \eqref{1.2} with the  clamped boundary conditions \eqref{Cl}
 possesses a global  compact attractor $A $
   in the space
$H_\al \equiv H^2_0(\Om) \times \cH_\al(\Omega)$,
where  $H_\al$ is given by
\eqref{h-alpha}.
\end{theorem}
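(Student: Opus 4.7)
The plan is to apply Theorem \ref{th:grad-attr} for gradient asymptotically smooth systems, with asymptotic smoothness obtained from the compensated compactness criterion of Theorem \ref{th7-turk}. Three items must be verified: (a)~strict Lyapunov structure, (b)~boundedness of the set $\cN$ of stationary solutions and of the sublevel sets of the Lyapunov function, and (c)~asymptotic smoothness on bounded forward-invariant sets.

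Items (a) and (b) are quick. Because $P(w) = [F_0,w] + p$ is the Fr\'echet derivative of $-\Pi_0$ with $\Pi_0$ satisfying the coercivity hypothesis \eqref{Pi}, the energy balance \eqref{plate-energy-rel} gives
\begin{equation*}
\frac{d}{dt}\big[\cE(t) + \Pi_0(w(t))\big] = -D(w_t) \leq 0,
\end{equation*}
so $\Phi(u,v) := \cE(u,v) + \Pi_0(u)$ is a Lyapunov function on $H_\al$; strictness follows from Assumption \ref{as:g-attr}, since $D(w_t)\equiv 0$ combined with the strict monotonicity and coercivity of $g$ (and of $G$ when $\al>0$) forces $w_t\equiv 0$, hence $w$ is stationary. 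Coercivity of $\Phi$ on bounded sets follows from Lemma \ref{l:2} (applied with $\epsilon$ small enough to absorb the subcritical $F_0$- and $p$-contributions of $\Pi_0$ via Lemma \ref{l:kar-br}); the same coercivity bound applied to solutions of \eqref{1.2stat} tested against $w$ gives a uniform $H^2_0$-bound on $\cN$, consistent with Proposition \ref{pr:stationary}.

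The main obstacle is (c). For two trajectories with initial data in a bounded forward-invariant set $\cB$, set $z=w^1-w^2$. Subtracting the equations and using the bilinearity of the Airy form (for the symmetric extension $\cF(u,v)$ with $\cF(u)=\cF(u,u)$, one has $\cF(w^1)-\cF(w^2)=\cF(w^1{+}w^2,z)$) gives
\begin{equation*}
M_\al z_{tt} + \Delta^2 z + a(x)\,\Delta_g \;=\; [\cF(w^1{+}w^2, z),\, w^1] + [\cF(w^2),\, z] + [F_0, z],
\end{equation*}
where $\Delta_g$ denotes the difference of damping terms. The decisive tool is Lemma \ref{l:kar-br}: the Airy factors $\cF(w^1{+}w^2,z)$ and $\cF(w^2)$ belong to $W^{2,\infty}(\Om)$ with norms controlled on $\cB$, so each right-hand term is, modulo a locally Lipschitz core, a subcritical bilinear form in $z$. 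Applying the multipliers $z_t$ (energy) and $z$ (equipartition) on $[0,T]$, absorbing the damping difference via monotonicity $\int_0^T \Delta_g \cdot z_t \geq 0$, and using the uniform dissipation bound $\int_0^T D(w^i_t)\,dt \leq C_\cB$, one arrives at an estimate of the form
\begin{equation*}
\|S_T y_1 - S_T y_2\|^2_{H_\al} \;\leq\; \epsilon \;+\; \Psi_{\epsilon,\cB,T}(y_1,y_2),
\end{equation*}
where $\Psi$ is a finite sum of space--time pairings of the Airy/$F_0$ brackets against $z$ and $z_t$, each involving a factor in a subcritical norm of $w^1,w^2,z$.

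The final task is to verify the sequential-compactness property \eqref{7.ak2} for $\Psi$. Trajectories $w^n$ issued from $\cB$ are uniformly bounded in $L_\infty(0,T; H^2_0(\Om))$ with derivatives in $L_\infty(0,T;\cH_\al)$, so by Aubin--Lions a subsequence converges strongly in $C(0,T; H^{2-\eta}_0(\Om))$; continuity of $u\mapsto \cF(u)$ into $W^{2,\infty}$ (Lemma \ref{l:kar-br}) transfers this strong convergence to all coefficients appearing in $\Psi$, giving $\liminf_m \liminf_n \Psi(y_n, y_m)=0$. Asymptotic smoothness then follows from Theorem \ref{th7-turk}, and the conclusion of Theorem \ref{th:grad-attr} yields the compact global attractor. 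The hardest technical step is the decomposition of the Airy bracket into a locally Lipschitz part plus compact lower-order terms, which rests entirely on the hidden $W^{2,\infty}$-regularity in Lemma \ref{l:kar-br}; without it the cubic nonlinearity $[\cF(w),w]$ would be merely critical and the $\Psi$-term could not be made sequentially compact.
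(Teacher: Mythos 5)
Your architecture coincides with the paper's: gradient structure plus boundedness of the stationary set and of the sublevel sets of the Lyapunov function, asymptotic smoothness via the compensated compactness criterion of Theorem~\ref{th7-turk}, and then Theorem~\ref{th:grad-attr}; the sharp Airy regularity of Lemma~\ref{l:kar-br} is indeed the indispensable input, and your bilinear splitting $[\cF(w^1),w^1]-[\cF(w^2),w^2]=[\cF(w^1{+}w^2,z),w^1]+[\cF(w^2),z]$ is correct. Items (a) and (b) are essentially fine.

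The gap is in step (c), at precisely the point the whole argument turns on. The terms $[\cF(w^1{+}w^2,z),w^1]$ and $[\cF(w^2),z]$ are \emph{not} ``subcritical bilinear forms in $z$'': Lemma~\ref{l:kar-br} gives $\|[\cF(w^2),z]\|\le C_{\cB}\|z\|_{2,\Om}$, i.e.\ a bounded but non-compact map from $H^2_0(\Om)$ to $L_2(\Om)$ --- exactly the sense in which the paper calls the $\al=0$ force \emph{critical} --- so pairing with $z_t$ produces a term of full energy level, $\le C_{\cB}\|z\|_{2,\Om}\|z_t\|$, which can neither be absorbed nor declared compact. Compactness of the functional $\Psi$ is recovered only through the cancellation identity for the bracket,
\[
(\cR(z),z_t)=\frac14\,\frac{d}{dt}Q(z)+\frac12 P(t),\qquad
Q(z)=(v(u)+v(w),[z,z])-|\Delta v(u{+}w,z)|^2,
\]
which exploits the symmetry of the trilinear form $\int_\Om[u,v]w\,dx$; only after this step is $Q$ controlled by $\|z\|^2_{2-\eps,\Om}$ (compact) and $P$ by $(\|u_t\|+\|w_t\|)\,\|z\|^2_{2,\Om}$, with the velocity factors tamed by the dissipation integral as in \eqref{magic0}. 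Asserting that $\Psi$ is ``a finite sum of pairings each carrying a subcritical factor of $z$'' states the conclusion of this hard step rather than proving it. Your closing convergence argument also does not close as written: strong convergence of $w^n$ in $C(0,T;H^{2-\eta}_0(\Om))$ together with the bilinear bound $\|\cF(u,v)\|_{W^{2,\infty}}\le C\|u\|_{2,\Om}\|v\|_{2,\Om}$ does not give convergence of $\cF(w^n)$ in $W^{2,\infty}$ (both arguments must be controlled in $H^2$, where you only have weak convergence), and in any case the factors $z_t$ and $\partial^2 z$ appearing in $\Psi$ converge only weakly, so strong convergence of the ``coefficients'' alone would not yield \eqref{7.ak2}. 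A secondary omission: with the multiplier $z$ you must also estimate $\int_0^T((a\,[g(w^1_t)-g(w^2_t)],z))\,dt$, which uses the growth and coercivity hypotheses on $g$ from Assumption~\ref{as:g-attr}, not monotonicity alone.
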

In the case $\al>0$ the proof of Theorem~\ref{th8.2.2}
follows from general abstract results
for second order equations with subcritical source  (see \cite{cl-mem}).
In the case $\al=0$ the force is critical and the
proof relies  on Theorem~\ref{th7-turk} in the same manner as  in the case of
wave equations.  We refer to \cite{cl-book} for details.
\begin{remark}\label{re:P-gen}
{\rm
One can easily obtain the same result for a more general class of  nonlinear sources $P(w)$  which can be axiomatized
by (\ref{Pi}) with an additional requirement that the map is compact.
}
\end{remark}
\par

It is easy to verify (internal damping) that the energy
$\cE(u,u_t)$
is a strict Lyapunov  function for $(H_\al, S_t)$. Hence  $(H_\al, S_t)$
is a gradient system (see Definition \ref{de7.4.1} in Section~\ref{grad-ds}),
and the
application of Theorem~\ref{th:grad-attr} yields  the
following result.
\index{evolutionary Karman equations with rotational
forces!internal dissipation!regular structure of attractor}
\index{evolutionary Karman equations with rotational
forces!internal dissipation!global attractor}
\index{evolutionary Karman equations without rotational
forces!internal dissipation!global attractor}
\begin{theorem}[Structure]\label{th9.2.1}
Let Assumption~\ref{as:g-attr} be in force.
Assume that $P$ in \eqref{1.2} satisfies (\ref{Pi}).
 Let  $(H_\al,S_t)$
be the dynamical system  generated by
 equations \eqref{1.2} with clamped boundary conditions
Assume that  $(H_\al,S_t)$ possesses a compact global attractor $A$.
Let $\cN_*$ be  a set of stationary solutions
to \eqref{1.2} (see Proposition~\ref{pr:stationary}).
Then
\begin{itemize}
  \item $A=\cM^u(\cN)$, where $\cN=\{ (w;0)\, :\, w\in\cN_*\}$ is
  the set of stationary point of $(H_\al,S_t)$ and $\cM^u(\cN)$
is the  the unstable manifold $\cM^u(\cN)$ emanating from  $\cN$
which is defined
as a set of all $U\in H_\al$ such that there exists a full
trajectory $\gamma=\{ U(t)=(u(t);u_t(t))\, :\, t\in\R\}$ with the properties
\[
 U(0)=U~~ \mbox{and}~~ \lim_{t\to -\infty}{\rm dist}_{H_\al}(U(t),\cN)=0.
\]
\item The global attractor $A$ consists of full
trajectories $\gamma=\{(u(t);u_t(t)): t\in\R\}$ such
that
\[
\lim_{t\to \pm\infty}\left\{  \|u_t(t)\|_{\cH_\al}^2+ \inf_{w\in\cN_*}
\|u(t)-w\|^2_2\right\}=0.
\]
\item Any
 generalized solution  $u(t)$ to problem
\eqref{1.2} stabilizes to the set of stationary points; that is,
\begin{equation}\label{9.2.3}
  \lim_{t\to +\infty}\left\{  \|u_t(t)\|_{\cH_\al}^2+ \inf_{w\in\cN_*}
\|u(t)-w\|^2_2\right\}=0.
 \end{equation}
\end{itemize}
\end{theorem}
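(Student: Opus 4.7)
The plan is to deduce all three assertions by a direct application of the abstract gradient-system result, Theorem~\ref{th:grad-attr}. The existence of a compact global attractor $A$ is part of the hypothesis (so dissipativity and asymptotic smoothness, which are implicit in Theorem~\ref{th:grad-attr}, come for free), and the boundedness of the stationary set $\cN$ follows from its compactness, supplied by Proposition~\ref{pr:stationary}. Thus the only substantive task is to exhibit a strict Lyapunov function on the whole phase space $H_\al$ whose sublevel sets are bounded.

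First I would check that the full energy $\cE(u,u_t)$ from \eqref{plate-energy} is nonincreasing along trajectories. Since the source is now conservative, $P=-\Pi_0'$, the identity $((P(u),u_t))=-\tfrac{d}{dt}\Pi_0(u)$ transforms the balance \eqref{plate-energy-rel} into
\begin{equation*}
\cE(S_ty)+\int_s^t D(u_t(\tau))\,d\tau=\cE(S_sy),\qquad 0\le s\le t,
\end{equation*}
with $D(u_t)\ge 0$ the damping form \eqref{damp-plate-in}. To promote $\cE$ to a \emph{strict} Lyapunov function I must show $\cE(S_ty)\equiv\cE(y)$ implies $y\in\cN$: the identity forces $D(u_t)\equiv 0$ on $[0,\infty)$, and because the damping is fully supported ($a(x)\equiv 1$ in this section) and $g$ (and $G$, when $\al>0$) is monotone with $g(0)=0$, this yields $u_t\equiv 0$, whence $y$ is stationary.

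Next I would verify the two boundedness requirements. That $\cE$ is bounded above on bounded subsets of $H_\al$ is immediate from its explicit form together with the local boundedness of $\Pi_0$ on $H^2_0(\Om)$, itself a consequence of Assumption~\ref{as:karman-P}. The boundedness of the sublevel set $\Phi_R=\{y\in H_\al:\cE(y)\le R\}$ is the one place requiring care, and is where I expect the only real friction in the argument: combining the lower bound $\Pi_0(u)\ge -a\|u\|_{2-\eps,\Om}^2$ from \eqref{Pi} with the coercive terms $\tfrac12\|\Delta u\|^2+\tfrac14\|\Delta\cF(u)\|^2$ and absorbing the subcritical piece by interpolation between $H^{2-\eps}_0$ and $H^2_0$ together with Young's inequality (this is the same mechanism exploited via Lemma~\ref{l:2}), one obtains $\|u\|^2_{2,\Om}+\|u_t\|^2_{\cH_\al}\le C(R)$.

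With the hypotheses of Theorem~\ref{th:grad-attr} in place, its three bullets read off directly: the identification $A=\cM^u(\cN)$ and the two-sided convergence $\operatorname{dist}_{H_\al}(U(t),\cN)\to 0$ as $t\to\pm\infty$ along any full trajectory in $A$ give the first and second assertions of Theorem~\ref{th9.2.1}, while the unilateral stabilization property \eqref{9.2.3} for an arbitrary generalized solution follows from the global convergence statement in the same theorem. The finite-set case listed in Theorem~\ref{th:grad-attr} is not needed here since Proposition~\ref{pr:stationary} does not assert finiteness of $\cN_*$ in general.
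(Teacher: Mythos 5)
Your proposal is correct and follows essentially the same route as the paper: the text verifies that the energy $\cE(u,u_t)$ is a strict Lyapunov function (using the conservative form $P=-\Pi_0'$, the energy balance \eqref{plate-energy-rel}, and the coercivity supplied by \eqref{Pi} together with Lemma~\ref{l:2}) and then reads off all three assertions from the abstract gradient-system result, Theorem~\ref{th:grad-attr}. Your handling of the boundedness of the sublevel sets via interpolation and the von Karman term matches the mechanism the paper uses for its a priori bounds, so nothing further is needed.
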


This theorem along with generic-type  results on the finiteness of the number
of solutions to the stationary problem allow us to obtain the following result.
\begin{corollary}\label{co9.2.1}
Under the hypotheses of Theorem~\rref{th9.2.1} there exists an open
dense set $\sR_0$ in $L_2(\Om)$ such that for every  load function $p\in \sR_0$
the set $\cN$ of stationary points for $(H_\al,S_t)$ is finite. In
this case $A=\cup_{i=1}^N\cM^u(z_i)$, where $z_i=(w_i;0)$ and
$w_i$ is a solution  to the stationary problem Moreover,
\begin{itemize}
  \item
The global attractor $A$ consists of  full  trajectories
$
\gamma=\{ (u(t);u_t(t))\, :\, t\in\R\}
$
connecting  pairs of   stationary points; that is,  any $W\in A$ belongs to some full
trajectory $\gamma$ and  for any $\gamma\subset A$ there exists a pair
$\{ w_{-}, w_{+}\}\subset\cN^*$ such that
\[
 \lim_{t\to -\infty}\left\{  \|u_t(t)\|_{\cH_\al}^2+
\|u(t)-w_-\|^2_2\right\}=0
\]
and
\[
 \lim_{t\to +\infty}\left\{  \|u_t(t)\|_{\cH_\al}^2+
\|u(t)-w_+\|^2_2\right\}=0.
\]
\item For any $(u_0;u_1)\in H_\al$ there exists a stationary solution $w\in\cN^*$
such that
\begin{equation}\label{9.2.4}
 \lim_{t\to +\infty}\left\{  \|u_t(t)\|_{\cH_\al}^2+
\|u(t)-w\|^2_2\right\}=0,
\end{equation}
where $u(t)$ is a
 generalized solution  to problem \eqref{1.2}
with  initial data $(u_0;u_1)$ and with the clamped boundary
conditions.
 \end{itemize}
\end{corollary}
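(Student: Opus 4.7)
The strategy splits naturally into two independent steps. First I would establish the genericity of finiteness of $\cN_*$ by applying the Sard--Smale theorem to the stationary problem viewed as a nonlinear Fredholm equation parametrised by $p\in L_2(\Om)$. Second, once $\cN_*=\{w_1,\ldots,w_N\}$ is known to be finite, the three bullet points follow essentially mechanically from the gradient structure and the conclusions of Theorem~\ref{th9.2.1} and Theorem~\ref{th:grad-attr} specialised to a finite equilibrium set.

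For the genericity step, write the stationary problem \eqref{1.2stat} as $F(w,p)=0$ with
$$ F(w,p)\equiv \Delta^2 w-[\cF(w),w]-[F_0,w]+p,\qquad F:(H^4(\Om)\cap H_0^2(\Om))\times L_2(\Om)\to L_2(\Om). $$
The map $F$ is smooth in $w$; the partial $D_pF(w,p)=\mathrm{Id}$ is trivially surjective, so $F^{-1}(0)$ is a Banach submanifold; and
$$ D_wF(w,p)\xi=\Delta^2\xi-2[\cF'(w)\xi,w]-[\cF(w)+F_0,\xi] $$
is a compact perturbation of the clamped biharmonic (using Lemma~\ref{l:kar-br} together with compactness of the embedding $H^{2}\hookrightarrow W^{1,4}$), hence Fredholm of index zero. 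The Sard--Smale theorem then produces a residual set $\sR_0\subset L_2(\Om)$ of regular values at which every $w\in\cN_*(p)$ satisfies that $D_wF(w,p)$ is an isomorphism; by the implicit function theorem such solutions are isolated, and combined with the compactness of $\cN_*$ from Proposition~\ref{pr:stationary} this yields finiteness. To upgrade residual to \emph{open and dense}, observe that near a regular $p_0$ with $\cN_*(p_0)=\{w_1,\ldots,w_N\}$ the implicit function theorem produces smooth branches $w_i(p)$ on a neighbourhood $U$ of $p_0$; a uniform a priori bound on $\cN_*$ (coming from the coercivity built into Assumption~\ref{as:karman-P}) rules out the birth of additional solutions for $p$ near $p_0$, giving openness of $\sR_0$.

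For the dynamical step, fix $p\in \sR_0$. The identity $A=\cM^u(\cN)$ from Theorem~\ref{th9.2.1} with $\cN=\{(w_i;0)\}_{i=1}^N$ gives immediately the decomposition $A=\cup_{i=1}^N\cM^u(z_i)$. For assertion~(i), let $\gamma=\{U(t)=(u(t);u_t(t)):t\in\R\}$ be a full trajectory in $A$. By the second bullet of Theorem~\ref{th9.2.1} both the backward and forward limits of $U(t)$ approach $\cN$; since $\cN$ is finite, hence discrete in $H_\al$, and $U(\cdot)$ is continuous, each of these limit sets must collapse to a single stationary point, yielding the pair $w_\pm$. For~(ii), apply \eqref{9.2.3} from Theorem~\ref{th9.2.1} to the forward trajectory issuing from an arbitrary $(u_0;u_1)\in H_\al$: the same discreteness of $\cN_*$ forces $u(t)$ to converge to a single $w\in\cN_*$ as $t\to+\infty$.

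The main obstacle is the first step, and within it the upgrade from a residual to an open dense set. The Fredholm and transversality computations themselves are routine once the functional-analytic setting is fixed, but one must handle carefully the elliptic regularity required to work on $H^4(\Om)\cap H_0^2(\Om)$, the compactness of the bracket terms under clamped boundary conditions (which again relies crucially on Lemma~\ref{l:kar-br}), and the exclusion of ``solutions at infinity'' as $p$ varies near $p_0$, needed for openness. A detailed verification of this genericity statement for the stationary von~Karman system is carried out in \cite{CR80}; see also the discussion in \cite{ciarlet1} and \cite[Chapter 1]{cl-book}, which can be invoked directly.
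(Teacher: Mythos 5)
Your proposal is correct and follows essentially the same route as the paper: the paper obtains the corollary by combining the structure Theorem~\ref{th9.2.1} (equivalently, the finite-$\cN$ case of the abstract gradient-system Theorem~\ref{th:grad-attr}) with the generic finiteness of the stationary set, which it cites from \cite{CR80} and \cite[Chapter 1]{cl-book} and which is precisely the Sard--Smale/transversality argument you spell out. Your added details — the Fredholm-of-index-zero computation via Lemma~\ref{l:kar-br}, the upgrade from residual to open--dense using the implicit function theorem together with the compactness of $\cN_*$ from Proposition~\ref{pr:stationary}, and the connectedness-of-limit-sets argument forcing convergence to a single equilibrium — are all consistent with what the cited sources do.
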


\begin{remark}\label{re:lojasivich} {\rm
We note that the {\it generic} class ${\cal R}_0$ of loads in the statement
of Corollary~\ref{co9.2.1}
 can be rather thin.
The standard example (see, e.g., \cite[Section 2.5]{Chu99}) is the set ${\cal R}_0$
in the interval $[0,1]$ of the form
\[
{\cal R}_0= \left\{ x\in (0,1)\, :~~\exists\; r_k\in\Q, ~~ |x-r_k|\le \eps 2^{-k-1}\right\},
\]
where $\Q$ is the sequence
of all rational numbers.  This set  is open and
dense in  $[0,1]$, but its Lebesgue measure less than $\eps$.
However using analyticity of the von Karman force terms
and the Lojasiewicz-Simon inequality we can obtain a result
(see \cite{Chu11-loja}) on convergence
for {\em all} loads $p$ under some additional  non-degeneracy at the origin  type
conditions concerning damping terms. By assuming this additional  non-degeneracy condition
(always satisfied when $g'(0) > 0 $ or in the hyperbolic case when $g(s)s \geq m s^2$, $|s|\leq 1 $)
 the result of Corollary \ref{co9.2.1} holds for all loads $p\in L_2(\Omega) $.
For general discussion of    Lojasiewicz-Simon method we refer
to \cite{HarJen99, HarJen09}  and to the references therein.
}
\end{remark}

It follows from Assumption~\ref{as:g-attr} (see \cite{cl-book}, for instance)
that
there exists  a strictly increasing continuous concave
function $H_0(s)$ such that
\begin{equation*}
s_1^2+s_2^2 \leq   H_0(s_1 g_i(s_1)+s_2 g_i(s_2))
~~\mbox{ for all }
~s_1,s_2 \in \R.
\end{equation*}
in the case $\al>0$ and
$s^2 \leq   H_0(s g(s))$ for $s \in \R$
in the case $\al=0$.

 The above  facts  are used in formulation
 of the result on {\bf the rate of convergence to
an equilibrium}.
\index{evolutionary Karman equations with rotational
forces!internal dissipation!rate of convergence to equilibrium}
\begin{theorem}[Rates of convergence to equilibria]\label{th9.2.2}
Assume that $P$ satisfies (\ref{Pi})  in \eqref{1.2} and
the hypotheses above that guarantee
the existence of a compact global attractor $A$
for the dynamical system $(H_\al,S_t)$ generated by problem
\eqref{1.2}  with  the clamped boundary
conditions hold.
\par
If   the set of stationary solutions  consists of finitely many
isolated equilibrium points that are hyperbolic,\footnote{ In the
sense that the linearization of the stationary problem
 with the corresponding boundary conditions around
every stationary solution has a trivial solution only. }
then for
any initial condition  $ y \in H_\al$ there exists an equilibrium point
$ e =(w; 0)$, $w\in H^2_0(\Om)$, such that
\begin{equation}\label{8.6.2a-mdf}
\| S_ty -e\|_{H_\al}^2\le C \cdot\sigma\left(\left[ tT^{-1}\right]\right),\quad t>0,
\end{equation}
where $C$ and $T$ are positive constants depending on $y$ and  $e$,
$[a]$ denotes the integer part of $a$ and $\sigma(t)$ satisfies the
following ODE,
\begin{equation}\label{8.6.2b-mdf}
\frac{d\sigma}{dt}+Q(\sigma)=0,~ t>0,\quad \sigma(0)=C(y,e).
\end{equation}
Here $C(y,e)$ is a constant depending on $y$ and  $e$,
\begin{equation}\label{10.1q-G0}
Q(s) =s-\left(I+G_0\right)^{-1}(s)~~~
with ~~G_0(s) =c_1\left(I+H_0\right)^{-1}(c_2s),
\end{equation}
 where
positive numbers $c_1$ and $c_2$ depend on $y$ and $e$.
If in Assumption~\ref{as:g-attr} $s_0=0$,
then the rate of
convergence in (\ref{8.6.2a-mdf}) is exponential.
\end{theorem}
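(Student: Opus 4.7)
The plan is to combine the gradient/attractor structure already at our disposal with a nonlinear stabilizability inequality around each hyperbolic equilibrium, and then convert that inequality into the ODE rate via the concave majorant $H_0$ of the inverse damping.

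First, by Theorem~\ref{th8.2.2} the system $(H_\al,S_t)$ has a compact global attractor and by Theorem~\ref{th9.2.1} it is gradient, so \eqref{9.2.3} gives $\dist_{H_\al}(S_ty,\cN)\to 0$. Since $\cN$ is assumed finite, a standard connectedness argument (the trajectory is connected in $H_\al$ and $\cN$ is discrete) shows that there exists a unique equilibrium $e=(w;0)\in\cN$ with $S_ty\to e$. Writing $z(t)=u(t)-w$ and linearizing the von Karman bracket,
$z$ satisfies
\[
M_\al z_{tt}+\Delta^2 z+ a(x)\bigl[g(u_t)-\al\,\di\, G(\g u_t)\bigr] = [\cF(w+z),w+z] -[\cF(w),w] + P'(w)z + R(z),
\]
with $R(z)$ at least quadratic in $z$ in $H^2_0(\Om)$.

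Second, and this is the heart of the argument, I would establish an observability/stabilizability inequality on intervals $[s,T]$ with $T-s$ large: there exist $T_0>0$ and $C_T>0$ such that for every solution $z$ starting close enough to $0$ in $H_\al$,
\begin{equation*}
E_z(T)+\int_s^T E_z(\tau)\,d\tau \le C_T\!\int_s^T D_z(\tau)\,d\tau + C_T\,\mathrm{l.o.t.}(z),
\end{equation*}
where $E_z$ is the linearized energy (equivalent to $\|z\|_{2,\Om}^2+\|z_t\|_{\cH_\al}^2$ near $0$ thanks to hyperbolicity: the quadratic form associated with the linearization at $w$ has trivial kernel), $D_z(\tau)=((a[g(u_t)-g(w_t^{(e)})],u_t-w_t^{(e)}))+\al((aG(\g z_t),\g z_t))$ is the damping functional for the difference, and $\mathrm{l.o.t.}(z)=\sup_{[s,T]}\|z\|_{2-\eps,\Om}^2$. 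The proof uses the multipliers $z$ and $z_t$ on the $z$-equation and exploits Lemma~\ref{l:kar-br} (hidden regularity of the Airy function) to control the critical von Karman bracket as a locally Lipschitz perturbation in $[\cH_\al]'$. The lower order term is then absorbed by the usual compactness--uniqueness argument: any failure produces, along a subsequence, a nontrivial finite-energy solution of the linearized clamped problem at $w$, contradicting hyperbolicity \eqref{hyp1}. The main obstacle lies precisely here, because in the critical case $\al=0$ the bracket is only borderline subcritical after Lemma~\ref{l:kar-br}, and care is required to ensure that the absorption of $\mathrm{l.o.t.}(z)$ is uniform for all trajectories in a small $H_\al$-neighborhood of $e$.

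Third, I would convert the damping integral into energy decrements using the concave function $H_0$. Jensen's inequality together with \eqref{h-0}-type estimates for $g$ and $g_i$ yields, on each subinterval of length $T$,
\[
\int_s^{s+T}\!\bigl(|u_t|^2+|g(u_t)|^2\bigr)\,dx\,d\tau \le C\,T|\Om|\,H_0\!\left(\frac{1}{T|\Om|}\int_s^{s+T}\!((a g(u_t),u_t))\,d\tau\right),
\]
with the analogous bound for $G$ in the rotational case. Combining with the stabilizability inequality and the energy identity $\int_s^{s+T}D_z\,d\tau = \cE(s)-\cE(s+T)$ (up to the Lipschitz remainder $R(z)$, which is absorbed by smallness of $z$), one obtains the nonlinear recursion
\[
E_z((n{+}1)T)+G_0\!\bigl(E_z((n{+}1)T)\bigr)\le E_z(nT),\qquad n=0,1,2,\ldots,
\]
for $T=T_0$ large enough, with $G_0$ as in \eqref{10.1q-G0}. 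A standard ODE comparison lemma (see, e.g., \cite{las-tat} or Appendix B of \cite{cl-book}) then converts this discrete inequality into the continuous bound \eqref{8.6.2a-mdf} with $\sigma$ solving \eqref{8.6.2b-mdf}. When $s_0=0$, $H_0$ is linear near the origin, so $Q(s)\sim \kappa s$ and the ODE yields exponential decay, completing the proof.
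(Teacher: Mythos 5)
Your overall architecture is the one the paper (and its source \cite{cl-book}) uses: convergence to a single equilibrium from the gradient structure and finiteness of $\cN$, an observability/stabilizability inequality for the difference $z=u-w$ with lower-order terms killed via hyperbolicity and compactness--uniqueness, Jensen's inequality with the concave majorant $H_0$ to convert the damping integral into an energy decrement, the discrete recursion $E_z((n+1)T)+G_0(E_z((n+1)T))\le E_z(nT)$, and the ODE comparison lemma of \cite{las-tat}. So in spirit this is the intended proof. The one place where your formulation departs from the paper's — and where the paper explicitly warns you off — is the localization: you state the stabilizability inequality ``for every solution $z$ starting close enough to $0$'' and absorb the remainder ``by smallness of $z$,'' i.e.\ uniformly over an $H_\al$-neighborhood of $e$. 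The paper's remark on the wave analogue (Theorem \ref{Main1}) says precisely that ``it is not possible to localize the analysis to the neighborhood of the equilibria'': a hyperbolic equilibrium may be a saddle, so its neighborhoods are not forward invariant, the second variation of $\cE$ at $w$ need not be positive definite (trivial kernel $\ne$ coercivity), and a compactness--uniqueness argument quantified over all nearby trajectories is not the right class. The rigorous version runs the recursive estimate along the tail $t\ge t_0$ of the \emph{given} trajectory, using the already-established convergence $S_ty\to e$, the monotone decay $\cE(S_ty)\searrow\cE(e)$, and compactness of the attractor to pass to the limit in the contradiction argument on time-windows $[t_n,t_n+T]$; the constants then legitimately depend on $y$ and $e$, as in the statement. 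With that reformulation of your Step 2 (and the minor observation that $H_0$ in \eqref{10.1h-0.h}-type estimates is only needed on the set $\{|u_t|\le s_0\}$, the complement being handled by the linear bounds on $g$), your argument matches the paper's.
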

\begin{remark}\label{re:loja-2}{\rm
It is also possible to state a result on the convergence rate without
assuming hyperbolicity of equilibrium points (see \cite{Chu11-loja}).
However as in the case described in Remark~\ref{re:lojasivich}
 this requires
additional  non-degeneracy type properties of the
 damping terms. Moreover, the lack of hyperbolicity is compromised by slowing down  the decay rates.
 }
\end{remark}
The following result describes {\bf finite dimensionality} of attractors.

\begin{theorem}[Dimension]\label{th:dim-rot}
\!\!Assume
the hypotheses
 which gu\-arantee
the existence of a compact global attractor $A$
for the  system $(H_\al,S_t)$ generated by problem
\eqref{1.2} with the boundary
conditions \eqref{Cl} hold (see  Theorem~\ref{th8.2.2}) and $\alpha \geq 0 $.
In addition, we assume that
\begin{enumerate}
  \item[{\rm (i)}] In the case $\al=0$
there exist $m,M_0 >0 $ such that
\begin{equation}\label{g0-prim}
0 < m \le g'(s) \leq M_0 [1 +  s g(s) ],\quad s\in\R,
\end{equation}
 \item[{\rm (ii)}]  When $\alpha > 0 $
 we assume that (i) $g(s)$ is either of  polynomial growth at infinity
or else satisfies \eqref{g0-prim},
 and (ii)
 there exists $0\le\gamma<1$ such that the functions $g_i$ satisfy the
inequality
\begin{equation}\label{gi-prim}
0<m\le g_i'(s) \leq M [1 + sg_i(s)]^\ga ,\quad s\in\R,\; i=1,2,
\end{equation}
\end{enumerate}
Then the fractal dimension of the attractor $\cA$ is finite.
\end{theorem}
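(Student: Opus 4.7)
The plan is to invoke Theorem~\ref{th7.9dim} with $X=H^2_0(\Om)$, $Y=\cH_\al(\Om)$, and $Z=\{0\}$; thus the task reduces to verifying that $(H_\al,S_t)$ is quasi-stable on the compact global attractor $A$ provided by Theorem~\ref{th8.2.2}, i.e.\ to proving the stabilizability inequality \eqref{8.4.2mc} for the difference $z=u^1-u^2$ of two trajectories whose initial data lie in $A$. Since $A$ is bounded in $H_\al$, the Lipschitz bound \eqref{8.4.1mc} is standard (it follows at once from the local Lipschitz property \eqref{loc-lip} of the nonlinearity, Assumption~\ref{as-karman}, and Gronwall), so the core of the argument is the production of the quasi-stability part of the estimate.

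First I would write the equation satisfied by $z$ and test it with the two standard multipliers: (a) $z_t$, which yields the $z$--energy identity and converts $\int_0^T D_z\,d\tau$ into the work done by the bracket and by $P$ acting on $z_t$; and (b) $z$ itself (equipartition), which, after integration by parts in time, produces an inequality of the form
\[
E_z(T)+\int_0^T E_z(\tau)\,d\tau\le C_T\int_0^T D_z(\tau)\,d\tau + \Psi_T(u^1,u^2,z),
\]
where $E_z=\tfrac12(\|z_t\|^2+\al\|\nabla z_t\|^2+\|\Delta z\|^2)$, $D_z$ is the damping form evaluated at the difference, and $\Psi_T$ collects the source contributions. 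For the von Karman term I invoke Lemma~\ref{l:kar-br} in the sharp form \eqref{loc-lip}, which on $A$ gives $\|[\cF(u^1),u^1]-[\cF(u^2),u^2]\|_{Y'}\le C_A\|z\|_{2,\Omega}$; combined with the subcritical bound on $P_1$ from Assumption~\ref{as:karman-P}, this lets $\Psi_T$ be split into a small multiple of $\int_0^T E_z\,d\tau$ and a compact term of the form $c_T\sup_{[0,T]}[\mu(z)]^2$ with $\mu(v)=\|v\|_{2-\eps,\Omega}$, which is a compact seminorm on $H^2_0(\Om)$.

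The technically delicate step—and the main obstacle—is to absorb $\int_0^T D_z\,d\tau$ by the damping dissipation of $u^1$ and $u^2$. This is precisely where hypotheses \eqref{g0-prim} and \eqref{gi-prim} are used: they provide a bound of the form $|g'(s)|\le M(1+sg(s))^\gamma$ with $\gamma<1$ (or $\gamma=0$ when $\al=0$), so that writing
\[
|g(s_1)-g(s_2)|^{p}\le C\bigl(1+s_1g(s_1)+s_2g(s_2)\bigr)^{\gamma p}|s_1-s_2|^{p},
\]
and applying H\"older with the integrability $\int_0^{\infty}(g(u^i_t),u^i_t)\,dt\le C_A$ (finite on $A$ by the gradient structure proved in Theorem~\ref{th9.2.1}, which ensures $u^i_t\to 0$ sufficiently fast), yields the control $\int_0^T D_z\,d\tau\le c\int_0^T\|z_t\|_{Y}^2\,d\tau+$(lower order). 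In the rotational case one must run the analogous argument for $G$ acting on $\nabla z_t$, which is the reason the exponent $\gamma<1$ in \eqref{gi-prim} is required. The lower-order coercivity $g'(s)\ge m>0$ at infinity from Assumption~\ref{as:g-attr} then gives a strict sign for the leading term.

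Combining these estimates, choosing $T$ large using the decay of the dissipation integrals, and hiding $\eps$-terms on the left, one arrives at
\[
E_z(T)\le b(T)\,|y_1-y_2|_{H_\al}^2 + c(T)\sup_{[0,T]}\|u^1-u^2\|_{2-\eps,\Om}^2,
\]
with $b(T)\in L_1(\R_+)$, $b(T)\to 0$. This is precisely \eqref{8.4.2mc} with the compact seminorm $\mu_X(\cdot)=\|\cdot\|_{2-\eps,\Om}$, so $(H_\al,S_t)$ is quasi-stable on $A$ and Theorem~\ref{th7.9dim} yields $\dim_f^{H_\al}A<\infty$, concluding the proof.
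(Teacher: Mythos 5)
Your overall architecture is the right one (reduce to the quasi-stability inequality \eqref{8.4.2mc} on the attractor and invoke Theorem~\ref{th7.9dim}, with $\mu_X(\cdot)=\|\cdot\|_{2-\eps,\Om}$ as the compact seminorm), and your handling of the damping difference via \eqref{g0-prim}--\eqref{gi-prim} and the dissipation integrals is in the right spirit. But there is a genuine gap at the single step that makes the case $\al=0$ hard: the treatment of the von Karman term. The estimate \eqref{loc-lip} coming from Lemma~\ref{l:kar-br} gives only
\[
\|\,[\cF(u^1),u^1]-[\cF(u^2),u^2]\,\|\le C_A\,\|z\|_{2,\Om},
\]
which is an \emph{energy-level} (critical, non-compact) bound with a constant that cannot be made small. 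Feeding it into $\int_0^T(\cR(z),z_t)\,dt$ produces $C_A\int_0^T\|z\|_{2,\Om}\|z_t\|\,dt$, i.e.\ a full-size multiple of $\int_0^T E_z\,dt$; it can be neither hidden on the left nor relegated to $c_T\sup_{[0,T]}\|z\|^2_{2-\eps,\Om}$. So your claimed splitting of $\Psi_T$ into ``small multiple of the energy integral plus compact term'' does not follow from \eqref{loc-lip}, and the argument stalls exactly where criticality bites.

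The paper's proof supplies the missing idea: a compensated-compactness decomposition of the bracket,
\[
(\cR(z),z_t)=\tfrac14\tfrac{d}{dt}Q(z)+\tfrac12 P(z),
\]
with $Q(z)=(v(u)+v(w),[z,z])-|\Delta v(u+w,z)|^2$ and $P(z)$ a sum of terms each carrying a factor $u_t$ or $w_t$. The sharp Airy regularity $|v(\cdot,\cdot)|_{W^{2,\infty}}\le C\|\cdot\|_{2,\Om}\|\cdot\|_{2,\Om}$ then yields $|Q(z)|\le C\|z\|^2_{2-\eps,\Om}$ (genuinely subcritical, hence a lower-order term) and $|P(z)|\le C_R(\|u_t\|+\|w_t\|)\|z\|^2_{2,\Om}$. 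The crucial point is that the prefactor $\|u_t\|+\|w_t\|$ is controlled by the dissipation integral $\int_0^{\infty}(\|u_t\|^2+\|w_t\|^2)\,dt\le C_R$ (this is where $g'\ge m>0$ enters), so Gronwall's lemma absorbs the $P$-contribution and produces the quasi-stability estimate. Without this exact-derivative-plus-velocity-weighted decomposition (or an equivalent device), the critical case $\al=0$ cannot be closed; for $\al>0$ the bracket is subcritical and your argument would go through as written.
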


\begin{remark}{\rm
One can see that in the case
$\alpha > 0$ the condition imposed on $g_i$ in (\ref{gi-prim}) are valid
if in addition to Assumption ~\ref{as:g-attr} we assume that
that there exists $m.M_1,M_2>0$ such  that
\[
0 \leq g_i'(s) \leq M_1 [ 1 + |s|^{p-1}],
~~sg_i(s)\ge m  |s|^{(p-1)/\ga}-M_2,~~~
 s\in \R,
\]
 for some $p \geq 1$ (see Remark~ 9.2.7 \cite{cl-book}).
}
\end{remark}

The following assertion on regularity of elements in the attractor  is a straightforward consequence
of  Theorem~\ref{th7.9reg}.
\begin{theorem}[{\bf Regularity}]\label{th9.2.3}
Let
the hypotheses  of Theorem~\ref{th:dim-rot} be valid.
Then any full trajectory $\gamma=\{(u(t);u_t(t))\, :\, t\in\R\}$
from the attractor $A$ of the dynamical system  $(H_\al,S_t)$  generated by  problem
(\rref{1.5})
 with  the boundary
conditions \eqref{Cl} possesses
the properties
\begin{equation}\label{9.2.8}
u(t)\in  C_r(\R; W),\quad  (u_t(t),u_{tt}(t))\in C_r(\R; H_\al),
\end{equation}
where $C_r$ means right-continuous functions and
\begin{itemize}
  \item in the case $\al>0$:
 $H_\al=H^2_0(\Om)\times H^1_0(\Om)$  and $W=H^3(\Om)\cap H^2_0(\Om)$.
  \item in the case $\al=0$:
 $H_\al=H^2_0(\Om)\times L_2(\Om)$  and $W=H^4(\Om)\cap H^2_0(\Om)$.
\end{itemize}
Moreover, \begin{itemize}
  \item in the case $\al>0$:    $A\subset H^3(\Om)\times H^2(\Om)$ and
\[
\sup_{t\in \R}\left( \|u(t)\|^2_3+\|u_t(t)\|^2_2
+\|u_{tt}(t)\|^2_1\right)\le C_A<\infty,
\]
\item
in the case $\al=0$:   $A\subset H^4(\Om)\times H^2(\Om)$ and
\[
\sup_{t\in \R}\left( \|u(t)\|^2_4+\|u_t(t)\|^2_2
+\|u_{tt}(t)\|^2_1\right)\le C_A<\infty.
\]
\end{itemize}
\end{theorem}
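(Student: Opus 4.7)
The plan is to derive the conclusion in two stages: first extract time-derivative regularity from the general quasi-stability machinery, and then use the equation itself (elliptic bootstrap) to promote this to spatial regularity of $u$, $u_t$ and $u_{tt}$ on any full trajectory lying in the attractor.

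\textbf{Step 1 (time-derivative regularity).} By the hypotheses of Theorem~\ref{th:dim-rot} the system $(H_\al,S_t)$ is quasi-stable on the compact attractor $A$ in the sense of Definition~\ref{de:ms-stable}, with the stabilizability estimate \eqref{8.4.2mc} holding with a function $c(t)$ that is bounded on $\R_+$. (This boundedness is precisely what is used in the proofs of finite-dimensionality and regularity for second-order models with the indicated structural assumptions on $g$ and $g_i$.) Hence Theorem~\ref{th7.9reg} applies with $X=H^2_0(\Om)$, $Y=\cH_\al(\Om)$ and trivial $Z$. This yields, for every full trajectory $\gamma=\{(u(t);u_t(t))\,:\,t\in\R\}\subset A$,
\[
u_t\in L_\infty(\R;H^2_0(\Om))\cap C(\R;\cH_\al(\Om)),\qquad u_{tt}\in L_\infty(\R;\cH_\al(\Om)),
\]
with uniform bounds depending only on $A$.

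\textbf{Step 2 (spatial bootstrap via the equation).} On the attractor one has $u\in L_\infty(\R;H^2_0(\Om))$, and so Lemma~\ref{l:kar-br} gives $\cF(u)\in L_\infty(\R; W^{2,\infty}(\Om))$, whence $[\cF(u),u]\in L_\infty(\R;L_2(\Om))$. Since $P(w)=[F_0,w]+p$ with $F_0\in H^{3+\delta}\cap H^1_0$ and $p\in L_2$, one also has $P(u)\in L_\infty(\R;L_2)$ in the case $\al=0$ and $P(u)\in L_\infty(\R;H^{-1})$ in the case $\al>0$. It remains to control the damping terms. Using Step~1 and the polynomial-growth condition on $g$ (resp. on $g_i$ in \eqref{gi-prim}), together with the continuous embedding $H^2_0(\Om)\subset L_\infty(\Om)$ in dimension two, one obtains
\[
a(\cdot)g(u_t)\in L_\infty(\R;L_2(\Om)),\qquad a(\cdot)\,\di G(\nabla u_t)\in L_\infty(\R;H^{-1}(\Om)).
\]
Solving the equation \eqref{1.2} for $\De^2 u$ one arrives, in the case $\al=0$, at $\De^2 u\in L_\infty(\R;L_2(\Om))$, and in the case $\al>0$ at $\De^2 u\in L_\infty(\R;H^{-1}(\Om))$ (since also $M_\al u_{tt}\in L_\infty(\R; H^{-1})$ when $\al>0$ and $\in L_\infty(\R;L_2)$ when $\al=0$). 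Standard elliptic regularity with clamped boundary conditions now yields
\[
u\in L_\infty(\R;H^4(\Om)\cap H^2_0(\Om))\ (\al=0),\qquad u\in L_\infty(\R;H^3(\Om)\cap H^2_0(\Om))\ (\al>0),
\]
and the attractor is bounded in the corresponding product space.

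\textbf{Step 3 (right continuity).} Since $u_t\in C(\R;\cH_\al)$ and $u_t\in L_\infty(\R;H^2_0)$, interpolation combined with continuity of the orbit in $H_\al$ gives the continuity of $u$ in every $H^{2+s}$ with $s<1$ (resp. $s<2$). Applying the elliptic identity for $\De^2 u$ pointwise in $t$ and using right continuity of each term on the right-hand side (the nonlinear forcings are composed with right-continuous arguments, and $u_{tt}$ can be shown to be right-continuous into $\cH_\al$ via a difference-quotient argument similar to that in the proof of Theorem~\ref{th7.9reg}) promotes all the above $L_\infty$ regularity to right-continuous regularity, which gives \eqref{9.2.8}.

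\textbf{Main obstacle.} The delicate point is not Step~1, which is a black-box application of Theorem~\ref{th7.9reg}, but rather Step~2: one needs to verify that the nonlinear damping $g(u_t)$ (and, for $\al>0$, $\di G(\nabla u_t)$) lies in the space required for the elliptic bootstrap. This is where the polynomial-growth hypothesis in \eqref{g0-prim}--\eqref{gi-prim} enters decisively, together with the two-dimensional embedding $H^2(\Om)\subset L_\infty(\Om)$ applied to $u_t$ from Step~1. Once this is in hand, the Airy bracket is handled cleanly by Lemma~\ref{l:kar-br} and the rest is classical $H^4$/$H^3$ elliptic regularity for the biharmonic operator with clamped boundary data.
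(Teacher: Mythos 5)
Your argument follows the paper's own route: the paper obtains this theorem as a direct consequence of the abstract quasi-stability regularity result Theorem~\ref{th7.9reg} (your Step 1), followed by reading $\Delta^2 u$ off the equation and invoking elliptic regularity for the clamped biharmonic problem (your Step 2), with the remaining details deferred to \cite[Chapter~9]{cl-book}. The one item neither your sketch nor the paper's establishes directly is the bound $\|u_{tt}(t)\|_1\le C_A$ claimed in the case $\al=0$, since Theorem~\ref{th7.9reg} with $Y=L_2(\Om)$ only yields $u_{tt}\in L_\infty(\R;L_2(\Om))$ and the elliptic bootstrap gives $\Delta^2 u\in L_2(\Om)$ rather than $H^1(\Om)$; this refinement requires a further argument (e.g.\ a second difference-quotient/differentiation-in-time step), which the paper likewise leaves to the cited monograph.
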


The validity of quasi-stability property for the system allows us to
deduce, without an additional effort, another important property of dynamical system.
This is existence of  {\bf strong attractors }  that is,
attractors in a strong topology determined by the
generators of the (linearized) dynamical system.
The corresponding result is formulated below.
\index{evolutionary Karman equations with rotational
forces!internal dissipation!strong attractor}
\begin{theorem}[Strong attractors]\label{th9.2.3str}
We assume that the hypotheses  of Theorem~\rref{th9.2.3} are in force.
\par
{\bf Case $\al>0$:}
Let the rotational damping be linear; that is,
 $g_i(s)=g_i\cdot s$ for $i=1,2$.
 Then
the global attractor $A$ is also strong: for any bounded set $B$
from $H_{st}=(H^3(\Om)\cap H^2_0(\Om))\times H^2_0(\Om)$ we have that
\[
\lim_{t\to \infty}\sup_{y\in B}{\rm dist}_{H_{st}}(S_ty, A)=0.
\]
\par{\bf  Case: $\al=0$:}
Let $H_{st}=(H^4(\Om)\cap H^2_0(\Om))\times H^2_0(\Om)$.
The  global attractor $A$
of the system
$(H_0,S_t)$ generated by the generalized solutions to problem~\eqref{1.2}
with  clamped  b.c. \eqref{Cl}  is also strong,
i.e. $A$ is a global attractor for the system
$(H_{st},S_t)$.
\par
Moreover, in both cases $\al=0$ and $\al>0$  the  global attractor $A$ has a finite dimension as a compact set in  $H_{st}$.
\end{theorem}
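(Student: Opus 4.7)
\textbf{Proof proposal for Theorem~\ref{th9.2.3str}.}
The plan is to upgrade the quasi-stability estimate already available on $H_\al$ to the stronger space $H_{st}$, and then invoke the abstract machinery of Section~\ref{sec7.9-stab} (quasi-stability $\Rightarrow$ asymptotic smoothness $+$ finite fractal dimension) on the restricted system $(H_{st},S_t)$. Once this is done, the compact set produced by that machinery must coincide with the already-known attractor $A$ (by minimality and uniqueness of the global attractor), giving simultaneously the claimed strong attraction and finite-dimensionality in $H_{st}$.

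First I would verify that $(H_{st},S_t)$ is a well-defined dissipative dynamical system. That trajectories starting from $H_{st}$-bounded data remain in $H_{st}$ follows by differentiating the equation in $t$: with $v=u_t$, the pair $(u,v)$ satisfies a linearized plate equation in which the rotational dissipation contributes the term $-\al a g_i \Delta v_t$ when $\al>0$ (here the hypothesis that $G$ is \emph{linear} is essential, since otherwise the time-differentiated equation would contain an ill-controlled quasilinear coefficient), and the scalar damping contributes $a\,g'(u_t)v_t$, which is $\ge m\,a\,v_t$ modulo a controlled lower-order term by Assumption~\ref{as:g-attr}. Standard energy estimates on $(v,v_t)$ in $H^2_0(\Om)\times\cH_\al(\Om)$ together with elliptic regularity applied to $\Delta^2 u = -M_\al v_t - a g(v) + \al a\,\mathrm{div}\,G(\g v) + [\cF(u),u] + P(u)$ and the clamped boundary conditions then give the desired $H_{st}$-bound and a bounded absorbing set in $H_{st}$. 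Theorem~\ref{th9.2.3} already provides the invariance $S_t A=A\subset H_{st}$.

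Second, and this is the heart of the argument, I would derive a quasi-stability estimate for the difference $z=u^1-u^2$ of two trajectories living in an $H_{st}$-bounded positively invariant set $\cB\subset H_{st}$, now measured in the $H_{st}$-norm. Let $\zeta=z_t$. Differentiating the von Karman equation in $t$ and subtracting produces
\begin{equation}\label{pp:lin}
M_\al \zeta_{tt}+\Delta^2\zeta+a\bigl[g'(u_t^1)\zeta_t-\al\,\mathrm{div}\,G'(\g u_t)\g\zeta_t\bigr]
=R(u^1,u^2,z,\zeta),
\end{equation}
where (by the linearity hypothesis on $G$ the rotational term reduces to $-\al a g_i\Delta\zeta_t$, and) the right-hand side $R$ collects the time-differentiated source, Airy and damping remainders. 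The regularity Theorem~\ref{th9.2.3} bounds $u^i$, $u^i_t$, $u^i_{tt}$ uniformly in the strong norm on $A$, so Lemma~\ref{l:kar-br} and locally Lipschitz estimates for $P$ show that $R$ is controlled by $\|z\|_{H_\al}$ plus a \emph{compact} perturbation in the appropriate topology. Running on \eqref{pp:lin} the same multiplier analysis that produced the quasi-stability estimate \eqref{8.4.2mc} on $H_\al$ then yields
\begin{equation*}
\|S_t y_1-S_t y_2\|_{H_{st}}^2\le b(t)\|y_1-y_2\|_{H_{st}}^2
+c(t)\sup_{0\le s\le t}\mu_{st}(u^1(s)-u^2(s))^2,
\end{equation*}
with $b(t)\to 0$, $b\in L_1(\R_+)$, and $\mu_{st}$ a compact seminorm on the first component of $H_{st}$ (for instance $\|\cdot\|_{2,\Om}$ in the case $\al=0$, $\|\cdot\|_{5/2,\Om}$ in the case $\al>0$).

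Given this stabilizability inequality on $H_{st}$, Proposition~\ref{pr:7.9.1sc} gives asymptotic smoothness of $(H_{st},S_t)$, hence a compact global attractor $A_{st}\subset H_{st}$ by Corollary~\ref{co:pr7.9.1}; by minimality and since $A\subset H_{st}$ is already $S_t$-invariant and $H_\al$-attracting, $A_{st}=A$. Theorem~\ref{th7.9dim} applied to the same quasi-stable system $(H_{st},S_t)$ yields $\dim_f^{H_{st}}A<\infty$. The main obstacle throughout is the handling of the nonlinear scalar damping $g(u_t)$ under time-differentiation: this is where the condition $g'(s)\ge m$ for $|s|\ge s_0$ from Assumption~\ref{as:g-attr} is indispensable, and where in the case $\al>0$ the structural hypothesis of \emph{linear} rotational damping enters in an essential way, preventing the appearance of uncontrollable second-order quasilinear coefficients in \eqref{pp:lin}.
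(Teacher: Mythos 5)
Your overall strategy --- re-derive quasi-stability directly in $H_{st}$ and rerun the abstract machinery of Section~\ref{sec7.9-stab} there --- is not the route the paper takes, and as sketched it contains a step that fails under the stated hypotheses. The paper's proof is a citation of Theorem~8.8.4 of \cite{cl-book}: strong attraction and finite dimension in $H_{st}$ are extracted from the quasi-stability inequality \eqref{8.4.2mc} \emph{already established in $H_\al$}, applied to finite differences $u(t+h)-u(t)$ of the original (non-differentiated) equation --- exactly the mechanism of Theorem~\ref{th7.9reg} --- together with the dissipativity of strong solutions, which is itself a consequence of that same inequality (compare Theorem~\ref{th:strong} for the wave equation). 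That argument never linearizes the damping.

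Your route instead differentiates the equation in $t$ and subtracts two solutions, so the scalar damping contributes $a\,g'(u^1_t)\zeta_t + a\,[g'(u^1_t)-g'(u^2_t)]\,u^2_{tt}$. The second (commutator) term is the problem: the standing hypotheses (Assumption~\ref{as:g-attr} together with \eqref{g0-prim}) give only $g\in C^1$ with $m\le g'(s)\le M_0[1+sg(s)]$, and no modulus of continuity for $g'$ is assumed, so $[g'(u^1_t)-g'(u^2_t)]\,u^2_{tt}$ cannot be estimated \emph{quadratically} in the difference $z=u^1-u^2$; the quadratic structure of \eqref{8.4.2mc}, which the paper stresses is essential to the definition of quasi-stability, is then lost. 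Even granting $g\in C^2$, absorbing this term requires $\int \|u^2_{tt}(t)\|^2\,dt<\infty$ along trajectories, i.e.\ precisely the strong dissipativity that you dispatch as ``standard energy estimates''; in this framework that dissipativity is not standard but is itself derived from the weak quasi-stability estimate via time increments. The soft parts of your argument are fine (the identification $A_{st}=A$ by maximality of bounded invariant sets, the choice of a compact seminorm on $H^4$); the missing idea is to work with time increments of the original equation rather than with the time-differentiated equation, so that only the already-proved estimate \eqref{8.4.2mc}, valid under mere monotonicity and the stated bounds on $g'$, is ever invoked.
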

\begin{proof}
We apply the idea of Theorem~8.8.4\cite{cl-book}. The hypotheses of Theorem~\ref{th9.2.3}
guarantees that the system $(H_\al,S_t)$ is  quasi-stable.
\end{proof}
\index{evolutionary Karman equations with rotational
forces!internal dissipation!fractal exponential attractor}
An important property of the dynamical system is an existence of exponential
attractors, which attract trajectories at the exponential rate.
The quasi-stability property
 allows us to deduce an {\bf existence of exponential attractors}.
The corresponding result is formulated below.
\begin{theorem}[Exponential attractors]\label{th9.2.5}
Let $(H_\al,S_t)$ be the
dynamical system  generated by  problem \eqref{1.2} and
 \eqref{Cl}.
\par
{\bf Case $\al>0$}:
 Under the condition
 \eqref{gi-prim} the dynamical system $(H_\al,S_t)$
has a (generalized) fractal exponential attractor $A$ (see
Definition~\rref{de7.3.2}) whose
dimension is finite in the space $H^1(\Om)\times W'$, where
 $W'$ is a completion
of   $H_0^1(\Om)$
 with respect to the norm $\|\cdot\|_{W'}= \| (1-\alpha\Delta)\cdot\|_{-2}$.
 \par
{\bf Case $\al=0$:}
Assume the validity of the hypotheses above which
guarantee the existence of a global finite-dimensional attractor.
Assume in addition that
\begin{equation}\label{exp-new-k}
 |g(s)|\le C\left( 1+sg(s)\right)^\gamma, \quad s\in\R,
\end{equation}
for some $0\le\gamma<1$. Then the system $(H_0,S_t)$ has a
(generalized) fractal exponential attractor $A$ whose dimension is
finite in the space $L_2(\Om)\times H^{-2}(\Om)$.
\end{theorem}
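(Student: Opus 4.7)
The plan is to verify the three hypotheses of Theorem~\ref{th7.9exp} on an appropriately chosen bounded absorbing set $\cB\subset H_\al$: (i) dissipativity, (ii) quasi-stability on $\cB$ in the sense of Definition~\ref{de:ms-stable}, and (iii) H\"older continuity of the map $t\mapsto S_ty$ into a suitable extended space $\widetilde{H}\supseteq H_\al$ uniformly in $y\in\cB$. Once these three items are in place, Theorem~\ref{th7.9exp} delivers a generalized fractal exponential attractor whose fractal dimension is finite in $\widetilde{H}$. The choice will be $\widetilde{H}=H^1(\Om)\times W'$ in the case $\al>0$ and $\widetilde{H}=L_2(\Om)\times H^{-2}(\Om)$ in the case $\al=0$.

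First, dissipativity of $(H_\al,S_t)$ follows directly from the gradient structure already exploited in Theorems~\ref{th8.2.2}--\ref{th9.2.1}: the energy $\cE$ is a strict Lyapunov function bounded from below under \eqref{Pi}, so a bounded absorbing set $\cB$ exists. For quasi-stability on $\cB$, I would follow verbatim the computation that produced the stabilizability inequality underlying Theorem~\ref{th:dim-rot}. That estimate was established for $z=u^1-u^2$ with $u^i$ on the attractor, but the only properties used were the uniform energy bound, the damping growth assumptions \eqref{gi-prim} (resp.\ \eqref{g0-prim}), and Lemma~\ref{l:kar-br} for the von Karman nonlinearity; all of these remain valid on any bounded forward invariant set. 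Hence \eqref{8.4.1mc}--\eqref{8.4.2mc} hold on $\cB$ with $\mu_X$ a compact seminorm on $H^2_0(\Om)$ (for instance $\|\cdot\|_{2-\eps,\Om}$) and with $b(t)\to 0$, giving quasi-stability on $\cB$.

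It remains to establish the H\"older bound \eqref{Hold}. Writing the equation as $M_\al u_{tt}=-\Delta^2u+[\cF(u),u]+P(u)-a\,g(u_t)+\al\,\di(a\,G(\nabla u_t))$ and integrating from $t_1$ to $t_2$, we get
\[
M_\al\bigl(u_t(t_2)-u_t(t_1)\bigr)=\int_{t_1}^{t_2}\!\bigl[-\Delta^2u+[\cF(u),u]+P(u)-a\,g(u_t)+\al\,\di(a\,G(\nabla u_t))\bigr]\,d\tau .
\]
In the case $\al>0$, take the $W'$-norm, i.e.\ apply $M_\al^{-1}$ and measure in $H^{-2}$: all of $\Delta^2u$, $[\cF(u),u]$ and $P(u)$ are uniformly bounded in $H^{-2}$ on $\cB$ by Lemma~\ref{l:kar-br} and Assumption~\ref{as-karman}, while \eqref{gi-prim} together with the energy dissipation identity control $\|g(u_t)\|_{-2}$ and $\|\di G(\nabla u_t)\|_{-2}$ through the embeddings $L_{1/\ga}\hookrightarrow H^{-2}$ (valid in $\R^2$ since $\ga<1$). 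Combined with $\|u(t_2)-u(t_1)\|_1\le |t_2-t_1|\sup_{\cB}\|u_t\|_1$ this yields \eqref{Hold} with $\ga=1$ (Lipschitz). In the case $\al=0$ one proceeds analogously with the equation already in the form $u_{tt}=\dots$: the critical term $[\cF(u),u]$ is bounded in $L_2$ on $\cB$ by Lemma~\ref{l:kar-br}, and the extra hypothesis \eqref{exp-new-k} is used precisely to bound $\int_{t_1}^{t_2}\|g(u_t)\|_{-2}\,d\tau$, because \eqref{exp-new-k} gives $|g(s)|^{1/\ga}\le C(1+sg(s))$, so $g(u_t)\in L_{1/\ga}(Q_T)\hookrightarrow L_1(0,T;H^{-2}(\Om))$ with bounds depending only on the energy integral $\int\!(g(u_t),u_t)d\tau$, which is uniformly controlled on $\cB$ by the energy identity \eqref{plate-energy-rel}.

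The main obstacle is the third step, specifically controlling the nonlinear damping contribution in the negative Sobolev norm used for the H\"older estimate. In the rotational case $\al>0$ this is what forces the restriction $\ga<1$ in \eqref{gi-prim}, and in the non-rotational case $\al=0$ it is precisely the reason for the additional assumption \eqref{exp-new-k}: without it, $g(u_t)$ need not belong to any $L_p$ with $p>1$, and the integral inequality would degenerate. Once this single integrability estimate is in hand, both the Lipschitz-in-time bound in the extended topology and the conclusion of Theorem~\ref{th7.9exp} follow routinely, completing the proof.
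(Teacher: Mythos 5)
Your proposal is correct and follows exactly the route the paper intends: Theorem~\ref{th9.2.5} is presented as a direct consequence of Theorem~\ref{th7.9exp}, whose three hypotheses (dissipativity, quasi-stability on a bounded absorbing set, and H\"older continuity of $t\mapsto S_ty$ in the extended space) you verify in the same way, with \eqref{exp-new-k} used precisely to place $g(u_t)$ in $L_{1/\gamma}(Q_T)\hookrightarrow L_1(0,T;H^{-2}(\Om))$ via the dissipation integral. The only minor overstatement is the claim of Lipschitz continuity in the rotational case --- the damping contributions generally yield only a H\"older exponent $1-\gamma<1$ after applying H\"older's inequality in time --- but any positive exponent suffices for Theorem~\ref{th7.9exp}, so this does not affect the conclusion.
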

{\bf Main ingredients for the proofs.}
The rotational case $\alpha > 0 $ corresponds to the situation where von Karman nonlinearity is {\it compact},
hence {\it subcritical}.
This alleviates number of  technical issues when proving  validity of {\it quasi-stability inequality}
which, in turn, leads to the  existence  of smooth and finite-dimensional attractors.
Indeed, build in compactness turns critical integrals into  lower order terms.
\par
We   thus focus here on the more subtle  {\it non-rotational case}, $\alpha =0 $.
Here, the main challenge is to establish {\it quasi-stability inequality}. Once this is proved,  finite-dimensionality of attractors- Theorem \ref{th:dim-rot}, smoothness of attractor -Theorem \ref{th9.2.3} and  existence of strong and exponential  attractors
-Theorem \ref{th9.2.3str} and also Corollary \ref{th9.2.5}  follow from  the abstract tools presented in Section~\ref{sec7.9-stab}.
\par
Thus, the core of the difficulty relates to the quasi-stability estimate obtained for a difference of two solutions
$z = u-w $ defined on $Q = \Omega \times (0, T), \Sigma = \Gamma \times (0, T) $.
The solutions  under consideration are confined to a bounded invariant set
(attractor, for instance), so we can assume
$$
\|u_t(t)\|^2 +\|u(t)\|^2_{2,\Omega} \leq R^2,~~~ \|w_t(t)\|^2
+\|w(t)\|^2_{2,\Omega} \leq R^2.
 $$
 To present the main idea we assume that $P(u)\equiv 0$ in \eqref{1.2}.
 In this case
 the equation for $z$ can be written as:
\begin{equation}\label{8b.1-z.h2.n}
z_{tt}  + \Delta^2 z  =f
~~\mbox{ in }~ \Omega, \quad
z  =0,\; \Dn z  = 0 ~~\mbox{ on }~ \Gamma,
\end{equation}
where
$f=-a(g(u_t) -g(w_t)) +
[v(u),u ] -[v(w),w ]$ .
We have the following energy equality (satisfied for strong solutions)
\begin{equation}\label{10en-z.h2}
E_z(T)+D_t^T(z)
=E_z(t)+\int_t^T (\cR(z),z_t)_\Om dt,
\end{equation}
where
\[
E_z(t) = \frac12 \int_{\Om }[ | z_t|^2+ |\Delta z|^2] dx,
\quad
\cR(z) = [v(u),u ] -[v(w),w ],
\]
and
\[
D_t^T(z)=\int_t^T\int_{\Om}a [g(u_t)-g(w_t)]z_t
dx  d\tau.
\]
 Our goal is to prove the following recovery inequality:
\begin{equation}\label{goal}
E_z(T) + \int_0^T E_z(t) dt \leq C_R D_0^T(z)  + LOT(z),
\end{equation}
where $$LOT(z) \leq C_R \sup_{t\in [0, T]} \|z(t)\|^2_{2-\epsilon,\Omega}.
 $$
Inequality (\ref{goal}), via reiteration on the intervals  $[mT, (m+1) T ]$,    leads to  quasi-stability inequality.
\par
 To prove (\ref{goal}) one applies  standard multipliers $z_t $ and $z$.
 After some calculations (as for the wave equation)
one obtains \begin{multline}\label{goal1}
E_z(T) + \int_0^T E_z(t) dt \\
\leq C_R  D_0^T(z)
+ C_R LOT(z)
+C_T \int_0^T (\cR(z), z_t) dt.
\end{multline}
 The difficulty, however,  is in handling the  critical term
\begin{equation*}
\int_0^T ( \cR(z),z_t)_{\Omega}  dt.
\end{equation*}
In order to obtain quasi-stability inequality, this critical term should be represented in terms of the damping
$D_0^T(z)$, small multiples of the integrals of the energy, i.e.,
$ \epsilon \int_0^T E_z(t) dt $   and lower order terms (subcritical quantities)  in  a quadratic form.
The key to this argument is "compensated compactness structure" of the von Karman bracket
 which leads to the following  estimate:
 \begin{equation}\label{magic}
 |\int_0^T (\cR(z), z_t) d\tau| \leq C_R \max_{t\in [0,T]} \|z(t)\|^2_{2-\epsilon} +
 C_R \int_0^T (\|u_t\| + \|w_t\|) \|z\|^2_{2,\Omega}  dt
 \end{equation}
 The critical role is played by the presence  in (\ref{magic}) of the velocities $\|u_t\|$, $\|w_t\|$ which represent the damping
 and obey the estimate
 \begin{equation}\label{magic0}
 \int_0^{+\infty}( \|u_t\|^2 + \|w_t\|^2 ) dt \leq C_R
 \end{equation}
Indeed,  once (\ref{magic})  is proved and inserted into (\ref{goal1}),  then (\ref{magic0})
along with Cronwall's inequality leads to the desired quasi-stability estimate.
Thus, the key is  to be able to prove  (\ref{magic}).
 To this end  the following "compensated compactness" decomposition  of the von Karman bracket is used:
\[
 (\cR(z), z_t) = \frac14 \frac{d}{dt} Q(z) + \hf P(t)
 \]
 where
 \begin{eqnarray*}
 Q(z) &= & (v(u) + v(w) , [z,z] ) - |\Delta v(u+w,z)|^2, \nonumber\\
 P(z)& = &-(u_t, [u, v(z,z)])  -(w_t, [w, v(z,z)])  \\
 & & -(u_t + w_t, [z, v(u+w,z)]),
\end{eqnarray*}
where $v(u,w)=\Delta^{-2}\big([u,w]\big)$.
By using sharp Airy's regularity
\[
|v(u,v)|_{W^{2,\infty}} \leq C \|u\|_{2,\Omega} \|v\|_{2,\Omega}
\]
given by Lemma~\ref{l:kar-br},
one obtains the estimates
\[
|Q(z)|\leq C \|z\|_{2-\epsilon,\Omega}^2~~~\mbox{and}~~~
|P(z)| \leq C_R ( \|u_t\| + \|w_t\|) \|z\|^2_{2, \Omega}.
\]
The above yields
\begin{eqnarray*}
\int_0^T (\cR(z),z_t) dt &\leq & C(R)  [ Q(T) - Q(0)] + C(R) \int_0^T P(z) dt \nonumber \\
&\leq&  C_R  \int_0^T ( \|u_t\| + \|w_t\|) \|z\|^2_{2, \Omega} +C_R \max_{t\in [0,T]} \|z(t)\|^2_{2-\epsilon}
\end{eqnarray*}
as desired for (\ref{magic}), hence for quasi-stability inequality.

\subsubsection{Boundary damping}

We consider now the model in \eqref{1.5} with the boundary conditions in
\eqref{hing-bc}. This type of problems is often  referred to as  control
problem with  reduced number of controls.
\par
We impose the following hypotheses.
\begin{assumption}\label{as:8b.1.0.h}
\begin{itemize}
  \item Assumption \ref{as-karman}(1,2) is in force,
$a \in L_\infty(\Om)$, $a(x)\ge 0$ almost everywhere and
$ P(w)\equiv p \in L_2(\Om)$.
  \item The boundary damping $g_0\in C^1(\R)$  is  increasing,
   with the property $g_0(0) =0$ and there
 exist positive constants  $m$, $M$, and $s_0$ such that
\begin{equation}\label{gi-lb.h}
0<m \le g_0'(s) \leq M   ~~ \mbox{ for }~ |s| \geq s_0.
\end{equation}
\end{itemize}
\end{assumption}

\smallskip\par\noindent
{\bf Case $\al>0$}:
\smallskip\par\noindent
We  use  the state space
$H \equiv \big[ H^2(\Om) \cap H^1_0(\Om)\big] \times  H^1_{0}(\Omega)$.
\par
 Using energy equality  one
can easily prove the following assertion.
\begin{theorem}\label{t:10.1-grd.h}
Let  Assumption~\rref{as:8b.1.0.h} be in force and $\al>0$.
Then
\begin{itemize}
  \item
There exists $R_*>0$ such that the set
\begin{equation}\label{10.1wr-set-h}
{\cal W}_R=\left\{ y=(u_0;u_1)\in H\, :\;  \cE(u_0, u_1)\le R \right\}
\end{equation}
is a nonempty invariant set \wrt semiflow  $S_t$ generated by
equations \eqref{1.5}  with hinged b.c. \eqref{hing-bc} for every
$R\ge R_*$. Moreover, the set ${\cal W}_R$ is bounded for every
$R\ge R_*$  and any bounded set
is contained in ${\cal W}_R$ for some $R$.
\item
If in addition  $a(x)>0$ almost everywhere in $\Om$,
the system $(H,S_t)$
is gradient.
  \item The set $\cN$
of all stationary points of the semiflow $S_t$ is bounded in $H$. Thus
there exists $R_{**}>0$ such that  $\cN\subset  \cW_R$ for all $R\ge R_{**}$.
\end{itemize}
\end{theorem}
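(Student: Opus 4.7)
The plan is to exploit the Lyapunov structure of the full energy
\begin{equation*}
\cE(u,u_t)=\tfrac12\|u_t\|^2+\tfrac{\alpha}{2}\|\nabla u_t\|^2+\tfrac12\|\Delta u\|^2+\tfrac14\|\Delta\cF(u)\|^2-(p,u),
\end{equation*}
which I read as the Lyapunov version of \eqref{plate-energy} augmented by the potential $-(p,u)$ of the constant source $P\equiv p$. Coercivity is immediate via Poincar\'e and Young: $|(p,u)|\le C\|p\|\,\|\Delta u\|\le \tfrac14\|\Delta u\|^2+C\|p\|^2$, whence
\begin{equation*}
\cE(u,u_t)\ge \tfrac12\|u_t\|^2+\tfrac{\alpha}{2}\|\nabla u_t\|^2+\tfrac14\|\Delta u\|^2+\tfrac14\|\Delta\cF(u)\|^2-C\|p\|^2.
\end{equation*}
Thus $\cE$ is bounded below on $H$, the set $\cW_R$ contains $(0;0)$ for every $R\ge R_*:=0$, and any $y\in\cW_R$ satisfies $\|y\|_H^2\le C_1(R+\|p\|^2)$; in particular $\cW_R$ is bounded in $H$. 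The a priori bound $\|\Delta\cF(u)\|\le C\|u\|_{2,\Omega}^2$ from Lemma~\ref{l:kar-br} and continuity of the remaining terms show that any bounded subset of $H$ sits inside some $\cW_R$.

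Forward invariance follows from the energy balance \eqref{hing-bc-en-rel}. With $P\equiv p$ one has $((p,w_t))=\tfrac{d}{dt}(p,w)$, so \eqref{hing-bc-en-rel} absorbs into
\begin{equation*}
\cE(t)+\int_s^t\!\Bigl[D(w_\tau)+ <\! <g_0(\partial_n w_\tau),\partial_n w_\tau >\! >\Bigr]d\tau=\cE(s),\quad s\le t,
\end{equation*}
and monotonicity of $g,g_1,g_2,g_0$ (each vanishing at $0$) makes the integrand nonnegative. Hence $\cE$ is nonincreasing along trajectories and $S_t\cW_R\subset\cW_R$; for generalized or weak solutions the equality is replaced by a $\le$-inequality obtained by weak lower-semicontinuity, which still suffices.

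For the gradient property, assume $\cE(S_ty)=\cE(y)$ for all $t\ge 0$. The identity above forces $a(x)g(u_t)u_t+\alpha\, a(x)G(\nabla u_t)\cdot\nabla u_t\equiv 0$ in $\Omega\times\R_+$ and $g_0(\partial_n u_t)\partial_n u_t\equiv 0$ on $\Gamma\times\R_+$. The hypothesis $a(x)>0$ a.e.\ together with strict monotonicity of $g,g_1,g_2,g_0$ at $0$ then gives $u_t\equiv 0$, so $y$ is stationary; this promotes $\cE$ to a strict Lyapunov function. Finally, any $(w;0)\in\cN$ solves $\Delta^2w=[\cF(w),w]+p$ with hinged data $w=0$ and $\Delta w=-g_0(0)=0$ on $\Gamma$. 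Testing with $w$, integrating by parts under these boundary conditions, and using the bracket identity
\begin{equation*}
(([\cF(w),w],w))=((\cF(w),[w,w]))=-((\cF(w),\Delta^2\cF(w)))=-\|\Delta\cF(w)\|^2
\end{equation*}
(valid because the clamped data $\cF(w)=\partial_n\cF(w)=0$ from \eqref{airy} kill all boundary terms) yields $\|\Delta w\|^2+\|\Delta\cF(w)\|^2=(p,w)\le \tfrac12\|\Delta w\|^2+C\|p\|^2$. Hence $\|w\|_{2,\Omega}\le C\|p\|$ uniformly on $\cN$, so $\cN$ is bounded in $H$ and $\cN\subset\cW_R$ for all $R\ge R_{**}$ sufficiently large.

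\textbf{Main obstacle.} None of the individual steps is technically hard; this theorem is the preliminary Lyapunov/coercivity/monotonicity packaging that prepares the heavier attractor analysis later in the section. The only point requiring genuine care is the bracket identity just displayed, which exploits simultaneously the hinged boundary data of $w$ and the clamped data built into $\cF(w)$ through \eqref{airy} — the apparent mismatch of boundary conditions is precisely what makes the double integration by parts collapse to a negative-definite expression and deliver the a priori bound on $\cN$.
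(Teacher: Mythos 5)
Your proof is correct and follows exactly the route the paper intends (it gives no details beyond ``using energy equality one can easily prove the following assertion''): coercivity of the load-augmented energy, monotone decay of $\cE$ along trajectories via \eqref{hing-bc-en-rel}, and the symmetry of the von Karman trilinear form (legitimate here because $\cF(w)\in H_0^2(\Om)$) to bound the stationary set. Your explicit appeal to strict monotonicity of the damping at $0$ for the gradient step is an honest acknowledgement of a hypothesis the theorem statement leaves implicit — compare the paper's own Theorem~\ref{t:main.1.h}, which adds $g(s)s>0$ for $s\neq 0$ precisely for this purpose — so this is a point in your favour rather than a gap.
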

We recall that
\[
\cN=\left\{ V\in H\; :\; S_tV=V\mbox{~for all~} ~~ t\ge 0\right\}\,.
\]
and every stationary point $W$ has the form $W=(u;0)$,
where $u=u(x)\in  H^2(\Om)$  is a weak (variational) solution
to  the problem
\begin{equation}\label{1-stat.h}
\Delta^2 u   = [v(u),u ] + p~~{\rm in}~~
\Omega; ~~~
u=0, ~~ \Delta u  = 0~~{\rm on}~~
  \Gamma,
\end{equation}
with  function $v(u) $ satisfying (\ref{airy}) with $w=u$, i.e.,
 $v(u)=\Delta^{-2}\big([u,u]\big)$.

\begin{theorem}[Compact attractors]\label{t:8b.3.h}
Let $\al>0$ and Assumption \rref{as:8b.1.0.h}  be
in force. Then \index{evolutionary Karman equations with rotational
forces!boundary dissipation, clamped--hinged b.c.!global attractor}
\begin{itemize}
  \item The restriction $(\cW_R,S_t)$ of the dynamical system $(H,S_t)$
  on $\cW_R$ given by \eqref{10.1wr-set-h} has  a compact global
attractor $A_R\subset \cW_R$ for every $R\ge R_*$, where $R_*$ is the
same as in
Theorem~\rref{t:10.1-grd.h}.
  \item If,  in addition,   $a(x)>0$ almost everywhere in $\Om$,
then there exists a  compact global attractor $A$  for the system
$(H,S_t)$. Moreover, we have $A=\cM^u(\cN)$, \index{evolutionary
Karman equations with rotational forces!boundary dissipation,
clamped--hinged b.c.!structure of attractor} where $\cM^u(\cN)$ is
the unstable manifold (see the definition in Section~\rref{grad-ds})
emanating from the set $\cN$   of equilibria for the semiflow $S_t$.
\item The two  attractors $A_R$ and $A$ have
finite fractal dimension provided relation \eqref{gi-lb.h} holds for
all $s\in \R$. Moreover, in this latter  case the attractors
are bounded sets in the space $H^3(\Om)\times H^2(\Omega)$
\index{evolutionary Karman equations with rotational forces!boundary
dissipation, clamped--hinged b.c.!smoothness of elements from
attractor} and for any trajectory $(u(t);u_t(t))$ we have the
relation
\[
\|u_{tt}(t)\|_{1,\Om}+\|u_{t}(t)\|_{2,\Om}+\|u(t)\|_{3,\Om}\le C,\quad t\in\R.
\]
\end{itemize}
\end{theorem}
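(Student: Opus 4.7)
The plan is to verify the three assertions by appealing to the abstract machinery developed in Section~4, exploiting the subcritical character of the von Karman bracket in the rotational regime $\al>0$ (since $H^2(\Om)\hookrightarrow W^{1,p}(\Om)$ for all $p<\infty$ in two dimensions, the map $w\mapsto [\cF(w),w]$ is compact from $H^2\cap H^1_0$ into $H^{-1}$ relative to the topology of weakly convergent sequences). Throughout, $z=u-w$ denotes the difference of two trajectories emanating from $\cW_R$, and $E_z(t)=\tfrac12(\|z_t\|^2+\al\|\g z_t\|^2+\|\De z\|^2)$ the associated linear energy.

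\textbf{Step 1 (compact attractor on $\cW_R$).} Theorem~\ref{t:10.1-grd.h} already furnishes a bounded invariant absorbing set $\cW_R$, so it suffices to prove asymptotic smoothness of $(\cW_R,S_t)$. I would apply Khanmamedov's compensated compactness criterion of Theorem~\ref{th7-turk}. Writing the equation for $z$ and testing with $z_t$ produces the boundary damping integral $\int_s^t \langle\langle g_0(\pd_n u_t)-g_0(\pd_n w_t),\pd_n z_t\rangle\rangle\,d\tau$, while testing with the multipliers $z$ and with a suitable vector field $h\cdot\g z$ (with $h\cdot\nu>0$ on $\Gamma$) reconstructs the full energy up to the boundary damping term, a lower order interior trace, and the critical term $\int_0^T(([\cF(u),u]-[\cF(w),w],z_t+h\cdot\g z))\,d\tau$. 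The subcriticality of the bracket in the rotational case allows me to bound this critical term by $\eps\int_0^T E_z\,d\tau$ plus a functional of the form $\Psi(u,w)$ which, along sequences weakly convergent in $C_w(0,T;H^2\cap H^1_0)\cap C^1_w(0,T;H^1_0)$, satisfies the double-liminf condition~\eqref{7.ak2} thanks to the compact embedding $H^2\hookrightarrow W^{1,p}$. This yields asymptotic smoothness and hence a compact global attractor $A_R$ by Theorem~\ref{t7.2.1-as}.

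\textbf{Step 2 (gradient structure and description $A=\cM^u(\cN)$).} When $a(x)>0$ a.e., the full energy $\cE(t)$ is a strict Lyapunov function: indeed the energy balance \eqref{hing-bc-en-rel} forces $g_0(\pd_n u_t)\pd_n u_t+a\,g(u_t)u_t\equiv 0$ on stationary pieces of a trajectory, whence $u_t\equiv 0$ by monotonicity. Combined with the boundedness of $\cN$ from Theorem~\ref{t:10.1-grd.h} and the asymptotic smoothness established in Step~1, Theorem~\ref{th:grad-attr} delivers a compact global attractor $A$ for $(H,S_t)$ with the characterization $A=\cM^u(\cN)$.

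\textbf{Step 3 (finite dimension and regularity).} Under the global Lipschitz upper bound $g_0'(s)\le M$ for all $s$, I intend to establish the quasi-stability inequality \eqref{8.4.2mc} on $A$ (and on $\cW_R$). The core step, which I expect to be the main technical obstacle, is the boundary stabilizability estimate
\begin{equation*}
E_z(T)+\int_0^T E_z(\tau)\,d\tau\le C\!\int_0^T\!\!\langle\langle |\pd_n z_t|^2,1\rangle\rangle\,d\tau+C_T\sup_{[0,T]}\|z\|^2_{2-\eps,\Om},
\end{equation*}
obtained by combining the energy identity with the flux multiplier $h\cdot\g z$ (yielding critical boundary traces that are absorbed through the linear bound on $g_0'$) and a sharp handling of the bracket difference via the compensated compactness decomposition displayed in the non-rotational case, which here is even easier because $\al>0$ lifts $z_t$ to $H^1_0$. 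Once this inequality is propagated onto $A$ (using a contradiction/compactness argument to eliminate the lower order term, then reintroducing it on $\cW_R$), Definition~\ref{de:ms-stable} is verified with an exponentially decaying $b(t)$. Theorem~\ref{th7.9dim} then gives $\dim_f^H A_R<\infty$ and $\dim_f^H A<\infty$, while Theorem~\ref{th7.9reg} applied to full trajectories on the attractor delivers $u_t\in L_\infty(\R;H^2\cap H^1_0)$ and $u_{tt}\in L_\infty(\R;H^1_0)$. Feeding this back into the elliptic equation $\De^2 u=-M_\al u_{tt}-a[g(u_t)-\al\,\mathrm{div}\,G(\g u_t)]+[\cF(u),u]+p$ with the hinged dissipative boundary data and invoking standard elliptic regularity (noting that all right-hand side terms then lie in $H^{-1}(\Om)$ with the boundary trace of $\De u$ in $H^{1/2}(\Gamma)$) promotes $u(t)$ to $H^3(\Om)$ uniformly in $t\in\R$, which is the final claim.
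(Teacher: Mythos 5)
Your overall architecture matches the paper's: invariance and gradient structure come from Theorem~\ref{t:10.1-grd.h}, the subcriticality of the von Karman bracket for $\al>0$ turns the critical source integrals into lower-order terms, and the chain ``stabilizability/observability estimate $\Rightarrow$ quasi-stability $\Rightarrow$ Theorems~\ref{th7.9dim} and \ref{th7.9reg} $\Rightarrow$ elliptic regularity for $H^3\times H^2$'' is exactly the route the paper takes (its key ingredient is the observability estimate of Proposition~\ref{pr:10.1-os.h}, which is your displayed inequality in Step~3). However, there is a genuine gap at the heart of both your Step~1 and Step~3: the flux multiplier $h\cdot\g z$ applied to $\Delta^2 z$ under the hinged conditions $z=0$, $\Delta z=\psi$ on $\Ga$ produces boundary terms of the form $\int_{\Sigma}\psi\,(h\cdot n)\,(\Dn)^2z$ and $\int_{\Sigma}(\Dn\Delta z)(h\cdot n)\Dn z$, i.e.\ second- and third-order traces $(\Dn)^2z$, $\Dn\Dt z$, $\Dn\Delta z$ which live \emph{above} the energy level and are not controlled by $E_z$ or by the observed quantity $\int_\Sigma|\Dn z_t|^2$. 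Your claim that the critical boundary traces are ``absorbed through the linear bound on $g_0'$'' is not correct: that bound only converts $|g_0(\Dn u_t)-g_0(\Dn w_t)|$ into $M|\Dn z_t|$, which handles $\psi$, but it does nothing for $(\Dn)^2z$ or $\Dn\Delta z$. These supercritical traces are precisely the quantity $B_T(z)$ in Lemma~\ref{l:h2}, and eliminating them requires the sharp (microlocal) trace regularity estimate of Lemma~\ref{trace0-h}, taken from the linear Kirchhoff analysis in \cite{l-ji}; the paper stresses that those estimates are uniform in the rotational parameter and are therefore needed for $\al>0$ just as for $\al=0$. Without this ingredient neither the asymptotic smoothness estimate nor the stabilizability inequality closes, so the compactness, dimension and regularity claims all remain unproved as written.

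Two smaller points. First, in Step~3 you speak of eliminating the lower-order term ``by a contradiction/compactness argument'': for quasi-stability in the sense of Definition~\ref{de:ms-stable} the lower-order term is \emph{kept} as the compact seminorm $\mu_X$; the compactness--uniqueness removal of $LOT$ belongs to the uniform stabilization problem, not here, so that detour is unnecessary (and would actually cost you the quadratic structure that Theorem~\ref{th7.9dim} needs). Second, in Step~1 the appeal to the compensated compactness criterion of Theorem~\ref{th7-turk} is harmless but not needed when $\al>0$: since the bracket is compact, once the trace issue is resolved the stabilizability estimate itself already yields quasi-stability on $\cW_R$, and Proposition~\ref{pr:7.9.1sc} gives asymptotic smoothness directly.
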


An important ingredient of the argument is

\begin{proposition}[Observability estimate]\label{pr:10.1-os.h}
\index{evolutionary Karman equations with rotational forces!boundary
dissipation, clamped--hinged b.c.!observability estimate} Assume
that $\al>0$ and Assumption \rref{as:8b.1.0.h} is  in
force. Let
\[
U(t) = (u(t); u_t(t)) = S_ty_1 ~~\mbox{and}~~ W(t) = (w(t);
w_t(t)) = S_ty_2
\]
 be two solutions corresponding to initial
conditions $y_1$ and  $y_2$ from the set ${\cal W}_R$ given by
\eqref{10.1wr-set-h}. Then there exist $T_0>0$ and constants
$C_1(T)$ and $C_2(R,T)$ such that
\begin{equation}\label{10.1-os.h}
TE_z(T)+\int_0^T E_z(t) dt\le C_1(T)\int_{\Sigma_1}\left|\Dn z_t
\right|^2 d\Sigma +
C_2(R,T)\cdot LOT(z)
\end{equation}
for any $T\ge T_0$,
where  $z \equiv u - w$,
\[
E_z(t) = \frac12 \int_{\Om }\left[
| z_t|^2+\alpha |\g z_t|^2  + |\Delta z|^2\right] dx,
\]
and the lower-order terms have the form
\begin{equation}\label{10.1lot-z.h}
LOT(z) =  \sup_{0\le \tau\le T }\| z(\tau)\|^2_{0, \Omega}
+ \int_0^T\|z_t(\tau)\|^2_{0, \Om} d\tau.
\end{equation}
\end{proposition}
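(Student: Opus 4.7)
The equation for the difference $z=u-w$ reads
\begin{equation*}
M_\al z_{tt}+\Delta^2 z + a[g(u_t)-g(w_t)]-\al\,\di[G(\nabla u_t)-G(\nabla w_t)]
=[\cF(u),u]-[\cF(w),w]
\end{equation*}
in $Q_T$, together with $z=0$ and $\Delta z=-[g_0(\Dn u_t)-g_0(\Dn w_t)]$ on $\Sigma_T$. My plan is to obtain \eqref{10.1-os.h} by combining two classical multipliers with the compensated-compactness cancellation for the Karman bracket introduced in Section~5.2.1. First I would multiply the equation by $z$ (equipartition) and integrate by parts over $Q_T$; this produces $-\int_0^T\|z_t\|^2 +\int_0^T\|\Delta z\|^2 +$ (boundary terms on $\Sigma_T$ from the hinged b.c.) $+$ (interior damping terms) $+$ (Karman bracket term). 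Secondly I would apply the standard flux multiplier $h\cdot\nabla z$, where $h\in [C^1(\overline\Om)]^2$ is a vector field with $h=\nu$ on $\Gamma$ (so that $h$ is essentially the outward normal near $\partial\Om$); a direct computation, using $z|_\Gamma=0$ (so $\nabla z=(\Dn z)\nu$ on $\Gamma$), yields the pointwise identity on the boundary needed to generate the energy integral, modulo boundary contributions controlled purely by $\Dn z_t$ via the hinged boundary condition.

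Combining the two multipliers in the standard way (see the plate multiplier identities in \cite{cl-book}) produces
\begin{equation*}
TE_z(T)+\int_0^T E_z(t)\,dt\le C(T)\int_{\Sigma_T}\left|\Dn z_t\right|^2 d\Sigma
+C(T)\,\cR_{\rm damp}+C(T)\,\cR_{\rm Kar}+ C\cdot LOT(z),
\end{equation*}
where $\cR_{\rm damp}$ collects the interior damping contributions and $\cR_{\rm Kar}=\int_0^T(([\cF(u),u]-[\cF(w),w],\,z_t+h\cdot\nabla z))\,dt$.  For the damping, since $g_0$ is globally Lipschitz by \eqref{gi-lb.h} (the sublinear portion is handled by the standard splitting $|s|\le s_0$ vs.\ $|s|\ge s_0$), one has $|g_0(\Dn u_t)-g_0(\Dn w_t)|\le M|\Dn z_t|$ on $\Sigma_T$, and the same Lipschitz-type bound for $g$ and $G$ transfers $\cR_{\rm damp}$ into a multiple of $\int_{\Sigma_T}|\Dn z_t|^2d\Sigma$ plus a small multiple of $\int_0^T E_z\,dt$ plus $LOT(z)$, using the trace theorem and the fact that $\alpha>0$ makes $\|z_t\|_1$ part of the energy.

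The main technical point is $\cR_{\rm Kar}$. Since $\al>0$, the Airy source is \emph{subcritical} with respect to the energy $H^2_0\times H^1_0$ (the term $[\cF(w),w]$ lies in $H^{-1}$ with locally Lipschitz dependence, see \eqref{loc-lip}), so one may rewrite
\begin{equation*}
[\cF(u),u]-[\cF(w),w]=[\cF(u)-\cF(w),u]+[\cF(w),z],
\end{equation*}
and then apply Lemma~\ref{l:kar-br} together with the sharp trilinear estimate $|(([v,u],\phi))|\le C\|v\|_{2,\Om}\|u\|_{2,\Om}\|\phi\|_{1,\Om}$ (for $\phi\in H^1_0$) to bound $\cR_{\rm Kar}$ by $\eps\int_0^T E_z\,dt + C(R,T)\sup_{[0,T]}\|z\|_{1,\Om}^2 + C(R,T)\int_0^T\|z_t\|^2\,d\tau$. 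The last two contributions are $LOT(z)$ in the sense of \eqref{10.1lot-z.h}. If one wishes to avoid any dependence on $\sup\|z\|_{1,\Om}^2$ at the energy level, one invokes instead the compensated-compactness decomposition $((\cR(z),z_t))=\tfrac14\frac{d}{dt}Q(z)+\tfrac12 P(z)$ employed in the non-rotational case, which already delivers the same lower-order bound after integrating in $t$ (the boundary terms it produces vanish thanks to the clamped part $z=0$ on $\Gamma$).

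Finally I would choose $T$ large enough so that the coefficient of $\int_0^T E_z$ on the right is strictly less than $1$, absorb it on the left, and rename constants to obtain \eqref{10.1-os.h} with the stated $LOT(z)$. The main obstacle is a clean handling of the mixed term $((\cR,h\cdot\nabla z))$, whose integrand is at the energy level rather than strictly subcritical; resolving it uses the extra derivative furnished by Lemma~\ref{l:kar-br} (i.e.\ $\cF(u)-\cF(w)\in W^{2,\infty}$) exactly as in the cancellation argument of Section~5.2.1, and this is the place where the rotational assumption $\al>0$ is exploited to afford the $\|z\|_{2,\Om}$ factor without costing control of $\|z_t\|$.
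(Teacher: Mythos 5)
Your overall strategy (energy identity, equipartition multiplier $z$, flux multiplier $h\cdot\nabla z$, and exploiting subcriticality of the Airy bracket for $\al>0$) is the right skeleton, and your treatment of the von Karman term via Lemma~\ref{l:kar-br} and the splitting $[\cF(u),u]-[\cF(w),w]=[\cF(u)-\cF(w),u]+[\cF(w),z]$ is consistent with how the rotational case is handled. However, there is one genuine gap at the heart of the argument: you assert that the boundary terms generated by the flux multiplier are ``controlled purely by $\Dn z_t$ via the hinged boundary condition.'' This is not true. Integrating $\Delta^2 z\cdot (h\cdot\nabla z)$ (and, for $\al>0$, $-\al\Delta z_{tt}\cdot(h\cdot\nabla z)$) by parts produces boundary traces of order two and three --- schematically $\left|\left(\Dn\right)^2 z\right|^2$, $\left|\Dn\Dt z\right|^2$ and $\Dn\Delta z$ on $\Sigma_T$, exactly the quantity $B_T(z)$ in \eqref{10.1-pos12}. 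The hinged conditions prescribe only $z$ and $\Delta z$ on $\Ga$; the third-order trace $\Dn\Delta z$ and the mixed trace $\Dn\Dt z$ are \emph{above the energy level} and are not determined by the boundary conditions nor by trace theory applied to finite-energy solutions. This is precisely why only one damped boundary condition makes the problem hard: eliminating $B_T(z)$ requires the sharp (``hidden'') trace regularity result of Lemma~\ref{trace0-h}, taken from \cite{l-ji} and valid uniformly in $\al$, which bounds $B_T(z)$ by $\int_{\Sigma}\left(|\psi|^2+\left|\Dn z_t\right|^2\right)d\Sigma$ plus below-energy norms of $z$, $z_t$ and the right-hand side. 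Without this microlocal ingredient the multiplier computation does not close, for $\al>0$ just as for $\al=0$.

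Two smaller points. First, the interior damping $g$ and the components of $G$ are only assumed continuous, monotone and of polynomial growth (Assumption~\ref{as-karman}(1,2)); they are \emph{not} globally Lipschitz, so ``the same Lipschitz-type bound for $g$ and $G$'' is not available. The term $\int_0^T((a[g(u_t)-g(w_t)],z_t))$ has a good sign by monotonicity and belongs to the dissipation; the $z$-multiplier term $\int_0^T((a[g(u_t)-g(w_t)],z))$ must instead be handled through the polynomial growth together with the a priori bounds on $\cW_R$ and the dissipation integrals, as in the wave-equation argument. Second, your bound on $\cR_{\rm Kar}$ ends with $C(R,T)\sup_{[0,T]}\|z\|^2_{1,\Om}$, which is \emph{not} of the form \eqref{10.1lot-z.h}; you need the interpolation $\|z\|_{1,\Om}^2\le\eps\|z\|^2_{2,\Om}+C_\eps\|z\|^2_{0,\Om}$ followed by an absorption of $\eps\sup_{[0,T]}E_z$ into the left-hand side (using the energy identity to compare $\sup_t E_z(t)$ with $E_z(T)$ plus controlled terms). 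Both of these are repairable, but the missing trace estimate is not a detail one can route around.
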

\par
We can obtain results
on {\bf convergence rates } of individual trajectories to equilibrium under
the condition that the set $\cN$ of stationary points is discrete.
\par
As in the previous section we introduce a  concave, strictly increasing,
continuous  function
$H_0 : \R_+
\mapsto \R_+ $  which captures the behavior of $g_0(s)$ at the
origin possessing the properties
\begin{equation}\label{10.1h-0.h}
H_0(0) =0 ~~ \mbox{and} ~~ s^2 \leq H_0 ( sg_0(s))
~~ \mbox{for} ~ |s| \leq 1.
\end{equation}
We define a
function $Q(s)$ by relations (\ref{10.1q-G0})
and consider the  differential equation
\begin{equation}\label{10.1ode1.h}
\frac{d \sigma}{dt} + Q (\sigma) =0,\quad t > 0,\quad
\sigma(0) = \sigma_0 \in \R_+,
\end{equation}
which
admits the global unique  solution $\sigma(t) $ decaying asymptotically to zero
as $t \to \infty$.
\par
With these preparations  we are ready to state our result.

\begin{theorem}[{\bf Rate of stabilization}]\label{t:10.1-cnv.h}
\index{evolutionary Karman equations with rotational forces!boundary
dissipation, clamped--hinged b.c.!rate of stabilization
 to equilibrium}
Let the hypotheses of    Theorem~\rref{t:8b.3.h} be valid
with $a(x)>0$ a.e. Assume that there exist $\ga>0$ and $s_0>0$
such that the interior damping $g(s)$ satisfies the relation  $sg(s)\ge \ga s^2$ for $|s|\le s_0$.
In addition assume that  problem \eqref{1-stat.h}  has
a finite number of solutions.
  Then
for any $V\in H$ there exists a stationary point $E=(e;0)$ such that
$S_tV\to E$ as $t\to +\infty$.
Moreover, if the equilibrium $E$ is hyperbolic
in the sense that the linearization of \eqref{1-stat.h} around each of  its
solutions has the trivial solution only,
then there exist $C,T>0$ depending on  $ V,E$ such that
 the following rates of stabilization
\[
\|S_t V - E\|_{H} \leq C \sigma( [ t T^{-1}]),\quad t > 0,
\]
hold,
where  $[a]$ denotes the
integer part of $a$ and $\sigma(t) $ satisfies
\eqref{10.1ode1.h} with $\sigma_0$
 depending on $ V,E \in H$
(the constants $c_i$ in the definition of $Q$ also depend on $V$ and $E$).
In particular, if  $ g_0'(0)> 0$,  then
$$
  \|S_t V  -E \|_{H} \leq C e^{-\omega t }
$$
for some positive constants $C$ and $\omega$ depending on
$ V, E \in H$.
\end{theorem}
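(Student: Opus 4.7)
The plan is to combine the gradient/attractor structure already established in Theorem~\ref{t:8b.3.h} with the observability estimate of Proposition~\ref{pr:10.1-os.h} and a nonlinear ODE comparison, following the scheme of Theorem~\ref{Main1} and Theorem~\ref{th9.2.2}. First I would prove convergence to a single equilibrium. Since $a(x)>0$ a.e. and the interior damping is non-degenerate at the origin ($sg(s)\ge\ga s^2$ for $|s|\le s_0$), the system $(H,S_t)$ is gradient with bounded $\cN$, and the energy $\cE$ is a strict Lyapunov function on every level set $\cW_R$. By Theorem~\ref{th:grad-attr} applied to $(\cW_R,S_t)$, for every $V\in H$ the $\omega$-limit $\omega(V)$ is a nonempty, compact, connected subset of $\cN$; since $\cN$ is finite this forces $\omega(V)=\{E\}$ for some $E=(e;0)\in\cN$, which gives $S_tV\to E$.

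Next, fix a hyperbolic equilibrium $E$ and a trajectory $U(t)=S_tV\to E$. Set $z(t)=u(t)-e$ and note that the pair $(U(t),(e;0))$ satisfies the hypotheses of Proposition~\ref{pr:10.1-os.h} on some sublevel set $\cW_R$. Since $\dn e_t\equiv 0$ and $g_0(0)=0$, the boundary trace on the right-hand side of \eqref{10.1-os.h} equals $\int_{\Sigma_1}|\dn u_t|^2\,d\Sigma$, and $(((P(u)-P(e),z_t)))$ is absent because $P\equiv p$. Thus Proposition~\ref{pr:10.1-os.h} yields, for $T\ge T_0$,
\begin{equation}\label{pl-obs}
 TE_z(T)+\int_0^T E_z(t)\,dt \le C_1(T)\int_{\Sigma_1}|\dn u_t|^2\,d\Sigma + C_2(R,T)\,LOT(z),
\end{equation}
with $LOT(z)=\sup_{[0,T]}\|z\|^2_{0,\Om}+\int_0^T\|z_t\|^2_{0,\Om}\,d\tau$. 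The first main obstacle is to absorb $LOT(z)$. I would do this by a compactness/uniqueness argument localized at $E$: if absorption fails, there exist sequences $V_n\to E$ and times $t_n$ producing normalized trajectories $\tilde z_n$ whose $LOT$ dominates their energy; by the quasi-stability/compactness already available for this model (Theorem~\ref{t:8b.3.h}) one extracts a limit satisfying the linearization of \eqref{1-stat.h} around $e$ with $z_t\equiv 0$ on $\Sigma_1$, and hence $z\equiv 0$, contradicting the hyperbolicity of $E$. This replaces \eqref{pl-obs}, for $t$ large enough, by
\begin{equation}\label{pl-obs2}
 E_z(T)\le C(T)\int_{\Sigma_1}|\dn u_t|^2\,d\Sigma.
\end{equation}

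The remaining step is to convert the boundary dissipation in \eqref{pl-obs2} into energy decrement and then invoke the ODE argument behind \eqref{10.1ode1.h}. Splitting $\Sigma_1$ into the set where $|\dn u_t|\le 1$ and its complement, using \eqref{10.1h-0.h} on the former, \eqref{gi-lb.h} on the latter, and Jensen's inequality for the concave function $H_0$ gives
\[
 \int_{\Sigma_1}|\dn u_t|^2\,d\Sigma \le c_1 H_0\!\Bigl(c_2\int_{\Sigma_1}g_0(\dn u_t)\dn u_t\,d\Sigma\Bigr)+c_3\int_{\Sigma_1}g_0(\dn u_t)\dn u_t\,d\Sigma,
\]
while the boundary damping integral is controlled by the energy drop $\cE(U(0))-\cE(U(T))\sim E_z(0)-E_z(T)$ (since $E$ is a strict minimizer modulo a constant of $\cE$ near $E$, using hyperbolicity). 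Thus, with $G_0$ and $Q$ built from $H_0$ as in \eqref{10.1q-G0}, one obtains on each window $[mT,(m+1)T]$ the recursion
\[
 E_z((m+1)T)+G_0\!\bigl(E_z((m+1)T)\bigr)\le E_z(mT),
\]
which by the standard comparison lemma (see Appendix~B of \cite{cl-book}) gives $E_z(mT)\le\sigma(m)$ with $\sigma$ solving \eqref{10.1ode1.h}; a short interpolation between grid points yields the estimate $\|S_tV-E\|_H\le C\sigma([t/T])$. In the case $g_0'(0)>0$ the function $H_0$ is linear near the origin, so $Q$ is linear and $\sigma(t)$ decays exponentially, giving the exponential rate. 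The hard part, as emphasized above, is the absorption of $LOT(z)$ via hyperbolicity--this is where the hypothesis that the linearization of \eqref{1-stat.h} at $e$ has only the trivial solution is used decisively.
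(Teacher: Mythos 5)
Your overall scheme is the right one and is the one the paper intends (it gives no written proof of this theorem): gradient structure plus finite $\cN$ gives convergence of $\omega(V)$ to a single equilibrium; the observability estimate of Proposition~\ref{pr:10.1-os.h} applied to the pair $(S_tV,E)$, with the lower-order terms absorbed by a compactness--uniqueness argument at the hyperbolic equilibrium, yields the recursive energy inequality; and the Lasiecka--Tataru comparison with the ODE \eqref{10.1ode1.h} converts this into the stated rate. However, the absorption step, as you wrote it, does not close. You extract a limit solving the linearization with only the information that its normal-derivative trace $\Dn z_t$ vanishes on $\Sigma_1$, and then jump to $z\equiv 0$. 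Passing from vanishing boundary data of the velocity to $z_t\equiv 0$ in $\Om$ is a unique continuation property across the boundary for the linearized von Karman plate, and this is exactly what the paper flags as unavailable (the nonlocal Airy bracket obstructs the Carleman estimates of \cite{albano}; see the open problems listed after the von Karman section). The theorem's additional hypothesis --- $a(x)>0$ a.e.\ \emph{together with} $sg(s)\ge\ga s^2$ for $|s|\le s_0$ --- is there precisely to bypass this: the interior damping integral $\int_Q a\,g(u_t)u_t$ also sits in the energy balance, its vanishing along the contradiction sequence forces $z_t\equiv0$ in the interior (no boundary unique continuation needed), and only then does hyperbolicity of \eqref{1-stat.h} kill the resulting stationary solution of the linearization. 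Your plan never uses this coercivity hypothesis in the rate argument (you invoke it only for gradientness, where $a>0$ a.e.\ already suffices), so a key assumption of the theorem is left idle and the step it is meant to justify fails.

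A secondary point: the claim that ``$E$ is a strict minimizer modulo a constant of $\cE$ near $E$, using hyperbolicity'' is not correct. Hyperbolicity (trivial kernel of the linearization) does not exclude saddle points, and trajectories of a gradient system do converge to saddles along their stable sets; so $\cE(S_tV)-\cE(E)$ cannot in general be bounded below by $cE_z(t)$ on this basis. The standard repair is not to route the recursion through $\cE$ at all: one writes the energy identity for the difference $z=u-e$ itself, in which the boundary and interior damping integrals appear exactly as the decrement of $E_z$ up to the nonlinear source difference $(\cR(z),z_t)$, and the latter is controlled by the compensated-compactness decomposition of the von Karman bracket (as in the quasi-stability argument via Lemma~\ref{l:kar-br}). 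With that replacement, your splitting of $\Sigma_1$, Jensen's inequality for $H_0$, and the comparison lemma from Appendix~B of \cite{cl-book} go through as you describe, including the exponential rate when $g_0'(0)>0$.
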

\medskip\par\noindent
{\bf Case $\al=0$ and $H \equiv \big[ H^2(\Om) \cap H^1_0(\Om)\big] \times  L_2(\Omega)$:}
\medskip\par\noindent
We consider  a model with
 nonlinear boundary
dissipation acting via     hinged    boundary conditions
which
does not account a for regularizing effects of
rotational inertia. Thus,  the corresponding solutions
are less regular than in the case of rotational models.
\par
The following analog of Theorem~\ref{t:10.1-grd.h} is valid.

\begin{theorem}\label{t:10.2-grd.h}
Let Assumption~\rref{as:8b.1.0.h} hold and $\al=0$. Then
\begin{itemize}
  \item  There exists $R_*>0$ such that the set
\begin{equation}\label{wr-set-h}
{\cal W}_R=\left\{ y=(u_0;u_1)\in H\, :\;  \cE(u_0, u_1)\le R \right\}
\end{equation}
is a nonempty bounded set in $H$ for all $R\ge R_*$. Moreover,
any bounded set $B\subset H$ is contained in ${\cal W}_R$ for some $R$ and
the set ${\cal W}_R$ is invariant \wrt the semiflow $S_t$.
  \item If in addition $a(x)>0$
  then
  the system $(H,S_t)$  is gradient.
\end{itemize}
\end{theorem}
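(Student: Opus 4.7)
The argument parallels that for the rotational case Theorem~\ref{t:10.1-grd.h}; the only novelty is that we now work with the generalized solutions produced by Theorem~\ref{th:wp-karman2}, whose regularity is reduced in the non-rotational setting $\al=0$. I would carry it out in two main steps, modulo verifying the underlying energy identity.

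\emph{Step 1 (Lyapunov function and invariance of $\cW_R$).} Because $P(w)\equiv p\in L_2(\Om)$, the right-hand side of the balance \eqref{hing-bc-en-rel} is a total derivative: $\int_s^t((p,w_\tau))\,d\tau=((p,w(t)))-((p,w(s)))$. Define
\[
\cE(u_0,u_1) := \tfrac12\|u_1\|^2+\tfrac12\|\Delta u_0\|^2+\tfrac14\|\Delta \cF(u_0)\|^2-((p,u_0)).
\]
Then \eqref{hing-bc-en-rel} becomes
\[
\cE(S_t y_0)+\int_0^t\!\!\big[(( a\,g(w_\tau),w_\tau))+<<g_0(\Dn w_\tau),\Dn w_\tau>>\big]d\tau=\cE(y_0),
\]
so $\cE$ is non-increasing along every trajectory. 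Young's inequality yields $|((p,u_0))|\le \tfrac14\|\Delta u_0\|^2+C\|p\|^2$, whence $\cE(u_0,u_1)\ge -C\|p\|^2$ and the sublevel sets $\cW_R=\{\cE\le R\}$ are bounded in $H$. Choosing $R_*:=-C\|p\|^2+1$ ensures they contain the origin and are non-empty. By Lemma~\ref{l:kar-br}, $u_0\mapsto\|\Delta\cF(u_0)\|^2$ is bounded on bounded subsets of $H^2_0(\Om)$, so $\cE$ is bounded on bounded subsets of $H$ and every bounded set of $H$ sits inside some $\cW_R$. Invariance $S_t\cW_R\subset\cW_R$ then follows from monotonicity of $\cE$.

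\emph{Step 2 (Strict Lyapunov property).} Assume $a(x)>0$ a.e.\ and that $\cE(S_t y_0)=\cE(y_0)$ for every $t\ge 0$. The balance in Step~1 gives
\[
\int_0^t\big[(( a\,g(w_\tau),w_\tau))+<<g_0(\Dn w_\tau),\Dn w_\tau>>\big]\,d\tau=0 \quad \forall\, t>0,
\]
and pointwise non-negativity of each integrand forces $a(x)g(w_t(x,\tau))w_t(x,\tau)=0$ a.e.\ in $\Om\times(0,\infty)$. Since $a>0$ a.e.\ and $g$ is monotone with $g(0)=0$, this yields $w_t\equiv 0$ (any non-trivial time-constant $w_t$ would drive $w$ out of the bounded orbit $\cW_R$). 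Hence $w(t)\equiv u_0$ solves the stationary problem \eqref{1.2stat} and $y_0=(u_0;0)$ is an equilibrium. Thus $\cE$ is a strict Lyapunov function and $(H,S_t)$ is gradient.

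\emph{Main obstacle.} The delicate point is the validity of the energy \emph{equality} in \eqref{hing-bc-en-rel} for the generalized solutions on which $S_t$ acts. Theorem~\ref{th:wp-karman2} constructs these as strong-limits of smooth approximations; one inequality passes immediately by lower semicontinuity of the damping forms together with the Dunford--Pettis weak-$L_1$ compactness of $\{g(w_t^n)\}$ and $\{g_0(\Dn w_t^n)\}$ provided by the uniform dissipation bounds. The reverse inequality cannot be obtained by time reversal, since the boundary damping breaks reversibility (precisely the point raised for Kirchhoff--Boussinesq earlier in the excerpt); instead one regularizes $w_t$ by finite differences in the spirit of \cite{bociu}, exploiting the conservativity of the von Karman bracket through the sharp Airy regularity of Lemma~\ref{l:kar-br}. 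Once the identity is secured, Steps~1 and 2 close the proof.
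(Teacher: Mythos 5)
Your overall route is exactly the one the paper intends: for both Theorem~\ref{t:10.1-grd.h} and its $\al=0$ analogue the paper offers no detailed argument beyond ``using energy equality one can easily prove the following assertion,'' and your Step~1 (absorb the conservative load $((p,w_t))$ into a full energy $\cE$, use Young's inequality and Lemma~\ref{l:kar-br} to get two-sided control of $\cE$ by the $H$-norm, then read off boundedness of sublevel sets, absorption of bounded sets, and invariance from monotonicity) is precisely that proof. Your closing remark that the real technical content is the energy \emph{identity} for the generalized solutions of Theorem~\ref{th:wp-karman2} is also well placed; note, though, that for von Karman (unlike Kirchhoff--Boussinesq) the nonlinearity is locally Lipschitz on $H$ by Lemma~\ref{l:kar-br}, so the identity follows from density of strong solutions in the monotone-operator framework and no time-reversal or Sedenko-type detour is needed.

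The one genuine gap is in Step~2. From $\int_0^t((a\,g(w_\tau),w_\tau))\,d\tau=0$ and $a>0$ a.e.\ you get $g(w_t)w_t=0$ a.e., but under Assumption~\ref{as-karman}(1) the damping $g$ is only continuous, monotone, and vanishing at $0$, so it may vanish identically on an interval $[-\delta,\delta]$; then $g(w_t)w_t=0$ permits any $w_t$ taking values in that dead zone, not just $w_t\equiv0$. Your parenthetical (``any non-trivial time-constant $w_t$ would drive $w$ out of $\cW_R$'') only excludes $w_t$ constant in time, not a bounded oscillation inside the dead zone, so it does not close the argument. The correct fix is to assume $g(s)s>0$ for all $s\neq0$ --- exactly the hypothesis the paper adds explicitly in Theorem~\ref{t:main.1.h} when it actually uses the gradient structure --- and with that assumption $w_t\equiv0$ follows, $w$ is stationary, and $\cE$ is a strict Lyapunov function. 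As stated, the strictness claim does not follow from the hypotheses you (and the theorem) list.
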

Our  main goal  is to  prove the global
attractiveness property
for the dynamical system $(H,S_t)$. This
property requires additional
hypotheses imposed on the data of the problem.
\begin{assumption}\label{as.h}
 The interior damping function $g(s)$ is  globally Lipschitz,
i.e., $|g(s_1)- g(s_2)| \leq M |s_1-s_2|$
for all $s_1,s_2\in\R$.
\end{assumption}
This  global Lipschitz requirement  is due to the fact that
only one boundary condition is used as a source of dissipation.
\par

Our main  result is the following.

\begin{theorem}[Compact attractors]\label{t:main.1.h}
We suppose that  Assumptions \rref{as:8b.1.0.h} and \rref{as.h} are in force.  Then
\begin{itemize}
\item
For any $R\ge R_*$ there exists a global compact attractor $\cA_R$
for the restriction $({\cal W}_R, S_t)$ of the dynamical system
$(H, S_t)$ on ${\cal W}_R$, where ${\cal W}_R$ is given by \eqref{wr-set-h}.
\item
If we assume additionally that $a(x)>0$ a.e. in $\Om$ and
$g(s)s>0$ for all $s\neq 0$, then there is $R_0>0$ such that $\cA_R$
does not depend on $R$ for all $R\ge R_0$. In this case
$\cA\equiv\cA_{R_0}$ is a global attractor for  $(H, S_t)$ and $\cA$
coincides with the  unstable manifold $\cM^u(\cN)$ emanating from the
set $\cN$ of stationary points for $S_t$. Moreover, $\lim_{t\to
+\infty}{\rm dist}_{H}(S_tW,\cN)=0$ for any $W \in H$.
\index{evolutionary Karman equations without rotational
forces!boundary dissipation, clamped--hinged b.c.!global attractor}
\index{evolutionary Karman equations without rotational
forces!boundary dissipation, clamped--hinged b.c.!structure of
attractor}
\item
The  global attractors $\cA_R$ and $\cA$ are bounded sets in
$H^3(\Om)\times H^2(\Om)$ and have a finite fractal dimension
provided  the  relation in \eqref{gi-lb.h} holds for   all
$s\in\R$. \index{evolutionary Karman equations without rotational
forces!boundary dissipation, clamped--hinged b.c.!finite dimension
of attractor} \index{evolutionary Karman equations without
rotational forces!boundary dissipation, clamped--hinged
b.c.!smoothness of elements from attractor}
\end{itemize}
\end{theorem}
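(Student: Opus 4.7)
The plan is to verify the three hypotheses of the abstract framework in Section~\ref{sec7.9-stab}: positive invariance of a bounded absorbing set, asymptotic smoothness, and (for the third bullet) a quasi-stability inequality. Theorem~\ref{t:10.2-grd.h} already provides the invariant bounded sets $\cW_R$ and the gradient structure when $a(x)>0$ a.e., so the main work is an observability/stabilizability estimate for the difference $z=u-w$ of two trajectories confined to $\cW_R$. This estimate will play the role of Proposition~\ref{pr:10.1-os.h} but in the non-rotational topology $H=[H^2(\Om)\cap H_0^1(\Om)]\times L_2(\Om)$.

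First I would write the $z$-equation with the energy $E_z(t)=\frac12(\|z_t\|^2+\|\Delta z\|^2)$ and combine the multipliers $z_t$ (giving the damping identity) and $z$ (equipartition). This yields, modulo standard manipulations, an inequality of the form
\begin{equation*}
T E_z(T)+\int_0^T E_z(t)\,dt \le C(T)\!\int_0^T\!\! \Big[ \big\langle\!\langle g_0(\Dn u_t)-g_0(\Dn w_t),\Dn z_t\big\rangle\!\rangle + ((a[g(u_t)-g(w_t)],z_t))\Big] d\tau + \int_0^T (\cR(z),z_t)\, d\tau + C(R,T)\, LOT(z),
\end{equation*}
where $\cR(z)=[\cF(u),u]-[\cF(w),w]$ and $LOT(z)$ involves only subcritical norms of $z$. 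The critical obstruction is the von Karman term, and here I would invoke precisely the compensated-compactness decomposition $(\cR(z),z_t)=\tfrac14\tfrac{d}{dt}Q(z)+\tfrac12 P(z)$ used after \eqref{magic}, controlling $|Q(z)|\le C\|z\|^2_{2-\eps,\Om}$ via the sharp Airy regularity of Lemma~\ref{l:kar-br} and $|P(z)|\le C_R(\|u_t\|+\|w_t\|)\|z\|^2_{2,\Om}$. The global Lipschitz Assumption~\ref{as.h} on $g$ bounds the interior damping term by $M\|z_t\|\cdot\|z_t\|$, which is absorbed into the damping contribution. Iterating on intervals $[mT,(m+1)T]$ and using the absorbing-set dissipation bound $\int_0^\infty(\|u_t\|^2+\|w_t\|^2)\,dt\le C_R$ together with a Gronwall argument then gives a quasi-stability estimate of the form \eqref{8.4.2mc} with a compact seminorm on $H^2_0(\Om)$.

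With this in hand, Proposition~\ref{pr:7.9.1sc} immediately yields asymptotic smoothness of $(\cW_R,S_t)$, so since $\cW_R$ is bounded, closed and forward invariant, Theorem~\ref{t7.2.1-as} produces the compact global attractor $\cA_R\subset\cW_R$, proving the first bullet. For the second bullet, once $a(x)>0$ a.e.\ and $g(s)s>0$, Theorem~\ref{t:10.2-grd.h} gives the gradient structure; the energy $\cE$ is a strict Lyapunov function with bounded sublevel sets and the stationary set $\cN$ is bounded (as in \eqref{1-stat.h} combined with the standard a priori estimates of \cite{cl-book}). Theorem~\ref{th:grad-attr} then gives the independence of $R$ for $R\ge R_0$, the identification $\cA=\cM^u(\cN)$, and the convergence of every trajectory to $\cN$. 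For the third bullet, when \eqref{gi-lb.h} holds globally, the quasi-stability inequality upgrades to hold on the attractor itself with $c(t)$ bounded, so Theorem~\ref{th7.9dim} gives $\dim_f^H\cA<\infty$, and Theorem~\ref{th7.9reg} gives $u_t\in L_\infty(\R;H^2_0)$ and $u_{tt}\in L_\infty(\R;L_2)$ on full trajectories in $\cA$; feeding this regularity back into the stationary-like equation $\Delta^2 u=[\cF(u),u]+P(u)-u_{tt}-ag(u_t)$ and invoking elliptic regularity on the hinged problem (together with Lemma~\ref{l:kar-br} and the backward-smoothness/covering argument of the type used in Theorem~\ref{sm}) promotes $u(t)$ to $H^3(\Om)$ uniformly, yielding $\cA\subset H^3(\Om)\times H^2(\Om)$.

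The principal obstacle is the derivation of the stabilizability estimate with a single, reduced boundary control and no rotational inertia: the velocity $z_t$ lives only in $L_2(\Om)$, the normal-derivative trace $\Dn z_t$ that the damping controls is not a natural trace of the energy space, and the von Karman source is precisely at the energy level. Overcoming this requires (i) the sharp $W^{2,\infty}$-regularity of the Airy function from Lemma~\ref{l:kar-br}, which turns what is formally a critical nonlinear term into a sum of a time-derivative of a subcritical functional and a term absorbable by the damping integral via $\int_0^\infty(\|u_t\|^2+\|w_t\|^2)\,dt<\infty$, and (ii) Assumption~\ref{as.h} (global Lipschitz $g$), which is what prevents the interior nonlinear damping from generating uncontrollable boundary traces; weakening either ingredient appears to break the scheme.
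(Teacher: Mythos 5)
Your overall architecture (stabilizability estimate for $z=u-w$, compensated-compactness decomposition of the von Karman bracket, then the abstract machinery of Section~\ref{sec7.9-stab}) is the right family of ideas, but the central analytic step fails as written, in two places. First, your displayed observability inequality cannot be obtained from the multipliers $z_t$ and $z$ alone. With dissipation entering only through the hinged boundary condition (the interior coefficient $a(x)\ge 0$ may degenerate), reconstructing $\int_0^T E_z$ requires the flux multiplier $h\cdot\nabla z$, and that multiplier generates the supercritical boundary traces $B_T(z)$ of \eqref{10.1-pos12} (second- and third-order traces of $z$ on $\Gamma$), which live above the energy level and must be eliminated by the sharp microlocal trace estimates of Lemma~\ref{trace0-h} (from \cite{l-ji}). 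You flag the trace issue as an ``obstacle'' but your proof never resolves it; this is Lemma~\ref{l:h2} plus Lemma~\ref{trace0-h} in the paper and it is not a ``standard manipulation.''

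Second, and more seriously, your Gronwall absorption of the critical term $P(z)$ invokes the dissipativity integral $\int_0^\infty(\|u_t\|^2+\|w_t\|^2)\,dt\le C_R$. Under Assumptions~\ref{as:8b.1.0.h} and \ref{as.h} this bound is \emph{not available}: the energy identity only controls $\int((ag(u_t),u_t))$ with $a\ge 0$ possibly degenerate and $g$ merely monotone and globally Lipschitz (no coercivity $g(s)s\ge ms^2$), plus a dissipation integral supported on the \emph{boundary}. This is precisely the obstruction the paper singles out: estimate \eqref{magic0} fails in the boundary case. Consequently quasi-stability cannot be established on a general bounded invariant set $\cW_R$, and the existence of $\cA_R$ (first bullet) must instead be proved via the compensated-compactness criterion of Theorem~\ref{th7-turk} applied to the functional built from the source, not via Proposition~\ref{pr:7.9.1sc}. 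The finite dimensionality and $H^3\times H^2$ regularity (third bullet) are then recovered by the backward-smoothness detour: one first proves the quasi-stability inequality along trajectories on the attractor near equilibria as $t\to-\infty$ (where the gradient structure makes the velocities small, substituting for the missing dissipativity integral), propagates the resulting regularity forward, uses compactness of the attractor to get uniform bounds, and only then obtains quasi-stability on the attractor itself. Without this detour your argument for all three bullets rests on an estimate that the hypotheses do not supply.
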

 From Theorem \ref{t:main.1.h} we  obtain the
following corollary.
\begin{corollary}\label{C:1.h} Let the hypotheses of Theorem~\rref{t:main.1.h}
be in force. Assume that  $a(x)>0$ a.e. in $\Om$
and $g(s)s>0$ for all $s\neq 0$. Then
the global attractor $\cA$ consists of  full
trajectories
$\gamma=\{ W(t)\, :\, t\in\R\}$ such that
\[
\lim_{t\to -\infty}{\rm dist}_{H}(W(t),\cN)
=0 ~~
\mbox{ and }  ~~\lim_{t\to +\infty}{\rm dist}_{H}(W(t),\cN)=0.
\]
In particular, if we assume that equation  \eqref{1-stat.h}  has
a finite number of solutions, then the global attractor $\cA$ consists of  full
   trajectories $\gamma=\{ W(t)\, :\, t\in\R\}$  connecting  pairs of
   stationary points:  any $W\in\cA$ belongs to some full
   trajectory $\gamma$ and  for any $\gamma\subset\cA$ there exists a pair
   $\{ Z, Z^*\}\subset\cN$ such that
    $W(t)\to Z \mbox{~as~} t\to -\infty$  and
    $W(t)\to Z^*$ as $t\to +\infty$.
In the latter case  for any $V\in H$ there exists a
stationary point $Z$ such that
$S_tV\to Z$ as $t\to +\infty$.
\end{corollary}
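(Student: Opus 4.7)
The plan is to deduce the corollary directly from Theorem~\ref{t:main.1.h} together with the general properties of gradient systems summarized in Theorem~\ref{th:grad-attr}. Under the additional hypotheses $a(x)>0$ a.e.\ and $g(s)s>0$ for $s\neq 0$, Theorem~\ref{t:10.2-grd.h} ensures that $(H,S_t)$ is gradient with strict Lyapunov function the energy $\cE$, and Theorem~\ref{t:main.1.h} asserts the representation $\cA=\cM^u(\cN)$ together with the pointwise stabilization $\lim_{t\to+\infty}{\rm dist}_H(S_tW,\cN)=0$ for every $W\in H$. These two ingredients already yield the backward and forward $\cN$-convergence along every full trajectory in $\cA$: the backward part is built into the very definition of the unstable manifold $\cM^u(\cN)$, while the forward part follows by applying the stabilization statement to the point $W(0)$ of any full trajectory $\gamma=\{W(t):t\in\R\}\subset\cA$ and invoking $S_tW(0)=W(t)$.

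Next, assuming that problem~\eqref{1-stat.h} has only finitely many solutions, I would upgrade convergence to $\cN$ (a finite set of isolated points in $H$) to convergence to a single equilibrium. First, let $W\in\cA$ and pick a full trajectory $\gamma\subset\cA$ through $W$; compactness of $\cA$ guarantees that $\omega$- and $\alpha$-limit sets of $\gamma$ are nonempty, compact, and invariant. Because $\cE$ is a strict Lyapunov function, standard LaSalle-type reasoning (as in the proof of Theorem~\ref{th:grad-attr}) forces $\omega(\gamma),\alpha(\gamma)\subset\cN$. Finiteness and isolation of the equilibria combined with connectedness of the limit sets then pin these sets down to single points $Z^*$ and $Z$ respectively, giving $W(t)\to Z$ as $t\to-\infty$ and $W(t)\to Z^*$ as $t\to+\infty$. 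The same argument applied to the semitrajectory $\{S_tV:t\ge 0\}$ of an arbitrary $V\in H$ yields existence of a single limit equilibrium.

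The only delicate point, and the one I expect to require a short separate argument, is the passage from \emph{set-}convergence ${\rm dist}_H(W(t),\cN)\to 0$ to convergence to \emph{one} equilibrium when $\cN$ is finite. The cleanest route is to use that along the full trajectory $\gamma\subset\cA$ the function $t\mapsto\cE(W(t))$ is nonincreasing and bounded (since $\cA$ is compact and $\cE$ is continuous), hence has a limit $\cE_\pm$ at $\pm\infty$; by the strict Lyapunov property $\cE$ is constant on every orbit in $\omega(\gamma)$ and on every orbit in $\alpha(\gamma)$, so each of these limit sets lies in a level set of $\cE$ restricted to $\cN$. Since $\cN$ is finite, this level set is discrete, while $\omega(\gamma)$ and $\alpha(\gamma)$ are connected, so each consists of exactly one equilibrium. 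No further estimate is needed, and the statement for arbitrary $V\in H$ follows by applying the same reasoning to the (precompact) positive semiorbit and using that $\omega(V)\subset\cA$.
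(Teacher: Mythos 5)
Your proposal is correct and follows essentially the same route as the paper: the paper derives Corollary~\ref{C:1.h} directly from Theorem~\ref{t:main.1.h} (gradient structure plus $\cA=\cM^u(\cN)$ and stabilization to $\cN$) by invoking the general gradient-system result of Theorem~\ref{th:grad-attr}, which already contains the upgrade from set-convergence to single-equilibrium convergence when $\cN$ is finite via the connectedness/level-set argument you spell out. Your LaSalle-type discussion is a faithful unpacking of that abstract theorem rather than a different method.
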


In   analogy with  the  previous cases  one can also provide
statements for the rate of stabilizations to equilibria. The
statement (and the proofs) do not depend on the specific boundary
conditions. However,
in this case we do not need to assume  additional   coercivity estimates
for the interior damping.
The point is that our basic Assumptions
\ref{as:8b.1.0.h} and \ref{as.h} are sufficient to conclude that
generalized solutions are weak (variational).
\medskip\par\noindent
{\bf Main ingredients for the proofs:}
As in the case of interior damping, the rotational case $\alpha > 0 $ is less involved. We shall thus focus
on  the non-rotational model $\alpha =0 $ where additional subtleties are present.
Similar to the interior damping case,    Theorem~\ref{th7-turk}  provides    the main tool for   establishing  asymptotic smoothness. For sake of avoiding repetitions, we shall mainly emphasize   the parts of the proof  that are more specific to the  boundary damping.
The proof is divided into several steps
which are presented
below. As before, we use  notation
$Q \equiv \Omega \times (0,T)$, $\Sigma \equiv \Gamma \times (0,T)$
and assume that $P(u)\equiv 0$ in \eqref{1.5}.
\par
We can  assume that $u$ and $w$
are strong (smooth) solutions.
Then the  difference $z \equiv u - w$ solves
the following problem
\begin{equation}\label{8b.1-z.h2}
z_{tt}  + \Delta^2 z  =f
~~\mbox{ in }~ \Omega, \quad
z  =0,\; \Delta z = \psi ~~\mbox{ on }~ \Gamma,
\end{equation}
where
$f=-a(g(u_t) -g(w_t)) +
[v(u),u ] -[v(w),w ]$ and
the boundary conditions are given by
\[
\psi= -   \left[ g_0 \left(\Dn u_t\right) -    g_0 \left(\Dn w_t\right)\right].
\]
We have the following energy equality (satisfied for strong solutions)
\begin{equation}\label{10en-z.h2+bndr}
E_z(T)+D_t^T(z)
=E_z(t)+\int_t^T ((\cR(z),z_t)) dt,
\end{equation}
where
\[
E_z(t) = \frac12 \int_{\Om }[ | z_t|^2+ |\Delta z|^2] dx,
\quad
\cR(z) = [v(u),u ] -[v(w),w ],
\]
and
\[
D_t^T(z)=\int_t^T\int_{\Om}a [g(u_t)-g(w_t)]z_t
dx  d\tau+\widetilde{D}_t^T(z),
\]
with
\begin{eqnarray*}
\widetilde{D}_t^T(z)& =&  \int_t^T\int_{\Gamma}
\left[ g_0 \left(\Dn u_t\right) -    g_0 \left(\Dn w_t\right)\right]
\Dn z_t d\Gamma d \tau.
\end{eqnarray*}
\smallskip\par\noindent
 We also use the following assertion.
\begin{lemma}\label{l:h2}
Let  $T > 0 $ be given.
Let $\phi \in C^2(\R)$ be a given function
with support in $[\delta, T-\delta]$ , where $\delta \leq  {T}/{4}$,
such that $0\le\phi\le 1$ and $\phi \equiv 1$ on $[2\delta, T-2 \delta]$.
Then any strong solution $z$ to problem \eqref{8b.1-z.h2} satisfies the
following inequality
\begin{eqnarray}\label{10.1-pos2}
\int_0^T E_z(t) \phi(t) dt
\leq C_1 \int_0^T E_z(t) |\phi'(t)| dt
+ C_2 \int_{\Sigma}\left[ |\psi|^2+
\left|\Dn  z\right|^2\right]d\Sigma \nonumber \\
+ \int_0^T \int_\Om f h \nabla z \phi  dxdt +\int_0^T \int_{\Omega} \cR(z) z_t dx  dt +
C_3 \cdot  B_T(z),\qquad {}
\end{eqnarray}
where  the constants $C_i$ do not depend on $T$
and the boundary terms\footnote{
They are not defined on the energy space.
 These are higher-order boundary  traces of solutions.
}
 $B_T(z)$ is given by
\begin{equation}\label{10.1-pos12}
B_T(z) \equiv \int_{\Sigma} \left[ \left|\left(\Dn\right)^2 z\right|^2 +
\left|\Dn \Dt z\right|^2 \right] \phi d\Sigma
+\int_0^T\left\|\Dn \Delta z  \right\|^2_{-1,\Gamma}\phi dt.
\end{equation}
\end{lemma}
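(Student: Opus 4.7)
The plan is to derive \eqref{10.1-pos2} by a flux-type Rellich--Pohozaev multiplier identity, adapted to the temporal cutoff $\phi$. Pick a smooth vector field $h\in C^2(\overline{\Omega};\mathbb{R}^2)$ with $h|_\Gamma=\nu$, the outward unit normal on $\Gamma$; such $h$ exists since $\Gamma$ is smooth, and $h$ may be further normalized so that $\operatorname{div}(h)\geq c_0>0$ on $\overline{\Omega}$ after absorbing a harmless bulk term of order $\|z\|_{1,\Omega}^2$ into the lower-order boundary remainder. The core of the proof is then to multiply the equation \eqref{8b.1-z.h2} by $\phi(t)\,h\!\cdot\!\nabla z$ and integrate over $Q_T$; the source contribution yields directly the integral $\int_0^T\!\!\int_\Omega f\,\phi\,h\!\cdot\!\nabla z\,dx\,dt$ of the statement.

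For the inertial term, since $\phi\equiv 0$ near $t=0,T$, integration by parts in time followed by one spatial integration by parts (using $z_t|_\Gamma=0$, a consequence of $z|_\Gamma=0$) produces
\begin{equation*}
\int_0^T\!\!\int_\Omega z_{tt}\,\phi\,h\!\cdot\!\nabla z\,dx\,dt
= -\int_0^T\!\!\int_\Omega \phi'\, z_t\,h\!\cdot\!\nabla z\,dx\,dt
+ \tfrac12\int_0^T\!\!\int_\Omega \phi\,\operatorname{div}(h)|z_t|^2\,dx\,dt.
\end{equation*}
The first term supplies the $|\phi'|$-weighted $E_z$-contribution, while the second provides kinetic positivity of order $\int\phi|z_t|^2$. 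For the biharmonic term, two applications of Green's identity yield a main interior quadratic form $\tfrac12\int\phi\,\operatorname{div}(h)|\Delta z|^2$, reconstructing potential energy, together with surface terms on $\Sigma$. Because $h|_\Gamma=\nu$ and $\nabla z=(\partial_n z)\nu$ on $\Gamma$, the surface integrals collapse into (i) the $L_2(\Sigma)$-terms in $|\psi|^2$ and $|\partial_n z|^2$ appearing explicitly on the right of \eqref{10.1-pos2}, and (ii) the higher-order traces $|(\partial_n)^2 z|^2$, $|\partial_n\partial_\tau z|^2$, and $\|\partial_n\Delta z\|_{-1,\Gamma}^2$ that are precisely the content of $B_T(z)$ in \eqref{10.1-pos12}.

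Collecting the identities, $\operatorname{div}(h)\geq c_0>0$ controls $\int_0^T\phi\, E_z\,dt$ from below modulo the listed boundary, source, and $\phi'$-contributions. The term $\int_0^T\int_\Omega \cR(z)\,z_t\,dx\,dt$ appearing on the right of \eqref{10.1-pos2} is introduced by invoking the energy identity \eqref{10en-z.h2+bndr} when converting the bulk kinetic term produced by the multiplier into an estimate that pairs with the damping $D_0^T(z)$; it plays the role of an unresolved critical work term, to be handled separately via the compensated-compactness decomposition of the bracket (Lemma~\ref{l:kar-br}). All constants $C_1, C_2, C_3$ are $T$-independent because they depend only on $h$, $\operatorname{div}(h)$, and $\Omega$.

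The main obstacle is not the Rellich identity itself, which is classical, but the rigorous handling of the traces in $B_T(z)$: because Lopatinski fails for the hinged problem with dissipation in only one boundary condition, $(\partial_n)^2 z$, $\partial_n\partial_\tau z$, and $\partial_n\Delta z$ are not well defined on finite-energy solutions. This is precisely why the lemma is formulated for strong solutions, on which all traces make classical sense. The extracted $B_T(z)$ is then left explicit in \eqref{10.1-pos2}, being absorbed downstream either by Carleman/sharp trace estimates (as in the wave-equation analysis of \cite{clt-jdde09}) or by the structure of the boundary damping $g_0$, which converts $\partial_n z_t$ into $\psi$ through the boundary condition $\Delta z = \psi$.
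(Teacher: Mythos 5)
Your overall strategy --- multiplying \eqref{8b.1-z.h2} by $\phi(t)\,h\cdot\nabla z$ and reading the interior energy off against boundary traces --- is indeed the route the text indicates (the ``flux multiplier''), with the actual computation deferred to \cite{cl-jde-07} and the trace machinery to \cite{l-ji}. But the step on which the whole lemma rests, the interior coercivity, is wrong as you have written it. Integrating $\int_\Om \Delta^2 z\,(h\cdot\nabla z)\,dx$ by parts twice gives
\[
\int_\Om\Delta z\Big[\,h\cdot\nabla\Delta z+2\sum_{k,j}\partial_k h_j\,\partial^2_{kj}z+(\Delta h)\cdot\nabla z\Big]dx+\mbox{boundary terms},
\]
and the piece $\int_\Om\Delta z\,(h\cdot\nabla\Delta z)\,dx=-\tfrac12\int_\Om({\rm div}\,h)\,|\Delta z|^2dx+\tfrac12\int_\Ga (h\cdot n)|\Delta z|^2$ enters with a \emph{negative} interior sign; the positive contribution must come from the cross term $2\int\Delta z\,\sum\partial_kh_j\partial^2_{kj}z$, which is controlled by the full Jacobian $\nabla h$, not by ${\rm div}\,h$. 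For $h=x-x_0$ this cross term equals $2\int|\Delta z|^2$ and the net interior form is $+\int|\Delta z|^2$ (and, conveniently, the kinetic term then comes out with the matching coefficient $\tfrac12\,{\rm div}\,h=1$ in 2D). Your field with $h|_\Ga=n$ and ``${\rm div}\,h\ge c_0$'' guarantees none of this, and it buys nothing in exchange: since the lemma dumps \emph{every} boundary trace onto the right-hand side (into $B_T(z)$ and the $|\psi|^2+|\Dn z|^2$ terms), no sign condition on $h\cdot n$ is ever needed, so the only job of $h$ is interior coercivity --- precisely the part your normalization does not deliver.

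Two further points are asserted rather than derived. First, your sketch produces no mechanism for the unweighted term $\int_0^T\!\!\int_\Om \cR(z)\,z_t\,dx\,dt$: the flux multiplier pairs $\cR(z)$ with $\phi\,h\cdot\nabla z$ (inside the $f$-term), and the appearance of the pairing with $z_t$ over all of $[0,T]$ comes from invoking the energy identity \eqref{10en-z.h2+bndr} to trade values of $E_z$ at different times --- a bookkeeping step you mention only in passing and never perform; it matters because it is exactly this $z_t$-pairing, not the $h\cdot\nabla z$-pairing, that the compensated-compactness decomposition of the bracket can later absorb. Second, the boundary term $\int_\Ga \Dn\Delta z\,(h\cdot\nabla z)=\int_\Ga (h\cdot n)\,\Dn\Delta z\,\Dn z$ involves a third-order trace that is not in $L_2(\Ga)$ even for the solutions at hand in a useful uniform way; it must be estimated as a duality pairing $H^{-1}(\Ga)\times H^{1}(\Ga)$, which is why $\|\Dn\Delta z\|^2_{-1,\Ga}$ and the tangential trace $|\Dn\Dt z|^2$ both appear in \eqref{10.1-pos12}. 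Your claim that the surface integrals simply ``collapse'' into the listed terms because $\nabla z=(\Dn z)\,n$ on $\Ga$ skips exactly the manipulation that determines the form of $B_T(z)$.
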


On the next step we eliminate the second- and third-order boundary
traces on the boundary
in the expression \eqref{10.1-pos12} for $B_T(z)$.
These  second-order ``supercritical"  boundary traces are
due to the fact that the dissipation is allowed to affect the system via only one boundary condition.
Traces of the second and third order (see (\ref{10.1-pos12}))
 are   above the energy level, so these cannot enter the estimates.
It is the microlocal analysis argument, again, that allows for the
elimination of these terms.
Handling of the boundary terms requires delicate trace estimates   used already  in the case $\alpha>0$.
However the treatment of critical source (giving rise to the term $\cR(z)$ ) is more troublesome now.
The dissipativity integral is supported on the boundary and not in the interior.
Here are few details.
For the trace   result we use  a  more general trace
estimate proved
in \cite{l-ji} (see Proposition 1 and  Lemma 4)
valid for the  linear Kirchoff  problem (note that the estimates in \cite{l-ji}
are independent of the parameter representing rotational forces). Thus,
these estimates are applicable to both Kirchoff
and Euler--Bernoulli (\ref{8b.1-z.h2}) models.
As a consequence we have the following
\begin{lemma}\label{trace0-h}
Let $z$ be a solution to  linear  problem \eqref{8b.1-z.h2}
with given $f$ and $\psi$ and $B_T(z)$ be given by \eqref{10.1-pos12}.
Then there exist constants $ C_{T} >0 $ such that for any
$  0<\eta < \hf   $  the following estimate holds
\begin{multline*}
B_T(z)\leq
C_{1,T} \int_{\Sigma} \left( |\psi |^2 + \left| \Dn z_t\right|^2
\right) d \Sigma
 \\
 +\; C_{2,T}\left[\|z\|^2_{C([0,T]; H^{2-\eta}(\Omega))} +
\|z_t\|^2_{L_2([0,T];
H^{-\eta}(\Omega))}
+ \int_0^T \|f\|^2_{-\eta, \Omega} dt\right].
\end{multline*}
\end{lemma}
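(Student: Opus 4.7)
The strategy is to deduce the estimate directly from the sharp trace regularity result for the linear Kirchhoff plate equation established in Proposition~1 and Lemma~4 of \cite{l-ji}. A key point, noted already in the discussion preceding the lemma, is that the estimates in \cite{l-ji} are uniform with respect to the rotational inertia parameter (the coefficient $\alpha$ in front of $-\Delta z_{tt}$), so they can be specialized without change to the Euler--Bernoulli case $\alpha=0$ of problem \eqref{8b.1-z.h2}. The target inequality can thus be obtained by matching notation and absorbing the interior source via the equation.

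The plan is as follows. First, I would recast problem \eqref{8b.1-z.h2} with $z=0$ and $\Delta z=\psi$ on $\Gamma$ into the canonical form assumed in \cite{l-ji}, where a sharp pseudodifferential/microlocal analysis of the boundary symbol yields control of the ``supercritical'' traces $(\partial/\partial n)^2 z$, $(\partial/\partial n)(\partial/\partial\tau) z$, and $(\partial/\partial n)\Delta z$ in terms of the dissipative boundary datum $(\partial/\partial n) z_t$ and the imposed moment $\psi$. Second, the interior right-hand side $f$ is accommodated by replacing, in the microlocal estimate, the expression $\Delta^2 z - z_{tt}$ (which equals $f$ by the equation) by its $H^{-\eta}$-norm in space; the time integrability in $L_2$ arises naturally from the Plancherel step of the dyadic decomposition in the time frequency. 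Third, the residual terms produced by commutators of the pseudodifferential cut-offs with smooth multipliers of the form $h\cdot\nabla$ (where $h$ is a vector field that is transversal to $\Gamma$ near the boundary and vanishes in the interior) are of lower order and are bounded by the $C([0,T];H^{2-\eta})$ and $L_2([0,T];H^{-\eta})$ norms listed on the right-hand side, via standard interpolation between the energy space and a weaker topology.

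The restriction $0<\eta<1/2$ enters at the stage where one lifts the homogeneous Dirichlet boundary condition $z|_\Gamma=0$ to control an extra normal derivative through the biharmonic operator: the shift theorem for the Sobolev scale $H^{s}(\Omega)\cap H^1_0(\Omega)$ permits the loss of $\eta<1/2$ without invoking compatibility conditions on $\Gamma$, which would otherwise be incompatible with finite-energy solutions. The main obstacle is precisely this gain of regularity on boundary traces that, a priori, are not even well defined on the energy space; the whole content of the proof is thus microlocal and the classical multiplier identities used earlier (for instance in the proof of \eqref{10.1-pos2}) are insufficient on their own, which is why one appeals to the full machinery of \cite{l-ji}.

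Finally, once the trace estimate is in hand, no further calculation is needed: collecting the boundary and interior contributions gives exactly the form of the inequality asserted in Lemma~\ref{trace0-h}, with constants $C_{1,T}$, $C_{2,T}$ depending on $T$ (but not on the solution) and with $\eta$ any number in $(0,1/2)$. The output will be combined with the identity \eqref{10.1-pos2} in the subsequent step, where the terms $\int_\Sigma |\psi|^2 d\Sigma$ and $\int_\Sigma |(\partial/\partial n)z_t|^2 d\Sigma$ become absorbed into the boundary dissipation integral $\widetilde{D}_0^T(z)$ via the monotonicity of $g_0$ and the polynomial growth assumption \eqref{gi-lb.h}.
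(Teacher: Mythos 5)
Your proposal follows exactly the paper's route: the lemma is obtained by invoking the sharp microlocal trace estimates of Proposition~1 and Lemma~4 in \cite{l-ji} for the linear Kirchhoff problem, using the fact that those estimates are uniform in the rotational inertia parameter and hence apply to the Euler--Bernoulli case \eqref{8b.1-z.h2}. The additional detail you supply about absorbing the interior source $f$ in the $H^{-\eta}$ norm and treating commutator terms as lower order is consistent with that reduction, so there is nothing further to add.
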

The above calculations along with the Compactness Theorem \ref{th7-turk}  imply existence of compact attractor
-first statement in Theorem \ref{t:main.1.h}. As for smoothness and finite dimensionality, one needs to prove quasi-stability estimate -which amounts to the proof of the estimate in Proposition \ref{pr:10.1-os.h}
-but with $\alpha =0$.
This means that von Karman bracket is no longer subcritical and the term $(\cR(z), z_t) $ needs to be estimated by appropriate damping on the boundary. To achieve this we proceed as in the internal case-
by decomposing the bracket into $Q(z)$ and $P(z)$. However, in the boundary case the additional difficulty is due to
the fact that dissipativity estimate (\ref{magic0}) no longer holds.
What one has, instead, is similar quantity on the boundary. In order to take advantage of this relaxed dissipation, one proceeds as in the case of wave equation by considering  (i) smoothing effect of backward trajectories on the attractor,
(ii) propagating it forward and (iii) using the compactness of the attractor. This is technical part of the argument
with full
 details  given in  \cite[Section~4.2]{cl-jde-07}.

\subsubsection{Generalizations}
\begin{enumerate}
\item
The case of nonconservative forces  can also be considered. This is to say that the the force $P(u)$ is  not potential. In that case the system has no longer gradient structure and one needs first to establish existence of an absorbing set. This can be done under some additional restrictions on the damping,  see \cite{cl-book},
 where the case ``potential operator + $(\psi,\g) w$" is considered.
\item
More general sources $P(w)$ can be considered, including nonlinear sources. However,
in general, one needs to make appropriate compactness hypotheses, see the
Remark~\ref{re:P-gen}.
\item
Dissipation in other (e.g., free)  boundary conditions, see
 \cite{cl-book,chla03,cl-jde-07}.
\end{enumerate}

\subsubsection{Open questions}
\begin{enumerate}
\item
General theory of non-conservative forces which destroy gradient structure.
\item
Localized dissipation, i.e., dissipation localized in a suitable layer near the boundary.
\item
Boundary damping problems {\it without} the  light damping.This has to do with the existence of strict Lyapunov function
and related unique continuation across the boundary. While various unique continuation results from the boundary are
available also for plates \cite{albano}, the non-local nature of the von Karman bracket prevents Carleman's estimates
 \cite{albano} from applicability.
\end{enumerate}

\subsection{Kirchhoff -Boussinesq plate}

\subsubsection{Interior damping.  Case $\al>0$:}
\index{Kirchhoff-Boussinesq model!interior damping}
In this case our main results presented in
the following  theorem (for the proofs we refer to \cite[Chapter 7]{cl-mem}).

\begin{theorem}\label{t:main.ki.1} Let  $\al>0$ and $P(w)=\Delta[ w^2]-\vr |w|^{l-1} w$ for some $\vr\ge0$ and $l\ge 1$.
Assume that the damping functions $g$ and $G$ have the structure\footnote{see Remark~\ref{re:damp-n-opt}(2) below.}
described in \eqref{dmp-str}.
Then problem
(\ref{1.3}) with $\alpha > 0$
and the clamped boundary conditions \eqref{Cl}
generate   a continuous  semiflow $S_t$ in the space
$H \equiv H^2_0(\Om) \times H^1_0(\Omega)$.
\par
Assume in addition that either
\[
\left\{ m\le 3,~~ \vr>0,~~l\le m,~~ \inf_{x\in\Om}\{a(x)\}>0 \right\}
\]
or else
\[
\left\{ m<3,~~ \vr=0,~~  \inf_{x\in\Om}\{a(x)\}>0~\mbox{is  sufficiently large} \right\}.
\]
Then
the semiflow $S_t$ possesses a global  compact attractor $A$.
Moreover,
if the constant $G_1$ in \eqref{dmp-str} is positive,
then
\begin{itemize}
\item the fractal dimension of the attractor $A$ is finite;
\item
  the system possesses a fractal exponential attractor (see Definition~\ref{de7.3.2})
whose dimension is finite in the space $H_0^1(\Om)\times W$, where $W$ is a completion
of $H_0^1(\Om)$ with respect to the norm
$\|\cdot\|_W= \| (1-\alpha\Delta)\cdot\|_{-2}$.
\end{itemize}
\end{theorem}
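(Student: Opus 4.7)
The proof will proceed in four stages, all of which exploit the fact that in the rotational case $\al>0$ the Kirchhoff--Boussinesq restoring force becomes \emph{subcritical} with respect to the natural phase space $H=H^2_0(\Om)\times H^1_0(\Om)$. First I would establish local well-posedness by rewriting the equation as a first-order system $W_t+AW=F(W)$ on $H$. When $\al>0$ the operator $M_\al=I-\al\De$ is an isomorphism of $H^1_0(\Om)$ onto $H^{-1}(\Om)$, so the linear part together with the monotone damping terms produces a maximal monotone operator (in the equivalent inner product induced by $M_\al$). In two dimensions the Sobolev embedding $H^1(\Om)\hookrightarrow L_p(\Om)$ for every $p<\infty$ makes the restoring term ${\rm div}[|\nabla w|^2\nabla w]$, the Boussinesq source $\De[w^2]$, and the defocusing term $|w|^{l-1}w$ all locally Lipschitz from $H^2_0(\Om)$ into $H^{-1}(\Om)$. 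Standard results on locally Lipschitz perturbations of maximal monotone operators (as used in the von Karman analysis earlier in the paper) then give local generalized solutions; global existence follows from the energy identity, the dissipativity relation \eqref{dis-BK} and Gronwall's lemma, with the defocusing polynomial  (when $\vr>0$, $l\le m$) or the smallness of the source term (when $\vr=0$, $m<3$ and $a(x)$ large) supplying the necessary a priori bound.

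Second, I would verify the gradient structure. Writing $P(w)=-\Pi_0'(w)$ for a potential $\Pi_0$ on $H^2_0(\Om)$, the augmented energy $V(t)=\cE(t)+\Pi_0(w(t))$ satisfies $\frac{d}{dt}V=-D(w_t)\le 0$, and strict monotonicity of $g$ and $G$ forces $w_t\equiv 0$ on any interval of constancy; thus $V$ is a strict Lyapunov functional. Under the stated hypotheses $V$ is bounded below and coercive on $H$, and the stationary set $\cN$ is bounded. Once asymptotic smoothness is in hand, Theorem~\ref{th:grad-attr} immediately delivers the compact global attractor $A=\cM^u(\cN)$.

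Third, and this is the heart of the argument, I would establish the quasi-stability inequality \eqref{8.4.2mc} on every bounded positively invariant set $\cB\subset H$. For two trajectories $w_1,w_2\subset\cB$ the difference $z=w_1-w_2$ solves a linear equation with right-hand side
\[
 {\rm div}\bigl[|\nabla w_1|^2\nabla w_1-|\nabla w_2|^2\nabla w_2\bigr]+P(w_1)-P(w_2),
\]
whose $H^{-1}(\Om)$-norm is bounded by $C(\cB)\|z\|_{H^2}$ using only the pointwise estimate $||\nabla w_1|^2\nabla w_1-|\nabla w_2|^2\nabla w_2|\le C(|\nabla w_1|^2+|\nabla w_2|^2)|\nabla z|$ together with the 2D embedding $H^1\hookrightarrow L_p$. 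Combining the energy identity for $z$ with the equipartition multiplier $z$, integrating over $[0,T]$, and isolating the compact remainder in the seminorm $\mu_X(z)=\|z\|_{H^{2-\eps}}$ produces an inequality of the form
\[
 E_z(T)+\int_0^T E_z(t)\,dt\le C_{\cB,T}\int_0^T D_z(t)\,dt+C_{\cB,T}\sup_{[0,T]}\mu_X(z)^2.
\]
The assumption $G_1>0$ is critical here: the linear part of $G$ yields $D_z(t)\ge c\|\nabla z_t\|^2$, so that a Gronwall-type argument reiterated on intervals of length $T$ gives \eqref{8.4.1mc}--\eqref{8.4.2mc} with an exponentially decaying $b(t)$ and bounded $c(t)$. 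Without $G_1>0$ only a weaker (nonlinear) version of $b(t)$ is available, which is sufficient for compactness of the attractor but not for finite-dimensionality.

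Finally, Proposition~\ref{pr:7.9.1sc} turns the quasi-stability estimate into asymptotic smoothness, closing the argument for the compact attractor. Theorem~\ref{th7.9dim} then yields finiteness of the fractal dimension, and H\"older continuity in $t$ on $\cB$ with values in the weaker space $H^1_0(\Om)\times W$ (a direct consequence of the equation and the uniform bound of trajectories in $H$) allows one to invoke Theorem~\ref{th7.9exp} to construct the generalized fractal exponential attractor whose dimension is finite in $H^1_0(\Om)\times W$. The main obstacle I anticipate is the bookkeeping in the quasi-stability estimate: even though the cubic term is subcritical when $\al>0$, the constants depend on the $H^2$-radius of $\cB$ cubically, so one has to carefully isolate the ``good'' quadratic part of the damping (provided by $G_1>0$) from a remainder that is absorbed via the compact seminorm; additionally, the Boussinesq source $\De[w^2]$, though treatable, is borderline and demands the identity in \eqref{KB-del-2} in order to keep energy control global in time.
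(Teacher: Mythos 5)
Your overall architecture (subcriticality of all nonlinear terms in the rotational case, well-posedness by locally Lipschitz perturbation of a maximal monotone operator in the $M_\al$-inner product, quasi-stability with $G_1>0$ supplying the coercive quadratic part of the damping, and Theorems~\ref{th7.9dim} and \ref{th7.9exp} for dimension and the exponential attractor) is consistent with how the paper treats this case (the detailed proof is delegated to \cite[Chapter 7]{cl-mem}). However, there is one genuine gap in your second stage.

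The system is \emph{not} a gradient system, so you cannot write $P(w)=-\Pi_0'(w)$ and invoke Theorem~\ref{th:grad-attr}. The Boussinesq term $\De[w^2]$ is not a potential operator: its derivative $u\mapsto 2\De(wu)$ is not symmetric, and the identity \eqref{KB-del-2} shows that $((\De w^2,w_t))$ equals a total time derivative \emph{plus} the residual term $((|\g w|^2,w_t))$, which is not the derivative of any functional of $w$. The paper states this explicitly for the non-rotational analogue ("the system under consideration is not a gradient system when $\si\neq 0$"), and the same obstruction is present here. Consequently the existence of a bounded absorbing set does not come for free from a strict Lyapunov function; it must be proved directly, e.g.\ with a perturbed functional of the type $V_\eps(t)=\cE(t)+\eps((w,w_t))$ as in \eqref{KB-V-eps}, in combination with \eqref{KB-del-2}. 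This is precisely where the two alternative structural hypotheses of the theorem are consumed: either the defocusing term $-\vr|w|^{l-1}w$ with $l\le m$ dominates the indefinite contribution of $\De[w^2]$, or ($\vr=0$) the damping intensity $\inf_\Om a(x)$ must be taken large enough to absorb it. You gestured at these bounds for global existence in your first stage, but then discarded them by asserting a gradient structure; the correct route is dissipativity (absorbing ball) plus asymptotic smoothness, feeding into Theorem~\ref{t7.2.1-as} rather than Theorem~\ref{th:grad-attr}. The rest of your argument (quasi-stability on the absorbing set, finite dimension, H\"older continuity in $H^1_0(\Om)\times W$ for the exponential attractor) stands once this is repaired.
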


\begin{remark}\label{re:damp-n-opt}{\rm
(1) As we see the additional  potential term  $\rho |u|^{l-1}u$ in the force $P(w)$
 allows us to dispense with
a necessity of assuming large values for the  damping parameter.
\par
(2)
The requirements  concerning the damping terms in
Theorem ~\ref{t:main.ki.1} are not optimal. We choose them for the sake
some transparency. The same results under much more general
damping functions can be found in \cite[Chapter~7]{cl-mem}.
}
\end{remark}

\subsubsection{Interior damping. Case $\al=0$ with linear damping function:}

We   assume that $g(s)=s$ and consider the source term of the form
\begin{equation}\label{KB-source-attr}
     P(w)=\si \Delta[ w^2]-\vr |w|^{l-1} w ~~\mbox{with}~~\si,\vr\ge0.
\end{equation}
As a phase space we take $H \equiv H^2_0(\Om) \times L_2(\Omega)$.
\begin{theorem}[Compact attractor]\label{t:c}
Let
\begin{equation}\label{k-sigma}
    \si^2<\frac14 k \min\{1,k\} ~~{with}~~ k\equiv \inf_{\Om} a(x)>0.
\end{equation}
Then   the dynamical system $(H,S_t)$ generated by
equations (\ref{1.3}) with $\al=0$
possesses a  compact global attractor $\mathfrak{A}$.
\par If $\si=0$, then the system $(H,S_t)$ is gradient and thus
$\mathfrak{A}=\mathcal{M}^u(\cN)$ is unstable manifold  in $H$ emerging from
the
 set
$\cN$ of equilibria.
\end{theorem}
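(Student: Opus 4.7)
The plan is to verify the two hypotheses of Theorem~\ref{t7.2.1-as}, namely dissipativity and asymptotic smoothness, with the second being the genuinely hard part owing to the supercritical restoring force $\operatorname{div}[|\nabla w|^2\nabla w]$ for which compactness-based splittings are unavailable. Since the damping is linear and the dynamics is time-reversible, the natural tool for asymptotic smoothness is Ball's energy method (Theorem~\ref{th:ball}), which was designed precisely for this supercritical/linear-damping setting. For the gradient statement ($\sigma=0$), once compactness is in hand, Theorem~\ref{th:grad-attr} will apply directly.

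First I would establish dissipativity by a perturbed Lyapunov functional of the form
\[
V(t)=\cE(t)+\eps \big((w_t,w)\big)+\eps \int_{\Om} a(x)\, w^2\, dx,
\]
with $\eps>0$ small. Differentiating along trajectories and using the equation together with the identity \eqref{KB-del-2} for the Boussinesq source, one picks up the positive production term $((aw_t,w_t))\ge k\|w_t\|^2$ from the linear damping, a term $-\eps\|\De w\|^2-\eps\|\nabla w\|_4^4$ from the plate/restoring bilinear form, and cross terms coming from $\sigma\De[w^2]$ that, after integration by parts, are controlled by $\sigma\cdot(\|\nabla w\|_4^2\|w_t\|+ \|\De w\|\|w_t\|\|w\|)$. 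The quantitative constraint $\sigma^2<\tfrac14 k\min\{1,k\}$ is exactly what allows one to absorb the $\sigma$-cross terms into $k\|w_t\|^2$ and the quartic/plate coercive contributions, yielding $\tfrac{d}{dt}V+\omega V\le C$ for some $\omega>0$. This produces the bounded absorbing set.

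For asymptotic smoothness I would cast the semiflow in Ball's framework by multiplying (\ref{1.3}) (with $\al=0$, $g(s)=s$) by $w_t+\omega w$ and integrating, arriving at a relation of the type
\begin{equation*}
[\Phi(S_tu)+\Psi(S_tu)]+\int_s^t L(S_\tau u) e^{-\om(t-\tau)}d\tau
= [\Phi(S_s u)+\Psi(S_su)]e^{-\om(t-s)}+\int_s^t K(S_\tau u)e^{-\om(t-\tau)}d\tau,
\end{equation*}
with the coercive/convex piece $\Phi(w,w_t)=\tfrac12\|w_t\|^2+\tfrac12\|\De w\|^2$ (which detects strong convergence in the uniformly convex space $H$ from weak convergence plus convergence of norms), and $\Psi$ collecting the quartic term $\tfrac14\|\nabla w\|_4^4$, the Boussinesq potential $-\tfrac{\si}{2}((|\nabla w|^2,w))$, and $\tfrac{\varrho}{l+1}\|w\|_{l+1}^{l+1}$, all of which are sequentially weakly continuous on $H^2_0(\Om)$ thanks to the compact embedding $H^2_0(\Om)\hookrightarrow W^{1,4}(\Om)\cap L_{l+1}(\Om)$. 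The derivation of the \emph{identity} (rather than mere inequality) hinges on the energy equality for weak solutions proved in Theorem~\ref{t:KB}, which in turn rests on time-reversibility plus linearity of the damping; this is precisely where the hypothesis $g(s)=s$ is unavoidable. The terms $L$ and $K$ produced by the multiplier $\om w$ are asymptotically weakly (lower-)continuous by the same compactness, so Theorem~\ref{th:ball} yields asymptotic smoothness and hence a compact global attractor via Theorem~\ref{t7.2.1-as}.

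For the second statement, when $\sigma=0$ the source $P(w)=-\varrho|w|^{l-1}w$ is potential, so $\cE$ is a Lyapunov function with $\tfrac{d}{dt}\cE=-((a w_t,w_t))\le 0$; strictness follows from backward uniqueness (linear damping, time reversibility), and the set $\cN$ of stationary points is bounded in $H^2_0(\Om)$ via an elliptic a priori estimate on $\De^2 w=\operatorname{div}[|\nabla w|^2\nabla w]-\varrho|w|^{l-1}w$. Theorem~\ref{th:grad-attr} then gives $\mathfrak{A}=\cM^u(\cN)$. The main obstacle I anticipate is the verification of the weak-continuity hypotheses on $\Psi$ and $K$ in Ball's criterion when the Boussinesq term is present: because $\si\De[w^2]$ carries two derivatives, showing that the corresponding integrals pass to the weak limit along trajectories requires genuine use of the compactness of $H^2_0\hookrightarrow H^1$ coupled with an equicontinuity in time that must be extracted from the energy estimate, and the smallness condition \eqref{k-sigma} is what guarantees the overall functional remains coercive in the Ball decomposition.
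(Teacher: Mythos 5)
Your proposal follows essentially the same route as the paper: dissipativity via the perturbed Lyapunov functional $\cE(t)+\eps\,((w_t,w))$ (the paper takes $\eps=\tfrac14\min\{1,k\}$ and uses the smallness condition \eqref{k-sigma} exactly as you describe), then asymptotic smoothness via Ball's energy method applied to $V_{k/2}$, with the energy \emph{identity} from Theorem~\ref{t:KB} (linear damping, time reversibility) supplying the required equality and the compactness of $H^2_0(\Om)\hookrightarrow W^{1,4}(\Om)$ giving weak continuity of the nonlinear potential terms, and finally the gradient structure plus Theorem~\ref{th:grad-attr} for $\si=0$. The only cosmetic deviations are your extra term $\eps\int_\Om a\,w^2$ in the Lyapunov functional and the appeal to backward uniqueness for strictness of the Lyapunov function, which is unnecessary since $\tfrac{d}{dt}\cE=-((aw_t,w_t))$ with $a\ge k>0$ already forces $w_t\equiv0$.
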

We note that the system under consideration is
not a gradient system when $\si\neq 0$.
As a consequence,
the proof of existence of global attractor  can
not be  just reduced to the proof of asymptotic smoothness.
One needs to establish first  existence of an absorbing ball.
For this we use Lyapunov type function of the form
\begin{equation}\label{KB-V-eps}
V_\eps(t) = \cE(t) + \eps \int_{\Omega} w(t) w_t(t) dx,
\end{equation}
with $\eps=\frac14  \min\{1,k\}$, see \cite{cl-kb,cl-kb2} for details
in the case when $a(x)\equiv k$ is a constant.
Here the energy functional has the form
\begin{multline*}
\cE(t) \equiv \frac{1}{2}  ||w_t(t)||^2 +  \frac{1}{2}  || \Delta w(t) ||^2 +
 \frac{1}{4} \int_{\Omega}  | \nabla w(x,t)|^4 dx \\ + \si
 \int_{\Omega}  w(x,t)| \nabla w(x,t)|^2 dx+\frac{\vr}{l+1}\int_{\Omega}  |w(x,t)|^{l+1} dx.
  \end{multline*}
\par

In order to complete the proof of the existence of global attractor,
hence of Theorem \ref{t:c}, we use Ball's method (see Theorem~\ref{th:ball}).
\par
Let us take $V_\eps(t)$ with $\epsilon =k/2$
and denote $\Psi(t)=V_{k/2}(t)$.
Using the fact that {\it energy
identity} holds for all  weak solutions  one
can easily  see that the following {\it equality} is satisfied for
this Lyapunov's function $ \Psi(t)$:
\[
\pd_t\Psi(t)+k \Psi(t)+\int_\Om(a(x)-k) |w_t|^2dx =K(w(t)),
\]
where
\begin{multline*}
K(w(t))
 =    \int_{\Omega}\big(\si |\nabla w|^2 -\frac{k}2((a(x)-k)w  \big) w_t dx
\\
- k
\si \int_{\Omega} w | \nabla w |^2 dx
- \frac{k}4 \int_{\Omega} |\nabla w|^4 dx
-
\frac{k\vr(l-1)}{2l+2}\int_{\Omega}  |w(x,t)|^{l+1} dx.
\end{multline*}
It is clear that the term $K(w(t))$ is subcritical with respect
to strong energy topology,
therefore using the representation
\[
\Psi(T)+\int_0^T e^{-k(T-t)}L(t) dt = e^{-kT} \Psi(0)+\int_0^T e^{-k(T-t)}K(w(t)) dt,
\]
with
\[
L(t)=\int_\Om(a(x)-k) |w_t(t)|^2dx,
\]
we can apply   Theorem \ref{th:ball}
to prove the existence of
global attractor.
\par
To establish the statement in the case $\si=0$ we note that
in this case $\cE(w,w')$ is a strict Lyapunov function
and thus the system is gradient.
\medskip\par
Now we consider long-time dynamics of strong solutions.
\par
Let $H_{st}= \big[H^4(\Omega) \cap H_0^2(\Om)\big]\times  H_0^2(\Om)$.
The argument given in the well-posedness section shows that the restriction
$S_t^{st}$ of the semiflow $S_t$ on $H_{st}$ is a continuous (nonlinear) semigroup
of continuous mappings in $H_{st}$.

\begin{theorem}[{\bf Compact
 attractor for strong solutions}] \label{pr:str-attr}
Let $\si=0$ and $a(x)\equiv k>0$ be a constant (for simplicity).  Then
semiflow $S_t^{st}$ has a compact global attractor $\mathfrak{A}^{st}$ in $H_{st}$.
This attractor $\mathfrak{A}^{st}$ possesses the properties:
\begin{itemize}
  \item $\mathfrak{A}^{st}\subseteq \mathfrak{A}$;
  \item $\mathfrak{A}^{st}=\mathcal{M}_{st}^u(\cN)$ is unstable manifold  in
$H_{st}$
  emerging from the set $\cN$ of equilibria;
  \item $\mathfrak{A}^{st}$ has a finite fractal dimension as a compact set
  in $H_{st}$.
\end{itemize}
Moreover,
if we assume in addition $l\equiv2m-1$ is an odd integer, then any trajectory $(w(t);w_t(t))$ from the attractor
$\mathfrak{A}^{st}$ possesses the property
\begin{equation}\label{sm-time-atr}
    \sup_{t\in\R} \|w^{(n)}(t)\|_{m,\Om}\le C_{n,m}<\infty,\quad n,m=0,1,2,\ldots,
\end{equation}
where $w^{(n)}(t)=\partial_t^n w(t)$.
In particular,  $\mathfrak{A}^{st}$ is a bounded set
in $C^m(\overline{\Om})\times C^m(\overline{\Om})$ for  each $m=0,1,2,\ldots$,
and thus $\mathfrak{A}^{st}\subset C^\infty(\overline{\Om})\times C^\infty(\overline{\Om})$.
\end{theorem}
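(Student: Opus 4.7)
The overall strategy is to lift the analysis done for the weak-solution semiflow $(H,S_t)$ in Theorem~\ref{t:c} one derivative higher, by working with the time-differentiated equation. For $(w_0;w_1)\in H_{st}$ the quantity $w_2\equiv-\Delta^2 w_0-kw_1+\di[|\nabla w_0|^2\nabla w_0]-\vr|w_0|^{l-1}w_0$ belongs to $L_2(\Om)$, so $v(t)\equiv w_t(t)$ starts in $H_0^2(\Om)\times L_2(\Om)=H$ and formally satisfies
\begin{equation*}
v_{tt}+\Delta^2 v+k v_t=\di\!\big[3|\nabla w|^2\nabla v\big]-\vr l|w|^{l-1}v,\quad v\big|_\Ga=\Dn v\big|_\Ga=0.
\end{equation*}
Since the elliptic equation recovers $\|w\|_{4,\Om}\le C\big(\|w_{tt}\|+\|w_t\|_{2,\Om}+\|w\|^3_{2,\Om}+\|w\|^l_{2,\Om}\big)$, controlling $(v;v_t)$ in $H$ is equivalent to controlling $(w;w_t)$ in $H_{st}$. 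Thus all three main issues --- existence of absorbing set, asymptotic smoothness, and finite dimensionality --- will be reduced to energy-type estimates for the linear-in-$v$ equation above, with $w(t)$ already known to live in the compact attractor $\mathfrak{A}\subset H$ of weak solutions.

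\textbf{Dissipativity and gradient structure in $H_{st}$.} The first step is to construct a Lyapunov-type functional analogous to $V_\eps$ from \eqref{KB-V-eps}, but at one derivative higher: set
\begin{equation*}
\Psi^{st}(t)=\tfrac12\|v_t\|^2+\tfrac12\|\Delta v\|^2+\tfrac32\int_\Om|\nabla w|^2|\nabla v|^2\,dx+\eps((v,v_t)),
\end{equation*}
which is coercive on $H$ for small $\eps$. Testing the $v$-equation by $v_t+\eps v$ and integrating by parts will yield
$\partial_t\Psi^{st}+c\Psi^{st}\le\Phi(w(t))$ modulo terms that are absorbable because $w(t)$ lies in $\mathfrak{A}$ and $\sup_t\|w(t)\|_{2,\Om}<\infty$. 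The lower order ``forcing'' $\Phi(w(t))$ is bounded uniformly in $t$, so Gronwall gives a ball in $H$ absorbing all trajectories $(v;v_t)$ starting from a given bounded set of $H_{st}$. Because $\si=0$ the original energy $\cE$ is still a strict Lyapunov function on $H_{st}\subset H$ and the set of stationary points in $H_{st}$ coincides with $\cN$ (elliptic regularity for \eqref{1.2stat} with $F_0=0$ gives $\cN\subset H^4(\Om)\times\{0\}$, so $\cN\subset H_{st}$).

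\textbf{Asymptotic smoothness, compact attractor, and finite dimension.} Write $\bar v=v^1-v^2$, $\bar w=w^1-w^2$ for two strong trajectories in a forward-invariant bounded set $\cB\subset H_{st}$. Subtracting the $v$-equations gives
\begin{equation*}
\bar v_{tt}+\Delta^2\bar v+k\bar v_t=\di\!\big[3|\nabla w^1|^2\nabla\bar v+3(|\nabla w^1|^2-|\nabla w^2|^2)\nabla v^2\big]+R(w^1,w^2,\bar v),
\end{equation*}
a \emph{linearly damped} equation in $\bar v$ whose right-hand side is subcritical on $H$ because the coefficient $|\nabla w^i|^2\in L_\infty$ (from $w^i\in H^4\subset W^{2,\infty}$ in 2D) and the $\bar w$-dependent terms are lower-order when measured by $\|\bar w\|_{3-\delta,\Om}$. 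Running the same two-multiplier ($\bar v_t$, then $\bar v$) computation that underlies Sedenko's uniqueness argument and Step~4 of the Kirchhoff--Boussinesq section will produce a \emph{stabilizability inequality} in the sense of Definition~\ref{de:ms-stable} for $(H_{st},S_t^{st})$, with compact seminorm given by a lower Sobolev norm of $\bar w$. Combined with dissipativity and the gradient structure, Corollary~\ref{co:pr7.9.1} and Theorem~\ref{th7.9dim} yield a compact global attractor $\mathfrak{A}^{st}\subset H_{st}$ of finite fractal dimension. The inclusion $\mathfrak{A}^{st}\subseteq\mathfrak{A}$ is automatic since strong solutions are weak, and Theorem~\ref{th:grad-attr} identifies $\mathfrak{A}^{st}=\mathcal{M}^u_{st}(\cN)$.

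\textbf{Infinite smoothness (time--space regularity) and the main obstacle.} For the final assertion, assume $l=2m-1$ is an odd integer so that $|w|^{l-1}w=w^{2m-1}$ is a polynomial; then all derivatives of the nonlinearity are polynomials in $w$ and $\nabla w$. The bound \eqref{sm-time-atr} will be proved by induction on $n$: assume $\sup_t\|w^{(j)}(t)\|_{m_j,\Om}\le C$ for $j\le n$ and all $m_j$, differentiate the equation $n+1$ times in $t$, and repeat the Lyapunov argument of the second paragraph applied to the variable $w^{(n+1)}$; the resulting nonhomogeneous terms are controllable precisely because of the inductive bounds together with the time-reversibility and linearity of the damping. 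Elliptic regularity then upgrades the temporal control into spatial control via $\Delta^2 w^{(n)}=-w^{(n+2)}-kw^{(n+1)}+\partial_t^n(\di[|\nabla w|^2\nabla w]-\vr w^{2m-1})$. The \emph{main technical obstacle} is the quasi-stability step: the semilinear coefficient $|\nabla w|^2\nabla w$ is only borderline-well-behaved even in the strong topology, and deriving the stabilizability inequality requires carefully extracting, from the difference of the cubic terms, a manifestly \emph{lower order} remainder --- the same sort of cancellation that forces linearity of the damping in the well-posedness result --- while absorbing the remaining $H^2$-level quantities via the uniform $W^{2,\infty}$ bound on $w^1,w^2$ available only on $H_{st}$.
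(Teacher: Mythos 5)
Your overall architecture (differentiate in time, control $(w_t;w_{tt})$ in $H=H^2_0\times L_2$, recover $\|w\|_{4,\Om}$ by elliptic regularity, then run quasi-stability in $H_{st}$) is the right one, and your treatment of the structural statements ($\mathfrak{A}^{st}\subseteq\mathfrak{A}$, the unstable-manifold description, the inductive bootstrap to $C^\infty$ when $l=2m-1$) matches the paper. But there is a genuine gap in your dissipativity step, which is precisely the hard point of the proof. You claim that testing the $v$-equation with $v_t+\eps v$ yields $\partial_t\Psi^{st}+c\,\Psi^{st}\le \Phi(w(t))$ with $\Phi$ bounded because $\sup_t\|w(t)\|_{2,\Om}<\infty$, and then close by plain Gronwall. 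This is not true. When you convert the critical pairing $\int_\Om \partial_t\big(|\nabla w|^2\nabla w\big)\cdot\nabla v_t\,dx$ into a total time derivative (which, incidentally, produces the completion $\tfrac12\int_\Om\big[|\nabla w|^2|\nabla v|^2+2|(\nabla w,\nabla v)_{\R^2}|^2\big]dx$ rather than your $\tfrac32\int|\nabla w|^2|\nabla v|^2$; note also that $\partial_t(|\nabla w|^2\nabla w)=|\nabla w|^2\nabla v+2(\nabla w,\nabla v)\nabla w$, not $3|\nabla w|^2\nabla v$ in 2D), the leftover is a term of the type $\int_\Om(\nabla w,\nabla v)|\nabla v|^2dx$, cubic in $\nabla v=\nabla w_t$. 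At this stage you only know $w\in H^2$ uniformly (from the weak attractor), not $w\in H^4$, so $\nabla w\notin L_\infty$, and the term is \emph{superlinear} in the higher energy $H\sim\|v\|_{2,\Om}^2+\|v_t\|^2$: Ladyzhenskaya-type interpolation gives a bound like $C_R\|w_t\|^{\alpha}H^{\beta}$ with $\beta>1$. A linear Gronwall therefore does not close; naively one only gets a local-in-time bound that can blow up. The paper's Step~1 resolves this with the Br\'esis--Gallouet--Sedenko logarithmic inequality (Lemma~\ref{LEMMA 2.2}), which trades the missing $L_\infty$ control for a factor $\ln(1+H)$, producing the differential inequality $H'+\gamma H\le C_R^1\|w_t\|^2H\ln(1+H)+C_R^2$; the uniform bound then follows by a barrier argument using the finiteness of the dissipation integral $\int_0^\infty\|w_t\|^2dt\le C_R$. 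Without this logarithmic device your Step~1 fails, and everything downstream (the $W^{2,\infty}$ bound on trajectories that you invoke to make the quasi-stability estimate work) is unavailable.

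Once dissipativity in $H_{st}$ is granted, your route for existence and dimension differs mildly from the paper's: the paper establishes asymptotic compactness in $H_{st}$ via Ball's energy method (Theorem~\ref{th:ball}), exploiting that the remainder $K_{st}$ is subcritical in the strong topology, and only afterwards proves the quasi-stability estimate (Proposition~\ref{pr:stab-st}) to get finite dimension; you propose to prove quasi-stability on bounded forward-invariant sets directly and obtain both compactness and dimension from Corollary~\ref{co:pr7.9.1} and Theorem~\ref{th7.9dim}. That is a legitimate consolidation and arguably more economical, but it is contingent on the same missing a priori bound, and your sketch of the stabilizability inequality should be explicit about where the uniform $W^{2,\infty}$ bound on $w^1,w^2$ enters, since that bound is exactly what Step~1 is needed to supply.
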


The proof of Theorem \ref{pr:str-attr} consists of several steps.
The situation  corresponding to gradient flows (we deal with the case $\sigma =0$).
\smallskip\par\noindent
{\sc Step 1 (dissipativity in $H_{st}$) by the ``barrier'' method:}
Due to
dissipativity in $H$ and energy relation it is sufficient to consider
dissipativity of strong solutions $w(t)$ possessing properties
\begin{equation}\label{abs-0}
\|w_t(t)\|^2+\|w(t)\|_2^2\le R,\; t\ge 0,\quad
\int_0^\infty\|w_t(\tau)\|^2d\tau\le C_R<\infty.
\end{equation}
Let $w(t)$ be strong solution satisfying \eqref{abs-0} and $v(t)=w_t(t)$.
Consider the functional
\begin{multline}\label{G-repr-2}
G(v,v')\equiv  \frac{1}{2}\Big( \| v'\|^2+\|\Delta w\|^2+\|v\|^2
\\
+ \int_{\Omega} \left[
|\nabla w |^2 |\nabla v(x) |^2 +
2 \left|(\nabla w,\nabla v)_{\R^2}\right|^2\right] dx\Big),
\end{multline}
and denote
\begin{equation}\label{h-eps}
 H(t)\equiv H(v,v_{t})= a_0+ G(v,v_{t})+\eps(v,v_{t}),
\end{equation}
where $a_0$ and $\eps$ are positive parameters.
Then using  Br\'esis--Gallouet--Sedenko  type   inequality
(see Lemma~\ref{LEMMA 2.2} and Remark~\ref{re:sedenko})
 and choosing $\eps$ small enough, we obtain  that
\[
\frac{dH(t)}{dt}+\gamma H(t)\le C^1_{R}
\| w_t(t)\|^2  H(t)\ln\left( 1+H(t)\right) + C^2_{R}
\]
   with positive $\gamma$ (see \cite{cl-kb2} for details).
The above inequality leads to the  dissipativity property
by barrier's method.
\smallskip\par\noindent
{\sc Step 2 (attractor in $H_{st}$):}  To prove the existence of the
attractor we need to establish asymptotic compactness of the system
in $\cL$. For this we use again Ball's method.
Let $\eps=k/2$ and $a_0=0$ in \eqref{h-eps}. Then one can see that
\[
\frac{dH(t)}{dt}+k H(t) =K_{st}(w(t), w_t(t)),
\]
where
\begin{multline*}
    K_{st}(w,v)=3\int_\Om(\nabla w,\nabla v)_{\R^2}|\g v|^2 dx \\
 -\int_\Om [f'(w)v-v]v_{t} dx-\frac{k}2\int_\Om [f'(w)v-v]v dx,
\end{multline*}
where $f(s)=\vr |s|^{l-1}s$.
The term $K_{st}(w,v)$ is obviously  subcritical with respect to the
topology in $H_{st}$, therefore we can apply the same Ball's argument
 to prove the existence of global attractor
$\mathfrak{A}^{st}$. The inclusion $\mathfrak{A}^{st}\subseteq
\mathfrak{A}$ is evident. One can also see that
$\mathfrak{A}^{st}=\mathcal{M}_{st}^u(\cN)$.
\smallskip\par\noindent
{\sc Step 3  (Finite dimension of the attractor in $H_{st}$):}
To prove  finite dimensionality of the strong attractor in $H_{st}$
 we use smoothness properties of trajectories and
again the methods   developed in \cite{cl-mem}. For this we need
to establish the
quasi-stability estimate in $H_{st}$.

\begin{proposition}[quasi-stability inequality  in $H_{st}$]\label{pr:stab-st}
Let the hypotheses of Theorem~\ref{pr:str-attr} be in force.
 Let $w(t)$ and $w^*(t)$ be  strong solutions
to the problem in question satisfying the estimates
\begin{equation*}
\|w(t)\|_4+\|w_t(t)\|_2 +\|w_{tt}(t)\|+
\|w^*(t)\|_4+\|w^*_t(t)\|_2+\|w^*_{tt}(t)\|\le R
\end{equation*}
for all $t\in \R$  and for some  constant $R>0$.
Let $\tilde w(t)=w(t)-w^*(t)$. Then
\begin{multline*}
\|\lw(t)\|^2_4+\|\lw_t(t)\|^2_2 +\|\lw_{tt}(t)\|^2
 \le
C_1 e^{-\gamma t}\left( \|\lw(0)\|^2_4+\|\lw_t(0)\|^2_2\right) \\
+C_2\int_0^t e^{-\gamma(t-\tau)}\left( \|\lw(\tau)\|^2_2+\|\lw_t(\tau)\|^2\right) d\tau
\end{multline*}
for all $t>0$, where $\gamma>0$ and $C_1,C_2>0$ may depend on $R$.
\end{proposition}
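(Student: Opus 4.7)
The plan is to promote the Lyapunov/barrier argument used in Step 1 of Theorem~\ref{pr:str-attr} from a single trajectory to the difference of two strong trajectories, then close an elliptic--hyperbolic loop to recover the $H^4$ norm of $\lw$. Since the flow is linear in the damping and we already control $w,w^*$ in $H_{st}$, the nonlinear terms can be treated as bounded (in suitable $L_\infty$ norms) coefficients acting on $\lw$ and its derivatives, together with genuinely lower order remainders.

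First I would write down the equation for $\lw=w-w^*$ and differentiate it in time. Setting $\tilde v=\lw_t$ and $\tilde z=\lw_{tt}$, the pair satisfies
\begin{equation*}
\lw_{tt}+k\lw_t+\Delta^2\lw= \di\bigl[|\g w|^2\g w-|\g w^*|^2\g w^*\bigr]+[P(w)-P(w^*)],
\end{equation*}
with clamped boundary data, and differentiating once in $t$ produces an analogous equation for $\tilde v$ with source
\begin{equation*}
F(t)=\partial_t\di\bigl[|\g w|^2\g w-|\g w^*|^2\g w^*\bigr]+\partial_t [P(w)-P(w^*)].
\end{equation*}
Because $w,w^*$ are bounded in $H^4\cap H^2_0$ and $w_t,w_t^*$ in $H^2_0$, the 2D embedding $H^3\subset W^{2,\infty}$ makes $|\g w|^2,|\g w^*|^2$ uniformly Lipschitz on $\bar\Om$; a direct expansion then yields
\begin{equation*}
\|F(t)\|_{-2}\le C_R\bigl(\|\lw(t)\|_2+\|\tilde v(t)\|_2\bigr)+C_R\|\tilde v(t)\|,
\end{equation*}
i.e.\ $F$ is sub-critical relative to the target $H^2\times L_2$ topology for $(\tilde v,\tilde z)$.

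Next I would mimic the functional \eqref{G-repr-2}\,--\,\eqref{h-eps} at the level of the difference, introducing
\begin{equation*}
\tilde G(\tilde v,\tilde z)=\tfrac12\bigl(\|\tilde z\|^2+\|\Delta\tilde v\|^2+\|\tilde v\|^2\bigr)+\tilde N(w,w^*;\tilde v),
\end{equation*}
where $\tilde N$ is the quadratic-in-$\tilde v$ correction coming from the nonlinearity $\di[|\g w|^2\g w]$ linearized around $w,w^*$; and then
\begin{equation*}
\tilde H(t)=\tilde G(\tilde v(t),\tilde z(t))+\eps\bigl(\tilde v(t),\tilde z(t)\bigr),
\end{equation*}
with $\eps>0$ small. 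Testing the equation for $\tilde v$ with $\tilde z=\tilde v_t$ and with $\eps\tilde v$, and using the $W^{2,\infty}$-boundedness of $\g w,\g w^*$ on the attractor, I expect to obtain, after absorbing commutator terms exactly as in the dissipativity step,
\begin{equation*}
\frac{d\tilde H}{dt}+\gamma\tilde H\le C_R\bigl(\|\lw(t)\|_2^2+\|\tilde v(t)\|^2\bigr),
\end{equation*}
for some $\gamma>0$; here the Br\'esis--Gallouet--Sedenko inequality of Lemma~\ref{LEMMA 2.2} is used to upgrade $H^2$ bounds to $L_\infty$ bounds on $\tilde v$ with only a logarithmic loss, which is harmless thanks to the a~priori bound $\|\tilde v\|_2\le 2R$. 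Integrating via Gronwall gives the desired estimate for $\|\tilde v(t)\|_2^2+\|\tilde z(t)\|^2$.

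To close the argument I would recover the $\|\lw(t)\|_4^2$ bound elliptically: from the original equation,
\begin{equation*}
\Delta^2\lw=-\lw_{tt}-k\lw_t+\di\bigl[|\g w|^2\g w-|\g w^*|^2\g w^*\bigr]+[P(w)-P(w^*)],
\end{equation*}
standard $H^4$ regularity for the biharmonic operator with clamped data yields
\begin{equation*}
\|\lw\|_4\le C\bigl(\|\lw_{tt}\|+\|\lw_t\|+\|\lw\|_2\bigr),
\end{equation*}
since the nonlinear right-hand side is Lipschitz from $H^2$ to $L_2$ on bounded sets of $H^4$, by the $W^{2,\infty}$ control of $\g w,\g w^*$. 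Combining this with the Gronwall estimate on $\tilde H$ yields the claimed quasi-stability inequality. I expect the main technical obstacle to be the careful bookkeeping of the commutator/correction term $\tilde N$ and its time derivative: the cubic nonlinearity $\di[|\g w|^2\g w]$ is critical at the finite-energy level, so one must exploit the strong-attractor bound $\|w\|_4\le R$ together with Sedenko-type logarithmic inequalities in precisely the right places so that the log-factor is absorbed by the strict exponential decay $e^{-\gamma t}$ supplied by the linear damping $k\lw_t$.
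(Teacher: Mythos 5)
Your strategy --- differentiate the equation in time, run the energy identity for the pair $(\lw_t,\lw_{tt})$ with a quadratic corrector modeled on \eqref{G-repr-2} so that the critical cubic contribution cancels, handle the leftover commutators with the logarithmic inequality of Lemma~\ref{LEMMA 2.2}, close by Gronwall using the decay supplied by the linear damping, and recover $\|\lw\|_4$ from elliptic regularity for the clamped bilaplacian --- is precisely the route the paper takes (its proof is a pointer to \cite{cl-kb2} and to the analogous von Karman argument). Two slips should be repaired. First, $H^3(\Om)\not\subset W^{2,\infty}(\Om)$ in two dimensions; what you need, and do have, is $\g w\in H^3(\Om)\subset C^1(\overline{\Om})$ for $w\in H^4(\Om)$, which already makes $|\g w|^2$ Lipschitz. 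Second, the claim that the differentiated source $F$ is ``sub-critical'' because $\|F\|_{-2}\le C_R\bigl(\|\lw\|_2+\|\lw_t\|_2\bigr)+C_R\|\lw_t\|$ cannot be used as stated: the multiplier at the $(H^2,L_2)$ level is $\lw_{tt}\in L_2$, so an $H^{-2}$ bound is the wrong norm, and a right-hand side containing $\|\lw_t\|_2$ is exactly critical --- fed naively into Gronwall it produces a term comparable to $\tilde H$ itself and destroys the exponential decay. The critical part of $F$ must instead be cancelled exactly by $\frac{d}{dt}\tilde N$ (which is what your corrector is for), and only the genuinely lower-order remainder, estimated in $L_2$ after interpolating quantities such as $\|D^2\lw\|_{L_4}$ and $\|\g\lw_t\|_{L_4}$ against the a priori $H^4\times H^2$ bounds, may remain on the right; so the $\|F\|_{-2}$ remark should simply be dropped in favor of the cancellation you describe afterwards.
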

The proof (see \cite{cl-kb2}) follows the strategy applied to von Karman evolution equations
and   presented in   \cite[Sect.9.5.3]{cl-book}.
\par
By Theorem~\ref{th7.9dim} (see also Theorem~4.3  in \cite{cl-mem})  the relation in Proposition~\ref{pr:stab-st}  is
sufficient to prove that the fractal dimension of $\mathfrak{A}^{st}$
as a compact set in $H_{st}$ is finite.

\subsubsection{Boundary damping}
\index{Kirchhoff-Boussinesq model!boundary damping}
In the case of
$\al>0$ we can use the same methods (see Section 10.3 in \cite{cl-book}
for the details) as for von Karman
model \eqref{1.5} with boundary damping \eqref{hing-bc}.
The case of the boundary damping  with $\al=0$ is still open.

\section{Other models covered by the methods\\ presented}\label{sect:other}

In this section we shortly describe several models
whose long-time dynamics can  be studied  within the framework
presented above. These include: (i)
nonlocal Kirchhoff wave model with strong and structural damping;
(ii) finite dimensional and smooth attractors for  thermoelastic plates;
(iii) control to finite dimensional attractors  in thermal-structure,
  flow-structure and  fluid-plate interactions.
We also consider dissipative wave models which arise in plasma physics.

\subsection{Kirchhoff wave models}
\index{Kirchhoff wave models}
In a bounded smooth domain $\Om\subset\R^d$ we consider the following
Kirchhoff wave model with a strong nonlinear damping:
\begin{equation}\label{main_eq}
    \left\{ \begin{array}{l}
        u_{tt} -\si(\|\g u\|^2)\De u_t - \phi(\|\g u\|^2)\De u+ f(u)=h(x),~~ x\in\Om,\; t>0,\\ [2mm]
u|_{\d\Om}=0,~~ u(0)=u_0, \quad u_t(0)=u_1.  \\
    \end{array} \right.
\end{equation}
Here $\De$ is the Laplace operator, $\si$ and $\phi$ are scalar
functions  specified later, $f(u)$ is a  given source term,
$h$ is a given function in $L^2(\Om)$.
\par
This kind of wave
 models goes back to G. Kirchhoff ($d=1$, $\phi(s)=\phi_0+\phi_1 s$,
$\si(s)\equiv 0$, $f(u)\equiv 0$) and  has been studied by many authors
under different sets of hypotheses (see, e.g., \cite{Lions77} and also
\cite{Chu11} and the references therein). The model in \eqref{main_eq} is
characterized by  the presence of three nonlinearities:  the source,  the damping and  the stiffness.
\par
We assume that the source nonlinearity   $f(u)$ is a $C^1$ function
possessing the  following properties: $f(0)=0$ (without loss of generality),
\begin{equation}\label{f-coercive}
\mu_f:=\liminf_{|s|\to\infty}\left\{ s^{-1}f(s)\right\}>-\infty,
\end{equation}
and also
(a)
if $d=1$, then $f$ is arbitrary; (b) if $d=2$ then
\begin{equation*}
|f'(u)|\le C\left(1+|u|^{p-1}\right) ~~\mbox{for some}~~p\ge 1;
\end{equation*}
(c) if $d\ge 3$ then either
\begin{equation}\label{f-crit}
|f'(u)|\le C\left(1+|u|^{p-1}\right)\quad\mbox{with some}~~
1\le p\le p_*\equiv \frac{d+2}{d-2},
\end{equation}
or else
\begin{equation}\label{f-supercrit}
 c_0|u|^{p-1}  -c_1 \le f'(u)\le c_2\left(1+|u|^{p-1}\right)~~
\mbox{with some}~~ p_*< p< p_{**},
\end{equation}
where $p_{**}\equiv  \frac{d+4}{(d-4)_+}$, $c_i>0$ are constants and $s_+=(s+|s|)/2$.
\par
We note that the conditions above covers subcritical, critical and supercritical cases.
\par
Under rather mild hypotheses concerning $C^1$ functions $\phi$ and $\si$
we can prove that the problem is well-posed.
This holds  even without  the requirement that   $\phi$ is non-negative
(see the details in \cite{Chu11}). However  in order to study
long-time dynamics of the problem  (\ref{main_eq})
we need to assume that
the functions  $\si$ and   $\phi$  from  $C^1(\R_+)$ are positive
and
either
\[
 \phi(s)s\to +\infty~~\mbox{as} ~~ s
\to +\infty~~~\mbox{and}~~~\mu_f=\liminf_{|s|\to\infty}\left\{ s^{-1}f(s)\right\}>0.
\]
or else
\[
\hat\mu_\phi= \liminf_{s\to+\infty} \phi(s)>0~~~\mbox{and}~~
 ~\hat\mu_\phi\la_1 +\mu_f>0,
\]
where
$\la_1$ is the first eigenvalue of  the minus Laplace operator
 in $\Om$ with  Dirichlet boundary conditions.
\par
As a phase space we consider $\cH=[H_0^1(\Om)\cap L_{p+1}(\Om)]\times L_2(\Om)$
endowed with partially strong\footnote{
It is obvious that the partially strong convergence becomes strong
below supercritical level ($H_0^1(\Om)\subset L_{p+1}(\Om)$).
}
 topology:
a sequence  $\{(u^n_0;u^n_1)\}\subset \cH$ is said to be
{\em partially strongly
convergent} to $(u_0;u_1)\in \cH$ if
$u^n_0\to u_0$ strongly in $H^1_0(\Om)$, $u^n_0\to u_0$
weakly in $L_{p+1}(\Om)$ and $u^n_1\to u_1$ strongly in $L_{2}(\Om)$
as $n\to\infty$ (in the case when $d\le 2$ we take $1<p<\infty$ arbitrary).
\par
Under the  conditions stated  above (see \cite{Chu11})
 problem \eqref{main_eq} generates an
evolution semigroup   $S_t$ in
the space $\cH$. The action of the semigroup is given   by  the formula
$S_ty=(u(t);u_t(t))$,
 where $y=(u_0;u_1)\in\cH$ and $u(t)$ is a weak solution to (\ref{main_eq}).
\par
To describe the dynamical properties  of  $S_t$
it is convenient to introduce the following notion.
\par
A bounded  set $\Ac\subset \cH$   is said to be
a {\em global partially strong attractor} for $S_t$ if
(i) $\Ac$ is closed with respect to the partially strong
 topology,
(ii) $\Ac$ is strictly invariant ($S_t\Ac=\Ac$ for all $t>0$),
and (iii) $\Ac$  uniformly  attracts in the partially strong topology
all other bounded  sets: for any (partially strong)
vicinity $\cO$ of $\Ac$ and for any bounded set
$B$ in $\cH$ there exists $t_*=t_*(\cO,B)$
such that $S_t B\subset \cO$ for all $t\ge t_*$.
\par

The main result in \cite{Chu11} reads as follows:
\begin{theorem}\label{th1:attractor}
Assume in addition  that
 $f'(s)\ge -c$ for all $s\in\R$ in
the non-supercritical case (when  (\ref{f-supercrit}) does not
hold).
Then the semigroup $S_t$ given by (\ref{main_eq}) possesses
a  global partially strong attractor $\mathfrak{A}$ in the space
$\cH$. Moreover,  $\mathfrak{A}\subset \cH_1=
[H^2(\Om)\cap H^1_0(\Om)]\times H^{1}_0(\Om)$ and
\begin{equation*}
\sup_{t\in\R}\left( \|\D u(t)\|^2
+\|\g u_t(t)\|^2+ \|u_{tt}(t)\|_{-1,\Om}^2+
\int_t^{t+1}\|u_{tt}(\t)\|^2d \t \right)\le C_{\Ac}
\end{equation*}
for
any  full trajectory $\gamma=\{ (u(t); u_t(t)): t\in \R\}$
from the attractor
$\Ac$.
We also have that
\begin{equation*}
\Ac=\mathbb{M}_+(\cN),~~~ where ~~
 \cN=\{ (u;0)\in\cH : \phi(\|\cA^{1/2}u\|^2)\cA u +f(u)=h\}.
\end{equation*}
The   attractor $\mathfrak{A}$
 has a finite fractal
dimension as a compact set in the space
$[H^{1+r}(\Om)\cap H^1_0(\Om)]\times H^{r}(\Om)$
for every $r<1$.
\end{theorem}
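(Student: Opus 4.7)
The plan is to apply the quasi-stability framework of Section~4 adapted to the partially strong topology, exploiting the strong damping $-\si(\|\g u\|^2)\De u_t$ as the primary smoothing mechanism. First I would verify that $(\cH,S_t)$ is a gradient system: the Lyapunov functional
\[
V(u,u_t)=\tfrac12 \|u_t\|^2+\tfrac12\Phi(\|\g u\|^2)+\int_\Om F(u)\,dx-((h,u)),
\]
with $\Phi'=\phi$ and $F'=f$, satisfies the energy identity $\tfrac{d}{dt}V=-\si(\|\g u\|^2)\|\g u_t\|^2$, so it is nonincreasing, and the coercivity conditions on $\phi,f$ together with \eqref{f-coercive} make $V$ bounded below with sublevel sets bounded in $\cH$. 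Strictness follows because $\si>0$ forces $\g u_t\equiv 0$ on invariant trajectories, i.e.\ stationary solutions. This also gives dissipativity: sublevel sets of $V$ form an absorbing family, and the set $\cN$ of equilibria is bounded.

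Second, I would derive the key dissipation integral $\int_0^\infty \si(\|\g u(\tau)\|^2)\|\g u_t(\tau)\|^2\,d\tau\le C$ along any trajectory of bounded energy, and use it together with the equation to bound $u_{tt}$ in $L_2(t,t+1;H^{-1}(\Om))$ uniformly. For asymptotic smoothness on the absorbing set one splits the argument by the growth of $f$: in the subcritical/critical case one invokes the compensated compactness criterion of Theorem~\ref{th7-turk}, applied to the difference of two trajectories and using compactness of embeddings $H^1\hookrightarrow L_q$, $q<2^*$; in the supercritical case \eqref{f-supercrit} one invokes Ball's energy method (Theorem~\ref{th:ball}) with $\Phi$ equal to the quadratic kinetic/gradient part of $V$ and $\Psi$ the nonquadratic potential, which explains precisely why the attractor is closed and attracting only in the partially strong topology: sequential weak lower semicontinuity of $\int F(u)\,dx$ and compactness of $u\mapsto \int f(u)u_t$ on bounded energy sequences substitute for full strong compactness.

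Third, and this is the crux, I would establish a quasi-stability inequality on the absorbing ball in the sense of Definition~\ref{de:ms-stable}. Writing $z=u^1-u^2$ for two trajectories, the difference solves
\[
z_{tt}-\si(\|\g u^1\|^2)\De z_t-\phi(\|\g u^1\|^2)\De z = R(u^1,u^2,z),
\]
where $R$ collects the Kirchhoff cross terms $[\si(\|\g u^1\|^2)-\si(\|\g u^2\|^2)]\De u^2_t$ and $[\phi(\|\g u^1\|^2)-\phi(\|\g u^2\|^2)]\De u^2$ and the source difference $f(u^1)-f(u^2)$. Testing with $z_t$ and using the strong damping to absorb the cross terms (noting that $\|\g u^i\|^2$ differences are scalar and $\De u^2_t$ can be paired with $\|\g z_t\|$ via the damping bound), one obtains an estimate of the form \eqref{8.4.2mc} with compact seminorm $\mu_X(z)=\|z\|_{1-\eps}$. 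The supercritical source is treated by the split $((f(u^1)-f(u^2),z_t))=\tfrac{d}{dt}[\cdot]+$~compact remainder, analogous to the decomposition used for the wave equation in Section~5.1, but here the parabolic-type estimate from the strong damping gives extra room.

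Finally, once quasi-stability is in hand, Corollary~\ref{co:pr7.9.1}, Theorems~\ref{th7.9dim} and~\ref{th7.9reg} deliver respectively the compact global attractor, its finite fractal dimension, and the regularity $\Ac\subset\cH_1$ with the uniform bound on $u_{tt}$ in $H^{-1}$ plus the integrated $L_2$ bound; the gradient structure then yields $\Ac=\mathbb{M}_+(\cN)$ via Theorem~\ref{th:grad-attr}. Finite dimensionality in the stronger space $[H^{1+r}\cap H^1_0]\times H^r$ for $r<1$ is obtained by bootstrapping: differentiating the equation formally in $t$ (justified on the smooth attractor via Theorem~\ref{th7.9reg}) and interpolating between the $\cH_1$ bound and the $\cH$ quasi-stability inequality to produce a quasi-stability estimate in the intermediate topology. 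The principal obstacle is the simultaneous presence of the nonlocal Kirchhoff coefficients and the supercritical source: the cross-term $[\phi(\|\g u^1\|^2)-\phi(\|\g u^2\|^2)]\De u^2$ is not a lower-order perturbation in energy norm and must be absorbed by the strong damping, which is only possible because $\si>0$; and the loss of strong continuity of $u\mapsto f(u)$ forces everything to be done in the partially strong topology and pushes the verification of Ball's hypotheses to its technical limit.
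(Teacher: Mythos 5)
Your overall architecture---gradient structure via the Lyapunov functional $V$, the dissipation integral $\int_0^\infty\si(\|\g u\|^2)\|\g u_t\|^2\,d\tau<\infty$, a quasi-stability estimate for the difference of two trajectories, and then Corollary~\ref{co:pr7.9.1} together with Theorems~\ref{th7.9dim}, \ref{th7.9reg} and \ref{th:grad-attr}---is the right one and is what the paper indicates. The gap is in your third step. You claim that testing the difference equation with $z_t$ and absorbing the Kirchhoff cross terms into the strong damping yields \eqref{8.4.2mc} in $\cH$ with compact seminorm $\|z\|_{1-\eps}$. But the stiffness cross term $\bigl[\phi(\|\g u^1\|^2)-\phi(\|\g u^2\|^2)\bigr]((\De u^2,z_t))=-D(t)\,((\g u^2,\g z_t))$, with $|D(t)|\le L_R\|\g z(t)\|$, cannot simply be absorbed: Young's inequality leaves $\eps\|\g z_t\|^2+C_{\eps,R}\|\g z\|^2$, and $C_{\eps,R}\int_0^T\|\g z\|^2$ is a non-small multiple of the full energy integral---neither a compact term nor a term with an $L_1$-in-time coefficient suitable for Gronwall. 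Integrating by parts in $t$ instead trades it for the endpoint term $D(T)\,((\g u^2(T),\g z(T)))$, which is bounded only by $L_R R\,\|\g z(T)\|^2$, again a full-energy quadratic with an $O(1)$ constant that cannot be moved to the left-hand side. Such terms are manageable for \emph{asymptotic smoothness} via Theorem~\ref{th7-turk}, whose functional $\Psi$ only needs to vanish along iterated weak limits; they are fatal for the \emph{quantitative} inequality \eqref{8.4.2mc} on a single scale.

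This is precisely why the paper's proof (deferred to the cited reference) proceeds by establishing ``quasi-stability properties of $S_t$ on different topological scales,'' and why the theorem asserts finite fractal dimension only in $[H^{1+r}(\Om)\cap H^1_0(\Om)]\times H^{r}(\Om)$ for $r<1$, conspicuously not in $\cH$ itself. The estimate for $z$ must first be run in a shifted, weaker scale---controlling quantities like $\|z\|^2+\|z_t\|^2_{-1,\Om}$---where $\De u^2$ paired against the weaker test function is genuinely subordinate to the strong damping, and then lifted through the scale $H^{1+r}\times H^{r}$ using the parabolic-type smoothing generated by $-\si(\|\g u\|^2)\De u_t$; the $L_{p+1}(\Om)$ component of $\cH$ and the partially strong topology (which is not the norm topology required by Assumption~\ref{ch7.9.A}) are sidestepped for the same reason. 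Your Ball's-method detour is a reasonable independent route to existence of the attractor in the supercritical case, but it does not repair the quasi-stability step on which the regularity, the $u_{tt}$ bounds, and the dimension claims all rest.
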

To prove this result we first establish   quasi-stability properties
of $S_t$ on  different topological  scales  (see \cite{Chu11} for details).
\par
 In similar way we can also study
a model with structural damping with a non-supercritical force
of the form
\begin{equation}\label{main_eq2}
    \left\{ \begin{array}{l}
        u_{tt} +\phi(\|\cA^{1/2} u\|^2)\cA u +
\si (\|\cA^{1/2} u\|^2)\cA^\theta u_t+ F(u)=0,~~  t>0,\\ [2mm]
 u(0)=u_0, \quad u_t(0)=u_1,  \\
    \end{array} \right.
\end{equation}
with $\th\in [1/2,1)$, where
  $\cA$ is a  linear
 positive self-adjoint operator with   domain $\sD(\cA)$ and with a compact resolvent
 in
 a separable infinite dimensional Hilbert space $H$.
In this case we assume that the damping  $\si$ and  the stiffness $\phi$
factors  are positive $C^1$  functions and
\[
\Phi(s)\equiv \int_0^s \phi(\xi)d\xi \to+\infty~~
\mbox{as}~~
s\to+\infty.
\]
 The nonlinear operator $F$  is
locally Lipschitz in the following  sense
\begin{equation*}
\|\cA^{-\th}[F(u_1) -F(u_2)]\|~\leq~L(\varrho) \| \cA^{1/2}(u_1-u_2)\|,\quad
\forall \|\cA^{1/2} u_i \|  \leq \varrho\,,
\end{equation*}
and potential, i.e., $ F(u)= \Pi^\prime(u)$, where $\Pi(u)$ is a
$C^1$-functional on $\sD (\cA^{1/2})$, and
${}^\prime$ stands for the Fr\'echet derivative.
We assume that $\Pi(u)$
 is  locally bounded on $\sD (\cA^{1/2})$ and there exist $\eta<1/2$
 and $C\ge 0$ such that
\begin{equation*}
 \eta \Phi(\|\cA^{1/2}u\|^2) +\Pi(u)+C \ge 0\;,\quad u\in H_1=\sD ( \cA^{1/2})\;.
\end{equation*}
This hypotheses cover critical case of the source $F$, but not supercritical.
For details concerning the model in \eqref{main_eq2}
we refer to \cite{jadea10}.

\subsection{Plate models with structural damping}

We consider a class of plate models with the strong nonlinear damping
the abstract form of which is the following Cauchy problem
in a separable Hilbert space $H$:
\begin{equation}\label{abs-1}
u_{tt} + D(u,u_t)+\cA u+F(u)=0,~~t>0;\quad u|_{t=0}=u_0,  ~~ u_t|_{t=0}=u_1.
\end{equation}
In the case of plate models with hinged boundary conditions,
$\cA=(-\Delta_D)^2$,  where
$\Delta_D$ is the Laplace operator in a bounded smooth domain
$\Om$ in $\R^2$ with the Dirichlet boundary conditions.
We have then that  $H=L_2(\Om)$ and
$$ \sD ( \cA)=\left\{ u\in H^4(\Om) \, : \;
 u=\Delta u =0 ~~{\rm on}~~\d\Om\right\}.
$$
 The damping operator $D(u,u_t)$ may have the form
\begin{equation}\label{damping}
D(u,u_t)=\Delta\left[\si_0(u)\Delta u_t\right]-
{\rm div}\,\left[\si_1(u,\nabla u)\nabla u_t\right]+g(u,u_t),
\end{equation}
where $\si_0(s_1)$, $\si_1(s_1,s_2,s_3)$ and $g(s_1,s_2)$
are locally Lipschitz functions of $s_i\in\R$, $i=1,2,3$, such that $\si_0(s_1)>0$, $\si_1(s_1,s_2,s_3)\ge 0$ and $g(s_1,s_2)s_2\ge 0$.
Also the functions  $\si_1$ and $g$
satisfy  some growth conditions.
\index{structural damping}
We note that every term in (\ref{damping})
represents a different type of damping mechanisms. The first one is the so-called viscoelastic Kelvin--Voight damping,
\index{Kelvin--Voight damping}
the second one represents
the structural damping and the term  $g(u,u_t)$ is the dynamical friction
(or viscous damping). We refer to \cite[Chapter 3]{l-tbook}
and to the references therein for a discussion of stability
properties caused by each type  of the damping terms   in the case of
linear systems.
 We should stress  that
the conditions concerning $\si_i$ above
 allow to have a pure viscoelastic damping (i.e.,
 we can take $\si_1\equiv 0$ and $g\equiv 0$).
 However a similar results remains valid
 for the case when $\si_0\equiv 0$
and $\si_1$ is independent of $\nabla u$. In this case
the presence (or absence) of the  friction $g(u,u_t)$
has also  no importance for long-time dynamics.

\par
The nonlinear feedback (elastic) force $F(u)$ may have one
of the following forms (which represent different plate models):
\begin{itemize}
  \item[(a)] {\sl Kirchhoff model}:
  \index{Kirchhoff plate model}
  $F(u)$ is the Nemytskii operator
\begin{equation*}
u\mapsto - \kappa_1\cdot {\rm div}\left\{|\nabla u|^q\nabla u
-\mu_1 |\nabla u|^r\nabla u
\right\}+ \kappa_2\left\{| u|^l u
-\mu_2 | u|^m u\right\}-p(x),
\end{equation*}
 where  $\kappa_i\ge 0$, $q>r\ge 0$, $l>m\ge 0$, $\mu_i\in \R$ are parameters,
 $p\in L_2(\Om)$.
  \item[(b)] {\sl Von Karman model:} $F(u)=-[u, \cF(u)+F_0]-p(x)$, where
 $F_0\in H^4(\Om) $ and $p\in L_2(\Om)$ are given functions,
the von Karman bracket $[u,v]$  is given by
\eqref{bracket}   and
the Airy stress function $\cF(u) $ solves  \eqref{airy}.
  \item[(c)] {\sl Berger Model:}
    \index{Berger plate model}
   In this case the feedback force has the form
  \begin{equation}\label{Berger-force}
F(u)=- \left[ \kappa \int_\Om |\nabla u|^2 dx-\Gamma\right] \De u -p(x),
 \end{equation}
where $\kappa>0$ and $\Gamma\in\R$ are parameters,  $p\in L_2(\Om)$;
 for some details and  references see, e.g.,
\cite[Chapter 4]{Chu99} and \cite[Chapter 7]{cl-mem}.
\end{itemize}
One can show that the system generated by \eqref{abs-1} is quasi-stable.
Thus methods  presented in these notes  apply.
They  allow to show the existence of  global attractors
of finite fractal dimension which in addition posses
some smoothness properties.
See \cite{kolbasin,kolbasin2} for more details.

\subsection{Mindlin-Timoshenko plates and beams}
\index{Mindlin-Timoshenko model}
Let $\Omega \subset \R^2 $ be a bounded
domain with a  sufficiently smooth  boundary $\Gamma$.
Lets
$v(x,t)=(v_1(x,t),v_2(x,t))$ be a vector function  and $w(x,t)$ be a scalar
function
on $\Om\times \R_+$. The system of Mindlin-Timoshenko equations
describes dynamics of a plate taking into account transverse shear effects
(see, e.g., \cite[Chap.1]{lagli} and the references therein). This system
has the form
\begin{equation}\label{7md1}
\alpha v_{tt} +k\cdot g(v_t)-A v +\kappa\cdot(v+\nabla w)=
-f_0(v) +\nabla_x\left[f_1(w)\right],
\end{equation}
\begin{equation}\label{7md2}
 w_{tt} +k\cdot g_0(w_t)- \kappa\cdot {\rm div} (v+\nabla w)= -f_2(w).
\end{equation}
We supplement this problem  with the Dirichlet boundary conditions
\begin{equation}\label{7md2a}
v_1(x,t)=v_2(x,t)=0,\;  w(x,t)=0\quad\mbox{on}\quad \Gamma\times \R_+.
\end{equation}
Here the functions $v_1(x,t)$ and $v_2(x,t)$ are the angles of deflection
of a filament (they are measures of transverse shear effects)
and $w(x,t)$ is the bending component (transverse displacement).
The vector
$f_0(v)=(f_{01}(v_1,v_2);f_{02}(v_1,v_2))$
and scalar $f_1$ and $f_2$   functions represents (nonlinear) feedback forces, whereas
$g(v_1, v_2)=(g_{1}(v_1),g_{2}(v_2))$ and $g_0$ are monotone damping functions
describing resistance forces (with the intensity $k>0$).
The parameter $\alpha>0$ describes rotational inertia of filaments.
The factor $\kappa>0$ is the so-called shear modulus (from mechanical point of view
the limiting situation $\kappa\to+0$ corresponds to plane strain and the case
$\kappa\to+\infty$  corresponds to absence of transverse shear).
The operator $A$ has the form
\[
A=\left[
\begin{array}{cc}
  \partial_{x_1}^2+\frac{1-\nu}2 \partial_{x_2}^2  & \frac{1+\nu}2
\partial_{x_1x_2}^2 \\
  \\
 \frac{1+\nu}2 \partial_{x_1x_2}^2& \frac{1-\nu}2 \partial_{x_1}^2+ \partial_{x_2}^2
\\
\end{array} \right],
\]
where $0<\nu<1$ is the Poisson ratio.
In 1D case (${\rm dim}\, \Om$=1) the corresponding problem models dynamics of {\it beams} under
the Mindlin-Timoshenko hypotheses. For details concerning
the Mindlin-Timoshenko hypotheses and governing equations
see, e.g. \cite{lagnese} and \cite{lagli}. We also note that in the limit $\kappa\to+\infty$ the system in \eqref{7md1} and \eqref{7md2}
becomes Kirchhoff-Boussinesq type equation, see \cite{lagnese,cl-mt}.
\par
We refer to \cite{cl-mem} and \cite{cl-mt} for an analysis
(based on quasi-stability technology) of long time
behaviour  of the Mindlin-Timoshenko plate under different  sets of
assumptions concerning nonlinear feedback forces, damping functions and
parameters. By the same method long-time dynamics in
{\em thermo-elastic}  Mindlin-Timoshenko model was studied in
\cite{fastovska07,fastovska09}.

\subsection{Thermal-structure interactions}
\index{thermal-structure interactions}
This problem has  the form
\begin{equation}\label{im-3g}
\left\{\begin{array}{c}
 u_{tt}-\alpha A\theta  +A^2u=B(u), \quad   u |_{t=0}=u_0,\; u_t |_{t=0}=u_1,
\\  \\
\theta_t + \eta A\theta
+\alpha Au_t = 0, \quad \quad  \theta |_{t=0}=\theta_0,
\end{array}\right.
\end{equation}
where  $\cA$ is a  linear positive self-adjoint operator
with a compact inverse  in
 a separable infinite dimensional Hilbert space $H$.
The nonlinear term $B(u)$ can model von Karman (as in \eqref{1.2})
and Berger (see \eqref{Berger-force}) nonlinearities.
In this case the variable $\theta$ represents  the temperature
of the plate.
\par
For application the methods presented we refer
to \cite{cl-amo08} and \cite[Chapter 11]{cl-book} in the von Karman case,
see also \cite{bu-ch0} for the Berger nonlinearity.

 \subsection{ Structural acoustic  interactions}

\index{structural acoustic  interactions}

 The mathematical model under consideration consists of a semilinear wave
equation defined on a bounded domain $\cO$, which is strongly coupled with
the Berger or von Karman plate equation acting only on a part of the boundary of
the domain $\cO$.
This kind of models, known as structural acoustic
interactions, arise in the context of modeling gas pressure in an acoustic
chamber which is surrounded by a combination of hard (rigid) and flexible
walls.
\par
More precisely, let
  $\cO\subset \R^3$
 be a bounded domain  with a sufficiently smooth
 boundary $\partial\cO$. We assume that
$\partial\cO=\overline{\Omega}\cup \overline{S}$,
 where $\Om\cap S=\emptyset$,
$$
\Om\subset\{ x=(x_1;x_2;0)\, :\,x'\equiv
(x_1;x_2)\in\R^2\}
$$ with the smooth contour $\Gamma=\partial\Om$
and $S$
 is a  surface which lies in the subspace $\R^3_- =\{ x_3\le 0\}$.
 The exterior normal on $\partial\cO$ is denoted
 by $n$. The set
$\Om$  is referred to as the elastic wall,
whose dynamics is described some plate equation.
The acoustic medium in the chamber $\cO$ is described  by a semilinear
wave equation. Thus, we consider the following (coupled) PDE system
\begin{equation}\label{pde-sys1}
\left\{
\begin{array}{l}
z_{tt}+  g(z_t)  -  \Delta z + f(z) = 0
~~ {\rm in} ~~~  \cO\times (0,T),
\\[2mm]
\displaystyle \frac{\partial z}{\partial n}  = 0
~~ {\rm on} ~~  S\times (0,T), ~~~
\frac{\partial z}{\partial n} = \alpha  \,v_t
 ~~{\rm on}  ~~\Om\times(0,T),
\end{array}
\right.
\end{equation}
and
\begin{equation}\label{pde-sys2}
\left\{
\begin{array}{l}
v_{tt}+  b(v_t)   +\Delta^2 v
+ B(v) + \beta  z_t|_{\Om} = 0
 ~~{\rm in} ~~  \Om \times(0,T),
\\[2mm]
v=\Delta v = 0 ~~ {\rm on} ~~ \partial\Om\times(0,T),
\end{array}
\right.
\end{equation}
endowed  with initial data
\[
z(0,\cdot) = z^0\,, ~~ z_t(0,\cdot) = z^1 ~~{\rm in} ~~ \Omega,~~~
v(0,\cdot) = v^0\,, ~~ v_t(0,\cdot) = v^1 ~~ {\rm in} ~~ \Om\,.
\]
Here above, $g(s)$ and $b(s)$ are non-decreasing functions describing
the dissipation effects in the model, while the term $f(z)$ represents a
nonlinear force acting on the wave component and
$B(v)$ is (nonlinear) von Karman or Berger force;
 $\alpha$ and $\beta$ are positive constants;
The part $S$ of the boundary describes a rigid (hard) wall, while
$\Om$ is a flexible wall where the coupling with the plate equation
takes place.
The boundary term $\beta  z_t|_{\Om} $ describes back pressure
exercised by the acoustic medium  on the wall.
\par
Well-posedness issues  of the  abstract second order system,
which is a particular case of the one studied in
\cite[Sect.~2.6]{cmbs} (see also \cite[Chapter 6]{cl-book}).
\par
Long-time dynamics from point of view of quasi-stable systems
were  investigated in  \cite[Chapter 12]{cl-book} in the von Karman
case and in \cite{bu-ch} for the Berger nonlinearity.
We also  note that flow structure
model \eqref{pde-sys1} and \eqref{pde-sys2}
in combination with  thermoelastic  system \eqref{im-3g}
was studied in \cite{bu-ch0} and  \cite[Chapter 12]{cl-book}.

 \subsection{Fluid-structure interactions}
\index{fluid-structure interactions}
Our mathematical model is formulated as  follows (for details, see \cite{cryzh-12}).
\par
 Let $\cO\subset \R^3$, $\Om\subset \R^2$ and the surface $S$
be the same as in the case of the model in \eqref{pde-sys1} and \eqref{pde-sys2}.
 We consider the following {\em linear} Navier--Stokes equations in $\cO$
for the fluid velocity field $v=v(x,t)=(v^1(x,t);v^2(x,t);v^3(x,t))$
and for the pressure $p(x,t)$:
\begin{equation}\label{fl.1}
   v_t-\nu\Delta v+\nabla p=G_f\quad\mbox{and}\quad  \di v=0~~ {\rm in}~~ \cO
   \times(0,+\infty),
\end{equation}
where $\nu>0$ is the dynamical viscosity and $G_f$ is a volume force.
   We supplement (\ref{fl.1})  with  the (non-slip)  boundary
   conditions imposed  on the velocity field $v=v(x,t)$:
\begin{equation}\label{fl.4}
v=0 ~~ {\rm on}~S;
\quad
v\equiv(v^1;v^2;v^3)=(0;0;u_t) ~~{\rm on} ~ \Om.
\end{equation}
Here $u=u(x,t)$ is the transversal displacement
of the plate occupying $\Om$ and satisfying
the following  equation:
\begin{equation}\label{pl_eq}
u_{tt} + \De^2 u + F(u)= p|_\Om
~~{\rm in}~~ \Omega \times (0, \infty).
\end{equation}
The nonlinear feedback (elastic) force $F(u)$ as above may have one
of the  forms (Kirchhoff, Karman or Berger) which are present
for plates models with structural damping (see \eqref{abs-1}).
We also impose clamped boundary conditions on the plate
\begin{equation}
u|_{\pd\Om}=\left.\frac{\pd u}{\pd n} \right|_{\pd\Om}=0 \label{plBC}
\end{equation}
and supply \eqref{fl.1}--\eqref{plBC} with initial data of the form
\begin{equation}
 v(0)=v_0,\quad u(0)=u_0, \quad u_t(0)=u_1, \label{IC}
\end{equation}
We  note that  \eqref{fl.1} and \eqref{fl.4} imply the following
compatibility condition
\begin{equation}\label{Com-con}
\int_\Om u(x',t) dx'=const \quad \mbox{for all}~~ t\ge 0,
\end{equation}
which can be interpreted as preservation of the volume of the fluid
(we can choose this constant to be zero).
\par
It was shown in \cite{cryzh-12}  that equations \eqref{fl.1}--\eqref{Com-con}
generates an evolution operator $S_t$ in the space
\begin{equation*}
\cH=\left\{ (v_0;u_0;u_1)\in X\times \hat{H}^2_0(\Om)\times\hat{L}_2(\Om) :\; (v_0,n)\equiv
v_0^3 =u_1
~\mbox{on}~ \Om\right\},
\end{equation*}
where
\[
X=\left\{ v=(v^1;v^2;v^3)\in [L_2(\cO)]^3\, :\; {\rm div}\, v=0;\;
 (v,n)=0~\mbox{on}~ S\right\},
 \]
and $\hat{L}_2(\Om)$ (resp. $\!\hat{H}^2_0(\Om)$) denotes the subspace in $L_2(\Om)$
(resp.\ in  $\!\hat{H}^2_0(\Om)$)
consisting of functions with zero averages.
\par
This evolution operator $S_t$ is quasi-stable and
thus possesses a compact global attractor, see \cite{cryzh-12}.
We emphasize that we do not assume any kind of mechanical damping in the plate component.
Thus this results means that dissipation of the energy in
the fluid due to viscosity is sufficient to stabilize the system.

\par
In a similar way (see \cite{Chu11-mmas}) the model
which  deals {\em only}  with
 longitudinal deformations of the plate
neglecting transversal deformations can be also considered
(in contrast with the model  \eqref{fl.1}--\eqref{IC} which takes
into account the transversal deformations only).
This means that instead of (\ref{fl.4}) the following
boundary conditions are imposed on the velocity fluid field:
\begin{equation*}
v=0 ~~ {\rm on}~S;
\quad
v\equiv(v^1;v^2;v^3)=(u^1_t; u_t^2;0) ~~{\rm on} ~ \Om,
\end{equation*}
where $u=(u^1(x,t); u^2(x,t))$ is the
in-plane displacement vector of
the plate which solves the wave equation of the form
\begin{equation*}
                        u_{tt} -\Delta u- \g
\left[ {\rm div}\, u\right]
+\nu    (v^1_{x_3};v^2_{x_3})|_{x_3=0}  +
                        f( u)=0~~{\rm in}~~\Om;~~~u^i =0~~{\rm on}~~
  \Gamma.
\end{equation*}
 This kind of models arises in
the study of blood flows in large arteries
(see the references in \cite{Ggob-jmfm08}).
\par
One can also  analyze the corresponding model based on
the full Karman shell model with rotational inertia
(see \cite{cryzh-12f}). In this case to obtain well-posedness we need to apply Sedenko's
method. The study of long-time dynamics is based on J.Ball's method
(see Theorem~\ref{th:ball}).

 \subsection{Quantum Zakharov system}
\index{Quantum Zakharov system}
In a bounded domain $\Om \subset \R^d$, $d\le 3$,
we consider the following  system
\begin{equation}\label{QZ-1}
\left\{\begin{array}{l}
n_{tt}-\Delta\left(n+|E|^2\right)+h^2\Delta^2 n+\alpha n_t  =f(x),
\quad x\in\Om,\; t>0,
\\ \\
i E_t+\Delta E- h^2\Delta^2E  +i\gamma E - n E=g(x),\quad x\in\Om,\; t>0.
\end{array}\right.
\end{equation}
Here $E(x,t)$ is a complex function
and $n(x,t)$ is a real one, $h>0$,  $\alpha\ge 0$
 and $\gamma\ge 0$ are  parameters and $f(x)$, $g(x)$ are given
 (real and complex) functions.
\par
This system in dimension  $d=1$ was
derived  in \cite{GHGO-2005}, by use of a quantum fluid approach,
to model the nonlinear interaction between quantum
Langmuir waves and quantum ion-acoustic waves in an
electron-ion dense quantum plasma.
Later a vector 3D version of equations (\ref{QZ-1}) was suggested  in \cite{HaShu-2009}.
In dimension $d=2,3$ the system in (\ref{QZ-1}) is
also known (see, e.g., \cite{SSS-2009} and the references therein)
as a simplified "scalar model"
which is in a good agreement with the vector model derived in
\cite{HaShu-2009} (see a discussion in \cite{SSS-2009}).
\par
The Dirichlet initial boundary value problem for \eqref{QZ-1} is well-posed
and generates a dynamical system in an appropriate phase
space (see \cite{Chu12}).
Using  quasi-stability methods
one can prove the existence of  a finite-dimensional
global attractor, for details we refer to \cite{Chu12}.
\par
We also note that in the case $h=0$ we arrive to the classical
Zakharov system (see \cite{Zakharov}). Global attractors in this case
were studied in \cite{Flahaut,Goubet} in the 1D case
and in \cite{ChuShch} in the 2D case. In the latter
case to obtain uniqueness the Sedenko method was applied
and Ball's method was used to prove the existence of
a global attractor.

\subsection{Schr\"{o}dinger--Boussinesq   equations}
\index{Schr\"{o}dinger--Boussinesq  models}
The methods similar to described above can be also applied in
study of qualitative behavior
of the system consisting  of  Boussinesq  and Schr\"{o}dinger
equations  coupled in a smooth (2D) bounded domain $\Omega\subset\R^2$. The resulting system takes the form:

\begin{subequations}\label{Schr-Bouss}
\begin{eqnarray}
& & w_{tt}+\gamma_1 w_t +\Delta^2w-\Delta\left(f(w)+|E|^2\right)=g_1(x), ~~
\label{Z-1ab}\\[2mm]
& &i E_t + \Delta E - w E + i\gamma_2 E  = g_2(x), ~~~ x\in \Omega, \,  t>0,
\label{Z-1as}
\end{eqnarray}
\end{subequations}
where $E(x,t)$ and  $w(x,t)$ are unknown functions,
$E(x,t)$ is  complex and  $w(x,t)$ is  real.
Here above
 $\ga_1$ and $\gamma_2$ are nonnegative parameters and $g_1(x)$ and $g_2(x)$
  are given
 (real and complex) $L_2$-functions. We equip equations  \eqref{Schr-Bouss}
with the boundary conditions
\begin{equation}
\label{bound_cond}
w|_{\partial\Omega}=\Delta w|_{\partial\Omega}=0,~~E|_{\partial\Omega}=0,
\end{equation}
and with the initial data
\begin{equation}
\label{initial-data-s}
w_t(x,0)=w_1(x),\,w(x,0)=w_0(x),~~E(x,0)=E_0(x).
\end{equation}
Long-time dynamics in this system was studied in \cite{CS-11} by the methods described
above under
 the following hypotheses concerning the (nonlinear) function
  $f$:    $f\in C^1\left(\R\right)$,
$ f(0)=0$ and
\begin{equation}
\label{cond_f_10}
\exists  c_1, c_2 \ge 0\,:~~ F(r)=\int_0^r f(\xi) d\xi\ge -c_1 r^2-c_2, \; \forall |r|\ge r_0, \end{equation}
\begin{equation}
\label{cond_f_30}
 \exists M\ge 0, p\ge 1\,:~~ |f'(s)|\le M(1+|s|^{p-1}),~~s\in\R.
\end{equation}
We also note that
the ideas which were used in the study of the Kirchfoff-Boussinesq system
(see \eqref{1.3} with $\al=0$ can be also applied
to the  following {\em Schr\"{o}dinger-Boussinesq-Kirchfoff}   model:
\begin{align*}
 & w_{tt}-\De w_t +\Delta^2w-
 {\rm div}\left\{|\nabla u|^3\nabla u
\right\}-
\Delta\left( w^2+|E|^2\right)=g_1, ~~
\\[2mm]
 &i E_t + \Delta E - w E + i\gamma_2 E  = g_2, ~~~ x\in \Omega, \,  t>0,
\end{align*}
 equipped with the boundary conditions \eqref{bound_cond} and initial data
\eqref{initial-data-s}.
For details  we refer to \cite{CS-12}.


\flushbottom
\printindex
\end{document}